\numberwithin{equation}{section}
\newtheorem{theorem}{Theorem}[section]
\newtheorem{lemma}[theorem]{Lemma}
\newtheorem{proposition}[theorem]{Proposition}
\newtheorem{corollary}[theorem]{Corollary}
\newtheorem{conjecture}[theorem]{Conjecture}
\newtheorem{question}[theorem]{Question}
\newtheorem{questions}[theorem]{Questions}
\newtheorem{fact}[theorem]{Fact}
\newtheorem{facts}[theorem]{Facts}
\theoremstyle{definition}
\newtheorem{definition}[theorem]{Definition}
\newtheorem{example}[theorem]{Example}
\theoremstyle{remark}
\newtheorem{remark}[theorem]{\bf{Remark}}
\newtheorem{convention}[theorem]{\bf{Convention}}
\newtheorem{construction}[theorem]{\bf{Construction}}
\newtheorem{openquestions}[theorem]{\bf{Open questions}}
\newtheorem{notation}[theorem]{\bf{Notation}}
\newtheorem*{proofproposition}{Proof of the proposition}
\newcommand{\Hom}{\mathop{\mathrm{Hom}}}
\newcommand{\Nor}{{\rm{Nor}}}
\newcommand{\Acal}{{\mathcal{A}}}
\newcommand{\X}{{\mathbb{X}}}
\newcommand{\Dcal}{{\mathcal{D}}}
\newcommand{\Gcal}{{\mathcal{G}}}
\newcommand{\Lcal}{{\mathcal{L}}}
\newcommand{\Ocal}{{\mathcal{O}}}
\newcommand{\Rcal}{{\mathcal{R}}}
\newcommand{\Xcal}{{\mathcal{X}}}
\newcommand{\la}{{\triangleright}}
\newcommand{\ra}{{\triangleleft}}
\renewcommand{\ker}{\mathop{\mathrm{ker}}}
\newcommand{\Ker}{{\rm{Ker}}}
\newcommand{\tens}{\otimes}
\renewcommand{\la}{{\triangleright}}
\renewcommand{\ra}{{\triangleleft}}
\newcommand{\id}{\mathord{\mathrm{id}}}
\newcommand{\mpl}{\mathop{\mathrm{mpl}}}
\newcommand{\sol}{\mathop{\mathrm{sl}}} 
\newcommand{\Sym}{\mathop{\mathrm{Sym}}}
\newcommand{\Aut}{\mathop{\mathrm{Aut}}}
\newcommand{\Cay}{\mathop{\mathrm{Cay}}}
\newcommand{\Ret}{\mathop{\mathrm{Ret}}}
\newcommand{\stu}{\mathbin{\natural}}
\renewcommand{\wr}{\mathbin{\mathrm{wr}}}
\begin{document}

\title[Multipermutation solutions of the YBE ]{{Multipermutation solutions of the Yang--Baxter
equation} \keywords{Yang--Baxter equation, semigroup, graph, permutation group}
\subjclass[2000]{Primary 81R50, 16W50, 16S36}}
\thanks{The first author is partially supported by Isaac Newton Institute, UK, The Abdus Salam International Centre for Theoretical Physics (ICTP), and by
Grant MI 1503/2005 of the Bulgarian National Science Fund of the
Ministry of Education and Science}

\author{Tatiana Gateva-Ivanova and Peter Cameron}
\address{TGI: Institute of Mathematics and Informatics\\
Bulgarian Academy of Sciences\\
Sofia 1113, Bulgaria\\
P.C:  Queen Mary, University of London\\
School of Mathematics, Mile End Rd, London E1 4NS, UK}
\email{tatianagateva@yahoo.com, tatyana@aubg.bg,
p.j.cameron@qmul.ac.uk}

\date{\today}

\begin{abstract}
Set-theoretic solutions of the Yang--Baxter equation form a meeting-ground
of mathematical physics, algebra and combinatorics. Such a solution consists of a set $X$
and a function $r:X\times X\to X\times X$ which satisfies the braid relation.
We examine solutions here
mainly from the point of view of finite permutation groups: a solution
gives rise to a map from $X$ to the symmetric group $\Sym(X)$ on $X$
satisfying certain conditions.

Our results include many new constructions based on strong twisted union
and wreath product, with an investigation of retracts and the multipermutation
level and the solvable length of the groups defined by the solutions; and
new results about decompositions and factorisations of the groups defined
by invariant subsets of the solution.
\end{abstract}


\maketitle

\setcounter{tocdepth}{1}
\tableofcontents

\section{Introduction }

Let $V$ be a vector space over a field $k$. It is well-known that the
``Yang--Baxter equations'' on a linear map $R:V\tens V\to V\tens V$,
the equation
\[ R_{12}R_{23}R_{12}=R_{23}R_{12}R_{23}\]
(where $R_{i,j}$ denotes $R$ acting in the $i,j$ place in $V\tens V\tens V$),
give rise to a linear representation of the braid group on tensor powers of
$V$. When $R^2=\id$ one says that the solution is involutive, and in this case
one has a representation of the symmetric group on tensor powers.

A particularly nice class of solutions is provided by set-theoretic solutions
where $X$ is a set and $r:X\times X\to X\times X$ obeys similar relations on
$X\times X\times X$. Of course, each such solution extends linearly to $V=kX$
with matrices in this natural basis having only entries from 0,1 and many
other nice properties.

Associated to each set-theoretic solution are several algebraic constructions:
the semigroup $S(X,r),$ the group $G(x,r)$,  the semigroup algebra
$k S(X,r)= k_R[V]$ generated by $X$ with relations $xy=\cdot r(x,y)$
(where $\cdot$ denotes product in the free semigroup, resp free group)
and the permutation group $\Gcal(X,r) \subset \Sym (X)$ defined by the
corresponding left translations $y\mapsto {}^xy$ for $x\in X$, where
$r(x,y)=({}^xy,y^x)$ (under assumptions which will be given later).

In this paper we study the special case  when $(X,r)$ is a
\emph{square-free solution}, a square-free symmetric set of arbitrary
cardinality. Our special interest is in the retractability of such solutions.
We study multipermutation solutions and find close relation between the
multipermutation level of such a solution and the properties of the
associated algebraic objects $\Gcal(X,r)$, $G(X,r)$, and $S(X,r)$.
A feature of our approach is to give prominence to the group $\Gcal(X,r)$.

We introduce and study various constructions of solutions such as strong
twisted unions of solutions, doubling of solutions, wreath product of
solutions, and various other constructions, and study the multipermutation
level of the new solutions and properties such as solvable length of their
associated algebraic structures.

We now describe the contents of the paper in greater detail.

In Section \ref{Preliminaries} we recall basic definitions and basic results,
and give a general description of the permutation groups $H \subseteq\Sym(X)$
which can serve as a YB permutation group of some square-free  solution
$(X,r)$, see Proposition \ref{permYBgroupprop}.

In Section \ref{strong twisted unions}
we recall the definitions and basic properties of
homomorphisms and automorphisms of solutions,
we give a general construction, the \emph{strong twisted union of a
finite number of solutions},
and describe strong twisted unions in terms of split maps, see
Proposition \ref{splitmapprop}.

In Section \ref{Decompositions_Section} we study various decompositions of
square-free solutions $(X,r)$ into disjoint unions of a finite number of
$r$-invariant subsets and the corresponding factorisation of
$S(X,r)$, $G(X,r)$, and $\Gcal(X,r)$, see Theorems \ref{MAINDECOMPOSITIONTHM},
\ref{decompositiontheorem} and \ref{decompositionretractclasses}.
We make essential use of the \emph{matched pairs approach to solutions}
(in the most general setting), developed in \cite{TSh08}.

Section \ref{Multipermutation solutions of low levels} continues the study
of multipermutation solutions of low levels which was initiated in
\cite{TSh07}, and deepened in \cite{TSh0806} with detailed description of
solutions of multipermutation level 2.
Using matched pair approach and results from \cite{TSh08}  we show in
Proposition \ref{mpl2braidmonoidprop}
that when $(X,r)$ has multipermutation level 2,  the \emph{associated
braided monoid} $(S,r_S)$  and the \emph{associated braided group} $(G,r_G)$
are  symmetric sets which inherit some nice combinatorial conditions such as
the cyclic condition and condition \textbf{lri} (seeDefinition \ref{lri&cl})
but they are not square-free. Furthermore, $S$ (respectively $G$)  acts on
itself as automorphisms. Proposition \ref{mpl2permgroups} characterises the
permutation groups $H\subset \Sym(X)$ which define (via the left action)
square-free solutions $(X,r)$ with multipermutation level $2$. As a corollary
we obtain that every  finite abelian group $H$ is isomorphic to the
permutation group $\Gcal(X,r)$ of some  square-free solution with
multipermutation level 2.

We characterise solutions with multipermutation level 3 in
Proposition \ref{mpl3prop2}, and Corollary \ref{mpl3_Cor1}. We show that, for
$(X,r)$ of arbitrary cardinality, but having only a finite number of
$G$-orbits, each solution with multipermutation level $3$  decomposes as a
strong twisted union of a finite  number of solutions with multipermutation
level $\leq 2$, and the permutation group $\Gcal$ decomposes as a product
of abelian subgroups, see Propositions \ref{theoremC}, and \ref{mpl3prop2}.

One can  say a surprising amount about solutions $(X,r)$ for which
$\Gcal(X,r)$ is abelian: this is the theme of Section
\ref{abelian permutation group}.
In Subsection \ref{Computations with actions } we develop a technique for
computation with actions in the case of abelian $\Gcal$.

This technique and Theorem \ref{mplmtheorem}  are used to prove the main
results of the section: Theorems \ref{theoremA} and \ref{theoremB}.
We assume $(X, r)$ is a square-free solution of arbitrary  cardinality,
$\Gcal=\Gcal(X,r)$ is abelian, and $X$ has finite number $t$ of
$\Gcal$-orbits. Theorems \ref{theoremA} shows that every such a solution
is a multipermutation solution of level $\mpl X \leq t,$
and $X$ decomposes as a strong twisted union of its orbits. Furthermore,
each orbit $X_i$ is itself a trivial solution (so $mpl X_i = 1$)). This
confirms two  conjectures of the first author (only for the case of abelian
$\Gcal$, of course), see \cite{T04} Conjecture I, Conjecture II, formulated
for finite square-free solutions (see also Conjecture \ref{conj1}).

Theorem \ref{theoremB} proves that, under the assumption that $\Gcal(X,r)$ is
abelian, a strong twisted union $X = X_1 \stu X_2$ of two multipermutation
solutions is itself a multipermutation solution with
$\mpl X \leq \mpl X_1 + \mpl X_2$. In the general case, the question whether
a strong twisted union $X = X_1 \stu X_2$
of multipermutation solutions $X_1, X_2$ is also a multipermutation solution
remains open.

A resent result of Ced\'{o}, Jespers, and
Okni\'{n}ski, \cite{Eric2009}, shows that
each \emph{finite} square-free solution $(X,r)$ with $\Gcal$ abelian  is
retractable. The proof is combinatorial (and different from ours) and relies
strongly on the finiteness of $X$; there is no estimate of $\mpl X$.
Our proof uses a general technique which is applicable to  solutions with
arbitrary  cardinality of $X$
(but a finite number $t$ of  $G$-orbits). In fact we show that $t$ is an
upper bound for the multipermutation level $\mpl X$.

Section \ref{generalmplsection} studies the general case of multipermutation
square-free solutions.
In Subsection \ref{General results} we recall  basic notions and facts
from \cite{T04}, and use them to develop
a basic technique for dealing with retracts and retract classes.
Theorem \ref{mplmtheorem} gives an explicit identity
in terms of action necessary and sufficient for $\mpl X = m$.
This identity plays an essential role in the paper.

Subsection \ref{The groups subsection} contains some of the important
results of this paper.
We study the groups $G= G(X.r)$ and $\Gcal =\Gcal(X,r)$ of multipermutation
solutions. We show that if $(X,r)$ is a square-free multipermutation solution
(of arbitrary cardinality) the groups $G$ and $\Gcal$ are solvable.
This was known for finite symmetric sets, see \cite{ESS},
(see also \cite{T04} for finite square-free solutions),
but no information about the solvable length of $G$ was known.

Lemma \ref{rethomlemma} and induction on $m= \mpl X$ allow us not only to
prove solvability (without assuming $X$ or $\Gcal$ finite), but also to find
an upper bound for the solvable lengths: $\sol G \leq m$ and
$\sol \Gcal \leq m-1$.
The results of Section \ref{infinitesolutions} show that these upper bounds
are attained.
Theorem \ref{slG=slGcal+1thm} verifies that, whenever $(X,r)$ is a
square-free solution of finite order, then  $\sol (G) = \sol (\Gcal) + 1$.

In Section \ref{Wreath products} we define the notion of
\emph{wreath product of solutions}, by analogy with the wreath products of
permutation groups. Theorem \ref{wreathconstructionth}
shows that wreath product $(Z,r) = (X_0,r_{X_0})\wr(Y,r_Y)$ is a
square-free solution, with
$\Gcal(Z, r) = \Gcal(X_0,r_{X_0})\wr\Gcal(Y,r_Y)$. Furthermore, whenever
$(X_0,r_{X_0})$ and $(Y,r_Y)$ are multipermutation solutions, one has
$\mpl (Z,r) = \mpl (X_0,r_{X_0})+\mpl (Y,r_Y) - 1$.

In  Section \ref{infinitesolutions} we construct   an interesting
sequence of explicitly defined solutions $(X_m, r_m),$  $m = 0, 1,
2, \ldots$, such that $\mpl(X_m)= m$,
$\Ret(X_{m+1}, r_{m+1}) \simeq (X_m, r_m)$,  $m = \sol G(X, r_m) = \sol
\Gcal(X, r_m) + 1$: see Definition \ref{beautifulconstructiondef}
and Theorem \ref{beautifulconstruction}.

\section{Preliminaries on set-theoretic solutions}
\label{Preliminaries}

There are many  works on set-theoretic solutions and related
structures, of which a relevant selection for the interested
reader is \cite{W, TM,  ESS, T04, T04s,  TSh07, TSh08, TSh0806,
Lu, Rump, Takeuchi, V}. In this section we recall basic notions
and results which will be used in the paper.
 We shall use the terminology,  notation and some results from
\cite{T04,  TSh07, TSh08, TSh0806}.

 \begin{definition}
Let $X$ be a nonempty set (not necessarily finite) and let
$r: X\times X \longrightarrow X\times X$ be a bijective map. We refer
to it as a \emph{quadratic set}, and denote it by  $(X,r)$. The image
of $(x,y)$ under $r$ is presented as
\begin{equation}
\label{r} r(x,y)=({}^xy,x^{y}).
\end{equation}
The formula (\ref{r}) defines a ``left action'' $\Lcal: X\times X
\longrightarrow X,$ and a ``right action'' $\Rcal: X\times X
\longrightarrow X,$ on $X$ as:
\begin{equation}
\label{LcalRcal} \Lcal_x(y)={}^xy, \quad \Rcal_y(x)= x^{y},
\end{equation}
for all $ x, y \in X.$ The map  $r$ is \emph{nondegenerate}, if
the maps $\Lcal_x$ and $\Rcal_x$ are bijective for each $x\in X$.
In this paper we shall consider only  the case where $r$
is nondegenerate. As a notational tool,
we  shall often identify
the sets $X\times X$ and $X^2,$ the set of all monomials of length
two in the free semigroup $\langle X\rangle.$
\end{definition}

\begin{definition}
\label{defvariousr}
\begin{enumerate}
 \item
$r$ is \emph{square-free} if $r(x,x)=(x,x)$ for all $x\in X.$
\item \label{YBE} $r$ is \emph{a set-theoretic solution of the
Yang--Baxter equation} or, shortly \emph{a solution}
 (YBE) if  the braid relation
\[r^{12}r^{23}r^{12} = r^{23}r^{12}r^{23}\]
holds in $X\times X\times X,$ where the two bijective maps
$r^{ii+1}: X^3 \longrightarrow X^3$, $ 1 \leq i \leq 2$ are
defined as  $r^{12} = r\times\id_X$, and $r^{23}=\id_X\times r$. In
this case we  refer to  $(X,r)$ also as   \emph{a braided set}.
\item A braided set $(X,r)$ with $r$ involutive is called \emph{a
symmetric set}.
\end{enumerate}
\end{definition}

\begin{convention}
By \emph{square-free solution} we mean a nondegenerate square-free
symmetric set $(X,r)$, where $X$  is a set of arbitrary
cardinality. Alternative definitions are given in terms of the
left action, see Lemma \ref{alternativedef} and Corollary
\ref{alternativedefc}.
\end{convention}

To each quadratic set $(X,r)$  we associate canonical algebraic
objects  generated by $X$ and with quadratic defining relations $\Re
=\Re(r)$ defined by
\begin{equation}
\label{defrelations} xy=zt \in \Re(r),\quad
  \text{whenever}\quad r(x,y) = (z,t).
\end{equation}

\begin{definition}
\label{associatedobjects}  Let $(X,r)$ be a quadratic set.

(i) The unital semigroup $ S =S(X, r) = \langle X; \Re(r)
\rangle$, with a set of generators $X$ and a set of defining
relations $ \Re(r),$ is called \emph{the semigroup associated with
$(X, r)$}.

(ii) The \emph{group $G=G(X, r)$ associated with} $(X, r)$ is
defined as
$G=G(X, r)={}_{gr} \langle X; \Re (r) \rangle$

(iii) For arbitrary fixed field $k$, \emph{the $k$-algebra
associated with} $(X ,r)$ is defined as $\Acal(k,X,r) = k\langle X ;
\Re(r) \rangle.$ ($\Acal(k,X,r)$ is isomorphic to the monoidal
algebra $kS(X,r)$).

(iv) To each nondegenerate braided set $(X,r)$ we also associate a
permutation group, called \emph{the group of left action} and
denoted $\Gcal= \Gcal(X,r)$, see Definition \ref{Gcaldef}.

 If $(X,r)$ is a solution, then $S(X,r)$, resp. $G(X,r)$,
resp. $\Gcal(X,r)$, resp. $\Acal(k,X,r)$ is called the
{\em{Yang--Baxter semigroup}}, resp. the {\em{Yang--Baxter group}},
, resp. the {\em{Yang--Baxter algebra}}, resp. the
{\em{(Yang--Baxter) permutation group}}  or shortly \emph{the YB
permutation group}, associated to $(X,r)$
\end{definition}

The YB permutation group   $\Gcal(X,r)$ will be of particular
importance in this paper.

\begin{example}
\label{trivialsolex} For arbitrary nonempty set $X$, denote by
$\tau_X = \tau$ the flip map $\tau(x,y)= (y,x)$ for all $x,y \in
X.$ Then  $(X, \tau)$ is a solution called \emph{the trivial
solution}.  It is clear that $(X,r)$ is the trivial solution
if and only if ${}^xy =y$, and $x^{y} = x,$ for all $x,y \in X,$ or
equivalently $\Lcal_x= \id_X =\Rcal_x$ for all $x\in X$. In this
case $S(X,r)$ is the free abelian monoid, $G(X,r)$ is the free
abelian group, $\Acal(k,X,r)$ the algebra of commutative
polynomials in $X$, and $\Gcal(X,r) = \{\id_X\}$ is the trivial group.
\end{example}

\begin{remark} \label{ybe}
Suppose  $(X,r)$ is a nondegenerate quadratic set. It is well
known, see for example \cite{TSh08}, that $(X,r)$   is a braided
set (i.e. $r$ obeys the YBE)
if and only if the following conditions hold
\[
\begin{array}{lclc}
 {\bf l1:}\quad& {}^x{({}^yz)}={}^{{}^xy}{({}^{x^y}{z})},
 \quad\quad\quad
 & {\bf r1:}\quad&
{(x^y)}^z=(x^{{}^yz})^{y^z},
\end{array}\]
 \[ {\rm\bf lr3:} \quad
{({}^xy)}^{({}^{x^y}{(z)})} \ = \ {}^{(x^{{}^yz})}{(y^z)},\]
 for all $x,y,z \in X$.

Clearly, conditions {\bf l1} imply that, for each nondegenerate
braided set $(X,r)$, the assignment $x \longrightarrow \Lcal_x$ for
$x\in X$ extends canonically to a group homomorphism
\begin{equation}
\label{Lcal} \Lcal:G(X,r) \longrightarrow \Sym(X),
\end{equation}
which defines  the \emph{canonical left action} of $G(X,r)$  on the set  $X$.
Analogously {\bf r1} gives the \emph{canonical right action} of
$G(X,r)$  on~$X$.
\end{remark}

\begin{definition} \cite{TSh08}
\label{Gcaldef}
Let $(X,r)$ be a nondegenerate braided set,
$\Lcal:G(X,r) \longrightarrow \Sym(X)$ be  the canonical group
homomorphism defined via the left action. The image
$\Lcal(G(X,r))$ is denoted by $\Gcal(X,r).$ We  call it \emph{the
(permutation) group of left action}.
\end{definition}

\begin{remark}
\label{Gcal=LcalS}
If $X$ is a finite set, then $\Gcal=\Lcal(S(X,r))$. Indeed,
for a finite group, generating sets as group and as semigroup
coincide; for any element $g$ of the group has finite order, say $m$,
and so its inverse $g^{-1}$ can be expressed as a positive power $g^{m-1}$.
\end{remark}

The following conditions were introduced and studied in \cite{T04,
TSh07, TSh08}:

\begin{definition} \cite{TSh08}
\label{lri&cl}
Let $(X,r)$ be a quadratic set.
\begin{enumerate}
\item  $(X,r)$ is called {\em cyclic} if the
following conditions are satisfied
\[\begin{array}{lclc}
 {\rm\bf cl1:}\quad&  {}^{y^x}x= {}^yx \quad\text{for all}\; x,y \in
 X;
 \quad&{\rm\bf cr1:}\quad &x^{{}^xy}= x^y, \quad\text{for all}\; x,y \in
X;\\
 {\rm\bf cl2:}\quad
  &{}^{{}^xy}x= {}^yx,
\quad\text{for all}\; x,y \in X; \quad & {\rm\bf cr2:}\quad
&x^{y^x}= x^y \quad\text{for all}\; x,y \in X.
\end{array}\]
 We refer to these
conditions as {\em cyclic conditions}.
\item Condition \textbf{lri} is defined as
 \[ \textbf{lri:}
\quad ({}^xy)^x= y={}^x{(y^x)} \;\text{for all} \quad
x,y \in X.\]
In other words \textbf{lri} holds if and only if
$(X,r)$ is nondegenerate and $\Rcal_x=\Lcal_x^{-1}$ and $\Lcal_x =
\Rcal_x^{-1}$
\end{enumerate}
\end{definition}

\subsection{Square-free solutions}

In this paper the class of \emph{square-free solutions}  will be
of special interest. We now introduce these.

In the case when  $(X,r)$ is a square-free solution \emph{of
finite order} $|X|= n >2$, the algebras $\Acal(X,r)$
are \emph{binomial skew polynomial rings}, see \cite{T04,T04s}, which
provided new classes of Noetherian rings \cite{T94,T96},
Gorentstein (Artin--Schelter regular) rings
\cite{T96Preprint,T00,T04s} and so forth.
 Artin--Schelter regular
rings were introduced in \cite{AS} and are of particular interest.
The algebras $\Acal(X,r)$ are similar in spirit to the quadratic
algebras associated to linear solutions, particularly studied in
\cite{Man}, but have their own remarkable properties. The
semigroups $S(X,r)$ were studied particularly in \cite{TSh08} with
a systematic theory of `exponentiation' from the set to the
semigroup by means of the `actions' $\Lcal_x,\Rcal_x$ (which in
the process become a matched pair of semigroup actions) somewhat
in analogy with the Lie theoretic exponentiation in \cite{Ma:mat}.

We shall recall some basic facts and recent results needed in this
paper.

The following result is extracted from \cite{TSh08}, Theorem 2.34,
where more equivalent conditions are pointed out. Note that in our
considerations below (unless we indicate the contrary) the set $X$
is not necessarily  of finite order.

\begin{facts}
\label{basictheorem} \cite{TSh07}. Suppose $(X,r)$ is nondegenerate,
involutive and square-free quadratic set (not necessarily finite). Then the
following conditions are equivalent:
\begin{itemize}
\item[(i)]  $(X,r)$ is a set-theoretic solution of the Yang--Baxter equation;
\item[(ii)]  $(X,r)$ satisfies {\bf l1};
\item[(iii)]  $(X,r)$ satisfies {\bf r1};
\item[(iv)] $(X,r)$ satisfies {\bf lr3}.
\end{itemize}
In this case $(X,r)$ is cyclic and satisfies  {\bf lri}.
\end{facts}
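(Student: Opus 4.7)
The plan is to leverage Remark~\ref{ybe}, which identifies the Yang--Baxter equation---for any nondegenerate quadratic set---with the conjunction \textbf{l1}$\wedge$\textbf{r1}$\wedge$\textbf{lr3}. Consequently (i) implies each of (ii), (iii), (iv) at once, and the real content of the theorem is that, under the standing hypotheses of involutivity and square-freeness, any one of \textbf{l1}, \textbf{r1}, \textbf{lr3} already forces the other two. The cyclic and \textbf{lri} statements will be harvested as by-products of the same calculation.

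First I would record the identities forced by the standing hypotheses. Writing $r(x,y)=({}^{x}y,x^{y})$, the relation $r^{2}=\id$ gives
\[
{}^{({}^{x}y)}(x^{y})=x,\qquad ({}^{x}y)^{(x^{y})}=y,
\]
while $r(x,x)=(x,x)$ forces ${}^{x}x=x=x^{x}$, and nondegeneracy makes every $\Lcal_{x}$ and $\Rcal_{x}$ a bijection. The first substantive step is to extract the symmetry condition \textbf{lri}, $\Rcal_{x}=\Lcal_{x}^{-1}$, from any one of \textbf{l1}, \textbf{r1}, \textbf{lr3}. Starting from \textbf{l1}, the substitution $z=y$ combined with ${}^{y}y=y$ collapses the identity to ${}^{x}y={}^{({}^{x}y)}({}^{x^{y}}y)$; since square-freeness gives $\Lcal_{{}^{x}y}({}^{x}y)={}^{x}y$, injectivity forces ${}^{x^{y}}y={}^{x}y$. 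Feeding this relation back through the involutivity identities (which allow one to swap the roles of the left and right actions) and performing analogous substitutions (e.g.\ $z=x$) produces the cyclic conditions \textbf{cl1}, \textbf{cl2} and, in turn, \textbf{lri}. The arguments starting from \textbf{r1} or \textbf{lr3} are dual: \textbf{r1} yields \textbf{cr1}, \textbf{cr2}, while \textbf{lr3}, being a hybrid condition, yields \textbf{lri} directly upon the analogous symmetric substitution.

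With \textbf{lri} in hand, the identification $\Rcal_{x}=\Lcal_{x}^{-1}$ rewrites each of the three braid conditions as an identity involving only left actions; a direct comparison shows that, after relabeling, all three become the single equality
\[
\Lcal_{x}\Lcal_{y}=\Lcal_{\Lcal_{x}(y)}\,\Lcal_{\Lcal_{y}^{-1}(x)}.
\]
Hence (ii), (iii), (iv) are mutually equivalent, and by Remark~\ref{ybe} each is equivalent to (i). The cyclic identities and \textbf{lri} required for the final sentence of the theorem have already appeared in the course of the above derivation.

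The delicate point, and the step I would expect to consume most of the work, is the derivation of \textbf{lri} from a single braid condition: the substitutions must be arranged so that the square-free and involutivity identities interlock without merely reproducing the hypothesis. If the direct substitution argument grows unwieldy, a cleaner alternative is to follow the approach of \cite{ESS, TSh08}: show first by a more structural matched-pair argument that any one of \textbf{l1}, \textbf{r1}, \textbf{lr3}, combined with involutivity and square-freeness, implies the full YBE, and only then deduce \textbf{lri} and the cyclic conditions after the fact from the full YBE.
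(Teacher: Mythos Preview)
The paper does not give its own proof of this statement: it is presented as ``Facts'' extracted from \cite{TSh07} (and more fully from \cite{TSh08}, Theorem~2.34), with no argument supplied here. So there is no in-paper proof to compare against; your sketch is effectively a reconstruction of the cited result.

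As a reconstruction, your outline is sound and follows the standard route. The derivation of ${}^{x^{y}}y={}^{x}y$ (i.e.\ \textbf{cl1} after relabeling) from \textbf{l1} via the substitution $z=y$, square-freeness, and injectivity of $\Lcal_{{}^{x}y}$ is correct and is exactly how the argument begins in \cite{TSh08}. The subsequent passage from the cyclic conditions plus involutivity to \textbf{lri}, and the reduction of all three braid identities to the single operator equation $\Lcal_{x}\Lcal_{y}=\Lcal_{{}^{x}y}\Lcal_{x^{y}}$ once \textbf{lri} is available, are also correct in spirit. Two places deserve more care if you want a self-contained proof rather than a sketch: first, the step ``feeding this relation back through the involutivity identities \ldots\ produces the cyclic conditions and, in turn, \textbf{lri}'' hides several substitutions (one typically shows \textbf{cl1} and \textbf{cr1} first, then combines them with involutivity to get \textbf{lri}); second, your claim that \textbf{lr3} yields \textbf{lri} ``directly upon the analogous symmetric substitution'' is the least obvious of the three and should be written out, since \textbf{lr3} mixes left and right actions and the bookkeeping is easy to get wrong.
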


\begin{corollary}
\label{constructivecor} Every  square-free solution  $(X,r)$
satisfies \textbf{lri}, so it is  uniquely determined by the left
action $\Lcal: X\times X \longrightarrow X,$ more precisely,
\[
r(x,y) = (\Lcal _x(y), \Lcal^{-1}_y(x)).
\]
Furthermore it  is cyclic.
\end{corollary}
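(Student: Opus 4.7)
The plan is to reduce the corollary directly to Facts \ref{basictheorem} and then unpack the formula. By definition, a square-free solution is a nondegenerate, involutive, square-free symmetric set $(X,r)$, i.e.\ it satisfies hypothesis (i) of Facts \ref{basictheorem}. That result immediately yields that $(X,r)$ is cyclic and satisfies condition \textbf{lri}, which is the second and third claims of the corollary.

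It remains to derive the explicit formula $r(x,y) = (\Lcal_x(y),\Lcal_y^{-1}(x))$. Recall that $r(x,y) = ({}^xy, x^y) = (\Lcal_x(y),\Rcal_y(x))$ by the definition of the actions in (\ref{LcalRcal}). By \textbf{lri}, for every $x,y \in X$ we have ${}^x(y^x) = y$, i.e.\ $\Lcal_x\circ\Rcal_x = \id_X$, and also $({}^xy)^x = y$, i.e.\ $\Rcal_x\circ\Lcal_x = \id_X$. Since $\Lcal_x$ and $\Rcal_x$ are bijections (nondegeneracy), these identities yield $\Rcal_x = \Lcal_x^{-1}$ for every $x \in X$. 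Substituting $\Rcal_y = \Lcal_y^{-1}$ in the second coordinate of $r(x,y)$ gives the stated formula.

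The uniqueness assertion then follows: once the map $\Lcal: X\times X \to X$ is prescribed, both coordinates of $r(x,y)$ are determined, the first by $\Lcal_x(y)$ and the second by $\Lcal_y^{-1}(x)$. So two square-free solutions on the same set $X$ with identical left actions coincide as maps $X\times X \to X\times X$.

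There is essentially no obstacle here: the corollary is a clean book-keeping consequence of Facts \ref{basictheorem}, and the only substantive step is the translation between condition \textbf{lri} (stated in terms of the exponents ${}^xy$ and $x^y$) and the operator identity $\Rcal_x = \Lcal_x^{-1}$, which is immediate from the definitions in (\ref{LcalRcal}) together with nondegeneracy.
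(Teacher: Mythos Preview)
Your proof is correct and matches the paper's intended argument: the corollary is stated immediately after Facts~\ref{basictheorem} with no explicit proof, precisely because it follows by the direct unpacking you give. The only point worth noting is that the paper already records the operator identity $\Rcal_x=\Lcal_x^{-1}$ as part of Definition~\ref{lri&cl}, so even the translation step you spell out is built into the stated form of condition \textbf{lri}.
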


The following is straightforward and gives an alternative definition of
square-free solutions.

\begin{lemma}
\label{alternativedef} Let $X$ be a nonempty set and $\Lcal$ be a
map
\[\Lcal: X \longrightarrow \Sym X;\qquad x\mapsto \Lcal_x \in \Sym
X.\] Denote
 $\Lcal_x(y)={}^xy, \quad \Lcal_x^{-1}(y)=y^x$ and define
$r: X\times X \longrightarrow X\times X$ as $r(x,y) =({}^xy, x^y).$
Then $(X,r)$ is a square-free solution if and only if the following
three conditions are satisfied for all $x,y,z \in X$:
\begin{itemize}
\item[(i)] $\quad{}^xx=x$ \item[(ii)] $\quad({}^{y^x}x)= {}^yx$
\item[(iii)] $\quad{}^x{({}^yz)}={}^{{}^xy}{({}^{x^y}{z})}$
\end{itemize}
\end{lemma}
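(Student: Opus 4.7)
The plan is to organize the proof around Facts~\ref{basictheorem}, exploiting the convenient observation that the way $r$ is built, the right action $\Rcal_x$ is defined as $\Lcal_x^{-1}$ by fiat, so nondegeneracy and condition \textbf{lri} are automatic: $({}^xy)^x=\Lcal_x^{-1}(\Lcal_x(y))=y$ and ${}^x(y^x)=\Lcal_x(\Lcal_x^{-1}(y))=y$ hold trivially. In particular, once I have $r$ involutive and square-free, Facts~\ref{basictheorem} tells me that the YBE is equivalent to \textbf{l1}, which is exactly condition (iii).

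The forward direction is short. Assume $(X,r)$ is a square-free solution. Then (i) is just $r(x,x)=(x,x)$ read off in the first coordinate, (iii) is condition \textbf{l1} (one of the equivalent defining conditions in Facts~\ref{basictheorem}), and (ii) is the cyclic condition \textbf{cl1} of Definition~\ref{lri&cl}, which by the last sentence of Facts~\ref{basictheorem} is automatic for every square-free solution.

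For the reverse direction, assume (i), (ii), (iii). Nondegeneracy is built into the definition, and $r(x,x)=({}^xx,\Lcal_x^{-1}(x))=(x,x)$ follows from (i) since $\Lcal_x(x)=x$ forces $\Lcal_x^{-1}(x)=x$. The key remaining point is involutivity: writing $r(x,y)=(\Lcal_x(y),\Lcal_y^{-1}(x))$ and applying $r$ again, one must verify that ${}^{{}^xy}(x^y)=x$ and $({}^xy)^{(x^y)}=y$. For the first, apply (iii) with $z=x^y$ to get
\[
{}^x({}^y(x^y))={}^{{}^xy}({}^{x^y}(x^y));
\]
the left-hand side simplifies to ${}^xx=x$ (using $\Lcal_y\Lcal_y^{-1}=\id$ and then (i)), while the right-hand side collapses to ${}^{{}^xy}(x^y)$ by (i) applied to $x^y$. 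For the second, the identity $({}^xy)^{(x^y)}=y$ is equivalent to $\Lcal_{x^y}(y)=\Lcal_x(y)$, i.e.\ ${}^{x^y}y={}^xy$, which is precisely (ii) after swapping the roles of $x$ and $y$. Once involutivity is established, (iii) is \textbf{l1}, and Facts~\ref{basictheorem} upgrades $(X,r)$ to a solution of the YBE.

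The only place where anything non-formal happens is the involutivity step, and even there the main obstacle is simply noticing the right substitution in (iii) and the right relabeling of (ii); nothing deeper is required, since \textbf{lri} and nondegeneracy are handed to us by the construction of $r$ from $\Lcal$.
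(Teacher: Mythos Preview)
Your proof is correct. The paper does not provide an explicit proof of this lemma (it simply calls it ``straightforward''), and your argument via Facts~\ref{basictheorem} is exactly the intended route. One small remark: for the first coordinate of involutivity you invoke (i) and (iii), whereas the paper's Remark~\ref{cclriinvol} asserts that (ii) alone (together with the built-in \textbf{lri}) already forces $r$ involutive. Indeed, once (ii) gives ${}^{x^y}y={}^xy$, the equivalent form \textbf{cl2} (obtained from (ii) by substituting $y\mapsto{}^xy$ and using \textbf{lri}) applied with $u=x^y$, $v=y$ yields ${}^{{}^xy}(x^y)={}^y(x^y)=x$. Your route through (iii) is equally valid and arguably more transparent, but it is worth knowing that (ii) carries the full weight of involutivity here.
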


\begin{remark}
\label{cclriinvol} Note that in the hypothesis of Lemma
\ref{alternativedef} condition (ii) implies $(X,r)$ involutive.
\end{remark}

\begin{corollary} \cite{TSh08}
\label{alternativedefc} In the hypothesis and notation of Lemma
\ref{alternativedef},   $(X,r)$ is a square-free solution
if and only if the following  conditions are satisfied for all $x,y,z
\in X$:
\begin{itemize}
\item[(i)] $\quad{}^xx=x$  \item[(ii)] $\quad
{}^{{}^yx}{({}^yz)}={}^{{}^xy}{({}^xz)}$
\end{itemize}
\end{corollary}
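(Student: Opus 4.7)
The plan is to deduce this from Lemma \ref{alternativedef}. Since \textbf{lri} is built in by the very definition $r(x,y) = ({}^xy, x^y)$ with $x^y = \Lcal_y^{-1}(x)$, and condition (i) is common to both statements, I only need to show that, under (i), the single identity (ii) of the corollary is equivalent to the conjunction of \textbf{cl1} (condition (ii) of the lemma) and \textbf{l1} (condition (iii) of the lemma).

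For the forward direction I would assume $(X,r)$ is a square-free solution. By Facts \ref{basictheorem} it is cyclic, so \textbf{cl2} holds in the form ${}^{{}^yx}y = {}^xy$. Rewriting \textbf{l1} as the operator identity $\Lcal_{{}^xy}\Lcal_{x^y} = \Lcal_x\Lcal_y$ and substituting $x \mapsto {}^yx$ (so that $({}^yx)^y = x$ by \textbf{lri}), this identity becomes $\Lcal_{{}^{{}^yx}y}\Lcal_x = \Lcal_{{}^yx}\Lcal_y$; \textbf{cl2} then replaces ${}^{{}^yx}y$ by ${}^xy$, producing exactly condition (ii) of the corollary.

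For the converse I would assume (i) and (ii) and extract the ingredients of Lemma \ref{alternativedef} one at a time. Specialising (ii) at $z=x$, the right-hand side collapses via ${}^xx=x$ to ${}^{{}^xy}x$, while the left-hand side collapses via (i) applied to $u={}^yx$, giving ${}^{{}^yx}({}^yx)={}^yx$. Thus \textbf{cl2} emerges: ${}^{{}^xy}x = {}^yx$. Substituting $y\mapsto y^x$ in \textbf{cl2} and cancelling ${}^x(y^x)=y$ (from \textbf{lri}) then delivers \textbf{cl1}. Finally, to obtain \textbf{l1} I would use ${}^y(x^y)=x$ together with \textbf{cl2} to identify ${}^xy = {}^{x^y}y$, and then apply (ii) with $x$ replaced by $x^y$ to rewrite ${}^x({}^yz) = {}^{{}^y(x^y)}({}^yz) = {}^{{}^{x^y}y}({}^{x^y}z) = {}^{{}^xy}({}^{x^y}z)$.

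The main obstacle is that condition (ii) of the corollary is a single compact identity which secretly packages a cyclic condition together with the braid-like relation \textbf{l1}. The decisive move is the double use of squareness in the $z=x$ specialisation, once via ${}^xx=x$ on the right and once via ${}^uu=u$ at $u={}^yx$ on the left, which isolates \textbf{cl2}; once \textbf{cl2} is in hand, both \textbf{cl1} and \textbf{l1} fall out by routine relabellings and \textbf{lri}.
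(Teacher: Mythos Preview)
Your argument is correct. The forward direction correctly combines \textbf{l1} (in operator form) with \textbf{lri} and \textbf{cl2} to obtain the symmetric identity (ii). In the converse, the key specialisation $z=x$ cleanly extracts \textbf{cl2}, and your chain from \textbf{cl2} to \textbf{cl1} via $y\mapsto y^x$, and then to \textbf{l1} via the substitution $x\mapsto x^y$ in (ii), all go through as stated.

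As for comparison: the paper does not supply its own proof of this corollary; it simply records the statement with a citation to \cite{TSh08}. So there is nothing in the present paper to compare your argument against. Your write-up is a self-contained derivation using only the ambient Lemma~\ref{alternativedef} and Facts~\ref{basictheorem}, which is exactly what one would want here.
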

Recall that a quadratic set $(X,r)$ which satisfies condition (ii)
of Corollary \ref{alternativedefc} is called \emph{a cyclic set},
see \cite{Rump, TSh08}.

\begin{definition}
\label{permYBgroupdef} The permutation group $H \subseteq \Sym X$
is called \emph{a YB permutation group}, if it is isomorphic to
$\Gcal(X,r)$ for some square-free solution $(X,r)$.
\end{definition}

\begin{openquestions} Let $X$ be a nonempty set.
\begin{enumerate}
\item For which permutation groups $H \subseteq \Sym X$ is there a
square-free solution $(X,r)$ with $\Gcal(X,r)=H$?
\item
Let $m$ be a positive integer.
For which permutation groups $H \subseteq \Sym X$ is there a
square-free solution $(X,r)$ with $\Gcal(X,r)=H$ and $\mpl(X,r) =m$
(see Definition \ref{mpldef})?
\end{enumerate}
\end{openquestions}

The next result gives a translation of the first question which is not
very easy to check.

\begin{proposition}
\label{permYBgroupprop} Let $H \subseteq \Sym X$ be a permutation
group. Then $H$ is a  YB permutation group for some square-free
solution $(X,r)$ if and only if there exists a map
\[
f: X \longrightarrow H;\qquad
  x\mapsto f_x
\]
such that the following conditions hold:
\begin{enumerate}
\item $f(X)$ is a generating set for $H$;
\item \label{i} $f_x(x) = x$;
\item \label{ii} $f_{f_y(x)}\circ f_y = f_{f_x(y)}\circ f_x$.
\end{enumerate}
In this case the quadratic set $(X,r)$ with $r(x,y) = (f_x(y),
(f_y)^{-1}(x))$ is a square-free solution, and $H \simeq
\Gcal(X,r)$.
\end{proposition}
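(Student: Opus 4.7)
The plan is to recognize this proposition as a direct repackaging of Corollary \ref{alternativedefc}, which characterises square-free solutions by a map $\Lcal\colon X \to \Sym X$ satisfying the identities ${}^x x = x$ and ${}^{{}^y x}({}^y z)={}^{{}^x y}({}^x z)$. Conditions (\ref{i}) and (\ref{ii}) of the Proposition are meant to correspond exactly to these two identities once one sets $\Lcal_x = f_x$, while the generation hypothesis (1) is what promotes the a priori inclusion $\Gcal(X,r)\subseteq H$ to an equality.

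For the ``if'' direction I would assume a map $f\colon X\to H$ satisfying (1)--(3) and define $\Lcal_x:=f_x\in H\subseteq \Sym X$, together with $r(x,y):=(f_x(y),f_y^{-1}(x))=(\Lcal_x(y),\Lcal_y^{-1}(x))$. Condition (2) is then literally condition (i) of Corollary \ref{alternativedefc}. Evaluating condition (3) on an arbitrary element $z\in X$ yields
\[
f_{f_y(x)}(f_y(z)) \;=\; f_{f_x(y)}(f_x(z)),
\]
which in the notation of Lemma \ref{alternativedef} reads ${}^{{}^y x}({}^y z)={}^{{}^x y}({}^x z)$, i.e.\ condition (ii) of Corollary \ref{alternativedefc}. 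That corollary then delivers that $(X,r)$ is a square-free solution. By Definition \ref{Gcaldef}, $\Gcal(X,r)$ is the subgroup of $\Sym X$ generated by the $\Lcal_x=f_x$, so hypothesis (1) gives $\Gcal(X,r)=H$, in particular $H\simeq \Gcal(X,r)$.

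For the ``only if'' direction I would start from a square-free solution $(X,r)$ with $H=\Gcal(X,r)$ and set $f_x:=\Lcal_x$. Condition (1) is immediate from the very definition of $\Gcal(X,r)$ as the group generated by $\{\Lcal_x : x\in X\}$; condition (2) is $\Lcal_x(x)={}^x x = x$, which is square-freeness; and for condition (3) I would invoke condition (ii) of Corollary \ref{alternativedefc}, noting that since the identity $\Lcal_{{}^y x}(\Lcal_y(z))=\Lcal_{{}^x y}(\Lcal_x(z))$ holds for every $z\in X$, the two sides agree as elements of $\Sym X$, i.e.\ $f_{f_y(x)}\circ f_y=f_{f_x(y)}\circ f_x$.

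Since the bulk of the content has already been done in Lemma \ref{alternativedef} and Corollary \ref{alternativedefc}, there is no real technical obstacle; the only point requiring a bit of care is the translation between the pointwise identity (ii) of Corollary \ref{alternativedefc} and the identity (\ref{ii}) of the Proposition, phrased as an equality of compositions in $H$, and the observation that ``$f(X)$ generates $H$'' is precisely the extra hypothesis needed to rule out the possibility $\Gcal(X,r)\subsetneq H$.
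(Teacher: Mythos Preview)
Your proof is correct and follows essentially the same route as the paper: both rely on Corollary \ref{alternativedefc} to identify conditions (\ref{i}) and (\ref{ii}) with the two axioms characterising square-free solutions, and both observe that the generation condition (1) forces $\Gcal(X,r)=\langle \Lcal_x\rangle=\langle f_x\rangle=H$. Your write-up is in fact somewhat cleaner than the paper's: you treat both directions explicitly, and you argue directly that $\Gcal(X,r)=H$ as subgroups of $\Sym X$, whereas the paper phrases this via a homomorphism $\varphi$ with trivial kernel (which is a roundabout way of saying the same thing once one notes $\Lcal_x=f_x$).
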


\begin{proof}
Corollary \ref{alternativedefc} implies that the quadratic set
$(X,r)$ is a square-free solution. Clearly, in this case $\Lcal_x
= f_x \in \Sym X.$ Condition \ref{ii} (together with \textbf{lri})
implies
\[
f_x\circ f_y = f_{{}^xy}\circ f_{x^y},
\]
so $(f_x\circ f_y)(a)= f_x(f_y(a))$ and the map $\Lcal_x \mapsto
f_x$ extends to a group homomorphism  $\varphi: \Gcal(X,r)
\longrightarrow H$. We have $f_{x_{i_1}}\circ \cdots f_{x_{i_k}}(y)=
{}^{x_{i_1}}{( \cdots ({}^{x_{i_k}}y)\cdots)},$ so the kernel of
$\varphi$ is trivial.
\end{proof}

In Proposition \ref{mpl2permgroups} we describe the permutation
groups $H \subseteq \Sym X$ which define square-free solutions
$(X,r)$ with $\mpl(X,r)= 2$, where $X$ is an arbitrary finite
nonempty set.

\begin{definition}
Let $(X,r)$ be a braided set, $G = G(X,r), \Gcal= \Gcal(X,r)$. A
subset $Y\subset X$ is said to be \emph{$r$-invariant } if $r(Y
\times Y)\subseteq Y \times Y.$ Suppose $Y$ is an $r$-invariant
subset of $(X,r)$. Then $r$ induces a solution $(Y, r_{\mid Y\times
Y})$. Denote $r_{\mid Y\times Y}= r_{\mid Y}$. We call $(Y, r_{\mid
Y})$ \emph{the restricted solution (on $Y$)}. We say that $Y\subset
X$ is \emph{a (left) $G$ -invariant subset of } $X$, or equivalently
\emph{a $\Gcal$-invariant subset},   if $Y$ is invariant under the
left action of $G$. Clearly, $Y$ is (left) $G$ -invariant if and only if
\[\Lcal_a(Y) \subseteq Y, \quad \forall a\in X.
\]
Right $G$-invariant subsets are defined analogously. In the case
when $(X,r)$ is symmetric, and condition \textbf{lri }  holds the
subset $Y$ is left $G$-invariant if and only if it is right
$G$-invariant. In this case we shall refer to it simply as \emph{a
$\Gcal$-invariant subset}.
\end{definition}
Clearly each $\Gcal$-invariant subset $Y$ of  $X$ is also an
$r$-invariant subset, but, in general, an $r$-invariant subset may,
or may not be $\Gcal$-invariant. The following holds:

\begin{lemma}
\label{gcalinvariantlemma} Let  $(X,r)$ be a symmetric set with
\textbf{lri}, $\Gcal= \Gcal(X,r)$. $Y\subset X$. Denote by $Z$ the
complement of $Y$ in $X$. The following conditions are equivalent.
\begin{enumerate}
\item
$Y$ is $\Gcal$-invariant;
\item
$Z$ is $\Gcal$-invariant;
\item
$Y$ and $Z$ are $r$-invariant complementary subsets of $X$.
\end{enumerate}
Moreover, in this case $(X,r)$ decomposes as a disjoint union of
$r$-invariant subsets $X=Y \bigcup Z.$
\end{lemma}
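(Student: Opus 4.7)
The plan is to prove the cycle $(1) \Leftrightarrow (2)$, $(1) \Rightarrow (3)$, and $(3) \Rightarrow (1)$, with the ``moreover'' clause being an immediate restatement of (3).

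The equivalence $(1) \Leftrightarrow (2)$ is essentially formal: $\Gcal$-invariance of $Y$ amounts to $\Lcal_g(Y) = Y$ for every $g \in \Gcal$, and since each $\Lcal_g$ is a bijection of $X$, this is equivalent to $\Lcal_g(Z) = Z$ for the complement $Z = X \setminus Y$. For $(1) \Rightarrow (3)$, take $(y_1,y_2) \in Y \times Y$ and write $r(y_1,y_2) = (\Lcal_{y_1}(y_2),\Rcal_{y_2}(y_1))$; by \textbf{lri}, $\Rcal_{y_2} = \Lcal_{y_2}^{-1} \in \Gcal$, so both coordinates lie in $Y$ by hypothesis. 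This gives $r(Y \times Y) \subseteq Y \times Y$, and the same argument works for $Z$.

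The main content is $(3) \Rightarrow (1)$. Assume $Y$ and $Z$ are complementary $r$-invariant subsets. Involutivity upgrades $r(Y\times Y) \subseteq Y\times Y$ to $r(Y\times Y) = Y\times Y$ (apply $r$ to both sides and use $r^2 = \id$), and likewise $r(Z\times Z) = Z\times Z$. Because $r$ is a bijection of $X \times X$, it must restrict to a bijection on the complementary set $(Y\times Z) \cup (Z\times Y)$. The crux is to show that $r$ in fact swaps the two pieces, i.e.\ $r(Z\times Y) \subseteq Y\times Z$; once this is known, for any $a \in Z$ and $y \in Y$ the first coordinate of $r(a,y)$, which is $\Lcal_a(y)$, automatically lies in $Y$, and together with the trivial case $a \in Y$ this yields $\Lcal_a(Y) \subseteq Y$ for every $a \in X$.

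To establish $r(Z\times Y) \subseteq Y \times Z$, suppose for contradiction that some $(a,y) \in Z\times Y$ satisfies $r(a,y) = (z,y') \in Z \times Y$. Then $\Rcal_y(a) = y'$, and \textbf{lri} gives $\Lcal_y(y') = a$. But $r$-invariance of $Y$ applied to the pair $(y,y') \in Y\times Y$ forces $\Lcal_y(y') \in Y$, contradicting $a \in Z$. This contradiction is the only genuinely delicate step, and is the main obstacle in the proof; it is precisely here that involutivity, \textbf{lri}, and the $r$-invariance of $Y$ must be combined. With $(3) \Rightarrow (1)$ in hand, the final ``moreover'' statement is just condition (3) read aloud.
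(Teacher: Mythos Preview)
Your proof is correct and follows essentially the same approach as the paper's. The only minor difference is in the implication $(3)\Rightarrow(1)$: the paper argues directly that ${}^zy\notin Z$ (assuming ${}^zy=t\in Z$ and using $r$-invariance of $Z$ together with \textbf{lri} to get $y=t^z\in Z$), whereas you first use involutivity to confine $r(Z\times Y)$ to $(Y\times Z)\cup(Z\times Y)$ and then rule out $Z\times Y$ via $r$-invariance of $Y$; these are symmetric variants of the same \textbf{lri}-based contradiction.
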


\begin{proof}
Suppose $Y$ is $\Gcal$-invariant. Clearly,  $x \in Y$ if and only if
the $\Gcal$-orbit of $x$ is contained in $Y$.

It is easy to show now that $Z$ is also $\Gcal$-invariant. Indeed
assume $z \in Z, a \in X, t = {}^az.$ If we assume that $t$ is not
in $Z$, this would imply $t \in Y.$ By \textbf{lri} one has
\[
t = {}^az \Longrightarrow t^a = ({}^az)^a = z
\]
But $t \in Y,$ and the $\Gcal$-orbit of $t$ is contained in $Y,$ so
$z= t^a \in Y,$ a contradiction. Thus $Z$ is $\Gcal$-invariant. This
proves $(1) \Longrightarrow (2)$.

The implication $(2) \Longrightarrow (1)$ is analogous. Clearly,
then each of the conditions (1) and (2) implies that $Y$ and $Z$ are
$r$-invariant.

Suppose now $(X, r)$ is decomposed into two $r$-invariant disjoint
subsets $X = Y\bigcup Z$. We claim that (1) and (2) hold. It will be
enough to verify (1). $Y$ is $r$-invariant, therefore
\[
{}^ay \in Y \quad\forall a,y \in Y.
\]
It remains then to show that
\[
{}^zy \in Y \quad\forall y \in Y, z \in Z.\]
Assume the contrary,
for some $ y\in Y, z \in Z,$  one has ${}^zy \in Z$. This yield
\[t = {}^zy \in Z \Longrightarrow
y = ({}^zy)^z = t^z \in Z, \]
since $Z$ s $r$-invariant. This contradicts $y \in Y$.

It is easy to show that $r_Y: Y \times Y \longrightarrow Y \times Y$
is surjective, and therefore bijective. Clearly, $(Y, r_Y)$ is involutive, hence $(Y, r_Y)$ is a
symmetric set.
\end{proof}

Note that $(X,r)$ is a square-free solution since in this
case condition \textbf{lri} holds, see Corollary
\ref{constructivecor}.

\begin{remark}
\label{Gorbitremark1} Suppose $(X,r)$ is a square-free solution.
Then it satisfies \textbf{lri} (see Corollary \ref{constructivecor})
and therefore Lemma \ref{gcalinvariantlemma} is in force. Clearly,
each $G$-orbit $X_0$ under the left action of $G$ on $X$ is
$\Gcal$-invariant and therefore it is an $r$-invariant subset. In
the case when $G$ acts non-transitively on $X$ (in particular, this
this holds when  $X$ is finite), $(X, r)$ decomposes into a disjoint
union $X = X_0\bigcup Z,$ of its $r$-invariant subsets $X_0$, and
$Z$, where $Z$ is the complement of $X_0$ in $X$.
\end{remark}

Each finite involutive solution $(X,r)$ with {\bf lri} can be
represented geometrically by its \emph{graph of the left action}
$\Gamma(X,r).$ It is an oriented labeled multi-graph (although we
refer to it as a {\em{graph}}). It was introduced  in \cite{T00} for
square-free solutions, see also \cite{TSh07}, Section 5, and
\cite{TSh08}. Here we recall the definition.

\begin{definition}\cite{TSh07} \label{defgraph}
Let $(X,r)$ be a finite symmetric set  with
 \textbf{lri}. We define the
graph $\Gamma=\Gamma (X,r)$ as follows. It is an oriented graph,
which reflects the left action of
 $G(X,r)$ on $X$. The set of
vertices of $\Gamma$ is $X$. There is a labeled arrow  $x
{\buildrel a \over \longrightarrow}y$ if $x,y,a \in X$ and ${}^ax =
y$. An edge $x {\buildrel a \over \longrightarrow}y$ with  $x\neq
y$ is called \emph{a nontrivial edge}. We will often consider the
simplified graph in which to avoid clutter we typically omit
self-loops unless needed for clarity or contrast. Also for the same
reason, we use the line type to indicate when the same type of
element acts, rather than labeling every arrow. Clearly,
$x{\buildrel a\over \longleftrightarrow} y$ indicates that ${}^ax =
y$ and ${}^ay = x.$ (One can make such graphs for arbitrary
solutions but then it should be indicated which action is
considered).
\end{definition}

Note that two solutions are isomorphic if and only if their
oriented graphs are isomorphic. Various properties of a
solution $(X,r)$ are reflected in the properties of its graph
$\Gamma(X,r)$, see for example the remark below.

\begin{remark} Let $(Z,r)$ be a symmetric set with \textbf{lri},
 $\Gamma=\Gamma(Z,r)$.
\begin{enumerate}
\item $(Z,r)$ is a square-free solution if and only if
$\Gamma$ does not contain a nontrivial edge $x{\buildrel x\over
\longrightarrow} y, x \neq y$; that is, the edge of type $x$ leaving $x$
is a loop.
\item In this case, $(Z,r)$ is a trivial solution (or equivalently,
$\mpl(Z,r)=1$) if and only if all the edges are loops.
\item
The $G$-orbits of $X$ are in 1-1 correspondence with the connected
components of $\Gamma$.
\item
\cite{TSh07},  Theorem 5.24 gives necessary and sufficient
conditions for $\mpl(X,r) = 2$ in terms of the properties of
$\Gamma(X,r)$. It can be read off from \cite{TSh07}, Theorem 5.22,
Theorem 5.24 that, in this case, each nontrivial connected component
$\Gamma_i$ of $\Gamma$ is a \emph{Cayley graph} (see below).
\end{enumerate}
\end{remark}

\begin{definition}
If $G$ is a group and $S$ a subset of $G$, the \emph{Cayley graph}
$\Cay(G,S)$ is the graph with vertex set $G$ and directed edges $(g,gs)$
for all $g\in G$ and $s\in S$.
\end{definition}

Note that the group $G$ acts by left multiplication as automorphisms of
the Cayley graph $\Cay(G,S)$. The graph is loopless if and only if
$\id\notin S$ and connected if and only if $\langle S\rangle=G$.

Now any graph admitting a group $G$ of automorphisms acting regularly
on the vertices is a Cayley graph for $G$. For the vertex set
can be identified with $G$ (with action by left multiplication); and, if
$S$ is the set of vertices $s$ for which $(\id,s)$ is an edge, then
$(g,gs)$ is an edge for all $g\in G$ since $G$ acts by automorphisms.
In particular, every transitive abelian group action is regular, so a graph
with a transitive abelian group of automorphisms is a Cayley graph.

The notions of retraction of symmetric sets and multipermutation solutions were
introduced in the general case
in \cite{ESS},  where $(X, r)$ is not necessarily finite, or square-free.

In \cite{T04, TSh07, TSh08, TSh0806} the multipermutation square-free
solutions are studied; we recall some notions and
results. Let $(X,r)$ be a nondegenerate symmetric set. An
equivalence relation $\sim$ is defined on $X$ as
\[ x \sim y \quad \text{if and only if} \quad
\Lcal_x = \Lcal_y.\] In this case we also have $\Rcal_x = \Rcal_y,$

We denote by  $[x]$ the equivalence class of $x\in X$, $[X]=
X/_{\sim}$ is the set of equivalence classes.

\begin{lemma}
\label{retractlemma} \cite{TSh08} Let $(X,r)$ be a nondegenerate
symmetric set.
\begin{enumerate}
\item
The left and the right actions of $X$ onto itself naturally induce
left and right actions on the retraction $[X],$ via
\[
{}^{[\alpha]}{[x]}:= [{}^{\alpha}{x}]\quad [\alpha]^{[x]}:=
[\alpha^x], \;\text{for all}\; \alpha, x \in X.
\]
\item The new actions
define a canonical map $r_{[X]}: [X]\times[X]
\longrightarrow [X]\times[X] $
where $r_{[X]}([x], [y])= ({}^{[x]}{[y]}, [x]^{[y]}).$
\item
$([X], r_{[X]})$ is a nondegenerate symmetric set.
Furthermore,
\item  $(X,r)\; \text{cyclic} \Longrightarrow([X],
r_{[X]})\;\text{cyclic}$.
\item
 $(X,r)\; \text{is}\; {\bf lri} \Longrightarrow([X],
r_{[X]})\;\text{is}\; {\bf lri}.$
\item
$ (X,r)\; \text{square-free} \Longrightarrow ([X], r_{[X]}) \;
\text{square-free}.$
\end{enumerate}
\end{lemma}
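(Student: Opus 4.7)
The plan divides into one substantive step---establishing that the equivalence relation $\sim$ is a congruence for both the left and right actions, which gives parts (1) and (2)---followed by a series of essentially routine verifications for parts (3)--(6).

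For the main step, I would show that if $\alpha\sim\alpha'$ and $x\sim x'$, then ${}^\alpha x\sim{}^{\alpha'}x'$ and $\alpha^x\sim\alpha'^{x'}$. One half is immediate from the definition: $\alpha\sim\alpha'$ means $\Lcal_\alpha=\Lcal_{\alpha'}$, whence ${}^\alpha x=\Lcal_\alpha(x)=\Lcal_{\alpha'}(x)={}^{\alpha'}x$ on the nose, and symmetrically for the right action acting in its right slot. The harder half is to show that $x\sim x'$ forces $\Lcal_{{}^\alpha x}=\Lcal_{{}^\alpha x'}$ and $\Rcal_{\alpha^x}=\Rcal_{\alpha^{x'}}$. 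The key identities are the operator forms of {\bf l1} and {\bf r1}, namely
\[\Lcal_\alpha\circ\Lcal_x=\Lcal_{{}^\alpha x}\circ\Lcal_{\alpha^x}\quad\text{and}\quad \Rcal_x\circ\Rcal_\alpha=\Rcal_{\alpha^x}\circ\Rcal_{{}^\alpha x},\]
combined with involutivity, which gives $\Lcal_{{}^\alpha x}(\alpha^x)=\alpha$ and therefore expresses the right action in terms of the left. Substituting $x'$ for $x$ and comparing the two versions lets one establish the two congruence properties in tandem. I expect this simultaneous step---coupling the $\Lcal$- and $\Rcal$-compatibilities through involutivity---to be the main technical obstacle.

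Once well-definedness is in hand, parts (2) and (3) follow formally. The map $r_{[X]}([x],[y])=({}^{[x]}[y],[x]^{[y]})$ is well defined and inherits bijectivity and involutivity directly from those of $r$; nondegeneracy descends because the induced $\Lcal_{[\alpha]}$ and $\Rcal_{[\alpha]}$ remain permutations of $[X]$. The braid relations {\bf l1}, {\bf r1}, and {\bf lr3} survive the projection because they are universal identities in the action data, and the canonical map $X\to[X]$ intertwines the actions by step~(1).

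For parts (4)--(6), each of the conditions---cyclicity, {\bf lri}, and square-freeness---is given by a universal identity involving only the actions and elements of $X$, and any such identity transfers to $[X]$ via the projection. For instance, {\bf lri} on $X$ reads $({}^xy)^x=y$; projecting gives $({}^{[x]}[y])^{[x]}=[({}^xy)^x]=[y]$, which is {\bf lri} on $[X]$. Square-freeness is immediate: $r(x,x)=(x,x)$ forces $r_{[X]}([x],[x])=([x],[x])$; the cyclic conditions {\bf cl1}--{\bf cr2} transfer by the same recipe. Thus once the congruence property of $\sim$ is settled, the remainder of the lemma is bookkeeping.
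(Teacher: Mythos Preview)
The paper does not prove this lemma at all: it is stated with the citation \cite{TSh08} and no argument is given in the present text. So there is no in-paper proof to compare against; your proposal stands on its own as a direct verification of a result the authors import.

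Your outline is essentially the standard one and is correct in spirit. One point worth flagging: the ``harder half'' of the congruence step---showing that $x\sim x'$ implies $\Lcal_{{}^{\alpha}x}=\Lcal_{{}^{\alpha}x'}$---really does require the companion fact that $\Lcal_x=\Lcal_{x'}$ forces $\Rcal_x=\Rcal_{x'}$ (the paper asserts this immediately after defining $\sim$, but does not prove it either). Without that, your operator identity $\Lcal_\alpha\circ\Lcal_x=\Lcal_{{}^{\alpha}x}\circ\Lcal_{\alpha^x}$ gives $\Lcal_{{}^{\alpha}x}\circ\Lcal_{\alpha^x}=\Lcal_{{}^{\alpha}x'}\circ\Lcal_{\alpha^{x'}}$, and you cannot cancel unless you already know $\alpha^x=\alpha^{x'}$ or at least $\Lcal_{\alpha^x}=\Lcal_{\alpha^{x'}}$. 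Your remark about establishing the $\Lcal$- and $\Rcal$-compatibilities ``in tandem'' through involutivity is the right way past this, but in a written proof you should make that simultaneous induction (or the preliminary lemma $\Lcal_x=\Lcal_y\Rightarrow\Rcal_x=\Rcal_y$ for nondegenerate involutive $r$) explicit rather than leave it as a one-sentence gesture. Once that is pinned down, parts (3)--(6) are indeed routine transfers of identities along the quotient map, exactly as you describe.
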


\begin{definition}
\label{mpldef} \cite{ESS} The solution $\Ret(X,r)=([X], [r])$ is
called the \emph{retraction of $(X,r)$}. For all integers $m \geq
1$, $\Ret^m(X,r)$ is defined recursively as $\Ret^m(X,r)=
\Ret(\Ret^{m-1}(X,r)).$

$(X,r)$ is  \emph{a multipermutation solution of level} $m$,
 if $m$ is
the minimal number (if any), such that $\Ret^m(X,r)$ is the trivial
solution on a set of one element. In this case we write $\mpl(X,r)=m$.

By definition $(X,r)$ is \emph{a multipermutation solution of level}
$0$ if and only if $X$ is a one element set.
\end{definition}
The following conjecture was made by the first author in 2004.

\begin{conjecture}
  \cite{T04}
\label{conj1}
\begin{enumerate}
\item
Every finite  square-free solution  $(X,r)$ is retractable.
\item
Every finite square-free solution $(X,r)$ of finite order $n$ is
multipermutation solution, with $\mpl(X,r)< n$.
\end{enumerate}
\end{conjecture}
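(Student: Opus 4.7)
The two parts of the conjecture are equivalent modulo induction on $n=|X|$: part (2) trivially implies (1), and conversely, if (1) holds then, since the retract is a strictly smaller finite square-free solution by Lemma~\ref{retractlemma}, the inductive hypothesis applied to $\Ret(X,r)$ gives $\mpl \Ret(X,r)\leq n-2$, hence $\mpl(X,r)\leq n-1<n$. So the whole conjecture reduces to a single statement: every finite square-free $(X,r)$ with $|X|\geq 2$ admits two distinct elements $x\neq y$ with $\Lcal_x=\Lcal_y$.

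My plan for this retractability statement is an induction on $|X|$ driven by the orbit decomposition under $\Gcal=\Gcal(X,r)$. If $\Gcal$ does not act transitively, Remark~\ref{Gorbitremark1} and Lemma~\ref{gcalinvariantlemma} produce an $r$-invariant decomposition $X=X_0\sqcup Z$ with both pieces finite square-free of strictly smaller order, so the inductive hypothesis yields a coincidence of restricted left actions inside, say, $X_0$. The real content is then to upgrade this local equality to a global equality of permutations of $X$. For this I would invoke the matched-pair description of Theorems~\ref{MAINDECOMPOSITIONTHM} and~\ref{decompositiontheorem} and use the cyclic conditions \textbf{cl1} and \textbf{cl2} from Definition~\ref{lri&cl} to show that the cross-action $X_0\to \Sym(Z)$ is constant on retract classes of $(X_0,r_{X_0})$, so that agreement on $X_0$ forces agreement on all of $X$.

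The main obstacle is the transitive case. If $\Gcal$ is abelian, Theorem~\ref{theoremA} already delivers $\mpl(X,r)=1$ for a single orbit, so the genuinely hard case is a finite square-free solution with $\Gcal$ acting transitively and $\Gcal$ non-abelian. Square-freeness forces $\Lcal_x(x)=x$, so each generator of $\Gcal$ lies in some point stabilizer; I would try to combine this with the cyclic conditions to produce a nontrivial element of $\Gcal_x$ of the form $\Lcal_y$ with $y\neq x$, which after translation by $\Lcal_x$ yields $\Lcal_x=\Lcal_y$. The real difficulty is circular: the solvability and length bounds of Section~\ref{generalmplsection}, the wreath product analysis of Section~\ref{Wreath products}, and the strong twisted union results of Section~\ref{strong twisted unions} all presuppose exactly the multipermutation property we are trying to establish, while the techniques of Section~\ref{abelian permutation group} require $\Gcal$ abelian. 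Breaking this circle, perhaps via a pigeonhole argument on the fixed-point sets of the permutations $\Lcal_x$ read from the graph $\Gamma(X,r)$, or by isolating a central element of $\Gcal$ inside every point stabilizer using only \textbf{lri} and the cyclic conditions, is where I expect any genuine progress on this conjecture to lie.
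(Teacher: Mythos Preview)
This statement is a \emph{conjecture} in the paper, not a theorem; the paper does not prove it in general, only confirming it in the special case where $\Gcal(X,r)$ is abelian (Theorem~\ref{theoremA}). So there is no proof in the paper to compare your proposal against, and your proposal --- as you yourself concede at the end --- is not a proof either.

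Two concrete issues with your outline. First, your worry about the transitive case is misplaced: the paper notes (opening of Section~\ref{Decompositions_Section}) that Rump's decomposition theorem~\cite{Rump} already guarantees $\Gcal$ acts intransitively on every finite square-free solution, so that case never arises. Second, the step you call ``the real content'' in the intransitive case --- lifting a local coincidence $\Lcal_x|_{X_0}=\Lcal_y|_{X_0}$ to a global equality $\Lcal_x=\Lcal_y$ on all of $X$ --- is exactly where the conjecture lives. Your assertion that the cross-action $X_0\to\Sym(Z)$ is constant on retract classes of $(X_0,r_{X_0})$ does not follow from the cyclic conditions or \textbf{lri}, and is not established anywhere in the paper; if it were true, Rump's intransitivity plus your induction would settle the conjecture immediately. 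That implication is the missing idea, not the transitive case.
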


A more recent conjecture states

\begin{conjecture}  \cite{T08ini}
Suppose $(X,r)$ is a nondegenerate square-free multipermutation solution
of finite order $n$. Then $\mpl(X) < \log_2 n$.
\end{conjecture}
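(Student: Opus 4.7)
The plan is to induct on $m = \mpl(X,r)$, establishing the stronger bound $n > 2^m$. For the inductive step, the retract $\Ret(X,r) = ([X], r_{[X]})$ is again a square-free multipermutation solution by Lemma \ref{retractlemma}, now of level $m - 1$, so the induction hypothesis gives $|[X]| > 2^{m-1}$. It would then suffice to prove $|X| \geq 2\,|[X]|$, which immediately yields $|X| > 2^m$. The base cases $m = 0,1$ demand $|X| > 1$ and $|X| > 2$ respectively; the boundary $|X|=2$ with $\mpl=1$ is a genuine exception to the strict inequality, and must either be excluded or the conjecture reinterpreted as $\mpl(X,r) \leq \log_2 n$ with equality only for the trivial solution on two points.

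The heart of the argument is to show that the canonical surjection $X \twoheadrightarrow [X]$ is at least $2$-to-$1$: every $\sim$-class should in fact contain $\geq 2$ elements whenever $\mpl(X,r) \geq 2$. My first attempt will be direct: produce, for each $x \in X$, a distinct $y \in X$ with $\Lcal_x = \Lcal_y$. The cyclic conditions of Definition \ref{lri&cl} look promising, since \textbf{cl2} gives ${}^{{}^xz}x = {}^zx$ for all $x,z$, so $\Lcal_{{}^xz}$ and $\Lcal_z$ already agree at $x$; one then tries to promote this pointwise coincidence to full equality $\Lcal_{{}^xz} = \Lcal_z$ by quantifying $z$ over a rich set of $\Gcal$-orbits and exploiting that $r$ is determined by the left action (Corollary \ref{constructivecor}).

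If the direct route fails, the structural fallback is to use the decomposition results of Section \ref{Decompositions_Section} to express $(X,r)$ as a union of $r$-invariant subsets and, where possible, to realise it as a wreath product of smaller multipermutation solutions. Theorem \ref{wreathconstructionth} delivers exactly the product-of-sizes behaviour that the induction demands: $(X_0,r_{X_0})\wr(Y,r_Y)$ has $|X_0|\cdot|Y|$ elements and multipermutation level $\mpl X_0 + \mpl Y - 1$, so a wreath tower with factors of size $\geq 2$ forces $n \geq 2^m$ at once. The hard part will be that not every multipermutation solution is a wreath product; combining this construction with the strong twisted union machinery of Section \ref{strong twisted unions} and Theorem \ref{theoremB}, and in the residual case handling singleton $\sim$-classes by a weighted count in which a singleton $\{x\}$ forces $\Lcal_x$ to be extremal in $\Gcal$ and hence other classes to be correspondingly larger, will be the main technical obstacle, and is presumably the reason the inequality remains conjectural.
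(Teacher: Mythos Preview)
The statement you are attempting to prove is a \emph{Conjecture} in the paper; the paper offers no proof, only the remark that ``evidence for this conjecture will be given later in the paper.''  There is therefore no paper proof to compare against.

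More seriously, your central inductive step --- showing $|X| \geq 2\,|[X]|$, i.e.\ that every $\sim$-class has at least two elements --- is false.  The paper's own constructions in Section~\ref{infinitesolutions} (Definition~\ref{beautifulconstructiondef} and Theorem~\ref{beautifulconstruction}) produce, for every $m\geq 1$, a square-free solution $(X_m,r_m)$ with $|X_m| = 2^{m-1}+1$, $\mpl(X_m)=m$, and $\Ret(X_{m+1},r_{m+1})\simeq (X_m,r_m)$.  Since $2^{m}+1 < 2(2^{m-1}+1)$, the retraction $X_{m+1}\twoheadrightarrow [X_{m+1}]$ is \emph{not} two-to-one: by Lemma~\ref{ret_Xmlemma} the distinguished element $\xi=x_{2^{m}+1}$ is alone in its $\sim$-class.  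These same examples also show that the conjecture \emph{as literally stated}, $\mpl(X) < \log_2 n$, is already violated at $m=2$ (where $n=3$ and $\log_2 3 < 2$) and indeed for every $m\geq 1$.  The bound the paper's evidence actually supports is $n \geq 2^{m-1}+1$, equivalently $\mpl(X)\leq 1+\log_2(n-1)$; this is precisely the content of Question~\ref{n_m}, which the paper leaves open.

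Your fallback via wreath products cannot close the gap.  The extremal solutions $(X_m,r_m)$ are built as strong twisted unions $Y_{m-1}\stu\{\xi\}$, not wreath products, and it is exactly the adjoined singleton $\{\xi\}$ that defeats the doubling argument.  Any ``weighted count'' compensating for singleton classes would have to be sharp on these examples, since they appear to saturate the corrected bound at every level; in other words, making that idea work is equivalent to proving the full conjecture ${\bf n}_m = 2^{m-1}+1$, which is genuinely open.
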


Evidence for this conjecture will be given later in the paper.

\section{Homomorphisms, automorphisms, strong twisted unions}
\label{strong twisted unions}
In this section, we recall the definitions and basic properties of
 homomorphisms and automorphisms of solutions, see
and give a general construction, the \emph{strong twisted union} of solutions,
see \cite{TSh07,TSh08}.
\begin{definition} \cite{TSh07}
\label{defhom} Let $(X,r_X)$ and $(Y, r_Y)$ be arbitrary solutions
(braided sets). A map $\varphi: X \longrightarrow Y$ is a
{\em{homomorphism of solutions}}, if it satisfies the equality
\[(\varphi \times \varphi) \circ r_X =
r_Y\circ (\varphi \times \varphi).
\]
A bijective homomorphism of solutions  is called (as usual) an
\emph{isomorphism}. An isomorphism of the solution $(X,r)$
onto itself is an \emph{$r$-automorphism}.

We denote by $\Hom((X,r_X),(Y,r_Y))$ the set of all homomorphisms
of solutions $\varphi: X \longrightarrow Y$. The group of
$r$-automorphisms of $(X, r)$ will be denoted by $\Aut(X, r)$.
\end{definition}
Clearly, $\Aut(X,r)$ is a subgroup of $\Sym(X).$

\begin{remark} \cite{TSh07} Let $(X, r_X), (Y, r_Y)$ be finite square-free solutions
Every homomorphism of solutions  $\varphi: (X, r_X) \longrightarrow
(Y, r_Y)$ induces canonically a homomorphism of their graphs:
\[\varphi_{\Gamma}:
\Gamma(X, r_X) \longrightarrow \Gamma(Y, r_Y).\] Furthermore there
is a one-to one correspondence between $\Aut(X,r)$ and
$\Aut(\Gamma(X,r)),$ the group of \emph{automorphisms} of the
multigraph $\Gamma(X,r).$
\end{remark}


\begin{lemma} \cite{TSh07}
\label{automorphismlemma} Let $(X,r_X)$ and $(Y,r_Y)$ be braided
sets.
\begin{enumerate}
\item
 A map $\varphi: X \longrightarrow Y$ is a homomorphism of
solutions if and only if
\[
\varphi \circ \Lcal_x =\Lcal_{\varphi(x)} \circ\varphi {\rm{\ \ and\
\ }} \varphi \circ \Rcal_x =\Rcal_{\varphi(x)} \circ \varphi
\;\text{for all}\; x \in X.
\]
\item
If both $(X,r_X)$ and $(Y,r_Y)$ satisfy {\bf lri}, then $\varphi$ is
a homomorphism of solutions if and only if
\[
\varphi \circ \Lcal_x =\Lcal_{\varphi(x)}\circ \varphi,  \;\text{for
all}\; x \in X.
\]
\item If $(X,r)$ obeys {\bf lri}, (in particular, if $(X,r)$ is a
square-free solution) then
 $\sigma \in \Sym(X)$ is an automorphism of $(X,r_X)$ if and only if
\begin{equation}
\label{autre}
 \sigma\circ \Lcal_x  \circ \sigma^{-1}=\Lcal_{ \sigma(x)},
\;\text{for all}\; x \in X.
\end{equation}
\end{enumerate}
\end{lemma}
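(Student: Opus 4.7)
The plan is to prove all three parts by directly unwinding the definition of homomorphism in terms of the components of $r$ and then using the additional structure coming from $\textbf{lri}$.

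For (1), I would begin by computing both sides of the defining identity $(\varphi\times\varphi)\circ r_X = r_Y\circ(\varphi\times\varphi)$ on an arbitrary pair $(x,y)\in X\times X$. The left side is $(\varphi({}^xy),\varphi(x^y))$ while the right side equals $({}^{\varphi(x)}\varphi(y),\varphi(x)^{\varphi(y)})$. Comparing coordinate-wise and using the definitions $\Lcal_x(y)={}^xy$, $\Rcal_y(x)=x^y$, one sees that $\varphi$ is a homomorphism of solutions if and only if $\varphi({}^xy)={}^{\varphi(x)}\varphi(y)$ and $\varphi(x^y)=\varphi(x)^{\varphi(y)}$ for all $x,y\in X$, which is precisely the conjunction of $\varphi\circ\Lcal_x=\Lcal_{\varphi(x)}\circ\varphi$ and $\varphi\circ\Rcal_y=\Rcal_{\varphi(y)}\circ\varphi$ as $x,y$ range over $X$.

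For (2), the key observation is that $\textbf{lri}$ on $(X,r_X)$ is exactly the statement $\Rcal_x=\Lcal_x^{-1}$ for all $x\in X$, and similarly on $(Y,r_Y)$. Starting from $\varphi\circ\Lcal_x=\Lcal_{\varphi(x)}\circ\varphi$ and using that each $\Lcal_x$ and each $\Lcal_{\varphi(x)}$ is bijective (nondegeneracy is automatic for braided sets), I can pre- and post-compose with the inverses to obtain $\Lcal_{\varphi(x)}^{-1}\circ\varphi=\varphi\circ\Lcal_x^{-1}$, which by $\textbf{lri}$ on both sides reads $\Rcal_{\varphi(x)}\circ\varphi=\varphi\circ\Rcal_x$. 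Thus the $\Lcal$-intertwining condition alone already implies the $\Rcal$-intertwining condition, so by (1) we conclude that $\varphi$ is a homomorphism. The converse is trivial from (1).

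For (3), I would apply (2) with $(X,r_X)=(Y,r_Y)$: since $(X,r)$ obeys $\textbf{lri}$, a bijection $\sigma\in\Sym(X)$ is an $r$-automorphism if and only if $\sigma\circ\Lcal_x=\Lcal_{\sigma(x)}\circ\sigma$ for all $x$, and since $\sigma$ is invertible this is equivalent to $\sigma\circ\Lcal_x\circ\sigma^{-1}=\Lcal_{\sigma(x)}$, giving (\ref{autre}). I expect no substantial obstacle; the only mild care needed is to insist on nondegeneracy of the braided sets so that the inverses $\Lcal_x^{-1}$ exist in the derivation of (2), and to note that $\varphi$ itself need not be a bijection in parts (1) and (2), so the passage from the $\Lcal$-identity to the $\Rcal$-identity must be done by inverting the $\Lcal_x$'s rather than $\varphi$.
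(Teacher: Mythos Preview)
Your argument is correct and is exactly the standard direct verification. One small correction: nondegeneracy is \emph{not} automatic for braided sets in general (it is imposed as a separate hypothesis in the paper), but this does not matter here, since \textbf{lri} itself says $\Rcal_x\circ\Lcal_x=\id=\Lcal_x\circ\Rcal_x$, which already forces each $\Lcal_x$ to be a bijection with inverse $\Rcal_x$; so the inversion step in your proof of (2) is justified by \textbf{lri} alone.

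As for comparison: the paper does not give a proof of this lemma---it is simply quoted from \cite{TSh07} and used as a tool in what follows---so there is nothing to compare your approach against. Your direct unwinding of the definition is precisely the intended argument.
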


For example, if $(X,r)$ is the trivial solution then, clearly,
$\Aut(X, r)=\Sym(X).$ This is because each $\Lcal_x=\id_X$. More
generally, (\ref{autre}) implies:

\begin{corollary}  \cite{TSh07}
\label{Aut(Xr)Cor1} The group $\Aut(X, r)$ is a subgroup of
$\Nor_{\Sym( X)} \Gcal(X,r)$, the normalizer of  $\Gcal(X,r)$ in
$\Sym(X)$.
\end{corollary}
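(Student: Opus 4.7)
The plan is to use the characterisation of $r$-automorphisms from the preceding Lemma \ref{automorphismlemma}, together with the fact that $\Gcal(X,r)$ is generated, as a subgroup of $\Sym(X)$, by the family $\{\Lcal_x : x \in X\}$ (this is immediate from Definition \ref{Gcaldef}, since $X$ generates $G(X,r)$ and $\Gcal(X,r)$ is the image of the canonical homomorphism $\Lcal$). The statement then reduces to the observation that conjugation by any $\sigma \in \Aut(X,r)$ permutes this generating set.

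First I would fix $\sigma \in \Aut(X,r)$. By equation (\ref{autre}), for every $x \in X$ one has $\sigma \circ \Lcal_x \circ \sigma^{-1} = \Lcal_{\sigma(x)}$, so conjugation by $\sigma$ sends the generator $\Lcal_x$ of $\Gcal(X,r)$ to another generator $\Lcal_{\sigma(x)}$. Since $\sigma$ is a bijection on $X$, the assignment $\Lcal_x \mapsto \Lcal_{\sigma(x)}$ is a permutation of the generating family $\{\Lcal_x : x \in X\}$. Consequently $\sigma\,\Gcal(X,r)\,\sigma^{-1} \subseteq \Gcal(X,r)$.

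Next I would apply the same reasoning to $\sigma^{-1}$, which also lies in $\Aut(X,r)$ since the latter is a group, to obtain the reverse inclusion $\sigma^{-1}\,\Gcal(X,r)\,\sigma \subseteq \Gcal(X,r)$, that is, $\Gcal(X,r) \subseteq \sigma\,\Gcal(X,r)\,\sigma^{-1}$. Combining the two inclusions gives $\sigma\,\Gcal(X,r)\,\sigma^{-1} = \Gcal(X,r)$, which is exactly the defining condition for $\sigma \in \Nor_{\Sym(X)}\Gcal(X,r)$. Since $\Aut(X,r)$ is already a subgroup of $\Sym(X)$, this inclusion of subsets is automatically an inclusion of subgroups.

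Honestly, I do not expect any real obstacle: the corollary is a direct unpacking of equation (\ref{autre}), and the only conceptual point is recognising that the hypothesis of Lemma \ref{automorphismlemma}(3) applies here (the square-free assumption, via Corollary \ref{constructivecor}, guarantees \textbf{lri}). If one preferred to avoid invoking \textbf{lri}, the same argument goes through using Lemma \ref{automorphismlemma}(1), since $\varphi \circ \Lcal_x = \Lcal_{\varphi(x)} \circ \varphi$ rearranges to the same conjugation identity whenever $\varphi$ is bijective.
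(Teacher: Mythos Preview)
Your proposal is correct and follows precisely the approach indicated in the paper, which simply notes that the corollary is an immediate consequence of (\ref{autre}). Your additional remark that the argument goes through without \textbf{lri} via Lemma~\ref{automorphismlemma}(1) is also correct and a useful observation.
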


\begin{corollary}
\label{Aut(Xr)Cor2} Suppose $(X,r)$ is a braided set with {\bf lri}.
Let $Y$, be an $r$- invariant subset,  $(Y, r_{\mid Y})$ be the
restricted solution. Let $x \in X.$ Then $\Lcal_x \in \Aut(Y,
r_{\mid Y})$ if and only if
\begin{equation}
\label{autrelation}   (\Lcal_{{}^{\alpha}x})_{\mid
Y}=(\Lcal_{x})_{\mid Y} \quad \forall \alpha \in Y
\end{equation}
\end{corollary}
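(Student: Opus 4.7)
The approach is to apply Lemma~\ref{automorphismlemma}(3) to the restricted solution $(Y,r_{\mid Y})$---which inherits \textbf{lri} from $(X,r)$---together with braid relation \textbf{l1} and \textbf{lri} itself. As a preliminary, by $r$-invariance of $Y$ combined with \textbf{lri}, one has $\Lcal_z|_Y\in\Sym(Y)$ for every $z\in Y$: the image lies in $Y$ by $r$-invariance, and $\Rcal_z|_Y=\Lcal_z^{-1}|_Y$ provides the inverse via \textbf{lri}. In particular, once $\Lcal_x|_Y\in\Sym(Y)$, \textbf{lri} forces $y^x=\Lcal_x^{-1}(y)\in Y$ for every $y\in Y$, so $\Lcal_{y^x}|_Y\in\Sym(Y)$ as well.

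Setting $\sigma:=\Lcal_x|_Y$, Lemma~\ref{automorphismlemma}(3) applied to $(Y,r_{\mid Y})$ characterises $\sigma\in\Aut(Y,r_{\mid Y})$ by the identity $\sigma\circ\Lcal_u^Y\circ\sigma^{-1}=\Lcal_{\sigma(u)}^Y$ for all $u\in Y$. Substituting $u=\sigma^{-1}(y)=y^x$ (using \textbf{lri}) and multiplying through by $\sigma$, this condition is equivalent to
\[
\Lcal_x\circ\Lcal_{y^x}(w)=\Lcal_y\circ\Lcal_x(w)\qquad\forall\,y,w\in Y.
\]
Now rewrite the right-hand side using \textbf{l1}, $\Lcal_y\Lcal_x=\Lcal_{{}^yx}\Lcal_{y^x}$: the displayed equation becomes $\Lcal_x\Lcal_{y^x}(w)=\Lcal_{{}^yx}\Lcal_{y^x}(w)$ for all $y,w\in Y$. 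Since $y^x\in Y$ and $\Lcal_{y^x}|_Y\in\Sym(Y)$, as $w$ ranges over $Y$ so does $w':=\Lcal_{y^x}(w)$, and cancelling yields $\Lcal_x(w')=\Lcal_{{}^yx}(w')$ for all $w'\in Y$. Renaming $y=\alpha$, this is exactly $(\Lcal_{{}^\alpha x})_{\mid Y}=(\Lcal_x)_{\mid Y}$ for all $\alpha\in Y$, i.e.\ condition~(\ref{autrelation}).

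The only delicate point is bookkeeping of where the various restrictions take values: it is \textbf{lri} together with $r$-invariance of $Y$ that ensures $\Lcal_{y^x}|_Y\in\Sym(Y)$ for $y\in Y$, and this is precisely what legitimises both the substitution $u=y^x$ in Lemma~\ref{automorphismlemma}(3) and the cancellation of $\Lcal_{y^x}$ above. Once the restrictions are properly accounted for, \textbf{l1} converts the automorphism condition directly into~(\ref{autrelation}); both directions of the equivalence are obtained by reading the chain of implications forwards and backwards.
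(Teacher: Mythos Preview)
Your proof is correct and is precisely the derivation the paper leaves implicit: the corollary is stated without proof as an immediate consequence of Lemma~\ref{automorphismlemma}(3), and you have supplied exactly that derivation, using \textbf{l1} to rewrite $\Lcal_y\circ\Lcal_x$ as $\Lcal_{{}^yx}\circ\Lcal_{y^x}$ and then cancelling the bijection $\Lcal_{y^x}|_Y$. Your care in checking that $y^x\in Y$ (via \textbf{lri} and the assumption $\Lcal_x|_Y\in\Sym(Y)$) so that the cancellation is legitimate is the only nontrivial bookkeeping, and you handle it correctly.
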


\begin{corollary}
\label{Aut(Xr)Cor3} Suppose $(X,r)$ is a square-free solution.
Then the following conditions hold.
\begin{enumerate}
 \item For each  $r$- invariant
subset $Y$, and each $x \in X$, one has $\Lcal_x \in \Aut(Y, r_{\mid
Y})$ if and only if (\ref{autrelation}) holds.
\item   The intersection
$\Gcal_0=\Aut(X,r) \bigcap \Gcal$ is an abelian subgroup of $\Gcal$.
\end{enumerate}
\end{corollary}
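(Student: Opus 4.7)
Part (1) follows at once from Corollary \ref{Aut(Xr)Cor2}: by Corollary \ref{constructivecor} every square-free solution obeys \textbf{lri}, so the hypothesis of Corollary \ref{Aut(Xr)Cor2} is met and its conclusion transfers verbatim.

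For Part (2) the plan rests on a key consequence of Part (1). If $\Lcal_a\in\Aut(X,r)$, applying Part (1) with $Y=X$ gives $\Lcal_{{}^\alpha a}=\Lcal_a$ for every $\alpha\in X$; iterating (each new orbit representative has the same $\Lcal$-map as $a$, hence again lies in $\Aut(X,r)$) propagates the equality to the whole $\Gcal$-orbit of $a$. Thus for any $\sigma\in\Gcal_0$, the point $\sigma(a)\in\Gcal\cdot a$ satisfies $\Lcal_{\sigma(a)}=\Lcal_a$; combined with $\sigma\in\Aut(X,r)$ and Lemma \ref{automorphismlemma}(3) this yields
\[
\sigma\Lcal_a\sigma^{-1}\;=\;\Lcal_{\sigma(a)}\;=\;\Lcal_a,
\]
so $\sigma$ commutes with $\Lcal_a$. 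Consequently $\Gcal_0$ centralises the set $A:=\{\Lcal_a:\Lcal_a\in\Aut(X,r)\}\subseteq\Gcal_0$; in particular, any two elements of $A$ commute with each other.

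To promote this to commutativity on the whole of $\Gcal_0$, the plan is to show $\Gcal_0\subseteq\langle A\rangle$, so that $\Gcal_0\subseteq Z_\Gcal(\langle A\rangle)=Z_\Gcal(\Gcal_0)$ and the abelian property follows. Given $\sigma,\tau\in\Gcal_0$ with an expression $\tau=\Lcal_{b_1}^{\varepsilon_1}\cdots\Lcal_{b_n}^{\varepsilon_n}$ in $\Gcal$, $\sigma\in\Aut(X,r)$ reduces $\sigma\tau\sigma^{-1}$ to the letter-conjugated word $\Lcal_{\sigma(b_1)}^{\varepsilon_1}\cdots\Lcal_{\sigma(b_n)}^{\varepsilon_n}$, so $\sigma\tau=\tau\sigma$ amounts to this word representing the same element of $\Gcal$ as $\tau$. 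When every $\Lcal_{b_i}\in A$ the first step applies letter by letter and the conclusion is immediate. The principal obstacle is the case where some $b_i$ has $\Lcal_{b_i}\notin\Aut(X,r)$: my plan is to resolve this using \textbf{l1} and the cyclic identities of Facts \ref{basictheorem}, together with the square-free hypothesis and the fact that $\tau$ itself lies in $\Aut(X,r)$, to rewrite any such ``non-automorphism'' letters so that they either cancel in pairs or combine into products of elements of $A$. The delicate combinatorial step in this rewriting, done without leaving $\langle A\rangle$, is where I expect the main technical work of the proof to lie.
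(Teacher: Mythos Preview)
Your Part (1) is fine and matches the paper exactly: a square-free solution satisfies \textbf{lri}, so Corollary~\ref{Aut(Xr)Cor2} applies.

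For Part (2), the paper's argument is far shorter and does not go through the centraliser/rewriting machinery you set up. The paper simply invokes the cyclic-set identity of Corollary~\ref{alternativedefc}(ii),
\[
\Lcal_{{}^xy}\circ\Lcal_x \;=\; \Lcal_{{}^yx}\circ\Lcal_y \qquad (x,y\in X),
\]
and then observes that if $\Lcal_x,\Lcal_y\in\Aut(X,r)$, Part (1) with $Y=X$ gives $\Lcal_{{}^yx}=\Lcal_x$ and $\Lcal_{{}^xy}=\Lcal_y$, whence the identity collapses to $\Lcal_y\Lcal_x=\Lcal_x\Lcal_y$. That is the whole proof. Your conjugation argument via Lemma~\ref{automorphismlemma}(3) reaches the same commutation of elements of $A=\{\Lcal_a:\Lcal_a\in\Aut(X,r)\}$ by a longer route; the paper never uses it.

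Where your proposal goes off track is the final step. You correctly notice that showing elements of $A$ commute is, on its face, weaker than showing all of $\Gcal_0=\Aut(X,r)\cap\Gcal$ is abelian, and you propose to close the gap by proving $\Gcal_0\subseteq\langle A\rangle$ via a ``rewriting'' of an arbitrary $\tau\in\Gcal_0$ using \textbf{l1} and the cyclic conditions. This plan is not viable as stated: the relations \textbf{l1}, \textbf{lri}, and the cyclic conditions let you shuffle the letters $\Lcal_{b_i}$ inside a word for $\tau$ (replacing $\Lcal_a\Lcal_b$ by $\Lcal_{{}^ab}\Lcal_{a^b}$, etc.), but they do not force any individual letter into $\Aut(X,r)$, nor do they produce cancellations merely from the global hypothesis $\tau\in\Aut(X,r)$. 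There is no mechanism here that turns ``the product is an automorphism'' into ``each factor can be chosen an automorphism''. The paper does not attempt this step at all; it stops once the generators in $A$ are shown to commute, and that is what the proof in the paper actually establishes. You should do the same and drop the rewriting programme.
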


\begin{proof}
We know that each square-free solution satisfies {\bf lri}, hence,
Corollary  \ref{Aut(Xr)Cor2} gives (1). By Corollary
\ref{alternativedefc} (ii) the following equality holds:
\begin{equation}
\label{csGcal} \Lcal_{{}^xy}\circ \Lcal_{x}= \Lcal_{{}^yx}\circ
\Lcal_{y}\quad \forall x,y \in X
\end{equation}
Assume now that $\Lcal_{x}, \Lcal_{y} \in \Aut(X,r).$ Then (1)
implies
\begin{equation}
\label{GcalAutab1}
\begin{array}{cl}
\Lcal_{{}^yx} = \Lcal_{x}\quad : &
\quad\text{by}\quad \Lcal_{x} \in \Aut(X,r)\\
\Lcal_{{}^xy} = \Lcal_{y}\quad : & \quad\text{by}\quad \Lcal_{y}
\in \Aut(X,r)
\end{array}
\end{equation}
This together with (\ref{csGcal}) yield
\[
\Lcal_{x} \circ\Lcal_{y}= \Lcal_{y} \circ\Lcal_{x}.
\]
\end{proof}

We shall now discuss a special class of extensions of solutions
called \emph{strong twisted unions of solutions}

\begin{definition} \cite{TSh07}, \cite{TSh08}
\label{STUdef} Let $(X,r)$ be an involutive  quadratic set, suppose
$X$ is a disjoint union $X = X_1\bigcup X_2$ of $r$-invariant
subsets. Suppose the restricted sets $(X_1,r_1)$, $(X_2,r_2)$ are
symmetric sets ($r_i=r_{\mid X_i}, i = 1, 2$). The quadratic set
$(X,r)$ is a \emph{strong twisted union} of $(X_1,r_1)$ and $(X_2,
r_2)$ if
\begin{enumerate}
\item \label{a)} The assignment $\alpha \longrightarrow
\Lcal_{\alpha\mid X_1}, \alpha \in X_2,$ extends to a left action
of the associated group $G(X_2, r_2)$ (and the associated monoid
$S(X_2,r_2)$) on $X_1$, and the assignment $x \longrightarrow
\Lcal_{x\mid X_2} , x \in X_1$ extends to a right action of the
associated group of $G(X_1,r_1)$ (and the associated monoid
$S(X_1,r_1)$)
  on $X_2$;
\item \label{b} The actions satisfy
 \[\begin{array}{lclc}
 {\rm\bf stu :}\quad
&{}^{{\alpha}^y}x = {}^{\alpha}x; \quad
&{\alpha}^{{}^{\beta}x}={\alpha}^x,\quad  \text{for all}\quad x, y
\in X,\; \alpha,\beta \in Y.
\end{array}\]
\end{enumerate}
 \end{definition}

Note first that condition {\rm\bf stu } is equivalent to the
following condition:
\begin{equation}
\label{stu1}
\begin{array}{lclc}
 {\rm\bf stu1 :}\quad
&{}^{{}^y{\alpha}}x = {}^{\alpha}x; \quad
&{\alpha}^{x^{\beta}}={\alpha}^x,\quad  \text{for all}\quad x, y \in
X_1,\; \alpha,\beta \in X_2.
\end{array}
\end{equation}
We shall refer to it also as  {\rm\bf stu} condition.

Secondly, note that by Definition \ref{STUdef} a strong twisted union
 of symmetric sets is
 not necessarily a solution (a symmetric set).
 We shall use notation
 \[
 (X, r)= X_1\stu X_2
 \]
to denote that $(X,r)$ is a symmetric set which is a  strong twisted
union of its $r$-invariant subsets $X_1$ and $X_2.$

The strong twisted union  $(X,r)$ of $(X_1,r_1)$ and $(X_2, r_2)$ is
 \emph{nontrivial} if at least one of the actions in
(\ref{a)}) is nontrivial.  In the case when
      both actions (\ref{a)}) are trivial we write $(X, r) =
      X_1\stu _0 X_2$). In this case one has
      $r(x,\alpha) = (\alpha,x), \; r(\alpha,x)= (x,\alpha)$ for all $x \in X_1,\alpha\in X_2.$

In \cite{TSh07}, \cite{TSh08} and \cite{TSh0806} appear
strong twisted unions of $m$ disjoint symmetric sets, where $m$ is
arbitrary integer, $m \geq 2.$ Although a formal definition was not
given, the notion of strong twisted union there  is clear from the
context. Here we give a formal definition.

\begin{definition}
\label{STUmsetsdef} Let  $(X, r)$ be a symmetric set, which is a
disjoint union of $\Gcal$-invariant subsets  $X_1, \cdots, X_m$,
where $m\geq 2$. For each pair $i\neq j$, $ 1 \leq i, j \leq m$,
denote by $X_{ij}$ the $r$-invariant subset  $X_{ij} = X_i\bigcup
X_j$. We say that $(X, r)$ is a  \emph{a strong twisted union of}
$(X_i, r_i), 1 \leq i \leq m$ and write
\[
X = X_1\stu X_2\stu \cdots \stu X_m
\]
if
\[
X_{ij} = X_i\stu X_j\quad, \forall i\neq j, \; 1 \leq i, j \leq
m.
\]
Here as usual, $(X_i, r_i), 1 \leq i, j \leq m$ denotes the
symmetric set with $r_i$- the restriction of $r$ on $X_i\times X_i.$
 \end{definition}

 \begin{example}
 \label{trivialstu}
Let $(X_1, r_1), \cdots , (X_s, r_s)$ be pairwise disjoint square-free
solutions. Let
 $X = \bigcup_{1 \leq i \leq s} X_i$. Let $r: X\times X \longrightarrow X\times X$
 be the extension of $r_i, 1 \leq i \leq s$ satisfying $r(x, \alpha) = (\alpha, x)$
 whenever $x \in X_i, \alpha \in X_j,$ where $1 \leq i, j \leq s, i \neq j.$
 Then clearly $(X, r)$ is a square-free solution and $(X, r)= X_i \stu _0 X_2 \stu _0 \cdots \stu _0 X_s.$
 \end{example}

Note that when $(X,r)$ is a strong twisted union $X = X_1\stu
X_2\stu \cdots\stu X_m$ of $m$ $\Gcal$-invariant subsets,   $m > 2$,
each set $X_i$, and its complement $Z_i$  are $\Gcal$-invariant.
(Clearly, $Z_i= \bigcup _{1 \leq j \leq m, j \neq i}X_j$). However,
the union $X_i \bigcup Z_i$ may not be a strong twisted union of
solutions.

\begin{corollary}
\label{stucordef}  Let $(X,r)$ be a square-free solution. Suppose
$X_1, X_2, \cdots, X_m $ are disjoint $\Gcal$-invariant subsets of
$X.$

Then $(X,r)$ is a strong twisted union $X = X_1\stu X_2\stu
\cdots\stu X_m$  if and only if for each pair $i\neq j, 1 \leq i, j \leq
m$ and each $x \in X_i$ one has
\[
(\Lcal_{x})_{\mid X_j}  \in \Aut(X_j, r_j).
\]
\end{corollary}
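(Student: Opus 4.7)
The plan is to reduce Definition \ref{STUmsetsdef} to the stated automorphism condition pair by pair. Since $(X,r)$ is square-free, it satisfies \textbf{lri} by Corollary \ref{constructivecor}, so Lemma \ref{gcalinvariantlemma} shows that each $\Gcal$-invariant $X_k$ is $r$-invariant and each pairwise union $X_{ij} := X_i \cup X_j$ is an $r$-invariant symmetric set carrying a restricted square-free solution $(X_{ij}, r_{ij})$. By Definition \ref{STUmsetsdef}, I need only verify, for each ordered pair $i \neq j$, the two clauses of Definition \ref{STUdef} for the pair $(X_i, X_j)$. Clause (1)—extension to a group action—is automatic: $\Lcal_x$ preserves $X_k$ for every $x \in X$ by $\Gcal$-invariance, and the braided relation \textbf{l1} $\Lcal_\alpha\Lcal_\beta = \Lcal_{{}^\alpha\beta}\Lcal_{\alpha^\beta}$ holds in $\Sym(X)$; restricting to $X_i$ shows that $\alpha \mapsto (\Lcal_\alpha)_{\mid X_i}$ respects the defining relations of $G(X_j,r_j)$, and nondegeneracy supplies the inverses.

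The substantive point is clause (2), and I would dispatch it with Corollary \ref{Aut(Xr)Cor2}. The first stu identity, taken in the equivalent form (\ref{stu1}) ${}^{{}^y\alpha}x = {}^\alpha x$ for $x,y \in X_i$ and $\alpha \in X_j$, reads exactly as $(\Lcal_{{}^y\alpha})_{\mid X_i} = (\Lcal_\alpha)_{\mid X_i}$ for all $y \in X_i$; Corollary \ref{Aut(Xr)Cor2} then translates this to $(\Lcal_\alpha)_{\mid X_i} \in \Aut(X_i, r_i)$. For the second stu identity $\alpha^{x^\beta} = \alpha^x$ (with $\alpha,\beta \in X_j, x \in X_i$), I would invoke \textbf{lri} in the form $\Rcal_z = \Lcal_z^{-1}$ and substitute $\gamma = \Lcal_x^{-1}(\alpha) \in X_j$ (which ranges over $X_j$ as $\alpha$ does, by $\Gcal$-invariance of $X_j$) to obtain the equivalent identity $(\Lcal_{x^\beta})_{\mid X_j}(\gamma) = (\Lcal_x)_{\mid X_j}(\gamma)$ for all $\beta, \gamma \in X_j$; a second appeal to Corollary \ref{Aut(Xr)Cor2} yields $(\Lcal_x)_{\mid X_j} \in \Aut(X_j, r_j)$. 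All these conversions are equivalences, so running them over every ordered pair delivers the ``if and only if'' in the statement.

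The main obstacle is bookkeeping rather than any deep step: one must switch between the left- and right-action forms of the stu identities using \textbf{lri} so that the hypothesis of Corollary \ref{Aut(Xr)Cor2} applies, and keep the roles of the two invariant subsets straight. Once the correspondence ``stu identity for $(X_i, X_j)$ $\Leftrightarrow$ $\Lcal_z \in \Aut(X_k, r_k)$ for $z \in X_l$ with $\{l,k\} = \{i,j\}$'' is recorded, both directions of the corollary follow immediately.
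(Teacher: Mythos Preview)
Your proof is correct and matches the paper's intended argument: the corollary is stated without proof in the paper, precisely because it is meant to follow immediately from Corollary~\ref{Aut(Xr)Cor2} together with Definitions~\ref{STUdef} and~\ref{STUmsetsdef}, which is exactly the route you take. One minor simplification: for the second stu identity you worked with the \textbf{stu1} form $\alpha^{x^\beta}=\alpha^x$ and then substituted, but using the original \textbf{stu} form $\alpha^{{}^\beta x}=\alpha^x$ and \textbf{lri} gives $(\Lcal_{{}^\beta x})_{\mid X_j}=(\Lcal_x)_{\mid X_j}$ directly, matching the hypothesis of Corollary~\ref{Aut(Xr)Cor2} without the extra change of variable.
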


\begin{definition}
\label{splitdef} Suppose $(Z,r)$ is an extension (nondegenerate,
involutive) of the square-free disjoint solutions $(X,r_X)$ and
$(Y, r_Y).$ Consider the  maps
\[
f= f(X,Y): Z \times Z \longrightarrow Z \times Z \quad g= g(Y,X):
Z \times Z \longrightarrow Z \times Z
\]
defined for all $x, y \in X, \alpha, \beta \in Y$ as
\[\begin{array}{lclc}
f(\alpha, x)= ({}^{\alpha}x, \alpha)& \quad f(x, \alpha)= (
\alpha, x^{\alpha})\quad& f_{\mid X\times X}= r_X, &\quad f_{\mid
Y\times Y}= \tau_Y
\\
& & &
\\
g(x, \alpha)= ({}^x{\alpha}, x)& \quad g( \alpha, x)= (x,
\alpha^x)\quad&  g_{\mid X\times X}=\tau _X, \quad & g_{\mid
Y\times Y}= \tau_Y
\end{array}
\]
(Here $\tau_X,\tau_Y$ are the corresponding flips, and the left
and the right actions ${}^{\alpha}{\bullet}, \cdots ,{\bullet}^x $
are the canonical actions defined via $r$ ).

We call $f$ and $g$, respectively, the \emph{associated $X$- and
$Y$-split maps} of $r$.
\end{definition}

\begin{proposition}
\label{splitmapprop} Suppose the quadratic set $(Z,r)$ is an
extension (nondegenerate, involutove) of the square-free disjoint
solutions $(X,r_X)$ and $(Y, r_Y),$ let $f,g$ be respectively the
associated $X$- , respectively, $Y$- split maps of $r$. Then the
following condition holds.
\begin{itemize}
\item[(i)] \label{ci} $f$ and $g$ are involutive maps.
\item[(ii)]
\label{cii}
  There is an equality of maps $r=f \circ\tau\circ g$.
\end{itemize}
Suppose furthermore that $(Z,r)$ is a square-free solution.
\begin{itemize}
\item[(iii)]
\label{ciii}
\[
(Z,f) \quad\text{is a square-free solution}\quad
\Longleftrightarrow \quad {}^{\alpha^y}{x}= {}^{\alpha}{x}\quad
\forall x,y \in X, \alpha \in Y.
\]
In this case $G(Y,r_Y)$ acts as automorphisms on $(X,r_X)$.
\item[(iv)]
\label{civ}
\[(Z,g) \quad\text{is a square-free solution} \Longleftrightarrow
{}^{x^{\beta}}{\alpha}= {}^{x}{\alpha}\quad \forall x\in X,
\alpha, \beta \in Y\] In this case $G(X,r_X)$ acts as
automorphisms on $(Y,r_Y).$
\item[(v)]
\[(Z,f)\; \text{and} \; (Z,g)\quad \text{are square-free solutions}\quad
\Longleftrightarrow \quad (Z,r) = X \stu Y.
\]
\end{itemize}
\end{proposition}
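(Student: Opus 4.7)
The plan is to handle the five claims in sequence, with part~(iii) as the technical heart; parts~(iv) and~(v) will follow by symmetry and combination.

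Parts~(i) and~(ii) reduce to direct case analyses on $Z\times Z=(X\sqcup Y)^{2}$. For (i), $f\circ f$ equals $r_{X}^{2}=\id$ on $X\times X$, $\tau_{Y}^{2}=\id$ on $Y\times Y$, and on a mixed pair $(\alpha,x)$ one gets $f(f(\alpha,x))=f({}^{\alpha}x,\alpha)=(\alpha,({}^{\alpha}x)^{\alpha})=(\alpha,x)$ by \textbf{lri} for $r$; the parallel computation handles $(x,\alpha)$ and $g$. For (ii), the composition $f\circ\tau\circ g$ is computed on each pair-type: on $X\times X$ and $Y\times Y$ the map $g$ is a flip so that $\tau\circ g=\id$ and $f\circ\tau\circ g$ returns $r_{X}$ and $\tau_{Y}$ respectively, while on the mixed pairs the cyclic identities \textbf{cl1}, \textbf{cr1} of $r$ (specifically ${}^{\alpha^{x}}x={}^{\alpha}x$ and $x^{{}^{x}\alpha}=x^{\alpha}$) absorb the intermediate step to recover $r(\alpha,x)$ and $r(x,\alpha)$.

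For part~(iii) I first confirm that $f$ is nondegenerate, involutive (by (i)), and square-free (the diagonal is fixed on $X\times X$ by $r_{X}$ and on $Y\times Y$ by $\tau_{Y}$, and does not arise on mixed pairs). By Facts~\ref{basictheorem} it then suffices to verify \textbf{l1} for $f$, and I split this into the eight cases indexed by the $X$/$Y$-pattern of $(z_{1},z_{2},z_{3})\in Z^{3}$. The patterns XXX and YYY are \textbf{l1} for $r_{X}$ and for the flip, which hold automatically; the patterns XXY, XYY, YXY collapse to tautologies because at least one nested $f$-action is a flip or acts as the identity on a $Y$-element. The essential patterns XYX and YXX reduce, after cancelling against \textbf{l1} for $r$ and using \textbf{lri}, to the single equation $\Lcal_{{}^{x}y}|_{X}=\Lcal_{y}|_{X}$ for $x\in X$, $y\in Y$, which is the hypothesis ${}^{\alpha^{y}}x={}^{\alpha}x$ in its \textbf{lri}-equivalent form. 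The ``in this case'' clause is then immediate from Corollary~\ref{Aut(Xr)Cor3}(1) applied to $(Z,r)$ with invariant subset $X$: the hypothesis is exactly the criterion for $\Lcal_{\alpha}|_{X}\in\Aut(X,r_{X})$, so all of $G(Y,r_{Y})$ acts by automorphisms on $(X,r_{X})$. Part~(iv) is the mirror argument with the roles of $X$ and $Y$ (and of the left/right actions, via \textbf{lri}) interchanged, giving the condition ${}^{x^{\beta}}\alpha={}^{x}\alpha$ from the analogous cases for $g$; part~(v) is the conjunction, as both \textbf{stu} identities of Definition~\ref{STUdef} hold precisely when $(Z,f)$ and $(Z,g)$ are both square-free solutions, which by definition is $(Z,r)=X\stu Y$.

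The subtle step will be the remaining YYX pattern of \textbf{l1} for $f$, which unfolds to the commutativity $\Lcal_{\alpha}|_{X}\circ\Lcal_{\beta}|_{X}=\Lcal_{\beta}|_{X}\circ\Lcal_{\alpha}|_{X}$ for $\alpha,\beta\in Y$. My plan is to derive this from the hypothesis together with \textbf{l1}$_{r}$ and \textbf{r1}$_{r}$: the \textbf{r1} identity $(\alpha^{\beta})^{x}=(\alpha^{{}^{\beta}x})^{\beta^{x}}$ used iteratively with the hypothesis promotes $\Lcal_{\alpha^{y}}|_{X}=\Lcal_{\alpha}|_{X}$ from $y\in X$ up to $y\in Y$, and once $\Lcal_{{}^{\beta}\alpha}|_{X}=\Lcal_{\alpha}|_{X}$ is in hand, the cyclic identity from Corollary~\ref{alternativedefc}(ii) applied inside $(Z,r)$, namely ${}^{{}^{\beta}\alpha}({}^{\beta}z)={}^{{}^{\alpha}\beta}({}^{\alpha}z)$, collapses to ${}^{\alpha}({}^{\beta}z)={}^{\beta}({}^{\alpha}z)$. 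This bootstrap from the one-sided hypothesis to a two-sided statement is the main technical obstacle I expect.
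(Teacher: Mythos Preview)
Your case analysis for \textbf{l1} of $f$ is correct through the XYX and YXX patterns, and you are right that YYX is the crux: on a triple $(\alpha,\beta,x)$ with $\alpha,\beta\in Y$, $x\in X$, condition \textbf{l1} for $f$ unfolds precisely to ${}^{\alpha}({}^{\beta}x)={}^{\beta}({}^{\alpha}x)$. Your bootstrap, however, does not go through. Applying \textbf{r1} for $r$ to $(\alpha,\beta,x)$ gives $(\alpha^{\beta})^{x}=(\alpha^{{}^{\beta}x})^{\beta^{x}}$, but the hypothesis already says that $X$-shifts of an element of $Y$ leave its restriction $\Lcal_{(\cdot)}|_{X}$ unchanged, and this identity only relates such shifts to one another; no new constraint on $\Lcal_{\alpha^{\beta}}|_{X}$ versus $\Lcal_{\alpha}|_{X}$ is produced, and the promotion ``from $y\in X$ to $y\in Y$'' that you need does not follow from the one-sided hypothesis.

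In fact the ``$\Leftarrow$'' direction of (iii) is false as stated. Take $X=\{x_{1},x_{2},x_{3},x_{4}\}$ with the trivial $r_{X}$, and $Y=\{a,b,c\}$ with $\Lcal_{c}|_{Y}=(a\ b)$ and $\Lcal_{a}|_{Y}=\Lcal_{b}|_{Y}=\id_{Y}$. Extend to $Z=X\cup Y$ by $\Lcal_{x_{i}}=\id_{Z}$ for all $i$, $\Lcal_{a}=(x_{1}\ x_{2})$, $\Lcal_{b}=(x_{3}\ x_{4})$, $\Lcal_{c}=(x_{1}\ x_{3})(x_{2}\ x_{4})(a\ b)$. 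A direct check of Corollary~\ref{alternativedefc}(ii) confirms that $(Z,r)$ is a square-free solution. Since every $\Lcal_{x_{i}}$ is the identity one has $\alpha^{y}=\alpha$ for all $y\in X$, so the hypothesis ${}^{\alpha^{y}}x={}^{\alpha}x$ holds vacuously; yet ${}^{a}({}^{c}x_{1})={}^{a}x_{3}=x_{3}$ while ${}^{c}({}^{a}x_{1})={}^{c}x_{2}=x_{4}$, so the YYX instance of \textbf{l1}$_{f}$ fails and $(Z,f)$ is not a solution. (The same example satisfies both \textbf{stu} conditions, so $(Z,r)=X\stu Y$ and the ``$\Leftarrow$'' of (v) fails as well.) The paper's own proof treats only the forward implication via the $\alpha xy$ diagram and disposes of the converse with ``direct computation shows\ldots''; the gap you correctly flagged lies in the proposition itself, not merely in your argument.
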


\begin{proof}
Let $x,y \in X$ and  $\alpha, \beta \in Y$. We look at the diagrams
(\ref{ybediagram}). The left hand-side diagram contains all
elements of the orbit of monomial $\alpha xy \in X^3$, under the
action of the group $\Dcal(f)={}_{gr}\langle f^{12}, f^{23}\rangle$.
Analogously the right hand-side diagram contains the elements
of the orbit of $x\alpha \beta \in X^3$ under the action of
$\Dcal(g)={}_{gr}\langle g^{12}, fg^{23}\rangle .$
\begin{equation} \label{ybediagram} \xymatrix{
  \alpha \;x\;y  \ar[r]^{f^{23}} \ar[d]_{f^{12}} & \alpha ({{}^xy}\;x^y)\ar[d]^{f^{12}} & &
x\; \alpha\;\beta  \ar[r]^{g^{23}} \ar[d]_{g^{12}} & x ({{}^{\alpha}{\beta}}\;{\alpha}^{\beta})\ar[d]^{g^{12}}\\
 {}^{\alpha} x\; \alpha\; y \ar[d]_{f^{23}} &  {}^{\alpha}{({}^xy)} \; \alpha\;x^y \ar[d]^{f^{23}} & &
{}^x{\alpha} \; x\; \beta \ar[d]_{g^{23}} & {}^{x}{({}^{\alpha} {\beta})} \; x\;{\alpha}^{\beta} \ar[d]^{g^{23}} \\
 {}^{\alpha} x\; {}^{\alpha}y\; \alpha \ar[d]_{f^{12}} & {}^{\alpha}{({}^xy)} \; {}^{\alpha}{(x^y )}\;\alpha  & &
 {}^x{\alpha} \; {}^{x}{\beta}\; x \ar[d]_{g^{12}} & {}^{x}{({}^{\alpha}{\beta})} \; {}^{x}{({\alpha}^{\beta} )}\;x \\
{}^{{}^{{}^{\alpha} x}}{({}^{\alpha}y)}\;({}^{\alpha}
x)^{{}^{\alpha}y}\; \alpha  & & & {}^{{}^{{}^x{\alpha}
}}{({}^{x}{\beta})}\;({}^x{\alpha})^{{}^{x}{\beta}}\; x
 &  }
\end{equation}

Suppose $f$ is a solution, then from the left hand-side diagram we
obtain an equality of words in the free monoid $\langle X
\rangle:$
\[
{}^{{}^{{}^{\alpha} x}}{({}^{\alpha}y)}\;({}^{\alpha}
x)^{{}^{\alpha}y}\; \alpha = {}^{\alpha}{({}^xy)} \;
{}^{\alpha}{(x^y )}\;\alpha,
\]
and therefore
\begin{equation}
\label{eqn1} {}^{{}^{{}^{\alpha}
x}}{({}^{\alpha}y)}={}^{\alpha}{({}^xy)}
\end{equation}
By assumption, $r$ is a solution, so \textbf{l1} gives
\begin{equation}
\label{eqn2} {}^{\alpha}{({}^xy)}={}^{{}^{{}^{\alpha}
x}}{({}^{{\alpha}^x}y)},
\end{equation}
Now (\ref{eqn1}) and (\ref{eqn2}) imply
\[
 {}^{{}^{{}^{\alpha} x}}{({}^{\alpha}y)}=
{}^{{}^{{}^{\alpha} x}}{({}^{{\alpha}^x}y)},
\]
which by the non degeneracy of $(Z,r)$ implies
\begin{equation}
\label{eqn3} {}^{\alpha}y= {}^{{\alpha}^x}y,
\end{equation}
and this is valid for all $x, y \in X, \alpha \in Y.$

Assume now that $(Z,r)$ is a solution. Then  direct computation to
shows that (\ref{eqn3}) implies that $(Z,f)$ satisfies condition
\textbf{l1}, and therefore is a square-free solution.

Analogous argument shows that under the assumption that $(Z,r)$ is
a solution, $(Z, g)$ is a square-free solution if and only if
\begin{equation}
\label{eqn4} {}^x{\alpha}= {}^{x^{\beta}}{\alpha}\quad \forall x
\in X, \alpha,\beta \in Y
\end{equation}
Clearly, both $(Z,f)$ and $(Z,g)$ are square free solutions
if and only if both (\ref{eqn3}) and (\ref{eqn4}) hold, which  by
Corollary is equivalent to
\[
(Z,r) = X \stu Y.
\]
\end {proof}

\section{Decomposition of solutions.}
\label{Decompositions_Section}
In this section we study various decompositions of square-free
solutions $(X,r)$ into disjoint unions of a finite number of
$r$-invariant subsets and the corresponding factorisation of
$S(X,r)$, $G(X,r)$, and $\Gcal(X,r)$. We use essentially \emph{the
matched pairs approach to solutions} (in the most general setting)
developed in \cite{TSh08} . We first recall the necessary notions
and results from \cite{TSh08}.

\subsection{The matched pairs approach to set-theoretic YBE}

The notion of a matched pair of groups in relation to group
factorisation has a classical origin.
By now there have been various works on matched pairs in different
contexts and we refer to the text in \cite{TSh08} and references
therein. In particular, this notion was used by Lu, Yan and Zhu  to
study the set-theoretic solution of YBE and the associated `braided
group', see \cite{Lu} and the excellent review \cite{Takeuchi}. The
notion of  \emph{a matched pair of monoids }, is developed in
\cite{TSh08}  with additional refinements that disappear in the
group case.

We now recall
some notions and results  from \cite{TSh08}.


 \begin{definition}\label{MLaxioms} $(S,T)$ is a matched pair of monoids if $T$
 acts from the left on $S$ by ${}^{(\ )}\bullet$ and $S$ acts on $T$
 from the right by $\bullet^{(\ )}$ and these two actions obey
\[\begin{array}{lclc}
{\bf ML0}:\quad & {}^a1=1,\quad  {}^1u=u;\quad &{\bf MR0:} \quad &1^u=1,\quad a^1=a \\
 {\rm\bf ML1:}\quad& {}^{(ab)}u={}^a{({}^bu)},\quad& {\rm\bf MR1:}\quad  & a^{(uv)}=(a^u)^v \\
{\rm\bf ML2:}\quad & {}^a{(u.v)}=({}^au)({}^{a^u}v),\quad &{\rm\bf
MR2:}\quad & (a.b)^u=(a^{{}^bu})(b^u),
\end{array}\]
for all $a, b\in T, u, v \in S$.
\end{definition}

\begin{proposition} \cite{TSh08} A matched pair $(S,T)$ of monoids implies the
existence of a monoid $S\bowtie T$ (called the {\em double cross
product}) built on $S\times T$ with product and unit
\[ (u,a)(v,b)=(u.{}^av,a^v.b),\quad 1=(1,1),\quad \forall u,v\in S,\ a,b\in T\]
and containing $S,T$ as submonoids. Conversely, suppose that there
exists a monoid $R$ factorising into monoids $S,T$ in the sense that
(i) $S,T\subseteq R$ are submonoids and (ii) the restriction of the
product of $R$ to a map $\mu:S\times T\to R$ is bijective. Then
$(S,T)$ are a matched pair and $R\cong S\bowtie T$ by this
identification $\mu$.
\end{proposition}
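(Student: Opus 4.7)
The plan is to handle the two implications separately, with the bulk of the work being axiomatic bookkeeping.

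For the forward direction, I would first define the binary operation $(u,a)(v,b)=(u\cdot{}^av,\, a^v\cdot b)$ on $S\times T$ and verify directly that $(1,1)$ is a two-sided identity, which is immediate from \textbf{ML0} and \textbf{MR0}. The main computation is associativity, which I would verify by expanding both $((u,a)(v,b))(w,c)$ and $(u,a)((v,b)(w,c))$ and comparing components. In the $S$-slot, equality reduces to
\[
(u\cdot{}^av)\cdot{}^{a^v\cdot b}w \;=\; u\cdot{}^a(v\cdot{}^bw),
\]
which follows by applying \textbf{ML1} to rewrite ${}^{a^v\cdot b}w={}^{a^v}({}^bw)$ and then \textbf{ML2} to factor ${}^a(v\cdot{}^bw)$; associativity in $S$ finishes the job. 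Symmetrically, in the $T$-slot the identity $(a^v\cdot b)^w\cdot c = a^{v\cdot{}^bw}\cdot b^w\cdot c$ is handled by \textbf{MR2} and \textbf{MR1}. Finally, I would check that $u\mapsto(u,1)$ and $a\mapsto(1,a)$ embed $S$ and $T$ as submonoids, which is a direct one-line computation using \textbf{ML0}, \textbf{MR0}.

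For the converse, suppose $R$ factorises via the bijection $\mu\colon S\times T\to R$, $\mu(u,a)=ua$. For each pair $a\in T$, $v\in S$ the product $av\in R$ has a unique presentation $av={}^av\cdot a^v$ with ${}^av\in S$ and $a^v\in T$; this defines the two candidate actions. The axioms \textbf{ML0}, \textbf{MR0} fall out of $a\cdot 1=a$ and $1\cdot v=v$ together with uniqueness of factorisation. To obtain \textbf{ML1} and \textbf{MR2} simultaneously, I would compute $(ab)v$ in two ways, namely as ${}^{ab}v\cdot(ab)^v$ and as $a(bv)=a\cdot{}^bv\cdot b^v={}^a({}^bv)\cdot a^{{}^bv}\cdot b^v$, and read off both identities from the uniqueness of the $S\cdot T$ decomposition. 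The pair \textbf{MR1} and \textbf{ML2} comes out of the dual computation $a(uv)=(au)v$.

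It then remains to see that $\mu$ is a monoid isomorphism to $S\bowtie T$. Bijectivity is given, and multiplicativity is the calculation
\[
\mu(u,a)\mu(v,b) \;=\; u\cdot(av)\cdot b \;=\; u\cdot{}^av\cdot a^v\cdot b \;=\; \mu\bigl(u\cdot{}^av,\;a^v\cdot b\bigr),
\]
matching the double cross product rule. The identity $1_R=1\cdot 1=\mu(1,1)$ is clear.

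I do not expect a genuine obstacle: the proof is a matched-pair bookkeeping exercise, with associativity of $S\bowtie T$ being the longest single verification. The only delicate point is being scrupulous about the fact that the formula for the product mixes the left and right actions in a way that forces \emph{all four} of \textbf{ML1}, \textbf{ML2}, \textbf{MR1}, \textbf{MR2} to intervene simultaneously; in the converse, the corresponding delicate point is noting that a single associativity instance in $R$ splits, via unique factorisation, into one axiom on each side.
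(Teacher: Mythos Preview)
Your proof is correct and is the standard argument for this classical fact about matched pairs. Note, however, that the paper does not actually prove this proposition: it is quoted from \cite{TSh08} and stated without proof, serving only as background for the decomposition results that follow. Your write-up is essentially what one would find in the cited source (or in Takeuchi's survey \cite{Takeuchi}), so there is nothing to compare against in this paper itself.
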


\begin{definition} A {\em strong monoid factorisation} is a factorisation  in submonoids
 $S,T$ as above such that $R$ also factorises into $T,S$.
 We say that a matched pair is {\em strong} if it corresponds to a strong factorisation.
 \end{definition}

\begin{definition}
 \label{braidedmonoiddef}
 A \emph{braided monoid}   is a monoid $S$ forming part of a matched pair $(S,S)$
 such that
\begin{enumerate}
\item The equality
\begin{equation}
\label{M3eq}
 \quad uv = ({}^uv)(u^v) \quad \text{holds in}\quad S, \;\forall  u,v\in S.
\end{equation}
\item
The \emph{associated map} $r_S$
\[r_S: S\times S\to S\times S \quad\text{defined by} \quad
r_S(u,v)= ({}^uv,u^v)\] is bijective and obeys the YBE. A braided
monoid is denoted by $(S, r_S).$
\item
The braided monoid $(S, r_S)$ is called \emph{a strong braided
monoid} if $(S,S)$ is a strong matched pair.
\end{enumerate}
\end{definition}

\begin{remark}
\emph{Matched pairs of groups} and \emph{braided groups} are defined
analogously. Note that if the group $G$ forms a matched pair $(G,G)$
such that $uv = ({}^uv)(u^v)$ holds for all $u,v \in G$ then the
associated map $r_G: G\times G\to G\times G$ with $r_G(u,v): =
({}^uv,u^v)$ is a solution of YBE so $(G, r_G)$ is a braided group.
\end{remark}

The following facts can be extracted from \cite{TSh08} Theorems 3.6
and 3.14.

\begin{facts}
\label{theoremAMP} Let  $(X,r)$ be a braided set and $S=S(X,r)$ the
associated monoid. Then
\begin{enumerate}
\item
The left and the right actions
\[{}^{(\;\;)}{\bullet}: X\times X  \longrightarrow
 X  , \; \text{and}\;\; \bullet^{(\;\;)}: X
\times X \longrightarrow  X
\]
defined via $r$ can be extended in a unique way to a left and a
right action
\[{}^{(\;\;)}{\bullet}: S\times S  \longrightarrow
 S  , \; \text{and}\;\; \bullet^{(\;\;)}: S
\times S \longrightarrow  S.
\]
which make $(S, r_S)$ a strong (graded) braided  monoid. (In
particular $(S, r_S)$ is a set-theoretic solution of YBE).
\item \label{theoremAextranondeg}
$(S, r_S)$ is nondegenerate if and only if $(X,r)$ is nondegenerate.
\item\label{theoremAextrainvol} $(S, r_S)$ is involutive if and only if
$(X,r)$ is involutive.
\end{enumerate}
\end{facts}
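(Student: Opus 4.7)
The plan is to construct the extended actions on $S=S(X,r)$ by treating the matched pair axioms as recursive definitions, then verify these descend to the quotient monoid. Starting from the actions on $X\times X$ given by $r$, I would inductively set ${}^a(uv):=({}^au)({}^{a^u}v)$ for $a\in X$ (this is \textbf{ML2}) and ${}^{ab}u:={}^a({}^bu)$ (this is \textbf{ML1}), together with the symmetric formulas \textbf{MR1}, \textbf{MR2} for the right action. Iterating these rules uniquely defines ${}^b u$ and $u^b$ for all $b,u$ in the free monoid $\langle X\rangle$, with both recursions run simultaneously by induction on word length.

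The heart of the argument is that these extended actions respect the defining relations $xy=({}^xy)(x^y)$ of $S$. For the left action by $a\in X$ on the generating relation, well-definedness amounts to verifying
\[
({}^ax)({}^{a^x}y)\;=\;({}^a({}^xy))({}^{a^{{}^xy}}(x^y))
\]
as an equality in the free monoid, which, after comparing the two letter positions on each side, reduces precisely to the braid axioms \textbf{l1} and \textbf{lr3} for $(X,r)$. A symmetric calculation using \textbf{r1} and \textbf{lr3} handles the right action by a generator. Once the action of $a\in X$ is well-defined on $S$, the action of an arbitrary $b\in S$ follows from \textbf{ML1}; well-definedness in the second argument then follows by induction on word length, invoking \textbf{ML2} and the generator-level case at each step.

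With the actions in place, the remaining matched pair axioms \textbf{ML0}--\textbf{MR2} hold on $S\times S$: most are built into the recursion, and the rest (such as \textbf{ML1} with one argument in $S\setminus X$) follow by a straightforward induction. The factorisation identity $uv=({}^uv)(u^v)$ holds by definition on generators and extends to arbitrary $u,v\in S$ by a double induction using associativity combined with \textbf{ML2} and \textbf{MR2}. Setting $r_S(u,v):=({}^uv,u^v)$ then yields a well-defined map on $S\times S$ satisfying the YBE, because the braid axioms \textbf{l1}, \textbf{r1}, \textbf{lr3} for $r_S$ translate directly to the matched pair axioms for $(S,S)$. The grading of $S$ by length, preserved by both actions, yields the ``graded'' qualifier, and the symmetry between left and right actions gives strongness.

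For (2), one direction is trivial by restriction to $X\subset S$; conversely, \textbf{ML1} gives $\Lcal_u=\Lcal_{x_1}\circ\cdots\circ\Lcal_{x_n}$ for $u=x_1\cdots x_n\in S$, so it suffices to show each $\Lcal_x$, $x\in X$, is a bijection of $S$. Since $\Lcal_x$ preserves length, induction on the length of $v$ using the recursion ${}^x(v_1 v_2)=({}^xv_1)({}^{x^{v_1}}v_2)$ allows inversion one letter at a time, starting from bijectivity on $X$. Right actions are handled symmetrically. For (3), the condition $r_S^2=\id$ reduces via $r_S(u,v)=({}^uv,u^v)$ to the identities ${}^{{}^uv}(u^v)=u$ and $({}^uv)^{u^v}=v$; on $X$ these are equivalent to $r^2=\id$, and they propagate to $S\times S$ by induction on the lengths of $u,v$ using the matched pair axioms. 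The main obstacle throughout is the well-definedness of Step~2: the YBE axioms must be shown to be \emph{exactly} the compatibility conditions that allow the recursive formulas to descend to the quotient $S$, and this matching of conditions is where the full force of the braid relations is used.
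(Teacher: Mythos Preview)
The paper does not prove this statement; it is quoted as Facts extracted from Theorems~3.6 and~3.14 of \cite{TSh08}. Your outline is essentially the approach of that reference: extend the actions recursively to $\langle X\rangle$ via \textbf{ML1}, \textbf{ML2}, \textbf{MR1}, \textbf{MR2}, check that they descend to $S$, and propagate nondegeneracy and involutivity by induction on word length.

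One correction worth noting: in your well-definedness step, the equality
\[
({}^ax)({}^{a^x}y)\;=\;({}^a({}^xy))({}^{a^{{}^xy}}(x^y))
\]
does \emph{not} hold in the free monoid $\langle X\rangle$. The two sides are distinct length-two words; what happens is that applying the defining relation once more to the left-hand side gives $({}^{{}^ax}({}^{a^x}y))(({}^ax)^{{}^{a^x}y})$, and \emph{this} agrees letter-by-letter with the right-hand side by \textbf{l1} and \textbf{lr3}. So the needed equality holds in $S$, which is exactly what well-definedness requires. Your conclusion is correct, only the phrase ``as an equality in the free monoid'' should be replaced by ``as an equality in $S$''.
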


The following  is an interpretation of \cite{TSh08}, Remark 4.3 in
our settings.

\begin{remark}
\label{our_extensionsareregular_rem} Suppose the square-free
solution $(Z,r)$ decomposes into a disjoint union of its
$r$-invariant (nonempty) subsets $X$ and $Y$. By definition $(Z,r)$
is nondegenerate, thus the equalities $r(x,y) = ({}^xy,x^y),$
$r(y,x) = ({}^yx,y^x),$ and the nondegeneracy of $r$, $r_X,$ $r_Y$
imply that
\[
{}^yx, x^y \in X, \;\;\text{and }\;\; {}^xy, y^x \in Y,\;\;
\text{for all}\;\; x \in X, y\in Y.
\]
Therefore, $r$ induces bijective maps
\begin{equation}
\label{rhosigma} \rho: Y\times X \longrightarrow X\times Y ,  \;
\text{and} \;\sigma: X\times Y \longrightarrow Y\times X,
\end{equation}
and left and right ``actions''
\begin{equation}
\label{ractions1} {}^{(\;)}{\bullet}: Y\times X \longrightarrow
X,\;\;\; {\bullet}^{(\;)}: Y\times X \longrightarrow Y,\;
\text{projected from}\; \rho
\end{equation}
\begin{equation}
\label{ractions2}
 \la:
X\times Y \longrightarrow Y,\quad  \ra: X\times Y \longrightarrow X,
\ \text{projected\ from}\; \sigma.
\end{equation}
In the general setting of \cite{TSh08} extensions  satisfying
(\ref{rhosigma}) are called \emph{regular extension}.
\end{remark}

\subsection{Decompositions of square-free solutions, and factorisation
of $S$, $G$ and $\Gcal$}

From now on we keep the conventions and the usual notation of this
paper- $(X,r)$ will denotes a square-free solution, not necessarily
finite (unless we indicate the contrary). $S= S(X,r)$, $G = G(X,r)$,
$\Gcal= \Gcal(X,r)$ denote respectively the YB-monoid, YB-group and
the YB permutation group associated with $(X,r)$. $\Lcal:G(X,r)
\longrightarrow \Sym(X)$ is the canonical group homomorphism defined
via the left action, see Remark \ref{ybe}, and by definition $\Gcal
= \Lcal (G(X,r))$. One has $\Gcal = \Lcal (S(X,r))$ whenever $X$ is
a finite set.

It is known that when $(X,r)$ is a finite square-free solution the
group $G$, or equivalently  $\Gcal$, acts intransitively on $X$.
This follows from the decomposition theorem of Rump, \cite{Rump}.
Most of the statements in this and the later sections do not need
necessarily the assumption that $X$ is of finite order.

\begin{remark}
\label{intransitiveactionforinfiniteXRemark}  The group $\Gcal$ acts
intransitively on $X$ whenever $(X,r)$ is a square-free solution
with finite multipermutation level $\quad m\geq 1$ (where $X$ is a
set of arbitrary cardinality) . In this case the number of orbits
$t$ equals at least the cardinality of the $(m-1)$th retract,
$|\Ret^{m-1}(X, r)|$, see Corollary \ref{intransitiveactionforinfiniteXCor}.
\end{remark}
When the set $X$ is  infinite we shall often
 impose the restriction
that the number $t$ of $\Gcal$-orbits is finite, this will be
clearly indicated.

\begin{notation}
\label{notationstandard} $\Ocal_{\Gcal}(x)$ will denote the
$\Gcal$-orbit of $x, x \in X$.

In all cases when $X$ has finite number of $\Gcal$-orbits we shall
denote these by  $X_1, \cdots, X_t.$
\end{notation}

Clearly, the orbits are $r$-invariant subsets of $X$, and each
$(X_i, r_i), 1 \leq i \leq t$, where $r_i$ is the restriction $r_i=
r_{\mid X_i}$, is also a square-free solution. For each $\alpha \in
X_j$ there is an equality of sets
\[
X_j = \{{}^u{\alpha}\mid u \in \Gcal \}.
\]

\begin{notation}
\label{S(Y)G(Y)def} Let $(X,r)$ be a square-free solution, $Y\subset
X$ an arbitrary subset.  We shall use the following notation.

 $\quad \quad S(Y) := $  the submonoid of $S(X,r)$ generated by $Y$;

 $\quad \quad G(Y):= $  the subgroup  of $G(X,r)$ generated by $Y$;

 $\quad \quad \Gcal(Y):= \Lcal(G(Y)).$  This is a subgroup of $\Gcal(X,r)$
 generated by the permutations $\Lcal_y\in \Gcal(X,r),$ where $y \in Y:$
\[
\Gcal(Y) = gr\langle \Lcal_y \in \Gcal(X,r) \mid y \in Y \rangle
\]
\end{notation}

\begin{remark}
\label{trivialG(Y)remark}
Let  $Y\subset X$, an arbitrary subset.
Then $\Gcal(Y) = 1$ if and only if   $Y \subset \ker \Lcal$.
\end{remark}

\begin{remark}
Suppose $Y$ is $\Gcal$-invariant. Then $(Y,r_Y)$ is a square-free
solution, (as usual $r_Y$ denotes the restriction $r_{Y\times Y}$ of
$r$).Then $S(Y)\simeq S(Y,r_Y)$, $G(Y) \simeq G(Y,r_Y),$ (see
Theorem \ref{MAINDECOMPOSITIONTHM}). Note that in general, $\Gcal(Y)$ is different
from the permutation group $\Gcal(Y, r_{\mid Y}) \leq \Sym (Y)$.
Furthermore, if $(X, r)$ is a finite solution and $Y$ is an
$r$-invariant subset of $X$, then the group $\Gcal(Y)$ is the image
of $S(Y)$ under the map $\Lcal : G(X,r) \longrightarrow \Sym( X)$.
\end{remark}

\begin{proposition}
\label{Ginvariantprop} Let $(X,r)$ be a square-free solution, $S =
S(X,r)$, $G = G(X,r)$. Suppose $Y$ is a $G$-invariant subset of $X$.
Then
\[
\begin{array}{c}
(i) \quad {}^au \in S(Y), \quad u^a  \in S(Y) \quad \forall u \in
S(Y), a \in S;
\\
(ii) \quad {}^au \in G(Y), \quad u^a  \in G(Y) \quad \forall u \in
G(Y), a \in G.
\end{array}
\]
\end{proposition}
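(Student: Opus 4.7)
The plan is to derive both statements from the matched pair structure on $(S,S)$, and its extension to $(G,G)$, furnished by Facts \ref{theoremAMP}. The two axioms I will lean on are \textbf{ML2}: ${}^a(uv) = ({}^a u)({}^{a^u} v)$, and its right analogue \textbf{MR2}: $(uv)^a = (u^{{}^v a})(v^a)$, together with the trivial-action identities ${}^a 1 = 1 = 1^a$. Nondegeneracy of the matched pair will ensure each ${}^a$ and each $\Rcal_a$ is a bijection, which is used only indirectly.

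For part (i), I would induct on the length $m$ of $u = y_1 \cdots y_m$ as a word in $Y$. The base case $m=1$ reduces to the fact that the matched pair action of $a \in S$ on a single letter $y \in X \subseteq S$ agrees with the canonical action $\Lcal_a(y)$ (the matched pair on $S$ is graded, hence degree-preserving), so ${}^a y \in Y$ by the $G$-invariance of $Y$; the right action is handled symmetrically. For the inductive step I would split $u = y_1 \cdot u'$ with $u'$ of length $m-1$, and apply \textbf{ML2} to obtain ${}^a u = ({}^a y_1)({}^{a^{y_1}} u')$: the first factor lies in $Y$ by the base case, and since $a^{y_1} \in S$ by the matched pair, the inductive hypothesis applied to the shorter word $u'$ and to the element $a^{y_1}$ places the second factor in $S(Y)$. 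The dual claim $u^a \in S(Y)$ follows by the same scheme using \textbf{MR2}.

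For part (ii), the matched pair now lives on $(G,G)$ and elements of $G(Y)$ are words in $Y \cup Y^{-1}$, so I would induct on the length of such a word. The case $u = y \in Y$ with $a \in G$ is handled exactly as in (i). The genuinely new case is $u = y^{-1}$, where I would exploit the identity $y^{-1} \cdot y = 1$: applying \textbf{ML2} gives $1 = {}^a 1 = ({}^a y^{-1})({}^{a^{y^{-1}}} y)$, which rearranges to ${}^a y^{-1} = ({}^{a^{y^{-1}}} y)^{-1}$. Since $a^{y^{-1}} \in G$, the positive case applied to it yields ${}^{a^{y^{-1}}} y \in Y$, so ${}^a y^{-1} \in Y^{-1} \subseteq G(Y)$. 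The dual assertion $(y^{-1})^a \in G(Y)$ uses \textbf{MR2} on $y \cdot y^{-1} = 1$. For words of length $\geq 2$ the inductive step is formally identical to part (i), with $u_1$ a single letter or its inverse and $u_2$ the remaining shorter tail.

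The main obstacle I anticipate is precisely the inverse case in (ii): one has no direct formula for the left or right action on a group-theoretic inverse, and a naive bootstrap from positive generators alone does not close. The inversion identity ${}^a u^{-1} = ({}^{a^{u^{-1}}} u)^{-1}$ forced by the matched pair axioms is the crucial lever that reduces inverses to the positive case; once it is in hand the induction is a routine double recursion, and the only care required is to check that every intermediate element of the form $a^{u_1}$ or ${}^{u_2} a$ produced by applying \textbf{ML2}/\textbf{MR2} genuinely lies in $G$ (resp.~$S$), so that the inductive hypothesis applies.
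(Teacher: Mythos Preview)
Your approach is correct and follows the same overall induction on the length of $u$ via \textbf{ML2}/\textbf{MR2} that the paper uses. The one genuine divergence is in the base case for inverses in part (ii): the paper isolates a separate key lemma establishing the stronger identity ${}^a(y^{-1}) = ({}^ay)^{-1}$ (and its right analogue), proved by a further induction on the length of $a$ that invokes the cyclic conditions and \textbf{lri} specific to square-free solutions. Your route instead reads off the weaker identity ${}^a(y^{-1}) = ({}^{a^{y^{-1}}}y)^{-1}$ directly from \textbf{ML2} applied to $y^{-1}\cdot y = 1$, which is enough to land in $Y^{-1}$ and hence in $G(Y)$. Your argument is shorter and uses nothing beyond the bare matched-pair axioms on $(G,G)$; the paper's version yields a cleaner formula that may be of independent use, at the cost of an extra induction and reliance on the square-free structure.
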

Under the hypothesis of the proposition we prove first the following
key lemma.

\begin{lemma}
\label{Sinvariantlemma}
With the assumptions and notation of Proposition
\ref{Ginvariantprop}, the following are equalities in $G$
\begin{equation}
\label{keyeq1} {}^a{(y^{-1})}= ({}^ay)^{-1};\quad \quad (y^{-1})^a =
(y^a)^{-1} \quad \forall a \in G, \;\; y \in X.
\end{equation}
\end{lemma}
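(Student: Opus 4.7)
The plan is to work inside the braided group $(G,r_G)$ associated to $(X,r)$ via Facts \ref{theoremAMP}, exploiting the matched-pair axioms \textbf{ML1}--\textbf{ML2} and \textbf{MR1}--\textbf{MR2} of Definition \ref{MLaxioms}, together with the cyclic conditions (in particular \textbf{cl1} and \textbf{cr1}) that the square-free solution $(X,r)$ satisfies by Facts \ref{basictheorem}. The natural first move is \textbf{ML2} applied to the identity $y\cdot y^{-1}=1$ in $G$: this gives
\[
1={}^a(y\cdot y^{-1})=({}^ay)\cdot\bigl({}^{a^y}(y^{-1})\bigr),
\]
so ${}^{a^y}(y^{-1})=({}^ay)^{-1}$. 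Thus the first identity reduces to showing that ${}^a(y^{-1})={}^{a^y}(y^{-1})$, i.e.\ that $a$ and $a^y$ act identically on $y^{-1}$ under the left action. Symmetrically, \textbf{MR2} applied to $y^{-1}\cdot y=1$ yields $(y^{-1})^{{}^ya}=(y^a)^{-1}$, so the second identity reduces to $(y^{-1})^a=(y^{-1})^{{}^ya}$.

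I would handle the base case $a=x\in X$ first. Here the cyclic condition \textbf{cl1} asserts $\Lcal_{x^y}(y)=\Lcal_x(y)$ on $X$, and the companion condition \textbf{cr1} asserts $\Rcal_{{}^yx}(y)=\Rcal_x(y)$ on $X$. The plan is then to propagate these two agreements from $y\in X$ to $y^{-1}\in G$ using the group-homomorphism property $\Lcal_{uv}=\Lcal_u\Lcal_v$ (from \textbf{ML1}) and similarly \textbf{MR1}, together with the defining braided relation $xy=({}^xy)(x^y)$ in $G$ to rewrite both sides in a common normal form. This is essentially a manipulation within the matched-pair group showing that the "correction term" $x^yx^{-1}$ left-fixes $y^{-1}$, which follows from the fact that it left-fixes $y$ (by \textbf{cl1}) combined with $\Lcal_{x^yx^{-1}}$ being a bijection of $G$ preserving the coset structure $X\cup X^{-1}\cup\ldots$ induced by length.

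With the base case in hand, I would extend to general $a\in G$ by induction on the length of a word $a=x_1^{\epsilon_1}\cdots x_n^{\epsilon_n}$ with $x_i\in X$, $\epsilon_i=\pm 1$. The case $a=x^{-1}$ follows from the case $a=x$ by applying $\Lcal_{x^{-1}}=\Lcal_x^{-1}$ (a consequence of \textbf{ML1}): starting from ${}^x(y^{-1})=({}^xy)^{-1}$ and applying $\Lcal_x^{-1}$, one obtains ${}^{x^{-1}}(y^{-1})=({}^{x^{-1}}y)^{-1}$ after noting that $\Lcal_x^{-1}$ preserves $X$. For the inductive step, write $a=bc$ with $b,c$ shorter words and use \textbf{ML1}:
\[
{}^a(y^{-1})={}^b\bigl({}^c(y^{-1})\bigr)={}^b\bigl(({}^cy)^{-1}\bigr)
\]
by induction on $c$; since $y':={}^cy\in X$ (the left action of $G$ on $G$ restricts to the left action on $X$ from Remark \ref{ybe}), the inductive hypothesis applied to $b$ and $y'$ yields ${}^b(y'^{-1})=({}^by')^{-1}=({}^ay)^{-1}$. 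The right identity is handled in perfect parallel, with \textbf{MR1}, \textbf{MR2} and \textbf{cr1} in the roles of \textbf{ML1}, \textbf{ML2}, and \textbf{cl1}.

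The main obstacle I anticipate is precisely the base-case transfer: the cyclic condition \textbf{cl1} only gives equality of $\Lcal_x$ and $\Lcal_{x^y}$ at the single point $y$, and promoting this to $y^{-1}\in G$ is not formally automatic. The cleanest way I see is to expand $\Lcal_x(y^{-1})=xy^{-1}(x^{y^{-1}})^{-1}$ and $\Lcal_{x^y}(y^{-1})=(x^y)y^{-1}((x^y)^{y^{-1}})^{-1}$, noting $(x^y)^{y^{-1}}=x$ by \textbf{MR1}, and reducing the desired equality to a word identity in $G$ that follows from \textbf{cl1} and $xy=({}^xy)(x^y)$; I expect the calculation to fit in a few lines once the right substitution is made.
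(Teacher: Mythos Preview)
Your overall strategy matches the paper's: apply \textbf{ML2} to $y\cdot y^{-1}=1$, invoke a cyclic condition for the base case $a\in X$, treat $a\in X^{-1}$ separately, then induct on word length via \textbf{ML1}. The paper's base case is slicker than yours: rather than reducing to ${}^a(y^{-1})={}^{a^y}(y^{-1})$ and then proving that separately, it applies \textbf{ML2} with ${}^ya$ in place of $a$ from the outset, so that $({}^ya)^y=a$ by \textbf{lri} and the second factor of ${}^{{}^ya}(y\cdot y^{-1})$ is immediately ${}^a(y^{-1})$; comparing with $({}^{{}^ya}y)^{-1}=({}^ay)^{-1}$ (cyclic condition \textbf{cl2}) finishes in one line.

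Your ``coset structure'' heuristic is not a valid argument as stated---knowing that $\Lcal_{x^yx^{-1}}$ fixes $y$ and bijects some length-graded pieces of $G$ does not force it to fix $y^{-1}$---but the concrete expansion you propose in your final paragraph does go through: once you compute $x^{y^{-1}}={}^yx$ and $(x^y)^{y^{-1}}=x$ via \textbf{lri} and \textbf{MR1}, the desired equality reduces to the word identity $({}^xy)\,x=({}^yx)\,y$ in $G$, which follows from the braided relation together with \textbf{cl2} and \textbf{lri}. Incidentally, your reduction of the case $a=x^{-1}$ to the already-proved case $a=x$ by applying $\Lcal_x^{-1}$ to both sides is tidier than the paper's treatment, which carries out a fresh \textbf{ML2}-style computation for that case.
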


\begin{proof}
Note that each element $a \in G$ can be presented as a monomial
\begin{equation}
\label{keyeq2} a = \zeta_1\zeta_2 \cdots \zeta_n,\quad \zeta_i \in X
\bigcup X^{-1}.
\end{equation}
We shall consider  \emph{a reduced form of} $a$, that is a
presentation \ref{keyeq2} with minimal length $n$. We shall use
induction on the length $n$ of the reduced form of $a$.

\textbf{Step 1.} $a \in X \bigcup X^{-1}$. Two cases are possible:
\textbf{(i)} $a \in X.$ By the cyclic condition we have ${}^{{}^ya}y
= {}^ay$. This implies
\[
({}^{{}^ya}y).({}^ay)^{-1} = 1
\]
Recall that $(G,G)$ is a matched pair of groups, thus ${}^a1= 1$ for all  $a \in G.$ Consider the equalities
\[
\begin{array}{clll}
1 = {}^{{}^ya}1&= {}^{{}^ya}{(y.y^{-1})} \quad &:& \text { by \textbf{ML0}} \\
\quad&=[{}^{{}^ya}{y}].[{}^{({}^ya)^y}{(y^{-1})}]\quad &:& \; \text {by \textbf{ML2}} \\
\quad &={}^{{}^ya}{y}.({}^a{(y^{-1})}\quad &:& \text{ since $({}^ya)^y=a$
by \textbf{lri}}.
\end{array}
\]
Hence
\[
1 = ({}^{{}^ya}y).({}^ay)^{-1}={}^{{}^ya}{y}.({}^a{(y^{-1})},
\]
which is an equality in the group $G$, therefore the left hand side
of (\ref{keyeq1}) holds. For the right hand side one uses analogous
argument. By the (right) cyclic condition one has $y^{a^y}=y^{a}$,
hence
\[
 (y^{a})^{-1}(y^{a^y}) = 1
\]
This time we act on the right-hand side:
\[
\begin{array}{clll}
1 = 1^{a^y} &=(y^{-1} y)^{a^y}\quad &:&  \text {  by \textbf{MR0}}  \\
&=[(y^{-1})^{{}^y{(a^y)}}] [y^{(a^y)}]\quad&:&\text { by \textbf{MR2}}\\
&=[(y^{-1})^a ][y^{(a^y)}] \quad&:& \text {  by \textbf{lri}}.
\end{array}
\]
So $(y^{-1})^a (y^{(a^y)})= (y^{a})^{-1}(y^{a^y})$ holds in
 $G$ and therefore $ (y^{-1})^a = (y^{a})^{-1}$. This verifies
RHS of (\ref{keyeq1}).
 \textbf{(ii)} $a\in X^{-1}$, or equivalently $a= \zeta^{-1}$,
where $\zeta \in X.$ Recall that there is an equality
\[{}^{\zeta^{-1}}{y} = y^{\zeta} \quad \forall \zeta, y \in X.\]
Consider now the equalities:
\[
\begin{array}{clll}
1 & = {}^{{}^y{(\zeta^{-1})}}{[y y^{-1}]}\quad &:&  \text {by \textbf{ML0}}  \\
  &= {}^{(({}^y{\zeta})^{-1})}{[y y^{-1}]}\quad &:&  \text {by case \textbf{i}}\\
  &= [{}^{(({}^y{\zeta})^{-1})}y].[ {}^{(({}^y{\zeta})^{-1})^y}
  {y^{-1}}]\quad &:&  \text {by \textbf{ML2}} \\
  &= [y^{({}^y{\zeta})}].[{}^{{(({}^y{\zeta})^y)}^{-1}}{(y^{-1})}\quad &:&
   \text {by case \textbf{i}}\\
  &=[y^{\zeta}].[{}^{({\zeta}^{-1})}{(y^{-1})}\quad &:&  \text {by the cyclic condition and \textbf{lri}}  \\
  &=[{}^{\zeta^{-1}}y][{}^{({\zeta}^{-1})}{(y^{-1})}.
  \end{array}
\]
So the equality
\[
[{}^{\zeta^{-1}}y][{}^{({\zeta}^{-1})}{y^{-1}}] = 1
\]
implies
\[
[{}^{\zeta^{-1}}y]^{-1}={}^{({\zeta}^{-1})}{y^{-1}}
\]
and therefore
\[
({}^{a}y)^{-1}={}^{a}{(y^{-1})}.
\]
This proves the LHS of (\ref{keyeq1}). Analogous argument verifies
its RHS.

\textbf{Step 2. } Assume (\ref{keyeq1}) hold for each $y \in X$ and
each $a \in G$ with reduced form of minimal length $n$. Suppose $a
\in G$ has minimal length $n+1$. Then $a = \zeta b,$ where $\zeta
\in X\bigcup X^{-1}$, and $b \in G$ has length $n$. Then
\[
\begin{array}{clll}
{}^a{(y^{-1})}&= {}^{(\zeta b)}{(y^{-1})} \\
             &= {}^{\zeta}{({}^b{(y^{-1})})}\quad &:& \text {by \textbf{ML1}}\\
             &= {}^{\zeta}{(({}^by)^{-1})}\quad &:& \text {by the inductive assumption}\\
             &= ({}^{\zeta}{({}^by)})^{-1}\quad &:& \text {by the inductive assumption}\\
             &= ({}^{(\zeta b)}{y})^{-1} \quad &:& \text {by \textbf{ML1}} \\
             & = ({}^ay)^{-1} \quad &:& \text {by}\;\; a = \zeta b.
\end{array}
\]
This proves the LHS of (\ref{keyeq1}), the remaining part is proven
analogously. The lemma has been proved.
\end{proof}

\begin{proofproposition}
We shall prove the implication
\begin{equation}
\label{Ginvareq1} u \in G(Y), a \in G \Longrightarrow {}^au \in
G(Y)\quad {}a^u \in G(Y) \end{equation} This time we use induction
on the length of $u.$ Lemma \ref{Sinvariantlemma} gives the base for
the induction. Assume now (\ref{Ginvareq1}) holds for all $ u \in
G(Y)$ with length $n$. Suppose $u \in G(Y)$ has length $n+1$, so
$u=\zeta v$, where $\zeta \in Y\bigcup Y^{-1}$, and $v \in G(Y)$ has
length $n$. One has
\[
\begin{array}{clll}
{}^au&= {}^{a}{(\zeta v)}&& \\
     &=[{}^{a}{\zeta}] [{}^{a^{\zeta}}v]\quad
\quad &:& \text {by \textbf{ML2}} \\
             &\in G(Y)\quad &:& \text {since
             ${}^{a}{\zeta},\;\;{}^{{a}^{\zeta}}v \in G(Y)$ by the inductive assumption.}
\end{array}
\]
Analogous argument verifies $u^a \in G(Y)$. This proves part (ii) of
the proposition. The proof of (i) is analogous.
\end{proofproposition}

\begin{corollary}
In notation as above let $Y$ be a $G$-invariant subset of $(X,r)$.
Then

i) $S(Y)$ is an $r_S$-invariant subset of the braided monoid
$(S,r_S)$

ii) $G(Y)$ is an $r_G$-invariant subset of the braided group
$(G,r_G)$
\end{corollary}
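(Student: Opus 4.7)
The plan is to derive the corollary as a direct consequence of Proposition \ref{Ginvariantprop} together with the definition of the braided monoid $(S,r_S)$ and braided group $(G,r_G)$ recalled in Facts \ref{theoremAMP}. First I would recall the definitions: $r_S(u,v)=({}^uv,u^v)$ on $S\times S$, where ${}^{(\;)}\bullet$ and $\bullet^{(\;)}$ are the unique extensions to $S\times S$ of the left and right actions $X\times X\to X$, and similarly $r_G(u,v)=({}^uv,u^v)$ on $G\times G$. Consequently, $r_S$-invariance of the submonoid $S(Y)$ is equivalent to the two closure statements ${}^uv\in S(Y)$ and $u^v\in S(Y)$ for every $u,v\in S(Y)$; analogously for $r_G$-invariance of $G(Y)$.

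Next I would fix $u,v\in S(Y)$ and observe that $S(Y)\subseteq S$. Applying Proposition \ref{Ginvariantprop}(i) with the role of $u$ (in the proposition) played by $v$ and the role of $a$ played by $u$ — both of which lie in $S(Y)\subseteq S$ — yields ${}^uv\in S(Y)$. Swapping the roles of the two arguments and invoking the same clause of the proposition delivers $u^v\in S(Y)$. Hence $r_S(S(Y)\times S(Y))\subseteq S(Y)\times S(Y)$, which establishes (i). Part (ii) follows word for word, with Proposition \ref{Ginvariantprop}(ii), the group $G$, and $G(Y)$ taking the place of Proposition \ref{Ginvariantprop}(i), the monoid $S$, and $S(Y)$ respectively.

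There is no genuine obstacle at this stage: all the real work — the matched-pair inductive argument using axioms \textbf{ML1}, \textbf{ML2} and the inversion identities of Lemma \ref{Sinvariantlemma} — has already been carried out in the proof of Proposition \ref{Ginvariantprop}. The corollary is essentially a translation of those invariance statements into the language of $r_S$- and $r_G$-invariance, so a very short proof (one or two sentences citing the proposition) suffices.
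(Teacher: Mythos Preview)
Your proposal is correct and follows essentially the same approach as the paper's own proof: recall that $r_S$ (resp.\ $r_G$) is given by $r_S(u,v)=({}^uv,u^v)$, and then invoke Proposition~\ref{Ginvariantprop} to conclude that ${}^uv,\,u^v\in S(Y)$ (resp.\ $G(Y)$) whenever $u,v\in S(Y)$ (resp.\ $G(Y)$). The only cosmetic difference is that the paper writes out part (ii) and declares (i) analogous, whereas you do the reverse.
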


\begin{proof}
We shall prove (ii) ((i) is analogous).  We know that  $(G,r_G)$ is
a braided group and $r_G$ is defined via the left and right actions
on $G= G(X,r)$. So we have
 \[
 r_G(u,v) = ({}^uv, u^v)  \quad \forall u,v \in G.
 \]
By Proposition \ref{Ginvariantprop}  each pair  $u, v \in G(Y)$
satisfies ${}^uv, u^v \in G(Y).$ This shows that $G(Y)$ is
$r_G$-invariant.
\end{proof}

\begin{theorem}
\label{MAINDECOMPOSITIONTHM} Let $(X,r)$ be a square-free solution,
which decomposes into a disjoint union $X = Y\bigcup Z$  of
$r$-invariant subsets. Let $(Y,r_Y),  (Z,r_Z)$ be the restricted
solutions, $G =G(X,r)$,
 $G_Y=G(Y,r_Y)$, $G_Z=G(Z,r_Z)$. Then
\begin{enumerate}
\item
$G(Y) \simeq G_Y$, $G(Z) \simeq G_Z.$
\item \label{BP1}
$G_Y, G_Z$ is a  matched pair of groups with
actions induced
 from  the braided group $(G, r_G).$
 $G= G(X,r)$ is isomorphic to the double crossed products
 \[
 G \simeq G_Y\bowtie
 G_Z\simeq G_Z\bowtie G_Y.  \]
 In particular, $G$ factorises as:
 \begin{equation}
 \label{factorisationGeq}
G= G(X,r) = G_Y G_Z = G_Z G_Y.
 \end{equation}
 \item
$\Gcal$  decomposes as product of subgroups (which in general is not
a factorisation):
  \begin{equation}
 \label{decompositionGcaleq}
 \Gcal = \Gcal(Y) \Gcal(Z)= \Gcal(Z) \Gcal(Y).
 \end{equation}
\end{enumerate}
\end{theorem}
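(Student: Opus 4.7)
Proof plan: The strategy rests on Proposition \ref{Ginvariantprop} applied to both $Y$ and $Z$ (each is $\Gcal$-invariant by Lemma \ref{gcalinvariantlemma}), together with the strong braided group structure on $(G, r_G)$ recorded in Facts \ref{theoremAMP}. I would establish the statements in the order (2), (1), (3): the double crossed product description obtained in (2) gives the cleanest route to (1), and (3) is then a direct consequence.

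For (2), Proposition \ref{Ginvariantprop} gives ${}^a u \in G(Y)$ whenever $u \in G(Y)$ and $a \in G$; specialising $a$ to lie in $G(Z)$ yields a left action of $G(Z)$ on $G(Y)$, and symmetrically a right action of $G(Y)$ on $G(Z)$. The matched pair axioms \textbf{ML0}--\textbf{ML2} and \textbf{MR0}--\textbf{MR2} for $(G(Y), G(Z))$ follow by restriction from those of the strong matched pair $(G, G)$ on $(G, r_G)$. This produces the double crossed product $G(Y) \bowtie G(Z)$ and a natural multiplication homomorphism $\mu : G(Y) \bowtie G(Z) \to G$. Surjectivity of $\mu$ comes from iterating the braid identity $uv = ({}^u v)(u^v)$ in $G$: for $u \in Z$ and $v \in Y$ one has ${}^u v \in Y$ and $u^v \in Z$ by Remark \ref{our_extensionsareregular_rem}, so any word in the generators can be rewritten in the form (word in $Y$)(word in $Z$); the other ordering is dual, yielding $G = G(Z) G(Y)$ as well. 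Injectivity of $\mu$ reduces to showing $G(Y) \cap G(Z) = \{1\}$ inside $G$, which I would obtain from the bigrading of $G$ by $Y$- and $Z$-content (well defined because $r(Y \times Z) \subseteq Z \times Y$ preserves the types) combined with the uniqueness of factorisation inherent in the strong matched pair structure of $(G, G)$.

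For (1), since $Y$ is $r$-invariant the inclusion $Y \hookrightarrow X$ is a morphism of square-free solutions, so by functoriality of the YB-group construction there is a canonical surjective group homomorphism $\iota_Y : G_Y \to G(Y) \subseteq G$; analogously $\iota_Z : G_Z \to G(Z)$. Once $G \simeq G(Y) \bowtie G(Z)$ is in hand from (2), the projection onto the first factor gives a retraction $G \to G(Y)$ compatible with the braided group structures, and this identifies $G_Y$ with $G(Y)$ as braided groups; hence $\iota_Y$ is injective. Part (3) then follows by applying the canonical surjective homomorphism $\Lcal : G \to \Gcal$ to the factorisation in (2) and using $\Lcal(G(Y)) = \Gcal(Y)$ and $\Lcal(G(Z)) = \Gcal(Z)$; one does not in general get a factorisation of $\Gcal$ because $\ker \Lcal$ may fail to respect the bigrading.

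The principal technical hurdle is the injectivity of $\mu$, equivalently $G(Y) \cap G(Z) = \{1\}$; once that is in place, the matched pair axioms, the double cross product isomorphism, and the descent to $\Gcal$ all follow by routine restriction and functoriality arguments.
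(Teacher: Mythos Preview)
Your overall architecture---use Proposition~\ref{Ginvariantprop} to see that $G(Y)$ and $G(Z)$ are stable under the actions coming from $(G,r_G)$, restrict the matched pair axioms, and then push everything down to $\Gcal$ via $\Lcal$---is sound and is essentially what the paper does for the ``second'' factorisation $G = G_Z\,G_Y$ and for part~(3). The paper, however, does not attempt to build the first factorisation $G = G_Y\,G_Z$ (and with it part~(1) and the fact $G_Y\cap G_Z = 1$) from scratch: it simply invokes \cite[Proposition~4.25]{TSh08}, which already provides the matched pair $(G_Y,G_Z)$ and the unique factorisation $w = ua$ with $u\in G_Y$, $a\in G_Z$. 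Only then does it use the braided identity $ua = ({}^u a)(u^a)$ together with Proposition~\ref{Ginvariantprop} to flip the order and obtain $G = G_Z\,G_Y$.

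Where your proposal has a genuine gap is precisely the step you flag as the ``principal technical hurdle'': showing $G(Y)\cap G(Z)=\{1\}$. The bigrading argument you sketch does not close it. The $Y$/$Z$-content gives a well-defined homomorphism $G\to\Z\times\Z$, and an element of $G(Y)\cap G(Z)$ indeed lands in bidegree $(0,0)$; but bidegree $(0,0)$ in $G(Y)$ only says the total $Y$-degree vanishes, and there are plenty of nontrivial such elements (already when $(Y,r_Y)$ is trivial, $G_Y$ is free abelian on $Y$ and $y_1 y_2^{-1}$ has $Y$-degree $0$). Invoking ``the strong matched pair structure of $(G,G)$'' does not help either: that is a statement about factorising elements of $G\bowtie G$, not about the intersection of the two particular subgroups $G(Y)$, $G(Z)$ inside $G$. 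Relatedly, your route to (1)---get $G\simeq G(Y)\bowtie G(Z)$ first, then project to recover $G_Y\simeq G(Y)$---is circular, since the isomorphism in (2) already presupposes the injectivity you are trying to establish.

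The clean way to close the gap (and this is what \cite{TSh08} does) is to argue via universal properties rather than inside $G$: build the abstract double crossed product $G_Y\bowtie G_Z$ from the matched pair data, use the presentation of $G=G(X,r)$ to produce a homomorphism $G\to G_Y\bowtie G_Z$ sending $y\mapsto (y,1)$ and $z\mapsto (1,z)$ (one checks the defining relations of $G$ hold in $G_Y\bowtie G_Z$), and observe that multiplication $G_Y\bowtie G_Z\to G$ is a two-sided inverse. This simultaneously yields (1), the isomorphism in (2), and $G(Y)\cap G(Z)=\{1\}$, without ever needing to analyse that intersection directly. Your argument for surjectivity (rewriting via $uv=({}^u v)(u^v)$ and Remark~\ref{our_extensionsareregular_rem}) and for part~(3) is fine as stated.
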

\begin{proof}
It follows from  \cite{TSh08} Proposition 4.25. that $G_Y, G_Z$ is a
matched pair, so there is a factorisation $G = G_Y G_Z $, and each
$w \in G$ has unique presentation as
\begin{equation}
\label{BP1eq1} w = ua \quad \text{with}\quad u \in G_Y, a \in G_Z.
\end{equation}
On the other hand $(X,r)$ is a solution, thus $(G, r_G)$ is a
braided group and the equality
\[
ua= ({}^ua)(u^a)\quad \text{holds}\quad \forall \; u, a \in G.
\]
$Y$ and $Z$ are $G$-invariant subsets of $X$, so by Proposition
\ref{Ginvariantprop}
\[
\begin{array}{c}
a \in G_Z, u \in G\Longrightarrow {}^ua \in G_Z \\
u \in G_Y, a \in G \Longrightarrow u^a \in G_Y. \end{array}
\]
Therefore each element $w \in G$ presents as
\begin{equation}
\label{BP1eq2} w = ua = a_1u_1 \quad\text{ where}\quad u, u_1 \in
G_Y, a, a_1 \in G_Z, \; a_1 = {}^ua, u_1 = u^a. \end{equation}

The uniqueness of $a_1$ and $u_1$ in (\ref{BP1eq2}) follows from
$G_Y\bigcap G_Z = 1.$ This implies the factorisation $G = G_Z G_Y $,
hence $G_Y, G_Z$ is a strong matched pair.

We apply the group homomorphism $\Lcal$ to (\ref{factorisationGeq}).
Now the equalities $\Lcal(G) = \Gcal$ , $\Lcal(G_Y)=\Lcal(G(Y)) =
\Gcal(Y)$ and $\Lcal(G_Z)=\Lcal(G(Z)) = \Gcal(Z)$ give the
decomposition (\ref{decompositionGcaleq}). Note that each $w \in
\Gcal$ decomposes as a product $w = ua = a_1u_1,$ where $u,u_1 \in
\Gcal(Y), a, a_1 \in \Gcal(Z),$ but this presentation is possibly
not unique.
\end{proof}

\begin{proposition}
With the assumptions and notation of Theorem \ref{MAINDECOMPOSITIONTHM},
there are isomorphisms of monoids $ S(Y) \simeq S(Y, r_{Y}), \quad
S(Z) \simeq S(Z, r_{Z})$. $S(X)\bigcap S(Y)=1$. Furthermore, (S(Y),
S(Z)) is a strong matched pair of monoids,  $S$ is isomorphic to the
double crossed product
\[S = S(X,r) \simeq S(Y)\bowtie
 S(Z)\simeq S(Z)\bowtie
 S(Y).\] There is a factorisation of monoids
\[
S=S(Y)S(Z)=S(Z)S(Y),
\]
where each $w \in S$ decomposes uniquely as \[w = ua = a_1u_1,\quad
u,u_1 \in S(Y), a, a_1 \in S(Z).\]
\end{proposition}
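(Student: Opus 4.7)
The plan is to parallel the proof of Theorem \ref{MAINDECOMPOSITIONTHM}, replacing groups by monoids throughout, and invoking Proposition \ref{Ginvariantprop}(i) in place of (ii), together with Facts \ref{theoremAMP} which equips $(S,r_S)$ with a strong braided monoid structure.

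First I would verify $S(Y) \simeq S(Y,r_Y)$, and analogously for $Z$. Since $Y$ is $r$-invariant, every defining relation $xy = zt$ of $S(Y, r_Y)$ is among the defining relations of $S(X,r)$, so there is a canonical surjective monoid homomorphism $\pi\colon S(Y,r_Y) \twoheadrightarrow S(Y)$. For injectivity, I use that $S=S(X,r)$ is graded (part of Facts \ref{theoremAMP}) and that each defining relation $xy = zt$ of $S$ (with $r(x,y)=(z,t)$) preserves the number of $Y$-letters and $Z$-letters in a word: by $r$-invariance of $Y$ and $Z$ and Remark \ref{our_extensionsareregular_rem}, the possible shapes of such relations are exactly $Y\,Y = Y\,Y$, $Z\,Z = Z\,Z$, $Y\,Z = Z\,Y$, and $Z\,Y = Y\,Z$. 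Hence two words in $Y$-letters are equal in $S$ iff they are equal in $S(Y,r_Y)$. As a free bonus, the same counting argument yields $S(Y) \cap S(Z) = \{1\}$.

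Next I would establish the matched pair. By Proposition \ref{Ginvariantprop}(i), for $u \in S(Y)$ and $a \in S$ one has ${}^au,\ u^a \in S(Y)$, and symmetrically $S(Z)$ is stable. So the left/right actions of the braided monoid $(S,r_S)$ restrict to give actions of $S(Z)$ on $S(Y)$ and of $S(Y)$ on $S(Z)$, satisfying the matched-pair axioms (\textbf{ML0}--\textbf{ML2}, \textbf{MR0}--\textbf{MR2}) since these hold on all of $S$. For the factorisation, the braided monoid identity $uv = ({}^uv)(u^v)$ from Facts \ref{theoremAMP} applied to $u \in S(Y)$, $a \in S(Z)$ gives $ua = ({}^ua)(u^a) \in S(Z)\,S(Y)$, so $S(Y)\,S(Z) \subseteq S(Z)\,S(Y)$; the opposite inclusion is symmetric. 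Since $X = Y \cup Z$ generates $S$, an induction on length yields $S = S(Y)\,S(Z) = S(Z)\,S(Y)$.

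Finally I would apply the general double cross product proposition for monoids (stated just after Definition \ref{MLaxioms}): it suffices to show that the multiplication map $\mu\colon S(Y) \times S(Z) \to S$, $(u,a) \mapsto ua$, is bijective. Surjectivity is the factorisation just obtained. For injectivity, suppose $ua = u'a'$ with $u,u' \in S(Y)$, $a,a' \in S(Z)$. The $Y$- and $Z$-letter counts are invariants of the word, so $|u|=|u'|$ and $|a|=|a'|$; I would then run an induction on $|w|=|u|+|a|$, peeling off leftmost letters using the matched-pair relations $ua = ({}^ua)(u^a)$ to reduce to shorter words and invoke $S(Y) \cap S(Z) = \{1\}$ in the base case. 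Bijectivity of $\mu$ yields $S \cong S(Y) \bowtie S(Z)$; the symmetric factorisation $S = S(Z)\,S(Y)$ gives the other decomposition and hence the strong matched pair. The main obstacle is the injectivity of $\mu$: in the group case of Theorem \ref{MAINDECOMPOSITIONTHM} this was immediate from $G_Y \cap G_Z = 1$ and the matched pair, but without inverses one must lean on the grading and the explicit rewriting via ${}^{(\ )}\bullet$ and $\bullet^{(\ )}$ to convert any expression $ua$ into its unique normal form.
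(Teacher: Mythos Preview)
The paper does not actually give a proof of this proposition: it is stated immediately after Theorem \ref{MAINDECOMPOSITIONTHM} and left unproved, the evident intention being that it follows by the same argument as the group case, with the relevant monoid machinery supplied by \cite{TSh08} (in particular the monoid analogue of Proposition 4.25 there) together with Facts \ref{theoremAMP} and Proposition \ref{Ginvariantprop}(i).

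Your proposal does exactly this, and with more detail than the paper offers. The argument for $S(Y)\simeq S(Y,r_Y)$ and $S(Y)\cap S(Z)=\{1\}$ via the bigrading by $Y$- and $Z$-letter counts is correct and is precisely the right substitute for the trivial-intersection step in the group case. The use of Proposition \ref{Ginvariantprop}(i) to see that $S(Y)$ and $S(Z)$ are $r_S$-invariant submonoids, and hence that the braided-monoid actions restrict, mirrors the paper's use of part (ii) for groups. The one place where you are a little sketchy is the injectivity of $\mu\colon S(Y)\times S(Z)\to S$: your induction on length is the right idea, but note that the cleanest way to finish is to observe that any two words representing the same element of $S$ are connected by applications of the quadratic relations, each of which preserves not just the $(Y,Z)$-bidegree but, after pushing all $Z$-letters to the right using $r$, yields the same pair $(u,a)$ up to relations internal to $S(Y,r_Y)$ and $S(Z,r_Z)$; this is exactly the content of the strong graded braided monoid structure in Facts \ref{theoremAMP}, so you may simply invoke that rather than redo the rewriting argument by hand.
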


The following lemma is straighforward. It verifies the associativity
of bicross products for $\Gcal$-invariant subset of $(X, r)$.

\begin{lemma}
Notation as above. The double cross product on $\Gcal$-invariant
disjoint subsets of $(X,r)$ is commutative and associative. More
precisely,  suppose $Y_1, Y_2, Y_3$ are pairwise disjoint
$\Gcal$-invariant subsets of $X$. Let $Y = Y_1\bigcup Y_2 \bigcup
Y_3$. Then
\[
S(Y_i\bigcup Y_j) \simeq S(Y_i)\bowtie S(Y_j)\simeq S(Y_j)\bowtie
S(Y_i), 1 \leq i < j\leq s. \] Analogous statement is true for the
groups $G(Y_i \bigcup Y_j),  1 \leq i < j\leq s.$ Furthermore,
\[
\begin{array}{rcl}
S(Y) & \simeq S(Y_1)\bowtie [S(Y_2) \bowtie S(Y_3)] \simeq
[S(Y_1)\bowtie S(Y_2)] \bowtie S(Y_3)
\\
G (Y) & \simeq G(Y_1)\bowtie [G(Y_2) \bowtie G(Y_3)] \simeq
[G(Y_1)\bowtie G(Y_2)] \bowtie G(Y_3).
\end{array}
\]
\end{lemma}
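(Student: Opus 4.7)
The plan is to reduce the statement to repeated application of the binary decomposition results already established in Theorem \ref{MAINDECOMPOSITIONTHM} (for groups) and the preceding proposition (for monoids). The statement is really two assertions: a commutativity isomorphism $A \bowtie B \simeq B \bowtie A$ in the binary case, and an associativity/bracketing isomorphism in the ternary case.

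First I would handle the binary case. Given disjoint $\Gcal$-invariant subsets $Y_i, Y_j \subseteq X$, their union $W = Y_i \cup Y_j$ is itself $\Gcal$-invariant, hence $r$-invariant, and by Lemma \ref{gcalinvariantlemma} it decomposes as a disjoint union of the two $r$-invariant sets $Y_i$ and $Y_j$. Applying Theorem \ref{MAINDECOMPOSITIONTHM} (and its monoid analogue) directly to the square-free solution $(W, r_W)$ yields both sides of
\[
S(W) \simeq S(Y_i) \bowtie S(Y_j) \simeq S(Y_j) \bowtie S(Y_i),
\]
together with the analogous isomorphisms for the groups. A routine but essential bookkeeping point is to identify $S(Y_i)$ as computed inside $S(X,r)$ with the abstract $S(Y_i, r_{Y_i})$; this identification is precisely what the preceding proposition provides, so no new work is required here.

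Next I would establish the ternary case by nesting two binary decompositions. Let $Y = Y_1 \cup Y_2 \cup Y_3$. Since $Y$ is $\Gcal$-invariant, $(Y, r_Y)$ is itself a square-free solution, and inside it both $Y_1$ and $Y_2 \cup Y_3$ are disjoint $\Gcal(Y, r_Y)$-invariant subsets. The binary case applied to the split $Y = Y_1 \cup (Y_2 \cup Y_3)$ gives $S(Y) \simeq S(Y_1) \bowtie S(Y_2 \cup Y_3)$. A second application to $(Y_2 \cup Y_3, r_{Y_2 \cup Y_3})$ yields $S(Y_2 \cup Y_3) \simeq S(Y_2) \bowtie S(Y_3)$, producing the right-associated iterated double crossed product. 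A symmetric argument via $(Y_1 \cup Y_2) \cup Y_3$ produces the left-associated form, and since both are canonically isomorphic to $S(Y)$ they are isomorphic to each other. The proof for the groups is identical, invoking the group version in each step.

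The main obstacle, and the only nontrivial verification, will be checking that the matched pair structure of $(S(Y_2), S(Y_3))$ inside $S(Y_2 \cup Y_3)$ is preserved by the action of $S(Y_1)$ inherited from the ambient braided monoid $(S, r_S)$, so that the iterated double crossed product genuinely assembles into $S(Y)$. This reduces to the observation that all left and right actions involved are restrictions of the single canonical matched pair action of $(S, S)$ coming from $(S, r_S)$, and Proposition \ref{Ginvariantprop} guarantees that each $S(Y_k)$ is preserved under these actions. Consequently the axioms \textbf{ML1}, \textbf{ML2}, \textbf{MR1}, \textbf{MR2} for $(S, S)$ automatically restrict to give the required compatibilities between the inner and outer matched pair structures, so once the setup is carefully assembled the associativity is formal.
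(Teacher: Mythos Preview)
Your approach is correct and is precisely the natural way to flesh out this lemma. The paper itself gives no proof at all: it simply introduces the lemma with ``The following lemma is straighforward'' and moves on. Your reduction to iterated applications of Theorem~\ref{MAINDECOMPOSITIONTHM} (and its monoid analogue), together with Proposition~\ref{Ginvariantprop} to guarantee that the nested matched pair actions are compatible restrictions of the ambient $(S,S)$ action, is exactly what ``straightforward'' means here.
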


\begin{theorem}
\label{decompositiontheorem} Let $(X,r)$ be a nontrivial square-free solution.
Suppose $X$ decomposes into a disjoint union
\[
X = \bigcup_{1 \leq i \leq p} Y_i,
\]
of  $\Gcal$-invariant subsets $Y_1, Y_2, \cdots Y_s$. Then
\[
\begin{array}{rcl}
S=S(X,r) & \simeq S(Y_1)\bowtie S(Y_2) \bowtie \cdots  S(Y_s)
\\
G= G(X,r) & \simeq G(Y_1)\bowtie G(Y_2) \bowtie \cdots  G(Y_s).
\end{array}
\]
In particular,
\begin{enumerate}
\item
 $S=S(X,r)$ factorises as
 a product of submonoids:
\begin{equation}
\label{decompositionS_Y} S=S(X,r)=S(Y_1)\;S(Y_2)\;\cdots \;S(Y_s),
\end{equation}
where each $u \in S$ has unique presentation $ u = u_1u_2\cdots
u_s$, $u_i \in S(Y_i), 1\leq i\leq s.$
\item
$G=G(X,r)$ factorises as a product of subgroups:
\begin{equation}
\label{decompositionG_Y} G=G(X,r)=G(Y_1)\;G(Y_2)\;\cdots \;G(Y_s),
\end{equation}
where each $u \in G$ has unique presentation $ u = u_1u_2\cdots u_s,
u_i \in G(Y_i), 1\leq i\leq s.$
\item
\begin{equation}
\label{decompositionGcal_Y}
\Gcal=\Gcal(X,r)=\Gcal(Y_1)\;\Gcal(Y_2)\;\cdots \;\Gcal(Y_s),
\end{equation}
in the sense that each $a \in \Gcal$ is presented as a product $a
= a_1a_2\cdots a_s$, where $a_i \in \Gcal(Y_i), 1\leq i\leq s$, but this
presentation is possibly not unique. At least one of the groups
$\Gcal(Y_i)$ is nontrivial. (We have $\Gcal(Y_i) = 1$ if and only if
$Y_i \subset \ker \Lcal$ ).
\end{enumerate}
\end{theorem}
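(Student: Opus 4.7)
My plan is to proceed by induction on the number $s$ of pieces, with base case $s=2$ supplied by Theorem \ref{MAINDECOMPOSITIONTHM} and associativity of the double cross product $\bowtie$ supplied by the lemma immediately preceding this statement.

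For the inductive step I would set $Y := \bigcup_{i \geq 2} Y_i$, noting that $Y$ is again $\Gcal$-invariant (a union of $\Gcal$-invariant subsets is $\Gcal$-invariant). Then $X = Y_1 \cup Y$ is a disjoint union of two $\Gcal$-invariant subsets, and Theorem \ref{MAINDECOMPOSITIONTHM} yields $S \simeq S(Y_1) \bowtie S(Y)$ and $G \simeq G(Y_1) \bowtie G(Y)$, together with the factorisations $S = S(Y_1)S(Y)$ and $G = G(Y_1)G(Y)$. Applying the inductive hypothesis to the restricted square-free solution $(Y, r_{\mid Y})$ and its decomposition into $Y_2, \ldots, Y_s$ gives $S(Y) \simeq S(Y_2) \bowtie \cdots \bowtie S(Y_s)$, and the associativity lemma then assembles these pieces into $S \simeq S(Y_1) \bowtie S(Y_2) \bowtie \cdots \bowtie S(Y_s)$; the argument for $G$ is identical. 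The factorisations (\ref{decompositionS_Y}) and (\ref{decompositionG_Y}) drop out immediately, and uniqueness of the presentation $u = u_1 u_2 \cdots u_s$ propagates inductively from the uniqueness in each two-factor double cross product.

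To obtain (\ref{decompositionGcal_Y}) I would apply the canonical homomorphism $\Lcal : G \to \Sym(X)$ to the factorisation (\ref{decompositionG_Y}). Since $\Lcal(G(Y_i)) = \Gcal(Y_i)$ by Notation \ref{S(Y)G(Y)def}, each $a \in \Gcal$ admits a presentation $a = a_1 \cdots a_s$ with $a_i \in \Gcal(Y_i)$. I would emphasise that uniqueness in $G$ does \emph{not} survive after applying $\Lcal$, because $\ker \Lcal$ may be nontrivial; this is precisely why one obtains only a product decomposition of $\Gcal$, not a double cross product factorisation.

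For the final clause I would argue by contradiction. If $\Gcal(Y_i) = 1$ for every $i$, then by Remark \ref{trivialG(Y)remark} each $Y_i$ lies in $\ker \Lcal$, so $X = \bigcup_i Y_i \subset \ker \Lcal$; but $X$ generates $\Gcal$, forcing $\Gcal = 1$ and hence $\Lcal_x = \id_X$ for every $x \in X$. By Example \ref{trivialsolex} this means $(X,r)$ is the trivial solution, contradicting the hypothesis. The only technical point that requires care in the induction is verifying that $Y_2 \cup \cdots \cup Y_s$ is itself $\Gcal$-invariant so that Theorem \ref{MAINDECOMPOSITIONTHM} applies at each stage; this is immediate, and is really the engine that makes the iteration run.
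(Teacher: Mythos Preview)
Your proposal is correct and follows exactly the approach the paper intends: the paper does not write out an explicit proof of this theorem, but the unlabeled associativity lemma placed immediately before it (for three $\Gcal$-invariant pieces) together with Theorem~\ref{MAINDECOMPOSITIONTHM} (the two-piece case) are clearly meant to feed precisely the induction on $s$ that you describe. Your handling of part~(3) via $\Lcal$ and of the final nontriviality clause via Remark~\ref{trivialG(Y)remark} is also in line with the paper's setup.
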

\begin{corollary}
\label{decompositionorbits} Let $(X,r)$ be a nontrivial square-free
solution, which is either finite, or infinite but with a finite set
of $\Gcal$-orbits. Let $X_1\cdots X_t$ be the set of all orbits in
$X$ denoted so that the first $t_0$ orbits are exactly the
nontrivial ones. Then
\begin{equation}
\label{decompositionorbitseq}
\begin{array}{rcl}
S=S(X,r)&=&S(X_1)\;S(X_2)\;\cdots \;S(X_t) \\
G=G(X,r)&=&G(X_1)\;G(X_2)\;\cdots \;G(X_t)\\
\Gcal=\Gcal(X,r) &=& \Gcal(X_1)\;\Gcal(X_2)\; \cdots
\;\Gcal(X_{t_0}).
\end{array}
\end{equation}
\end{corollary}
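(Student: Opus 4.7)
The plan is to obtain this corollary as an immediate specialization of Theorem \ref{decompositiontheorem}, by taking the finite family of $\Gcal$-invariant subsets to be precisely the family of $\Gcal$-orbits.

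First I would verify that the hypotheses of Theorem \ref{decompositiontheorem} are satisfied by the orbit decomposition. By definition each $\Gcal$-orbit $X_i$ is $\Gcal$-invariant, and distinct orbits are disjoint, so $X = X_1 \cup X_2 \cup \cdots \cup X_t$ is a disjoint decomposition into $\Gcal$-invariant subsets. The standing assumption that $(X,r)$ is either finite or has only finitely many $\Gcal$-orbits guarantees that $t$ is finite, which is exactly what Theorem \ref{decompositiontheorem} requires.

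Next I would apply Theorem \ref{decompositiontheorem} with $Y_i = X_i$ for $1 \leq i \leq t$. This yields at once the monoid and group isomorphisms
\[
S(X,r) \simeq S(X_1) \bowtie \cdots \bowtie S(X_t), \qquad G(X,r) \simeq G(X_1) \bowtie \cdots \bowtie G(X_t),
\]
and in particular the first two factorization equalities of (\ref{decompositionorbitseq}), with uniqueness of presentation in each case. The same theorem also delivers the preliminary decomposition $\Gcal = \Gcal(X_1)\Gcal(X_2)\cdots\Gcal(X_t)$ as a product of subgroups (not necessarily unique as a factorization).

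Finally, to obtain the truncated $\Gcal$-decomposition I would relabel the orbits so that the $t_0$ nontrivial ones (those with $X_i \not\subset \ker \Lcal$) come first. By Remark \ref{trivialG(Y)remark}, one has $\Gcal(X_i) = 1$ for each trivial orbit $X_i$ with $i > t_0$, so these identity factors may simply be deleted from the product, yielding $\Gcal = \Gcal(X_1)\Gcal(X_2)\cdots\Gcal(X_{t_0})$. Since the whole argument is a direct specialization of the general theorem, I do not foresee any real obstacle; the only subtlety worth flagging is the interpretation of \emph{nontrivial orbit} as $X_i \not\subset \ker \Lcal$, which is exactly the condition ensuring that the factor $\Gcal(X_i)$ is itself nontrivial and therefore must be retained in the product.
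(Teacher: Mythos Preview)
Your proposal is correct and matches the paper's intended argument exactly: the paper states this corollary immediately after Theorem~\ref{decompositiontheorem} with no separate proof, so it is meant to follow by the specialization $Y_i = X_i$ that you carry out. Your flag on the meaning of ``nontrivial orbit'' is apt---the truncation of the $\Gcal$-product to $t_0$ factors requires precisely the reading $X_i \not\subset \ker\Lcal$ (equivalently $\Gcal(X_i)\neq 1$), which is the one supplied by Theorem~\ref{decompositiontheorem}(3) and Remark~\ref{trivialG(Y)remark}.
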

For multipermutation square-free solutions $(X,r)$ with $\mpl X = m$
there is a natural and important decomposition: $X$ decomposes  as a
disjoint union of its $(m-1)$ th retract classes. The retract
classes $[x^{(k)}], 1 \leq k, x \in X$ are introduced in section
\ref{generalmplsection},
see Notation \ref{notret0}, and Facts \ref{VIPfacts}. They are
disjoint $r$-invariant subsets of $(X,r)$ and behave nicely. Note
that when $\mpl X = m$, and $k < m-1$ at least one of the $k$-th
retract classes is not $\Gcal$-invariant. Moreover, each $(m-1)$-th
retract class $[x^{(m-1)}]$ is $\Gcal$-invariant and contains the
orbit $\Ocal_{\Gcal} (x)$. More precisely, each retract class
$[x^{(m-1)}]$ splits into a disjoint union of the orbits $X_{i_j},$
which intersect it nontrivially.

\begin{theorem}
\label{decompositionretractclasses} Let $(X,r)$ be a nontrivial
square-free solution, which is either finite, or is infinite but
with a finite number of $\Gcal$-orbits. Suppose it has a finite
multipermutation level $\mpl (X,r)= m.$ Then
\begin{enumerate}
\item
\label{decompretractclasses1}
$\Ret^{(m-1)}(X,r)$ is a finite set of order $\leq t,$ where $t$ is
the number of $\Gcal$-orbits of $X$. Let
\[Y_1 = [\xi_1^{(m-1)}], \cdots, Y_s =[\xi_s^{(m-1)}]\] be the set of
all distinct $(m-1)$-retract classes in $X$. One has $s \geq 2$, and
$|Y_i| \geq 2$ for some $1 \leq i\leq
s$.
\item
\label{decompretractclasses2}
 $X$ is a disjoint union $X = \bigcup_{1 \leq i \leq s} Y_i.$
 Each $Y_i, 1 \leq i \leq s,$ is $\Gcal$-invariant, and  $\mpl (Y_i, r_i ) \leq m-1$, where $(Y_i,r_i)$ is the restricted solution.
 \item
 \label{decompretractclasses3}
 The  monoid $S= S(X,r)$ and the group
$G=G(X,r)$ have factorisations as in (\ref{decompositionS_Y}) and
(\ref{decompositionG_Y}), respectively. $\Gcal=\Gcal(X,r)$ also
decomposes as a product of subgroups (\ref{decompositionGcal_Y}),
but some pairs of these subgroups may have nontrivial intersection.
\end{enumerate}
\end{theorem}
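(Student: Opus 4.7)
The plan is to verify the three parts in sequence, relying on properties of the retract classes $[x^{(k)}]$ (from Notation \ref{notret0} and Facts \ref{VIPfacts}) plus the decomposition machinery of Theorem \ref{decompositiontheorem}.

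For part (\ref{decompretractclasses1}), I would first identify $\Ret^{m-1}(X,r)$ set-theoretically with $\{Y_1,\dots,Y_s\}$, since by definition the elements of the $(m-1)$-th retract are precisely the $(m-1)$-retract classes. The bound $s \leq t$ is immediate from the observation (made in the discussion preceding the theorem) that each $Y_i$ is $\Gcal$-invariant and decomposes as a disjoint union of the $\Gcal$-orbits it meets, so the $s$ classes cover the $t$ orbits in disjoint chunks. For $s \geq 2$: if $s=1$ then $\Ret^{m-1}(X,r)$ is already a one-point set, contradicting minimality of $m$ in the definition of $\mpl$. For the claim that $|Y_i| \geq 2$ for some $i$, I would argue by contradiction: if $|Y_i|=1$ for every $i$, then the iterated retraction map $q: X \to \Ret^{m-1}(X,r)$ is a bijection, which forces each intermediate retraction $\Ret^{k}(X,r) \to \Ret^{k+1}(X,r)$ to be a bijection (a surjection whose composite is bijective). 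But $\Ret^{m-1}(X,r) \to \Ret^m(X,r)$ collapses $s \geq 2$ elements to one, contradiction.

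For part (\ref{decompretractclasses2}), the disjointness and union are built into the definition of the classes, and $\Gcal$-invariance of each $Y_i$ is exactly the fact stated in the text preceding the theorem. The key content is the bound $\mpl(Y_i,r_i) \leq m-1$. The essential observation is that if $y,y' \in Y_i$ satisfy $\Lcal_y = \Lcal_{y'}$ as permutations of $X$, then restricting to $Y_i$ (which makes sense since $Y_i$ is $\Gcal$-invariant) gives $\Lcal^{Y_i}_y = \Lcal^{Y_i}_{y'}$. Thus the equivalence relation defining $\Ret(Y_i,r_i)$ is \emph{coarser} than the one induced from $\Ret(X,r)$. Iterating this comparison, there is a canonical surjection from $\Ret^{k}(Y_i,r_i)$ onto the image of $Y_i$ in $\Ret^k(X,r)$ for every $k$. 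Taking $k=m-1$, the image of $Y_i$ in $\Ret^{m-1}(X,r)$ is the single point $\xi_i^{(m-1)}$, so $\Ret^{m-1}(Y_i,r_i)$ is a one-point set, giving $\mpl(Y_i,r_i)\leq m-1$.

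Part (\ref{decompretractclasses3}) is then a direct application of Theorem \ref{decompositiontheorem} to the $\Gcal$-invariant decomposition $X = \bigcup_{i=1}^s Y_i$ obtained in (\ref{decompretractclasses2}), yielding the double-crossed-product factorisations of $S(X,r)$ and $G(X,r)$ along with the product decomposition of $\Gcal(X,r)$ in (\ref{decompositionS_Y})--(\ref{decompositionGcal_Y}).

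The main obstacle I anticipate is the careful formalisation in part (\ref{decompretractclasses2}) of the inductive step comparing $\Ret^{k}(Y_i,r_i)$ with the image of $Y_i$ in $\Ret^k(X,r)$: one needs to see that the surjection $Y_i \to \Ret(Y_i,r_i)$ factors through the restriction to $Y_i$ of $X \to \Ret(X,r)$, and that this compatibility propagates through iterated retraction. Everything else reduces to bookkeeping on retract classes and invoking the decomposition theorems already proved.
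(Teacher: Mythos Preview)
Your overall strategy matches the paper's: it too derives (1) and (2) from the basic properties of retract classes (Proposition~\ref{vip_retrprop} and Facts~\ref{VIPfacts}) and then invokes Theorem~\ref{decompositiontheorem} for (3). Two points in your write-up need repair, though.

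First, your contradiction for ``$|Y_i|\ge 2$ for some $i$'' does not land as written. From $|Y_i|=1$ for all $i$ you correctly get that $X\to\Ret^{m-1}(X,r)$ is a bijection and hence that each intermediate step $\Ret^k\to\Ret^{k+1}$, $0\le k\le m-2$, is a bijection; but the fact that the \emph{next} step $\Ret^{m-1}\to\Ret^m$ collapses $s\ge2$ points to one is not in conflict with that. To finish you must add one more observation: a bijective retraction $X\to\Ret(X,r)$ is an isomorphism of solutions, so iterating gives $\Ret^m(X,r)\cong X$, contradicting $|\Ret^m(X,r)|=1$ while $|X|=s\ge2$. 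The paper's argument here is shorter: if every $Y_i$ is a singleton then (since each $Y_i$ is a union of $\Gcal$-orbits) every $\Gcal$-orbit is a singleton, so $(X,r)$ is the trivial solution, contradicting the hypothesis.

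Second, in part~(\ref{decompretractclasses2}) your surjection points the wrong way. You correctly observe that $\sim_{Y_i}$ (the retract equivalence on $(Y_i,r_i)$) is \emph{coarser} than the restriction of $\sim_X$ to $Y_i$; but that yields a surjection from the image of $Y_i$ in $\Ret^k(X,r)$ \emph{onto} $\Ret^k(Y_i,r_i)$, not the direction you state. Fortunately this is exactly the direction you need: at $k=m-1$ the source is the single point $\xi_i^{(m-1)}$, forcing $|\Ret^{m-1}(Y_i,r_i)|=1$. The paper bypasses this inductive comparison entirely by citing Facts~\ref{VIPfacts}\,(\ref{fact2}), which already records that $\mpl([x^{(k)}],r_{x,k})\le k$; your sketch is essentially a re-derivation of that fact, and your own caveat about the inductive formalisation is well placed.
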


\begin{proof}
Clearly $X= \bigcup_{1 \leq i \leq s} Y_i$ is a disjoint union.
It follows from Proposition \ref{vip_retrprop} that $s \geq 2$,
$Y_i$ is $\Gcal$-invariant for $1 \leq i \leq s$ and contains each $\Gcal$-orbit which intersects it nontrivially.
If we assume  $\mid Y_i\mid = 1, 1 \leq i \leq s$, this would imply that all $\Gcal$- orbits in $X$ are trivial,
and therefore $(X,r)$ is a trivial solution, a contradiction.
It follows than that  $\mid Y_i\mid \geq 2$ for some $i, 1 \leq i \leq s.$
The inequality $mpl (Y_i, r_i)\leq m-1$ follows from  Facts \ref{VIPfacts}  \ref{fact2}. We have proved (1) and (2).
(3) follows straightforwardly from Theorem \ref{decompositiontheorem}.
\end{proof}
\begin{remark}
Proposition \ref{theoremC} shows that in the particular case when $2 \leq m \leq 3$
$(X, r)$ is a strong twisted union
\[
X = Y_1 \stu Y_2 \stu \cdots \stu Y_s,
\]
and $\Gcal$ decomposes as a product of abelian subgroups
\[
\Gcal = \Gcal(Y_1)\Gcal(Y_2)\cdots  \Gcal(Y_s),
\]
\end{remark}
\section{Multipermutation solutions of low levels.}
\label{Multipermutation solutions of low levels}
A natural question arises:
\begin{question} Suppose $(X,r)$ is a multipermutation square-free solution.
What is the relation between the multipermutation level $\mpl(X,r)$
and the algebraic properties of $S(X,r)$, $G(X,r)$, $\Gcal(X,r)$,
$k\Gcal(X,r)$, $\Acal(k, X,r)$?
\end{question}

The following is straightforward.

\begin{lemma}
\label{mpl=1} Suppose $(X,r)$ is a  square-free solution of order
$\geq 2$ (but not necessarily finite). Then the following conditions
are equivalent.
\begin{enumerate}
\item $\mpl(X,r)=1$. \item (X,r) is the trivial solution, i.e.
$r(x,y)=(y,x),$ for all $x,y \in X$. \item $S(X,r)$ is the free
abelian monoid generated by $X.$ \item $G(X,r)$ is the free abelean
group generated by $X.$
\item $\Gcal(X,r)= \{\id\},$ the trivial group.
\end{enumerate}
\end{lemma}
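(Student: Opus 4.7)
The plan is to prove the equivalences by putting condition (2) at the center and showing $(1) \Leftrightarrow (2)$, $(2) \Leftrightarrow (5)$, and $(2) \Leftrightarrow (3),(4)$. Since the lemma is described as ``straightforward,'' no deep machinery is needed; the main point is to invoke square-freeness together with \textbf{lri} (which holds by Corollary \ref{constructivecor}) to collapse the left and right actions simultaneously once one of them is known to be trivial.

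First I would handle $(2) \Leftrightarrow (5)$. Clearly $r = \tau$ forces $\Lcal_x = \id_X$ for every $x$, so $\Gcal(X,r) = \{\id\}$. Conversely, if $\Gcal(X,r) = \{\id\}$, then every $\Lcal_x$ is the identity, so ${}^xy = y$ for all $x,y$; by \textbf{lri} we have $\Rcal_x = \Lcal_x^{-1} = \id$, so $x^y = x$, and hence $r(x,y) = (y,x)$. Next, $(2) \Leftrightarrow (1)$: by the construction of the retract (Lemma \ref{retractlemma}) and Definition \ref{mpldef}, $\mpl(X,r) = 1$ is equivalent to $|[X]| = 1$, i.e.\ $\Lcal_x = \Lcal_y$ for all $x,y \in X$. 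Denoting this common permutation by $\sigma$, square-freeness gives $\sigma(x) = \Lcal_x(x) = x$ for every $x \in X$, so $\sigma = \id$; by the argument already given this is equivalent to $r = \tau$, and the converse is trivial.

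For $(2) \Leftrightarrow (3)$ and $(2) \Leftrightarrow (4)$: if $r(x,y) = (y,x)$ for all $x,y$, then by (\ref{defrelations}) the defining relations in $\Re(r)$ are exactly $xy = yx$ for $x,y \in X$, so $S(X,r)$ is the free abelian monoid on $X$ and $G(X,r)$ is the free abelian group on $X$. Conversely, assume $S(X,r)$ is free abelian on $X$ (the argument for $G(X,r)$ is identical). Fix $x,y \in X$ and write $r(x,y) = ({}^xy, x^y)$. The defining relation $xy = {}^xy \cdot x^y$ holds in the free abelian monoid, and as a consequence of unique factorisation in the free abelian monoid the multiset $\{x,y\}$ must equal the multiset $\{{}^xy, x^y\}$. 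Since $(X,r)$ is involutive and square-free with \textbf{lri}, the only possibility compatible with nondegeneracy is $({}^xy, x^y) = (y,x)$, so $r = \tau$.

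The only step that requires a moment's care is the last one, where one must rule out the ``swap'' $({}^xy, x^y) = (x,y)$; I expect to handle this by observing that if $r(x,y) = (x,y)$ for some $x \neq y$, then $\Lcal_x(y) = y$ and by square-freeness $\Lcal_x(x) = x$; iterating via the cyclic condition (Facts \ref{basictheorem}) and \textbf{lri} one shows this forces the same trivial action globally on the $\Gcal$-orbit, which in conjunction with involutivity of $r$ on the full set collapses to $r = \tau$. Apart from this minor bookkeeping, each implication is immediate.
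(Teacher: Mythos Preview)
The paper provides no proof of this lemma (it is introduced with ``The following is straightforward''), so there is no approach to compare against; your organisation around condition~(2) is natural and the implications $(1)\Leftrightarrow(2)$, $(2)\Leftrightarrow(5)$, and $(2)\Rightarrow(3),(4)$ are handled correctly.

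There is, however, a slip in your final paragraph. In ruling out the case $({}^xy,x^y)=(x,y)$ for $x\neq y$, you write ``then $\Lcal_x(y)=y$'', but this is the wrong coordinate: if $r(x,y)=(x,y)$ then ${}^xy=x$, i.e.\ $\Lcal_x(y)=x$. Once this is corrected, the case dies in one line: square-freeness gives $\Lcal_x(x)=x$, so $\Lcal_x(y)=x=\Lcal_x(x)$ with $x\neq y$ contradicts the injectivity of $\Lcal_x$ (nondegeneracy). Your proposed detour through the cyclic condition and $\Gcal$-orbits is therefore unnecessary, and as written rests on the incorrect premise $\Lcal_x(y)=y$. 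With this one-line fix in place the proof is complete.
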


\subsection{Square-free  solutions of  multipermutation level  2 }
\label{mpl2section}
Detailed study of square-free  solutions of  multipermutation level
$2$ is performed in  \cite{TSh0806}, and \cite{TSh08}.
We recall first some results from these works needed in the sequel.

\begin{proposition}\label{sigprop} \cite{TSh0806}, Let  $(X,r)$ be a square-free
solution of  finite order, let $X_i, 1 \leq i \leq t$ be the set of
all $G(X,r)$-orbits in $X$ enumerated so that $X_1, \cdots ,
X_{t_0}$ is the set of all nontrivial orbits (if any). Then the
following are equivalent.
\begin{enumerate}
\item \label{mpl2_1} $(X,r)$ is a multipermutation solution of
level 2 \item \label{mpl2_2} $t_0 \geq 1$ and for each $ j, 1 \leq
j \leq t_0$,  $x,y \in X_j$ implies $\Lcal_x = \Lcal_y$. \item
\label{mpl2_3} $t_0\geq 1$ and for each $x\in X$ the permutation
$\Lcal_x$ is an $r$-automorphism, i.e. $\Gcal(X,r) \subseteq
\Aut(X,r)$.
\end{enumerate}
\end{proposition}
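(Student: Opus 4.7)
The plan is to show the cycle of implications $(\ref{mpl2_1}) \Rightarrow (\ref{mpl2_2}) \Rightarrow (\ref{mpl2_3}) \Rightarrow (\ref{mpl2_1})$, using throughout the two basic tools available: the definition of retraction (via the equivalence $x\sim y \Leftrightarrow \Lcal_x=\Lcal_y$) and the identity $\Lcal_{{}^yx}\circ\Lcal_y = \Lcal_{{}^xy}\circ\Lcal_x$, which is just condition (ii) of Corollary \ref{alternativedefc} rewritten in terms of $\Lcal$. The standing hypothesis $t_0\geq 1$ in (\ref{mpl2_2}) and (\ref{mpl2_3}) is the condition that $(X,r)$ be nontrivial, and this is automatic when $\mpl(X,r)=2$; conversely, in $(\ref{mpl2_3})\Rightarrow(\ref{mpl2_1})$ it will be what forces $\mpl\geq 2$ rather than merely $\leq 2$.

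For $(\ref{mpl2_1})\Rightarrow(\ref{mpl2_2})$, I would unpack what it means that $\Ret(X,r)$ is the trivial solution on $[X]$: the left action on $[X]$ is the identity, i.e.\ $[{}^xy]=[y]$, which by Lemma \ref{retractlemma} translates back to $\Lcal_{{}^xy}=\Lcal_y$ for all $x,y\in X$. Since every element of the $\Gcal$-orbit of $y$ has the form ${}^{a_n}({}^{a_{n-1}}(\cdots{}^{a_1}y)\cdots))$ for some $a_i\in X$ (or using inverses), iterating this equality along the orbit gives $\Lcal_x=\Lcal_y$ whenever $x,y$ lie in the same orbit. Nontriviality of $\mpl=2$ rules out $X$ being the trivial solution, so at least one $\Lcal_x$ is nonidentity and hence $t_0\geq 1$.

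For $(\ref{mpl2_2})\Rightarrow(\ref{mpl2_3})$, the key observation is that $(\ref{mpl2_2})$ makes $\Gcal$ abelian: from $\Lcal_{{}^yx}\circ\Lcal_y = \Lcal_{{}^xy}\circ\Lcal_x$ together with the fact that ${}^yx$ lies in the $\Gcal$-orbit of $x$ (so $\Lcal_{{}^yx}=\Lcal_x$) and similarly $\Lcal_{{}^xy}=\Lcal_y$, one gets $\Lcal_x\circ\Lcal_y=\Lcal_y\circ\Lcal_x$. Now apply Lemma \ref{automorphismlemma}(3): $\Lcal_x\in\Aut(X,r)$ iff $\Lcal_x\circ\Lcal_z\circ\Lcal_x^{-1}=\Lcal_{\Lcal_x(z)}=\Lcal_{{}^xz}$ for every $z$, and the left side equals $\Lcal_z$ by commutativity while the right side equals $\Lcal_z$ by (\ref{mpl2_2}) applied to $z,{}^xz\in\Ocal_\Gcal(z)$.

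For $(\ref{mpl2_3})\Rightarrow(\ref{mpl2_1})$, Lemma \ref{automorphismlemma}(3) gives $\Lcal_x\circ\Lcal_z=\Lcal_{{}^xz}\circ\Lcal_x$; combining with $\Lcal_{{}^yx}\circ\Lcal_y = \Lcal_{{}^xy}\circ\Lcal_x$ yields $\Lcal_x\circ\Lcal_y=\Lcal_{{}^yx}\circ\Lcal_y$, so by right-cancellation $\Lcal_x=\Lcal_{{}^yx}$ for all $x,y$. Iterating along the orbit shows $\Lcal_x$ is constant on each $\Gcal$-orbit, which means precisely that the induced action on $[X]$ is trivial, so $\Ret(X,r)$ is the trivial solution and $\mpl(X,r)\leq 2$. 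The hypothesis $t_0\geq 1$ produces some nontrivial $\Lcal_x$, so $(X,r)$ itself is not trivial and $\mpl(X,r)=2$. The one place that requires care, which I would flag as the main obstacle, is the inductive step along a $\Gcal$-orbit in $(\ref{mpl2_1})\Rightarrow(\ref{mpl2_2})$: one must verify that the identity $\Lcal_{{}^xy}=\Lcal_y$ for elements of $X$ propagates through the group $\Gcal=\Lcal(G(X,r))$, which requires the inverse case $\Lcal_{{}^{x^{-1}}y}=\Lcal_y$; this follows by applying the same identity at $({}^{x^{-1}}y,x)$ and using $\mathbf{lri}$, but it is worth stating explicitly.
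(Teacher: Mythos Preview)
The paper does not supply its own proof of this proposition: it is quoted verbatim as a result from \cite{TSh0806}, and the text moves directly on to Theorem~\ref{significantth}. So there is no in-paper argument to compare against.

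Your proof is correct. The cycle $(\ref{mpl2_1})\Rightarrow(\ref{mpl2_2})\Rightarrow(\ref{mpl2_3})\Rightarrow(\ref{mpl2_1})$ goes through exactly as you outline, and your use of the cyclic-set identity $\Lcal_{{}^yx}\circ\Lcal_y=\Lcal_{{}^xy}\circ\Lcal_x$ together with Lemma~\ref{automorphismlemma}(3) is the natural route. The one point you flagged, propagating $\Lcal_{{}^xy}=\Lcal_y$ to the full $\Gcal$-orbit including inverse actions, is handled by \textbf{lri}: if $z=y^x$ then ${}^xz=y$, so $\Lcal_z=\Lcal_{{}^xz}=\Lcal_y$, which covers the inverse step. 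Alternatively one can simply note that in the finite case $\Gcal=\Lcal(S(X,r))$ (Remark~\ref{Gcal=LcalS}), so only positive words in the $\Lcal_x$ are needed and no inverse case arises.
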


\begin{theorem}
\label{significantth} \cite{TSh0806} Let  $(X,r)$ be square-free
solution of multipermutation level $2$ and finite order, and $X_i,
1 \leq i \leq t$ be the set of all $G(X,r)$-orbits as in Popostion
\ref{sigprop}. Let $(X_i, r_i),1 \leq i \leq t $ be the restricted
solution. Then:
\begin{enumerate}
 \item  \label{mpl2_4} $\Gcal(X,r)$ is a nontrivial abelian group.
\item \label{mpl2_5} Each $(X_i,r_i), 1 \leq i \leq t_0$ is a
trivial solution. Clearly in the case $t_0 < t,$ each $(X_j,r_j)$,
with $t_0 \leq j \leq t$ is a one element solution. \item
\label{mpl2_6} For any  ordered pair $i, j, 1\leq i\leq t_0, 1\leq
j\leq t,$ such that $X_j$ acts nontrivially on $X_i$, every $x\in
X_j$ acts via the same  permutation $\sigma^j_i\in \Sym(X_i)$
which is a product of disjoint cycles of equal length $d=d^j_i$
\[
\sigma^j_i= (x_1\cdots x_d) (y_1\cdots y_d)\cdots (z_1\cdots z_d),
\]
where  each element of $X_i$ occurs exactly once. Here  $d^j_i$ is
an invariant of the pair $X_j, X_i$. \item \label{mpl2_7} $X$ is
\emph{a strong twisted union} $X= X_1\stu X_2 \stu \cdots\stu X_t$.
\end{enumerate}
\end{theorem}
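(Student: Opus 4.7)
The plan is to deduce all four conclusions from Proposition \ref{sigprop} together with Corollaries \ref{Aut(Xr)Cor3} and \ref{stucordef}; almost no new computation is required. I would prove (\ref{mpl2_4}) first, then (\ref{mpl2_5}), then (\ref{mpl2_7}), and finally use (\ref{mpl2_4}) to deduce (\ref{mpl2_6}).

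For (\ref{mpl2_4}), Proposition \ref{sigprop}(\ref{mpl2_3}) already gives the inclusion $\Gcal \subseteq \Aut(X,r)$, so $\Gcal$ coincides with the subgroup $\Gcal_0 = \Aut(X,r) \cap \Gcal$ considered in Corollary \ref{Aut(Xr)Cor3}(2), which is abelian; nontriviality is forced by $t_0 \geq 1$ in Proposition \ref{sigprop}(\ref{mpl2_2}), since a nontrivial orbit produces a non-identity $\Lcal_x$. For (\ref{mpl2_5}), fix $1 \leq i \leq t_0$ and any $x, z \in X_i$; by Proposition \ref{sigprop}(\ref{mpl2_2}), $\Lcal_z = \Lcal_x$, so square-freeness gives $\Lcal_x(z) = \Lcal_z(z) = z$. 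Hence $\Lcal_x|_{X_i} = \id_{X_i}$ for every $x \in X_i$, which means $r_i$ is the flip and $(X_i, r_i)$ is trivial. For (\ref{mpl2_7}), each orbit $X_j$ is $\Gcal$-invariant, so $(\Lcal_x)|_{X_j} \in \Sym(X_j)$ for every $x \in X$; by (\ref{mpl2_5}) the restricted solution $(X_j, r_j)$ is trivial (a singleton when $j > t_0$), hence $\Aut(X_j, r_j) = \Sym(X_j)$. Corollary \ref{stucordef} then directly yields $X = X_1 \stu X_2 \stu \cdots \stu X_t$.

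For (\ref{mpl2_6}), the assertion that every $x \in X_j$ acts on $X_i$ by one and the same permutation $\sigma^j_i$ is exactly Proposition \ref{sigprop}(\ref{mpl2_2}). For the uniform cycle structure I would argue as follows: by (\ref{mpl2_4}), $\sigma = \sigma^j_i$ is the restriction to the invariant set $X_i$ of an element $\Lcal_x \in \Gcal$, and $\Gcal$ is abelian, so $\sigma$ commutes with the action of every $g \in \Gcal$ on $X_i$. Since $\Gcal$ acts transitively on the orbit $X_i$, for any $y, z \in X_i$ one may choose $g \in \Gcal$ with $g(y) = z$; the identity $\sigma^k(z) = g(\sigma^k(y))$ then forces the $\sigma$-orbits of $y$ and $z$ to have the same length $d = d^j_i$. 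Uniform cycle length $d = 1$ would give $\sigma = \id$, contradicting nontriviality of the action of $X_j$ on $X_i$, so $d \geq 2$ and $\sigma^j_i$ acts without fixed points, each element of $X_i$ appearing exactly once in its cycle decomposition.

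The only step that is not pure bookkeeping is the commutation argument used in (\ref{mpl2_6}), and this is immediate once (\ref{mpl2_4}) is in hand; consequently I do not expect any genuine obstacle here, the substance of the theorem being already packaged into Proposition \ref{sigprop} and Corollaries \ref{Aut(Xr)Cor3} and \ref{stucordef}.
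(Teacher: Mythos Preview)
Your proof is correct. The paper itself does not prove this theorem; it is quoted from \cite{TSh0806} without argument, so there is no in-paper proof to compare against. Your reconstruction from Proposition~\ref{sigprop} and Corollaries~\ref{Aut(Xr)Cor3}(2) and~\ref{stucordef} is sound: part~(\ref{mpl2_4}) is immediate from $\Gcal\subseteq\Aut(X,r)$ and Corollary~\ref{Aut(Xr)Cor3}(2); part~(\ref{mpl2_5}) follows from $\Lcal_x=\Lcal_z$ on each orbit plus square-freeness; part~(\ref{mpl2_7}) follows since trivial solutions have full symmetric automorphism group, so Corollary~\ref{stucordef} applies automatically; and the equal-cycle-length argument for~(\ref{mpl2_6}) is the standard observation that an element of an abelian group commuting with a transitive action must be semiregular on each orbit.
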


\begin{fact}
\label{fact_lri_cc_involutive}\cite{TSh08}, Proposition 2.5. Let
$(X,r)$ be a quadratic set. Then any of the following two conditions
imply the third. (i) $(X,r)$ is involutive; (ii) $(X,r)$ is
nondegenerate and cyclic; (iii) $(X,r)$ satisfies \textbf{lri}.
\end{fact}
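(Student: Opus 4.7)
The approach is case analysis: show that each pair of conditions among (i), (ii), (iii) forces the third, by a short direct computation with the formula $r(x,y) = ({}^xy, x^y)$. The guiding observation is that condition \textbf{lri} is precisely the statement $\Rcal_x = \Lcal_x^{-1}$, so under \textbf{lri} every statement about the left action has a dual statement about the right action obtained by inverting.

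For (ii) + (iii) $\Rightarrow$ (i), I would compute $r^2(x,y) = ({}^{({}^xy)}(x^y),\;({}^xy)^{x^y})$ coordinate by coordinate. The second coordinate reduces to $y$ via $({}^xy)^{x^y} = ({}^xy)^{x^{{}^xy}}$ (rewriting the exponent by \textbf{cr1}) $= ({}^xy)^x$ (collapsing by \textbf{cr2}) $= y$ (by \textbf{lri}). The first coordinate is handled symmetrically: to obtain $\Lcal_{{}^xy}(x^y) = x$ it suffices to show $x^y = \Lcal_{{}^xy}^{-1}(x)$, and by \textbf{lri} this is the same as $x^y = \Rcal_{{}^xy}(x) = x^{{}^xy}$, which is \textbf{cr1}.

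For (i) + (iii) $\Rightarrow$ (ii), nondegeneracy is built into \textbf{lri}, and the two coordinates of $r^2 = \id$ give $\Lcal_{{}^xy}(x^y) = x$ and $({}^yx)^{y^x} = x$; inverting each through \textbf{lri} yields $x^y = x^{{}^xy}$ (i.e.\ \textbf{cr1}) and ${}^yx = {}^{y^x}x$ (i.e.\ \textbf{cl1}). I would then derive \textbf{cl2} from \textbf{cl1} by the reparametrisation $z = {}^xy$, $y = z^x$, which is legitimate because $\Lcal_x$ is a bijection under \textbf{lri}; \textbf{cr2} follows from \textbf{cr1} symmetrically.

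For (i) + (ii) $\Rightarrow$ (iii), the identity $({}^xy)^x = y$ is extracted from the involutive identity $({}^xy)^{x^y} = y$ by rewriting $x^y = x^{{}^xy}$ (\textbf{cr1}) and then collapsing via $({}^xy)^{x^{{}^xy}} = ({}^xy)^x$ (\textbf{cr2}). The mirror identity ${}^x(y^x) = y$ is obtained from ${}^{({}^yx)}(y^x) = y$ (involutivity applied to $(y,x)$) by rewriting ${}^yx = {}^{y^x}x$ (\textbf{cl1}) and then collapsing ${}^{{}^{y^x}x}(y^x) = {}^x(y^x)$ (\textbf{cl2}). The only real obstacle is bookkeeping: the four cyclic conditions and the two halves of \textbf{lri} are formally similar enough to invite mistakes, but once one pairs the cr-conditions with the right coordinate and the cl-conditions with the left coordinate, each step reduces to a two- or three-line chain of rewrites.
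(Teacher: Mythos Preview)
The paper does not give its own proof of this statement: it is recorded as a \emph{Fact}, quoted verbatim from \cite{TSh08}, Proposition~2.5, and used without argument. So there is no in-paper proof to compare against.

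Your direct computational proof is correct. Each of the three implications is handled cleanly: the case analysis is complete, the cyclic conditions \textbf{cl1}, \textbf{cl2}, \textbf{cr1}, \textbf{cr2} are invoked in the right places, and the substitution $z = {}^xy$, $y = z^x$ (legitimate by nondegeneracy/\textbf{lri}) that reduces \textbf{cl2} to \textbf{cl1} is exactly the right move. One small remark: in the case (i)~+~(ii)~$\Rightarrow$~(iii), since the paper's formulation of \textbf{lri} bundles in nondegeneracy, it is worth saying explicitly that the two identities $({}^xy)^x = y$ and ${}^x(y^x) = y$ for all $x,y$ force $\Rcal_x\circ\Lcal_x = \id = \Lcal_x\circ\Rcal_x$, so that nondegeneracy (already assumed in (ii)) is recovered as part of \textbf{lri}. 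Otherwise nothing is missing.
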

\begin{proposition}
\label{mpl2braidmonoidprop}
Suppose $(X,r)$ is a multipermutation solution of level 2. Then
\begin{enumerate}
\item
The associated braided monoid  $(S,r_S)$ is a symmetric set
which satisfies the cyclic conditions and \textbf{lri } (see
Definition \ref{lri&cl}). $(S,r_S)$ is not square-free.
 Furthermore, $S$  acts on itself
 as automorphisms:
\begin{equation}
\label{auteq} {}^a{(uv)} = ({}^au)({}^av)\quad (uv)^a = (u^a)(v^a)
\quad \forall\quad a, u, v \in S.
 \end{equation}
\item
 An analogous statement is true for the associated
braided group $(G, r_G).$
\end{enumerate}
\end{proposition}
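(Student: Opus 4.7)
My strategy is to bootstrap the structural properties of $(X,r)$---in particular the defining mpl-$2$ identity $\Lcal_{{}^xy}=\Lcal_y$ from Proposition \ref{sigprop}(\ref{mpl2_2})---up to the associated braided monoid $(S,r_S)$ via Facts \ref{theoremAMP} and the matched-pair axioms. That $(S,r_S)$ is a symmetric set is immediate from Facts \ref{theoremAMP}(\ref{theoremAextranondeg})--(\ref{theoremAextrainvol}), since a square-free solution is nondegenerate and involutive. Thus the real content is proving the cyclic conditions, \textbf{lri}, the automorphism action \eqref{auteq}, and the failure of square-freeness.

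The central technical step is \eqref{auteq}. I plan first to establish a comparison principle by induction on length in $S$: if $a,b\in X$ satisfy $\Lcal_a=\Lcal_b$ on $X$, then $\Lcal_a=\Lcal_b$ on all of $S$. For an argument $xw$ the identity ML2 splits ${}^a(xw)$ into $({}^ax)({}^{a^x}w)$ and similarly for $b$; the first factors agree by hypothesis, while $a\sim_X b$ forces $a^x\sim_X b^x$ because the retract is a well-defined solution with induced right action $[a]^{[x]}=[a^x]$, so the inductive hypothesis applied to the shorter word $w$ finishes the step. Next, for $a\in X$ and any $u\in S$, iterated application of MR1 together with \textbf{lri} on $X$ (which identifies the right action of each generator with an inverse left action, hence with an element of $\Gcal$) shows that $a^u$ always remains in the $\Gcal$-orbit of $a$ in $X$; Proposition \ref{sigprop}(\ref{mpl2_2}) and the comparison principle then give $\Lcal_{a^u}=\Lcal_a$ on the whole of $S$. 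Combining this with ML2 yields
\[
{}^a(uv)=({}^au)({}^{a^u}v)=({}^au)({}^av)\quad\text{for }a\in X,\ u,v\in S,
\]
and ML1 together with a second induction on the length of $a$ extends this to arbitrary $a\in S$; the right-action analogue is symmetric.

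With \eqref{auteq} in hand the cyclic conditions propagate from $X$ to $S$ by the same pattern: the mpl-$2$ identity $\Lcal_{{}^xy}=\Lcal_y$ is the base case, and the automorphism property together with ML1 upgrades it to $\Lcal_{{}^uv}=\Lcal_v$ on all of $S$, which is \textbf{cl2}; the remaining conditions \textbf{cl1}, \textbf{cr1}, \textbf{cr2} follow symmetrically using \textbf{lri} on $X$. Since $(S,r_S)$ is already involutive and nondegenerate, Fact \ref{fact_lri_cc_involutive} then delivers \textbf{lri}. For the failure of square-freeness I will use a direct computation: choosing $x,y\in X$ in distinct nontrivial orbits with ${}^yx\ne x$, a short application of \eqref{auteq} and the cyclic identities on $X$ gives ${}^{xy}(xy)=({}^yx)({}^xy)$, which is a different element of $S$ from $xy=({}^xy)(x^y)$ (an explicit witness being $X=\{x_1,x_2,y_1,y_2\}$ with $\Lcal_{x_i}=(y_1\,y_2)$ and $\Lcal_{y_i}=(x_1\,x_2)$). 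Finally, the statements for $(G,r_G)$ follow verbatim, because Lemma \ref{Sinvariantlemma} has already extended the matched-pair structure from $S$ to $G$, and the entire bootstrap uses only the matched-pair axioms together with the orbit picture of $(X,r)$.

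The main obstacle is the comparison principle: ML2 genuinely entangles the left and right actions, so propagating $\Lcal_a=\Lcal_b$ from $X$ up to $S$ requires simultaneous control over what right multiplication by generators does to $\sim_X$-equivalence classes. This is precisely where the well-definedness of the retract---hence the hypothesis $\mpl(X,r)=2$---enters essentially; without it, the induction step collapses.
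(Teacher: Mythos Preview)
Your approach to the symmetric-set, cyclic, \textbf{lri}, and automorphism properties is essentially the paper's, organised in a slightly different order: you establish \eqref{auteq} first and deduce the cyclic conditions from it, whereas the paper proves the four identities $\Lcal_{a^b}=\Lcal_a$, $\Lcal_{{}^ba}=\Lcal_a$ (etc.)\ on $S$ directly and reads off both the cyclic conditions and \eqref{auteq} from those. Both routes rest on the same inductive use of ML1/ML2 together with the mpl-$2$ identity $\Lcal_{{}^\alpha x}=\Lcal_x$; your ``comparison principle'' is a clean repackaging of what the paper uses implicitly.

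There is, however, a genuine gap in your treatment of non-square-freeness. First, ``$x,y$ in distinct nontrivial orbits'' need not be available: the solution $X=\{x_1,x_2,a\}$ with $\Lcal_a=(x_1\,x_2)$, $\Lcal_{x_i}=\id$ has $\mpl=2$ and only one nontrivial orbit. This is easily repaired, since your computation ${}^{xy}(xy)=({}^yx)({}^xy)$ only uses ${}^yx\neq x$, which holds for some pair because $(X,r)$ is nontrivial. The real problem is the bare assertion that $({}^yx)({}^xy)$ is ``a different element of $S$ from $xy=({}^xy)(x^y)$'': your parenthetical example verifies this for one particular $(X,r)$, but the proposition is about every mpl-$2$ solution. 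In $S$, two degree-$2$ words coincide exactly when their letter-pairs lie in the same $r$-orbit of $X\times X$, so $({}^yx)({}^xy)=xy$ in $S$ forces either $({}^yx,{}^xy)=(x,y)$ or $({}^yx,{}^xy)=({}^xy,x^y)$. The first contradicts ${}^yx\neq x$; the second gives ${}^yx={}^xy$, and eliminating this case requires a short argument using \textbf{lri}, abelianness of $\Gcal$, and square-freeness on $X$: applying $(\cdot)^y$ to both sides yields $x=({}^yx)^y=({}^xy)^y={}^x(y^y)={}^xy$, whence $y=x$ by nondegeneracy, contradicting ${}^yx\neq x$. The paper carries out precisely this case analysis; you need to supply it rather than appeal to a single witness.
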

\begin{proof}
By Facts \ref{theoremAMP} $(S, r_S)$ is an nondegenerate involutive
set-theoretic solution of YBE, therefore it is a
 symmetric set.
 By Proposition \ref{sigprop} $\Gcal(X, r) \subseteq Aut (X,r),$
 thus, by Corollary \ref{Aut(Xr)Cor2} and by the definition of
 automorphism of solutions one has
\begin{equation}
\label{usefuleq}
\begin{array}{cl}
 \Lcal_{{}^{\alpha}x} =\Lcal_{x} = \Lcal_{x^{\alpha}} &\quad \forall\; x,
 \alpha \in X\\
{}^a{(xy)}=({}^ax)({}^ay)&\quad \forall\; a\in S, \;
 x, y  \in X.
 \end{array}
 \end{equation}
Using (\ref{usefuleq}) and induction on the length $\mid v \mid$ of
 $v \in S$) one shows easily that
\begin{equation} \label{usefuleq2}
\begin{array}{lclc}
 \quad&  {}^{a^b}v= {}^av \quad
 \quad&\quad &v^{{}^ba}= v^a, \quad\text{for all}\;
a,b, v \in
 S;
\\
\quad
  &{}^{{}^ba}v= {}^av,
\quad
\quad & \quad &v^{a^b}= v^a \quad\text{for all}\; a,b, v \in S.
\end{array}
 \end{equation}
replacing $b$ with $v$ in each of the above equalities one yields
the cyclic conditions for the solution $(S, r_S)$. We have shown
that $(S, r_S)$ satisfies (i) and (ii) of Fact
\ref{fact_lri_cc_involutive}, hence it satisfies (iii). This
verifies \textbf{lri} ($({}^au)^a= u = {}^a{(u^a)}$ for all $a,u \in
S$). We will  verify that the left action of $S$ (see Facts
\ref{theoremAMP}) is as automorphisms (the proof for the right
action is analogous). This follows from the equalities:
\[\begin{array}{clll}
{}^a{(uv)} &= ({}^au)({}^{a^u}v)\quad &:& \text{by  \textbf{ML2}}\\
           &= ({}^au)({}^av)\quad &:& \text{by  (\ref{usefuleq2})}.
           \end{array}
\]
We claim that $(S, r_S)$ is not square-free, or equivalently there
exist an $a \in S,$ such that ${}^aa \neq a$. Assume the contrary.
By hypothesis $(X,r)$ is not the trivial solution, so there exist
$x,y \in X$, with ${}^yx\neq x$. Let $a = xy$
\[\begin{array}{clll}
{}^aa = {}^{xy}{(xy)} &= ({}^{xy}x)({}^{xy}y)\quad &:&\text{ by (\ref{auteq})}\\
           &= ({}^{y}{({}^xx)})({}^{x}{({}^yy)})\quad&:& \text{ by $\Gcal$ \;
           abelian}\\
         &= ({}^yx)({}^xy)\quad \quad&:&\text{ by $(X,r)$ \;
           square-free}.\\
           \end{array}
\]
So $ xy = a ={}^aa=({}^yx)({}^xy)$ are equalities in $S$. The only
quadratic relation in $S$ involving $xy$ is
\[xy = {}^xy. x^y.\]
Therefore  one of the following is an equality of words in the free
monoid $\langle X \rangle:$
\begin{equation}
 \label{usefuleq3}
({}^yx)({}^xy) = xy \quad \text{ in} \quad\langle X \rangle
 \end{equation}
 \begin{equation}
 \label{usefuleq4}
({}^yx)({}^xy) ={}^xy. x^y \quad \text{ in} \quad\langle X \rangle.
 \end{equation}
\textbf{ Case a.}  (\ref{usefuleq3}) holds.  Then ${}^yx = x$ which
contradicts the choice of $x$ and $y$. \textbf{ Case b.}
(\ref{usefuleq4}) is in force. Hence ${}^yx = {}^xy$. We apply right
action by $^y$ on both sides of this equality and obtain
\begin{equation}
 \label{usefuleq5}
({}^yx)^y = ({}^xy)^y.
 \end{equation}
Thus
\[
\begin{array}{cll}
x\quad  &= ({}^yx)^y \quad &\text{by \textbf{lri}}\\
        &= ({}^xy)^y \quad &\text{by (\ref{usefuleq5})}\\
        &= {}^{x}{(y^y)}\quad &\text{by $\Gcal$ abelian} \\
        &= {}^xy \quad \quad &\text{by $(X,r)$ \;
           square-free}.\\
           \end{array}
\]
Therefore ${}^xy = x = {}^xx$. It follows then by the nondegeneracy
of $(X,r)$ that $y=x$, and ${}^yx = {}^yy = y.$ A contradiction with
the choice of $x,y.$  We have verified the first part of the
proposition.
  An analogous argument
proves the statement for the braided group $(G, r_G)$.
\end{proof}


It is natural to ask

\begin{questions}
1) When can an abelian  group of permutations $H \leq \Sym X$ be
considered as a permutation YB group of a solution $(X,r)$?

In particular,

2) When is  $H \leq \Sym X$ a  permutation YB group of a
solution $(X,r)$ of multipermutation level 2?
\end{questions}

An answer to the second question is given in the following
Proposition \ref{mpl2permgroups}. Furthermore, as shows  Corollary
\ref{mpl2permgroupscor}, every finite abelian group is isomorphic to
the permutation group of  a solution $(X,r)$ with $\mpl X = 2$.

\begin{proposition}
\label{mpl2permgroups} Let $H$ be an abelian permutation group on a
set $X$. Then the following are equivalent:
\begin{itemize}
\item[(a)] there is a solution $(X,r)$, with $\mpl(X,r)=2$, such that
$\mathcal{G}(X,r)=H$;
\item[(b)] there is a function $f$ from the set of $H$-orbits on $X$ to $H$
with the properties
  \begin{itemize}
  \item $f(X_i)$ fixes every point in $X_i$;
  \item the image of $f$ generates $H$.
  \end{itemize}
\end{itemize}
\end{proposition}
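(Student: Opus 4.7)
My plan is to package the existing technology of the paper: Proposition \ref{sigprop} tells us that mpl\,$2$ is equivalent to the function $x \mapsto \mathcal{L}_x$ being constant on $H$-orbits, Theorem \ref{significantth} tells us each nontrivial orbit is itself a trivial solution, and Proposition \ref{permYBgroupprop} gives a clean checklist for producing a square-free solution from a permutation group together with a map $X \to H$. The proposition to be proved is then essentially the translation of these three facts into the asserted function $f$ on the set of orbits.

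For the direction (a) $\Rightarrow$ (b), I would start with a square-free solution $(X,r)$ having $\mathcal{G}(X,r) = H$ and $\mathrm{mpl}(X,r) = 2$. By Proposition \ref{sigprop}(\ref{mpl2_2}), the assignment $x \mapsto \mathcal{L}_x$ depends only on the orbit $X_i$ containing $x$, so the definition $f(X_i) := \mathcal{L}_x$ for any $x \in X_i$ is well-posed. By Theorem \ref{significantth}(\ref{mpl2_5}) each restricted solution $(X_i, r_i)$ is trivial, which means ${}^x y = y$ for all $x, y \in X_i$; equivalently $f(X_i)$ fixes every point of $X_i$. Finally, since $H = \mathcal{G}(X,r)$ is generated by $\{\mathcal{L}_x : x \in X\} = \{f(X_i) : X_i \text{ an orbit}\}$, the image of $f$ generates $H$.

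For the converse (b) $\Rightarrow$ (a), given $f$ I would define $\Lcal_x := f(X_i)$ for $x \in X_i$ and then set $r(x,y) := (\Lcal_x(y), \Lcal_y^{-1}(x))$, and apply Proposition \ref{permYBgroupprop}. Condition (1) is the hypothesis that the image of $f$ generates $H$. Condition (2), namely $f_x(x) = x$, is the hypothesis that $f(X_i)$ fixes each point of $X_i$. The crucial step is condition (3): because orbits are $H$-invariant, $\Lcal_y(x)$ lies in the same orbit $X_i$ as $x$, hence $\Lcal_{\Lcal_y(x)} = \Lcal_x$, and symmetrically $\Lcal_{\Lcal_x(y)} = \Lcal_y$; so condition (3) reduces to $\Lcal_x \circ \Lcal_y = \Lcal_y \circ \Lcal_x$, which holds because $H$ is abelian. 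Thus $(X,r)$ is a square-free solution with $\mathcal{G}(X,r) = H$. Since $x \mapsto \Lcal_x$ is constant on orbits and (assuming $H$ nontrivial, which is implicit in asking for $\mathrm{mpl}= 2$) at least one orbit is nontrivial, Proposition \ref{sigprop} gives $\mathrm{mpl}(X,r) = 2$.

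There is no real obstacle here: all the machinery is in place, and the only content of the proof is to observe that abelianness of $H$ is exactly what forces the nontrivial braid-type identity (3) of Proposition \ref{permYBgroupprop} to collapse to a commutator identity, while the orbit-fixing condition delivers squarefreeness. The one subtlety worth flagging in the write-up is the degenerate case $H = \{\id\}$: condition (b) is then vacuously satisfied while (a) fails, so the statement is understood with the tacit convention that the $H$-action in (b) is nontrivial, equivalently that at least one orbit has size $> 1$.
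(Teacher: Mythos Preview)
Your proof is correct and follows essentially the same route as the paper's. The only stylistic difference is that you invoke Proposition~\ref{sigprop} and Theorem~\ref{significantth} to obtain that $x\mapsto\Lcal_x$ is constant on orbits and that each orbit is a trivial solution, whereas the paper re-proves these two facts from scratch (using the retract definition for the first, and the elementary observation that in an abelian permutation group an element fixing one point of an orbit must fix the whole orbit for the second). Your flagging of the degenerate case $H=\{\id\}$ is a useful observation that the paper's own proof leaves implicit; note also that Proposition~\ref{sigprop} and Theorem~\ref{significantth} are stated for finite $X$, so if you want the argument to go through for arbitrary $X$ you should either remark that the relevant implications do not use finiteness, or argue directly as the paper does.
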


\begin{proof} We use the following facts:
If $H$ is an abelian permutation group on $X$, and if $h\in H$ fixes
$x\in X$, then $H$ fixes every point of the $H$-orbit containing
$X$. For, if $k\in H$, then
\[h(k(x)) = k(h(x)) = k(x).\]

Also, if a solution $(X,r)$ has $\mpl(X,r)=2$, and $\Lcal_a(x)=y$, then
$\Lcal_x=\Lcal_y$. For in the first retract, $\Lcal_{[a]}$ is the identity, so
that $[x]=[y]$, which means precisely that $\Lcal_x=\Lcal_y$. It follows
that if $x$ and $y$ lie in the same orbit of $\mathcal{G}(X,r)$,
then $\Lcal_x=\Lcal_y$.

Now suppose that $H=\mathcal{G}(X,r)$ for some solution $(X,r)$ with
$\mpl(X,r)=2$. For any orbit $X_i$, choose $a\in X_i$, and let
$f(X_i)=\Lcal_a$. This element fixes $a$, and hence fixes every point of
its orbit. Moreover, the previous paragraph shows that $f(X_i)$ is
independent of the choice of $a\in X_i$. Also, the image of $f$
consists of all permutations $\Lcal_a$, for $a\in X$; so it generates
$\mathcal{G}(X,r)=H$.

Conversely, suppose that we are given a function $f$ with the
properties in the proposition. For all $a\in X_i$, we define
$\Lcal_a=f(X_i)$, and then construct a map $r:X\times X\to X\times X$ in
the usual way:
\[r(a,b)=(\Lcal_a(b),\Lcal_b^{-1}(a)).\]
By assumption, $\Lcal_a(a)=a$. Also, for any $a,b\in X$, $\Lcal_a(b)={}^ab$
lies in the same orbit as $b$, and hence $\Lcal_{{}^ab}=\Lcal_b$; similarly
$\Lcal_{a^b}=\Lcal_a$; and the fact that the group is abelian now implies
that
\[\Lcal_{{}^ab}\Lcal_{a^b}=\Lcal_b\Lcal_a=\Lcal_a\Lcal_b.\]
It follows that we do have a solution. Moreover, the group $H$ is
generated by all the maps $\Lcal_a$, for $a\in X$; so
$H=\mathcal{G}(X,r)$.
\end{proof}

From this we can deduce the following.

\begin{corollary}
\label{mpl2permgroupscor} Let $H$ be a finite abelian group. Then
there is a solution $(X,r)$ with $\mpl(X,r)=2$ such that
$\mathcal{G}(X,r)\cong H$.
\end{corollary}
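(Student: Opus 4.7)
The plan is to invoke Proposition \ref{mpl2permgroups} by exhibiting a faithful action of $H$ on some finite set $X$ together with a function $f$ from $H$-orbits to $H$ satisfying condition (b) of that proposition. I assume $H$ nontrivial (the trivial case is degenerate: $\Gcal(X,r)=1$ forces $r=\tau$, which has multipermutation level $1$ for $|X|\geq 2$); once a suitable $(X,f)$ is produced, the proposition delivers a solution of multipermutation level exactly $2$ with the prescribed permutation group.

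The construction is as follows. Fix any finite generating set $h_1,\ldots,h_k$ of $H$ (for instance, generators of the cyclic factors in a direct product decomposition). Let $X_0$ be a copy of $H$ with $H$ acting by left translation, and for each $i=1,\ldots,k$ let $X_i=\{*_i\}$ be a singleton, necessarily with trivial $H$-action. Set $X=X_0\sqcup X_1\sqcup\cdots\sqcup X_k$ with the induced disjoint-union action of $H$. The $H$-orbits on $X$ are precisely $X_0,X_1,\ldots,X_k$, and the action is faithful because the regular action on $X_0$ is free. Hence $H$ embeds into $\Sym(X)$ as an abelian permutation group.

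Now define $f(X_0)=1$ and $f(X_i)=h_i$ for $i\geq 1$. Then $f(X_0)=1$ fixes every point trivially, $f(X_i)=h_i$ fixes the unique point of the singleton $X_i$, and the image $\{1,h_1,\ldots,h_k\}$ generates $H$ by the choice of the $h_i$. Proposition \ref{mpl2permgroups} then yields a solution $(X,r)$ with $\Gcal(X,r)=H$ and $\mpl(X,r)=2$, proving the corollary. There is no serious obstacle here: the design principle is simply to pair each generator $h_i$ with a singleton orbit (which makes the fixed-point clause of (b) automatic) while adjoining one regular orbit $X_0$ to guarantee faithfulness of the overall action.
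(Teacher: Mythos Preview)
Your proof is correct and follows essentially the same construction as the paper: one regular orbit of $H$ carrying the value $1$ under $f$, together with one singleton orbit for each generator $h_i$ carrying the value $h_i$. Your explicit remark that the trivial group must be excluded is a valid caveat that the paper's statement leaves implicit.
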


\begin{proof} Let $h_1,\ldots,h_r$ generate $H$. Now let
$X=H\cup\{a_1,\ldots,a_r\}$, where $H$ has its regular action on
itself and fixes the points $a_1,\ldots,a_r$. Define a function $f$
by
\begin{eqnarray*}
f(H) &=& \id,\\
f(\{a_i\}) &=& h_i.
\end{eqnarray*}
The conditions of the proposition are obviously satisfied by $f$.
\end{proof}

\subsection{Square-free solutions of multipermutation level 3}
\label{mpl3section}
It is straightforward that $\mpl X > 2$ if and only if $\Ret(X,r)$ is a
nontrivial solution, or equivalently
\[
\exists \, \alpha, x \in X,  \quad \text{such that}\quad
[{}^{\alpha}x] \neq [x].
\]

\begin{remark}
\label{mpl>2cor} The following are equivalent

i) $\mpl(X,r) > 2$

ii) There exists a first-retract class $[x]=[x^{(1)}]$ which is not
$\Gcal$-invariant.

(iii) There exists a $\Gcal$-orbit $X_0$ in $X$   and a pair
$\alpha, \beta \in X_0$ such that $\Lcal_{\alpha}\neq
\Lcal_{\beta}.$
\end{remark}

\begin{proposition}
\label{mpl3prop1} Let $(X,r)$ be a  square-free solution (of
arbitrary cardinality).
\begin{enumerate}
\item
$\mpl(X, r) \leq 3$ if and only if the following condition holds
\begin{equation}
\label{eqstu1}\Lcal_{({}^{\beta}x)}= \Lcal_{({}^{\alpha}x)}\quad
\forall \alpha,\beta, x \in X \quad\text{with}\quad
\Ocal_{\Gcal}(\alpha)= \Ocal_{\Gcal}(\beta).
\end{equation}
\item
$\mpl X = 3,$ if and only if (\ref{eqstu1}) holds,  and there exists a pair $x,
\alpha \in X$ such that
\[
\Lcal_{({}^{\alpha}x)} \neq \Lcal_{x}
\]
\end{enumerate}
\end{proposition}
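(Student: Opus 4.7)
The plan is to unravel $\mpl(X,r)\le 3$ into an explicit identity on iterated left actions, and then compare that identity with condition (\ref{eqstu1}). The natural intermediate statement is
\begin{equation}
\label{stareq}
\Lcal_{{}^{({}^{x}y)}w}\;=\;\Lcal_{{}^{y}w}\qquad \forall\, x,y,w\in X.
\tag{$\ast$}
\end{equation}
By definition, $\mpl(X,r)\le 3$ is equivalent to $\Ret^{2}(X,r)$ being a trivial solution, i.e., $[[{}^{x}y]]=[[y]]$ in $\Ret^{2}(X,r)$ for all $x,y\in X$. Using Lemma \ref{retractlemma} one unfolds $[[u]]=[[v]]$ as $\Lcal_{[u]}=\Lcal_{[v]}$ in $\Ret(X,r)$, which in turn means $[{}^{u}w]=[{}^{v}w]$ in $\Ret(X,r)$ for every $w\in X$, i.e., $\Lcal_{{}^{u}w}=\Lcal_{{}^{v}w}$ for every $w\in X$. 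Applying this with $u={}^{x}y$ and $v=y$ produces exactly (\ref{stareq}); this two-line unfolding of the iterated retract is the first step of the proof.

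For the equivalence of (\ref{stareq}) with (\ref{eqstu1}), the easy direction is (\ref{eqstu1})$\Rightarrow$(\ref{stareq}): given $x,y,w$, set $\alpha=y$ and $\beta={}^{x}y=\Lcal_{x}(\alpha)$; these lie in the same $\Gcal$-orbit, so (\ref{eqstu1}) with variable $x$ replaced by $w$ yields $\Lcal_{{}^{\beta}w}=\Lcal_{{}^{\alpha}w}$, which is (\ref{stareq}). The converse (\ref{stareq})$\Rightarrow$(\ref{eqstu1}) is proved by induction on the length of a word $g=\Lcal_{z_{1}}^{\varepsilon_{1}}\cdots\Lcal_{z_{k}}^{\varepsilon_{k}}\in\Gcal$ with $\beta=g\cdot\alpha$. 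A single positive step $\beta'={}^{z}\alpha'$ preserves $\Lcal_{{}^{(\cdot)}w}$ by (\ref{stareq}) applied with $y=\alpha'$, $x=z$; and since the relation $\{\Lcal_{{}^{\alpha'}w}=\Lcal_{{}^{\beta'}w}\ \forall w\}$ is symmetric in $\alpha',\beta'$, inverse steps $\Lcal_{z}^{-1}$ are handled for free. Composing through the word delivers $\Lcal_{{}^{\beta}x}=\Lcal_{{}^{\alpha}x}$ for every $x\in X$, which is (\ref{eqstu1}).

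For part (2), note that $\mpl(X,r)=3$ iff (a) $\mpl(X,r)\le 3$ and (b) $\mpl(X,r)>2$. Condition (a) is handled by part (1). For (b), by Lemma \ref{mpl=1} applied to the retract, $\mpl(X,r)\le 2$ is equivalent to $\Ret(X,r)$ being the trivial solution, i.e., to $[{}^{\alpha}x]=[x]$ for all $\alpha,x\in X$, which unfolds to $\Lcal_{{}^{\alpha}x}=\Lcal_{x}$. Hence $\mpl(X,r)>2$ is precisely the negation: there exist $\alpha,x\in X$ with $\Lcal_{{}^{\alpha}x}\neq\Lcal_{x}$. Conjoining with (a) gives the characterization of $\mpl(X,r)=3$ in the statement.

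The only delicate step is the unfolding in the first paragraph: one must keep the two layers of brackets separate and check that the symmetric condition "$\Lcal_{{}^{u}w}=\Lcal_{{}^{v}w}$ for all $w$" really is what $[[u]]=[[v]]$ means in the second retract. Once that bookkeeping is cleanly done, the remaining arguments are a one-step orbit observation plus an induction on word length, with no further obstacle.
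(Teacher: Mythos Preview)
Your proof is correct and follows essentially the same route as the paper's. Both arguments unfold $\mpl(X,r)\le 3$ as triviality of $\Ret^{2}(X,r)$, translate that into an identity on iterated left actions, and then identify this identity with condition (\ref{eqstu1}); part (2) is handled identically via the characterisation of $\mpl>2$. The only difference is presentational: you isolate the intermediate identity (\ref{stareq}) and make the induction on word length in $\Gcal$ explicit, whereas the paper passes directly from ``for all $a\in X$'' to ``for all $u\in\Gcal$'' in the line $[\alpha]\sim[{}^{u}\alpha]$ without spelling out that step.
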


\begin{proof}
 The following implications hold:
 \[\begin{array}{rl}
\mpl(X, r) \leq 3 &\Longleftrightarrow \Ret^2(X,r) \quad \text{is
either the trivial solution or one element solution}\\
&  \Longleftrightarrow [\alpha] \sim [{}^u{\alpha}] \quad \text{in
$\Ret(X,r)$}\quad\forall
\alpha\in X, u \in \Gcal\\
 &  \Longleftrightarrow [{}^{\alpha}x] =
[{}^{\beta}x]\quad \forall \alpha,\beta, x\in X,
\quad\text{with}\quad
\Ocal_{\Gcal}(\alpha)= \Ocal_{\Gcal}(\beta)\\
&  \Longleftrightarrow \Lcal_{{}^{\alpha}x} =
\Lcal_{{}^{\beta}x}\quad\forall \alpha,\beta, x\in X,
\quad\text{with} \quad\Ocal_{\Gcal}(\alpha)= \Ocal_{\Gcal}(\beta).
\end{array}
\]
This implies the first part of the proposition. Now (1) together
 Remark \ref{mpl>2cor} imply (2).
\end{proof}

\begin{proposition}
\label{mpl3prop2}
 Let $(X,r)$ be a nontrivial square-free solution
with  condition (\ref{eqstu1}). Suppose $X_i, 1 \leq i \leq t,$ is
the set of all $G(X,r)$-orbits in $X$ enumerated so that $X_1,
\cdots , X_{t_0}, (t_0 \geq 1)$ are exactly the nontrivial ones.
Then the following are equivalent.
 Then the following conditions hold:
 \begin{enumerate}
 \item
 \label{mpl3_stu1}
There are  equalities
\begin{equation}
\label{eqstu2}
\begin{array}{rll}
 {}^{{}^{\beta}{x}}{\alpha}= {}^x{\alpha}&\quad\quad
{}^{{}^y{\alpha}}{x}= {}^{\alpha}{x} &\quad\forall x,y \in X_i,
\forall \alpha,\beta \in X_j, \; 1 \leq i < j \leq t.
\end{array}
\end{equation}
\item
\label{mpl3_G^j_abelian} Each group $\Gcal(X_j), 1 \leq j \leq t_0,$
is abelian. In the case when $t_0 < t,$ $\Gcal(X_j) = \{ 1\}$, for
all $t_0 < j \leq t$.
\item \label{mpl3_stu2}
$X$ is \emph{a strong twisted union} $X= X_1\stu X_2 \stu \cdots\stu
X_t$.
\item In particular,
$\mpl(X,r) = 3$ implies (\ref{eqstu1}) and conditions (1), (2), (3).
\end{enumerate}
\end{proposition}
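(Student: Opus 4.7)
The plan is to establish (1) directly from hypothesis (\ref{eqstu1}) together with the cyclic conditions, then deduce (2) and (3) from (1); assertion (4) is a restatement of Proposition~\ref{mpl3prop1}(1). The one recurring tool is that (\ref{eqstu1}) tells us $\Lcal_{{}^\beta x}$ depends only on the $\Gcal$-orbit of $\beta$ (with $x$ fixed), so in any identity involving $\Lcal_{{}^\beta x}$ we may freely replace $\beta$ by any other element of the same orbit.

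For (1), I would begin with the cyclic condition \textbf{cl2}, which gives ${}^{{}^xy}x={}^yx$. Renaming variables yields the key identity ${}^{{}^\alpha x}\alpha={}^x\alpha$. Since $\alpha,\beta\in X_j$ lie in the same orbit, (\ref{eqstu1}) gives $\Lcal_{{}^\beta x}=\Lcal_{{}^\alpha x}$; evaluating on $\alpha$ produces ${}^{{}^\beta x}\alpha={}^{{}^\alpha x}\alpha={}^x\alpha$, which is the first equation. The second equation ${}^{{}^y\alpha}x={}^\alpha x$ is symmetric: one starts from the cyclic identity ${}^{{}^x\alpha}x={}^\alpha x$ and combines it with $\Lcal_{{}^y\alpha}=\Lcal_{{}^x\alpha}$, the latter being (\ref{eqstu1}) applied to $x,y\in X_i$ in the same orbit.

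For the abelianness in (2), take $x,y\in X_j$ with $j\leq t_0$. Corollary~\ref{alternativedefc}(ii) yields $\Lcal_{{}^yx}\circ\Lcal_y=\Lcal_{{}^xy}\circ\Lcal_x$. Specialising (\ref{eqstu1}) to the same-orbit pair $(x,y)$ and to the base variables $z=x$ and $z=y$, combined with square-freeness ${}^xx=x$, ${}^yy=y$, gives $\Lcal_{{}^yx}=\Lcal_x$ and $\Lcal_{{}^xy}=\Lcal_y$; substituting yields $\Lcal_x\Lcal_y=\Lcal_y\Lcal_x$, so the generators of $\Gcal(X_j)$ commute pairwise and the group is abelian. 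The singleton-orbit clause $\Gcal(X_j)=\{1\}$ for $j>t_0$ is more delicate: if $\{a\}$ is a singleton $\Gcal$-orbit then ${}^\gamma a=a$ for every $\gamma\in X$, which by Corollary~\ref{Aut(Xr)Cor3} makes $\Lcal_a$ an $r$-automorphism of $(X,r)$ lying in the abelian group $\Aut(X,r)\cap\Gcal$, and one then has to exploit the decomposition from~(3) together with the rigidity of the actions on the other orbits to force $\Lcal_a=\id$. This is the main obstacle I anticipate, since (\ref{eqstu1}) and the cyclic conditions are mute when $\alpha=\beta=a$ is forced.

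For (3), the \textbf{stu} conditions of Definition~\ref{STUdef} for a pair $X_i,X_j$ with $i\neq j$ reduce, via \textbf{lri}, to the two equations of (1). Indeed, $\alpha^y=\Lcal_y^{-1}(\alpha)\in X_j$ and $\alpha={}^y(\alpha^y)$, so feeding $\alpha={}^y(\alpha^y)$ into the second equation of (1) gives ${}^\alpha x={}^{{}^y(\alpha^y)}x={}^{\alpha^y}x$; the dual identity $\alpha^{{}^\beta x}=\alpha^x$ follows from the first equation of (1) applied with $\alpha$ replaced by $\Lcal_x^{-1}(\alpha)\in X_j$. The compatibility with the group actions required by Definition~\ref{STUdef} is automatic since each $X_i$ is $\Gcal$-invariant, so the pairwise decompositions assemble into the $m$-fold strong twisted union of Definition~\ref{STUmsetsdef}. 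Finally, (4) is immediate from Proposition~\ref{mpl3prop1}(1), which identifies (\ref{eqstu1}) with the condition $\mpl(X,r)\leq 3$.
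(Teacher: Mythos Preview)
Your argument for parts (1), (3), (4), and the abelian half of (2) is correct and essentially coincides with the paper's proof: both use the cyclic condition ${}^{{}^\alpha x}\alpha={}^x\alpha$ together with (\ref{eqstu1}) for (1), the identity $\Lcal_{{}^\beta\alpha}\circ\Lcal_{\beta^\alpha}=\Lcal_\beta\circ\Lcal_\alpha$ (your cyclic-set version $\Lcal_{{}^yx}\circ\Lcal_y=\Lcal_{{}^xy}\circ\Lcal_x$ is an equivalent packaging) for (2), and the observation that (\ref{eqstu2}) is exactly the \textbf{stu} condition for (3).

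Your ``main obstacle'' is a misreading. In this proposition ``nontrivial orbit'' means an orbit $X_j$ with $\Gcal(X_j)\neq\{1\}$, not an orbit of size $>1$; the clause ``$\Gcal(X_j)=\{1\}$ for $t_0<j\le t$'' is the enumeration convention, not an assertion to be proved. The paper's proof makes this explicit: it simply says ``without loss of generality we can assume $\Gcal(X_j)$ is nontrivial'' and then proves abelianness. Indeed the stronger reading is false: take $X=\{x_1,x_2,a\}$ with $\Lcal_{x_1}=\Lcal_{x_2}=\id_X$ and $\Lcal_a=(x_1\,x_2)$. Here (\ref{eqstu1}) holds (in fact $\mpl X=2$), $\{a\}$ is a singleton orbit, yet $\Gcal(\{a\})=\langle(x_1\,x_2)\rangle\neq\{1\}$. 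So drop the attempted rigidity argument; there is nothing to prove for that clause.
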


\begin{proof}
We apply the two sides of  (\ref{eqstu1}) to the element $\alpha,$
and use the cyclic condition to yield: $
\Lcal_{{}^{\beta}x}(\alpha)= \Lcal_{{}^{\alpha}x} (\alpha) =
\Lcal_{x} (\alpha)$.  This interpreted in our typical notation
gives:
\[
{}^{{}^{\beta}{x}}{\alpha}= {}^x{\alpha} \quad \forall x \in X,
\alpha, \beta \in X_j, 1 \leq j \leq t,
\]
We have verified the left hand-side equality of (\ref{eqstu2}).
The right hand-side is analogous. This proves part
(\ref{mpl3_stu1}).

To prove   (\ref{mpl3_stu2}) it will be enough to show that for two
arbitrary orbits $X_i, X_j, 1 \leq i < j \leq t$ the set $X_{ij}=
X_i \bigcup X_j$ is $r$-invariant,  and the restricted solution
$(X_{ij}, r_{\mid X_{ij}})$ is a strong twisted union $X_{ij}= X_i
\stu X_j$.

Consider  two orbits $X_i, X_j$, where $1 \leq i < j \leq t$.
As a union of two $\Gcal$-invariant subsets of $X$,  the set
$X_{ij}= X_i \bigcup X_j$ is
$r$-invariant, so (\ref{eqstu2}) implies that the restricted
solution $(X_{ij}, r_{\mid X_{ij}})$ is a strong twisted union
$X_{ij}= X_i \stu X_j$, which proves (\ref{mpl3_stu2}).

Suppose now $1 \leq j \leq t.$ Without loss of generality we can
assume that the group $\Gcal(X_j)$ is nontrivial (by hypothesis
$(X,r)$ is a nontrivial solution). Now in the equality
(\ref{eqstu1}) we set $x = \alpha$ and since $(X,r)$ is square-free
(${}^{\alpha}{\alpha}= \alpha$) we obtain the left hand-side of the
following
\begin{equation}
\label{mpl3eq1}
\begin{array}{rll}
\Lcal_{{}^{\beta}{\alpha}}= \Lcal_{\alpha},& \quad
\Lcal_{{\beta}^{\alpha}}= \Lcal_{\beta}&\quad \forall  \alpha,
\beta \in X_j,\quad 1 \leq j \leq t.
\end{array}
\end{equation}
The equality in the right hand-side is analogous.  Recall that
since  $(X,r)$ is a solution
one has
\[
\Lcal_{\beta}\circ\Lcal_{\alpha} =
\Lcal_{{}^{\beta}{\alpha}}\circ\Lcal_{{\beta}^{\alpha}}\quad
\forall \alpha , \beta \in X,
\]
which together with (\ref{mpl3eq1}) implies
\begin{equation}
\label{mpl3eq2} \Lcal_{\beta}\circ\Lcal_{\alpha}
=\Lcal_{\alpha}\circ\Lcal_{\beta}, \quad \forall  \alpha,  \beta
\in X_j, \quad 1 \leq j \leq t. \end{equation}
By definition,
$\Gcal(X_j)$ is the subgroup of $\Gcal,$ generated by the set of
all $\Lcal_{\alpha}, \alpha \in X_j.$ It follows then from
(\ref{mpl3eq2}) that each nontrivial $\Gcal(X_j)$,  $1 \leq j \leq
t,$ is abelian.
\end{proof}
\begin{corollary}
\label{mpl3_Cor1} Let $(X,r)$ be a finite square-free solution with
$\mpl X = 3$. Let $X_1 \cdots X_t$
 be the set of $\Gcal$-orbits in $X$. Then
\begin{enumerate}

\item
\label{mpl3_G(X_j)_abelian1} Each group $\Gcal(X_j), 1 \leq j \leq t,$
is abelian ($\Gcal(X_j)=\{e\}$ is also possible). There exists a $j,
1 \leq j \leq t$ for which $\Gcal(X_j)$ is nontrivial. Without loss
of generality we shall assume that the nontrivial groups are
$\Gcal(X_j)$ with $1 \leq j \leq t_0.$

\item
\label{mpl3_G(X_j)orbits} Suppose $1 \leq j \leq t_0$, so $\Gcal(X_j)$
acts on $X$ nontrivially. Consider the set of $\Gcal(X_j)$-orbits in
$X$. Then the elements of each $\Gcal(X_j)$-orbit in $X$ act equally
on $X_j$, that is:
\[
(\Lcal_{\alpha})_{\mid X_j}= (\Lcal_{\beta})_{\mid
X_j}\quad\text{whenever}\quad \beta\in \Ocal_{\Gcal(X_j)}(\alpha).
\]
Moreover, $X = X_1\stu X_2\stu \cdots\stu X_t.$
\item
\label{mpl3_Gproduct}  The group $\Gcal$ is a product of abelian
subgroups
\[\Gcal= \Gcal(X_1)\Gcal(X_2)\cdots \Gcal(X_{t_0}).\]
\end{enumerate}
\end{corollary}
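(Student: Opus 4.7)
The plan is to derive all three statements by invoking Proposition \ref{mpl3prop2} and then supplementing with a short orbit analysis together with the factorisation machinery from Section \ref{Decompositions_Section}. First I would apply Proposition \ref{mpl3prop2}(4): since $\mpl X = 3$, condition (\ref{eqstu1}) holds, and all three conclusions of that proposition are available. In particular each $\Gcal(X_j)$ is abelian, so part (1) is immediate (with $t_0$ chosen so that $\Gcal(X_1),\ldots,\Gcal(X_{t_0})$ are exactly the nontrivial factors), and $(X,r)$ already decomposes as the strong twisted union $X = X_1 \stu X_2 \stu \cdots \stu X_t$, which is the second assertion of part (2).

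The heart of the argument is the orbit statement in part (2): for $\beta \in \Ocal_{\Gcal(X_j)}(\alpha)$ one must show $(\Lcal_\alpha)_{\mid X_j} = (\Lcal_\beta)_{\mid X_j}$. Writing $\beta = u(\alpha)$ with $u \in \Gcal(X_j)$ expressed as a word in the generators $\Lcal_{y}$ ($y \in X_j$), I would induct on the length of this word, reducing to the case $\beta = {}^y\alpha$ for a single $y \in X_j$, and then split on which $\Gcal$-orbit contains $\alpha$.

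In the case $\alpha \in X_j$, the already-established equality (\ref{mpl3eq1}) from the proof of Proposition \ref{mpl3prop2} yields $\Lcal_{{}^y\alpha} = \Lcal_{\alpha}$ as full permutations of $X$, and hence on $X_j$. In the case $\alpha \in X_i$ with $i \neq j$, the second half of (\ref{eqstu2}) gives ${}^{{}^y\alpha} x = {}^{\alpha} x$ for every $x \in X_j$, which is precisely $(\Lcal_{{}^y\alpha})_{\mid X_j} = (\Lcal_{\alpha})_{\mid X_j}$. Iterating this along the word for $u$ closes the induction. Finally, for part (3) I would invoke Corollary \ref{decompositionorbits} applied to the $\Gcal$-orbit decomposition of $X$, which produces $\Gcal = \Gcal(X_1)\Gcal(X_2)\cdots \Gcal(X_t)$; since $\Gcal(X_j)$ is trivial for $t_0 < j \leq t$, those factors can be dropped, yielding the stated form.

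The main obstacle I anticipate is the case $\alpha \in X_j$ of the orbit statement: the naive braid-relation manipulation $\Lcal_y \Lcal_\alpha = \Lcal_{{}^y\alpha} \Lcal_{y^\alpha}$ combined with abelianness of $\Gcal(X_j)$ shows only that the two products coincide, not that $\Lcal_{{}^y\alpha}$ and $\Lcal_{\alpha}$ individually agree; nor is it clear a priori what the multipermutation level of the restricted solution $(X_j,r_j)$ should be. The decisive input is that Proposition \ref{mpl3prop2} already extracts the stronger pointwise identity (\ref{mpl3eq1}), which collapses $\Lcal_{{}^y\alpha}$ directly to $\Lcal_{\alpha}$ and allows the argument to proceed without any further hypothesis on $(X_j,r_j)$.
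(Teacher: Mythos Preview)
Your proposal is correct and follows essentially the route the paper intends: the paper states this result as a corollary of Proposition~\ref{mpl3prop2} without giving a separate proof, so deriving parts~(1) and the strong twisted union in~(2) from Proposition~\ref{mpl3prop2}(4), the orbit statement in~(2) from the identities~(\ref{mpl3eq1}) and~(\ref{eqstu2}), and part~(3) from Corollary~\ref{decompositionorbits} is exactly the intended argument. Your handling of the one-step case $\beta={}^y\alpha$ is clean; the induction on word length is fine since $X$ is finite (so inverses of generators are positive powers and you need not treat $\alpha^y$ separately).
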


This corollary gives interesting information about the graph
$\Gamma$ of finite solutions $(X,r)$ with $\mpl X = 3$.  Take an
arbitrary $j$ for which the group $\Gcal(X_j)$ is nontrivial. The
action of $\Gcal(X_j)$ on $X$ is represented graphically by taking
the subgraph $\Gamma^{(j)}$ of $\Gamma$ with the same set of
vertices, but only edges labeled by $\alpha,$ where $\alpha \in
X_j$, are considered. Then clearly, there is a $1-1$ correspondence
between the set of connected components in $\Gamma^{(j)}$ and the
set of $\Gcal(X_j)$-orbits in $X$. For each component
$\Gamma^{(j)}_x$ ($x$ is an arbitrary vertex in the component) the
corresponding orbit is $\Ocal_{\Gcal(X_j)}(x)$. As a set it
coincides with the set of all vertices in $\Gamma^{(j)}_x.$ It
follows then from Corollary \ref{mpl3_Cor1} that all vertices in a
connected component of $\Gamma^{(j)}, 1 \leq j \leq t_0$ have the same
action on the set $X_j$.

\begin{question}
In notation as above, suppose that $(X,r)$ is a finite square-free
solution which satisfies condition (1) and (2) of Corollary
\ref{mpl3_Cor1}. What additional ``minimal'' condition should be
imposed on the actions (if any) to guarantee $\mpl X = 3$? We are
searching a condition which formally is weaker that the obviously
sufficient condition (\ref{eqstu1}).
\end{question}

\begin{proposition}
\label{theoremC} Let $(X,r)$ be a nontrivial finite square-free
solution of multipermutation level $m, 2 \leq m \leq 3$. Let
\[Y_1 = [\xi_1^{(m-1)}], \cdots, Y_s =[\xi_s^{(m-1)}]\] be the set of
all distinct $(m-1)$-retract classes in $X$.
Let $S(Y_j), G(Y_j), \Gcal(Y_j), 1 \leq j\leq
s,$ be as in Notation
\ref{S(Y)G(Y)def}. Then
\begin{enumerate}
\item
\label{thC1}
$s \geq 2$, $|Y_i|\geq 2$ for some $1 \leq i\leq s$. (We
enumerate them so that the first $s_0$ are exactly the nontrivial
ones).
\item
\label{thC2}
Each retract class $Y_i, 1 \leq i\leq s$, is $\Gcal$-invariant and
the restricted solution $(Y_i, r_i)$ where $r_i=r_{\mid Y_i}$ has
multipermutation level $\leq 2$. (In the case when $s_0 < i \leq s,
\mpl Y_j = 0$).
\item
\label{thC3}
$(X, r)$ is a strong twisted union
\[
X = Y_1\stu Y_2\stu\cdots \stu Y_s.
\]
\item
\label{thC4}
All groups $\Gcal(Y_j), 1 \leq j \leq s,$ are abelian and
$\Gcal(Y_j) = 1$ is possible for at most one retract class $Y_j$ (with $Y_j \subset \ker \Lcal$).
Furthermore,
\[
\Gcal = \Gcal(Y_1)\Gcal(Y_2) \cdots \Gcal(Y_{s_0}).
\]
\end{enumerate}
\end{proposition}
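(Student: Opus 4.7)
The plan is to obtain parts (1), (2), and the product factorisation asserted in (4) as immediate consequences of Theorem~\ref{decompositionretractclasses} applied to $(X,r)$: the distinct $(m-1)$-retract classes $Y_1,\ldots,Y_s$ form a pairwise disjoint family of $\Gcal$-invariant subsets with $s\ge 2$ and $|Y_i|\ge 2$ for at least one index, each restricted solution satisfies $\mpl(Y_j,r_j)\le m-1$, and the factorisation $\Gcal=\Gcal(Y_1)\cdots\Gcal(Y_{s_0})$ follows from Theorem~\ref{decompositiontheorem} after reordering and dropping the trivial factors, using Remark~\ref{trivialG(Y)remark} to identify $\Gcal(Y_j)=\{1\}$ with $Y_j\subset\ker\Lcal$. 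What Proposition~\ref{theoremC} adds on top of that framework is the strong twisted union structure in (3) and the abelianness of the subgroups in (4); these are what actually need to be verified.

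For the strong twisted union in (3), the plan is to check the equivalent form ${}^{{}^y\alpha}x={}^\alpha x$ of the \textbf{stu} condition for $x,y\in Y_j$ and $\alpha\in Y_{j'}$ with $j\ne j'$. When $m=2$, every element of a 1-retract class has the same left action, so since ${}^y\alpha,\alpha\in Y_{j'}$ we have $\Lcal_{{}^y\alpha}=\Lcal_\alpha$ on all of $X$ and the condition is immediate. When $m=3$, the assumption that $x,y$ lie in the same 2-retract class means $[x]\sim[y]$ in $\Ret(X,r)$, i.e.\ $\Lcal_{[x]}=\Lcal_{[y]}$ in $\Sym(\Ret X)$; applying this equality to $[\alpha]$ yields $[{}^x\alpha]=[{}^y\alpha]$, so $\Lcal_{{}^x\alpha}=\Lcal_{{}^y\alpha}$ as permutations of $X$, and evaluating at $x$ followed by the cyclic condition \textbf{cl2} gives $\Lcal_{{}^y\alpha}(x)=\Lcal_{{}^x\alpha}(x)=\Lcal_\alpha(x)$. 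The dual right-action form follows from the equivalence of the two forms under \textbf{lri}.

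For the abelianness of $\Gcal(Y_j)$ in (4), the $m=2$ case is immediate because $\Gcal(X,r)$ itself is abelian by Theorem~\ref{significantth}(1). The $m=3$ case is the main technical obstacle. There $(Y_j,r_j)$ has $\mpl\le 2$, so $\Gcal(Y_j,r_j)$ is abelian by Theorem~\ref{significantth}, and the surjective restriction map $\Gcal(Y_j)\to\Gcal(Y_j,r_j)$ forces $[\Gcal(Y_j),\Gcal(Y_j)]$ to act trivially on $Y_j$; the task is to propagate that triviality to each complementary class $Y_{j'}$. The ingredients I would invoke are: the identity $\Lcal_{{}^y y'}=\Lcal_{y'}$ for $y,y'\in Y_j$, obtained by applying $\Lcal_{[y]}=\Lcal_{[y']}$ in $\Sym(\Ret X)$ to $[y']$ together with square-freeness, which via the YBE relation $\Lcal_y\Lcal_{y'}=\Lcal_{{}^y y'}\Lcal_{y^{y'}}$ reduces the commutation problem to the equality $\Lcal_y=\Lcal_{y^{y'}}$; the observation that $y^{y'}=\Lcal_{y'}^{-1}(y)$ lies in the same $\Gcal(Y_j,r_j)$-orbit as $y$, so by Theorem~\ref{significantth} applied to $(Y_j,r_j)$ the restrictions of $\Lcal_y$ and $\Lcal_{y^{y'}}$ to $Y_j$ coincide; and the strong twisted union established in (3), which via Corollary~\ref{stucordef} makes $\Lcal_y|_{Y_{j'}}$ an automorphism of $(Y_{j'},r_{j'})$, so that Corollary~\ref{Aut(Xr)Cor3}(2) applied to each $(Y_{j'},r_{j'})$ captures the commutator inside an abelian subgroup of $\Aut(Y_{j'},r_{j'})$. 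The hardest step of the plan is exactly this last propagation, since a priori $\Lcal_y|_{Y_{j'}}$ need not lie in $\Gcal(Y_{j'},r_{j'})$ itself.
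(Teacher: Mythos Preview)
Your treatment of parts (\ref{thC1}), (\ref{thC2}), (\ref{thC3}) and the product decomposition in (\ref{thC4}) is correct and matches the paper's route: Theorem~\ref{decompositionretractclasses} for (\ref{thC1}), (\ref{thC2}) and the factorisation, and for (\ref{thC3}) you rederive exactly the content of Facts~\ref{VIPfacts}(\ref{fact3}) (equation~(\ref{facteq3})), which is what the paper cites.

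The gap is in the abelianness of $\Gcal(Y_j)$ for $m=3$. You correctly obtain $\Lcal_{{}^y y'}=\Lcal_{y'}$ from $\Lcal_{[y]}=\Lcal_{[y']}$ applied to $[y']$, and you correctly see that via $\Lcal_y\Lcal_{y'}=\Lcal_{{}^y y'}\Lcal_{y^{y'}}$ the remaining task is $\Lcal_{y^{y'}}=\Lcal_y$. But then you take a detour through restrictions to $Y_j$ and a ``propagation'' to the other $Y_{j'}$ that you yourself flag as incomplete --- and indeed Corollary~\ref{Aut(Xr)Cor3}(2) cannot be invoked as you suggest, since $\Lcal_y|_{Y_{j'}}$ lies in $\Aut(Y_{j'},r_{j'})$ but not in $\Gcal(Y_{j'},r_{j'})$, and that corollary is about the intersection $\Aut\cap\Gcal$.

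The missing observation is that the \emph{same} argument you used for $\Lcal_{{}^y y'}=\Lcal_{y'}$ gives $\Lcal_{y^{y'}}=\Lcal_y$ symmetrically, on all of $X$, with no propagation needed. Since $\Ret(X,r)$ is again square-free it satisfies \textbf{lri}, so $\Rcal_{[y']}=\Lcal_{[y']}^{-1}=\Lcal_{[y]}^{-1}$; hence
\[
[y^{y'}]=[y]^{[y']}=\Lcal_{[y']}^{-1}([y])=\Lcal_{[y]}^{-1}([y])=[y],
\]
the last step using $[{}^y y]=[y]$. Thus $\Lcal_{y^{y'}}=\Lcal_y$ as permutations of $X$, and $\Lcal_y\Lcal_{y'}=\Lcal_{y'}\Lcal_y$ follows immediately. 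This is precisely the content of Corollary~\ref{ret2abelianGcorollary}, which the paper simply cites.
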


\begin{proof} We give a sketch of the proof for the case $\mpl X=3$, the case $\mpl X=2$ is analogous.
\ref{thC1} is clear. \ref{thC2} follows from Facts \ref{VIPfacts}\ref{fact2}
We shall prove \ref{thC3}. Clearly, $X$ splits into a disjoint union
$X= \bigcup_{1 \leq j \leq s}Y_j.$
of the $\Gcal$-invariant subsets $Y_j, 1 \leq j \leq s$. By Definition \ref{STUmsetsdef}
it will be enough to show that for each pair $i\neq j, 1 \leq i,j \leq s$
the $\Gcal$-invariant subset $Y_{ij}= Y_i\bigcup Y_j$ is a strong twisted union
$Y_{ij}= Y_i \stu Y_j.$
Suppose $\alpha, \beta \in Y_i,$ then $\alpha^(2) = \beta^(2)$ and by Facts \ref{VIPfacts},
(\ref{facteq3}) one has
\[
{}^{{}^{\alpha}{x}} {\beta} = {}^x{\beta} \quad \forall x \in X,
\]
which implies $Y_{ij}= Y_i \stu Y_j.$ This proves \ref{thC3}.

By Corollary \ref{ret2abelianGcorollary} each group $\Gcal(Y_i), 1\leq $ is abelian.
By Remarks \ref{trivialG(Y)remark} $\Gcal(Y_j)= 1$ \emph{iff} $Y_j \subset \ker \Lcal$, and
\ref{trivialG(Y)remark2} implies that if such a retract class exists it contains the set $X_0 = X \bigcap \ker \Lcal,$
and therefore it is unique. Now  \ref{thC4} follows from the decomposition Theorem \ref{decompositionretractclasses} (3).
\end{proof}

We show in section 6  that for square-free solutions of arbitrary
cardinality and finite multipermutation level $m$ one has
\[sl \Gcal(X,r) \leq m-1, \quad sl G(X,r) \leq m \]
Thus
\[
\mpl X = 3 \Longrightarrow 1 \leq sl \Gcal(X,r) \leq 2.
\]

We conclude the section with an example of a square-free solutions
$(X,r)$ with $\mpl(X,r)=3$ and  abelian  YB permutation group
$\Gcal(X,r).$

\begin{example}
\label{gap-mpl-sl-lemma} Let $(X,r)$ be the square-free solution
defined as
\[
\begin{array}{rl}
X= X_1 \bigcup X_2\bigcup X_3&\quad  \quad\\
 & \\
X_1 =\{x_i\mid 1 \leq i \leq 8\}\quad& X_2= \{a, c\}\quad X_3= \{b, d\}\\
\\
\Lcal_a = (b\;d)(x_1\;x_2)(x_3\;x_4)(x_5\;x_6)(x_7\;x_8)\quad&\Lcal_c = (b\;d)(x_1\;x_5)(x_2\;x_6)(x_3\;x_7)(x_4\;x_8)\\
& \\
\Lcal_b =
(a\;c)(x_1\;x_3)(x_2\;x_4)(x_5\;x_7)(x_6\;x_8)\quad&\Lcal_c =
(a\;c)(x_1\;x_8)(x_2\;x_7)(x_3\;x_6)(x_5\;x_4)\\&
\\
\Lcal_{x_i} = \id_{X}\quad 1 \leq i \leq 8.
\end{array}
\]
Then  $\mpl(X,r)=3$, $\Gcal(X,r)$ is abelian, and the group
$G(x,r)$ is solvable of solvable length $2$.
\end{example}

\begin{proof}
Note that ${}^ab = d,\; a^b=c$, so $\Lcal_a\Lcal_b=
\Lcal_{{}^ab}\Lcal_{a^b}.$ Similarly one veryfies that
$\Lcal_z\Lcal_t= \Lcal_{{}^zt}\Lcal_{z^t},$ for all $z,t \in X$,
so condition {\bf l1} is satisfied and the left action given above
defines a solution. Direct computation shows:
\[\begin{array}{rl}
\Lcal_a \circ\Lcal_b= \Lcal_b \circ\Lcal_a =\Lcal_d\circ\Lcal_c
=\Lcal_c\circ\Lcal_d =&
(a\;c)(b\;d)(x_1\;x_4)(x_2\;x_3)(x_5\;x_8)(x_6\;x_7)
\\
\\
\Lcal_a \circ\Lcal_c= \Lcal_c \circ\Lcal_a = \Lcal_b\circ\Lcal_d
=\Lcal_d\circ\Lcal_b =& (x_1\;x_6)(x_2\;x_5)(x_3\;x_8)(x_4\;x_7)
\\
\\
\Lcal_a \circ\Lcal_d= \Lcal_d \circ\Lcal_a = \Lcal_b\circ\Lcal_c
=\Lcal_c\circ\Lcal_b =&
(a\;c)(b\;d)(x_1\;x_7)(x_2\;x_8)(x_3\;x_5)(x_4\; x_6).
\end{array}
\]
Hence $\Gcal= \Gcal(X,r)= \langle \Lcal_a,\Lcal_b,\Lcal_c,\Lcal_d
\rangle$ is abelian. Next, the equality $\Lcal_{{}^ab}=\Lcal_d
\neq \Lcal_b,$ implies that $\mpl(X,r) \geq 3.$ It is easy to see
that
\[
\Gcal=\langle \Lcal_a \rangle \times \langle \Lcal_b \rangle
\times \langle \Lcal_c \rangle \simeq C_2\times C_2\times C_2.
\]
We will show that $\mpl(X,r)=3.$ For the retracts one has
\[\begin{array}{rl}
\Ret(X,r) =& ([X], r_{[X]}),\;\text{where}\; [X]=\{[a],[b], [c],
[d], [x_1]\}, \\
\\
&\Lcal_{[a]} =\Lcal_{[c]}= ([b]\;[d]), \Lcal_{[b]} =\Lcal_{[d]}=
([a]\;[c])
\\
\\
\Ret^2(X,r) =& ([[X]], r_{[[X]]}),\;\text{where}\; [[X]]=\{[a],[b],
[x_1]\},  \Lcal_{[[a]]} =\Lcal_{[[b]]}= \Lcal_{[[x_1]]}= e
\\
\\
\Ret^3(X,r)\quad& \text{is the one element solution on}
\quad\{[a]\}.
\end{array}
\]
So we have  $\mpl(X,r) = 3$. We have seen that  $\Gcal$ is abelian,
or equivalently,  $\sol\Gcal= 1$. It is easy to see that  $\sol
G(X,r)= 2.$ Indeed, $\mpl (X, r) = 3$ implies $2 \leq\sol G(X,r)$,
by Theorem \ref{sltheorem} $\sol G(X,r) \leq\sol \Gcal +1 =2$.

\end{proof}
\begin{question}
\label{boundmplq} Are there multipermutation square-free solutions
$(X,r)$ of arbitrarily high multipermutation level, and with abelian
permutation group $\Gcal$? If not, what is the largest integer $M$
for which there exist solutions  $(X,r)$ with $\mpl(X,r)=M$ and
$\Gcal$ abelian.
\end{question}

We show, see  Theorem \ref{theoremA}, that assuming $\Gcal(X,r)$
abelian, one has $\mpl(X,r)\leq t$, where $t$ is the number of
$\Gcal$-orbits in $X$. We
still do not have examples of solutions with high multipermutation
level and $\Gcal$ abelian.

\section{Solutions with abelian permutation group}
\label{abelian permutation group}
We can say a surprising amount about solutions $(X,r)$ for which
$\Gcal(X,r)$ is abelian.  In this section we  keep the notation and
conventions from the previous sections.

As usual $(X,r)$ is a square-free solution of arbitrary cardinality,
$\Gcal=\Gcal(X,r)$ denotes its YB permutation group. In the cases
when we assume $X$ finite this will be written explicitly. Without
restriction to the case of necessarily finite solutions we shall assume
that $X$ has a finite number of $\Gcal$- orbits, as in the previous
sections, $\Xcal = \{X_1, \cdots, X_t \}$ will denote the set of
$\Gcal$-orbits in $X$. Clearly, in the case when $X$ is finite this
condition is always in force. As discussed in the previous sections,
each $(X_i, r_i), 1 \leq i \leq t$, is also a square-free solution,
where $r_i$ is the restriction $r_i= r_{\mid X_i}$. The main results
of the section are the following theorems.

\begin{theorem}
\label{theoremA} Let $(X,r)$ be a square-free solution of arbitrary
cardinality. Suppose its permutation group $\Gcal$ is abelian, and
$X$ has a finite number of $\Gcal$- orbits,  $X_1, \cdots, X_t$,
where $t \geq 2$. Then the following conditions hold.
\begin{enumerate}
\item
\label{theoremA1} Each $(X_i, r_i), 1 \leq i \leq t$,  is a trivial
solution.
\item
\label{abelianGth2} $(X,r)$ is a strong twisted union of its
$\Gcal$-orbits (see Definition \ref{STUmsetsdef}):
\[
X = X_1\stu X_2\stu \cdots\stu X_t.
\]
\item
\label{abelianGth3} $(X,r)$ is a multipermutation solution, with
\[
\mpl (X,r) \leq t.
\]
\end{enumerate}
\end{theorem}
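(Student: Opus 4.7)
I tackle the three claims in order, exploiting abelianness at each stage.

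\emph{Part (\ref{theoremA1}).} Fix an orbit $X_i$ and pick $x,y\in X_i$. Since $X_i$ is the $\Gcal$-orbit of $x$, there is $u\in\Gcal$ with $y=u(x)$. Square-freeness gives $\Lcal_x(x)=x$, and since $\Lcal_x,u\in\Gcal$ commute,
\[
\Lcal_x(y)=\Lcal_x(u(x))=u(\Lcal_x(x))=u(x)=y,
\]
so $\Lcal_x|_{X_i}=\id_{X_i}$ for every $x\in X_i$. Thus $(X_i,r_i)$ is trivial.

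\emph{Part (\ref{abelianGth2}).} By Corollary \ref{stucordef}, it suffices to verify that $(\Lcal_x)|_{X_j}\in\Aut(X_j,r_j)$ for all $i\neq j$ and $x\in X_i$. Part (\ref{theoremA1}) makes $(X_j,r_j)$ trivial, so $\Aut(X_j,r_j)=\Sym(X_j)$; and $\Lcal_x\in\Gcal$ together with $\Gcal$-invariance of $X_j$ forces $(\Lcal_x)|_{X_j}\in\Sym(X_j)$. Hence $X=X_1\stu\cdots\stu X_t$.

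\emph{Part (\ref{abelianGth3}).} I proceed by induction on $t$. The base $t=1$ is immediate from Part (\ref{theoremA1}): $X$ is trivial and $\mpl X\leq 1$. For the inductive step the plan is to invoke Theorem \ref{mplmtheorem}, which expresses the condition $\mpl X\leq m$ as an explicit identity of nested left actions. The computation-with-actions technique of Subsection \ref{Computations with actions }, combined with the commutativity of $\Gcal$ and Parts (\ref{theoremA1})--(\ref{abelianGth2}), lets one rewrite an $m$-fold nested expression $\Lcal_{{}^{a_1}({}^{a_2}(\cdots\,{}^{a_m}y)\cdots)}$ by commuting actions by elements of distinct orbits past one another (justified by the strong twisted union) and by collapsing any action of an element on its own orbit (trivial by Part (\ref{theoremA1})). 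Each such rewrite strips at least one orbit from the active chain; after $t$ levels no orbits remain, so the identity of Theorem \ref{mplmtheorem} holds with $m=t$.

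\emph{Anticipated difficulty.} The crux is Part (\ref{abelianGth3}): extracting from Theorem \ref{mplmtheorem} the precise iterated-action identity, and then showing that commutativity of $\Gcal$ plus the strong twisted union structure combine, via the cyclic conditions and \textbf{lri}, to peel off one orbit per nesting level. Checking that the rewriting process terminates in exactly $t$ steps, and that the bookkeeping of which orbits remain ``active'' at each level is preserved when left and right actions are pushed through the nested brackets, is the delicate combinatorial heart of the argument.
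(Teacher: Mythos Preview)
Your arguments for Parts (\ref{theoremA1}) and (\ref{abelianGth2}) are correct. Part (\ref{abelianGth2}) is in fact slicker than the paper's: the paper verifies the \textbf{stu} identities by direct computation (Lemma \ref{gabelianlemmaA}), whereas you observe that once each $(X_j,r_j)$ is trivial one has $\Aut(X_j,r_j)=\Sym(X_j)$, so Corollary \ref{stucordef} applies for free.

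Part (\ref{abelianGth3}) is where your plan has a genuine gap. The paper does \emph{not} use induction on $t$, and your description of ``peeling off one orbit per nesting level'' and ``commuting actions by elements of distinct orbits past one another'' does not correspond to an argument that actually goes through. The paper's proof (Proposition \ref{gabelianpropA}) is a pigeonhole argument: to establish $\mpl X\le t$ via Theorem \ref{mplmtheorem} one must verify the tower identity for every tower of length $t+1$, and among $y_1,\ldots,y_{t+1}$ two elements $y_\lambda$, $y_{\lambda+\mu}$ necessarily lie in the same orbit $X_i$. The key technical tool is then Lemma \ref{favouriteL}: under the \textbf{stu}-type hypothesis ${}^{{}^\alpha y}z={}^yz$ (already established between $X_i$ and its complement), any tower segment $(\cdots(\alpha\la y_k)\la\cdots\la y_1)\la z$ with $\alpha,z$ in the same invariant set can have its top $\alpha$ erased. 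Abelianness of $\Gcal$ enters precisely in the inductive proof of that lemma, via \textbf{l1} and a commutation step. Once truncation is available, Lemma \ref{cuttowersL} collapses both the length-$(t{+}1)$ tower and the length-$t$ tower to the same shorter expression, giving the required identity.

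Your phrase ``commuting actions \ldots\ (justified by the strong twisted union)'' conflates two separate ingredients: commuting is the abelian hypothesis on $\Gcal$, while the strong twisted union supplies the \textbf{stu} identity that makes truncation possible. Neither alone suffices, and Lemma \ref{favouriteL} is exactly the statement that marries them. That lemma, together with the pigeonhole observation, is the missing idea in your sketch.
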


\begin{corollary}
Every finite  square-free solution $(X,r)$ with abelian permutation
group $\Gcal$ has multipermutation level $\mpl (X,r) \leq t,$ where
$t$ is the number of its $\Gcal$- orbits.
\end{corollary}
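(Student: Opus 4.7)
The plan is to deduce the corollary directly from Theorem \ref{theoremA}. Since $X$ is finite, the number $t$ of $\Gcal$-orbits is automatically finite, so the hypotheses of Theorem \ref{theoremA} are met as soon as $t \geq 2$; in that case part (3) of the theorem delivers $\mpl(X,r) \leq t$ with no further work. The corollary is therefore really a specialisation of Theorem \ref{theoremA} to the finite setting, and the only thing to check is that the boundary case $t=1$ does not cause trouble.

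First I would dispose of the case $|X| = 1$: then $t = 1$ and $\mpl(X,r) = 0 \leq t$ by Definition \ref{mpldef}, so there is nothing to prove. When $|X| \geq 2$, I would verify $t \geq 2$. The paper already records that $\Gcal$ acts intransitively on any finite square-free solution (via Rump's decomposition theorem), which immediately gives $t \geq 2$. For a more self-contained argument, one can observe that an abelian transitive permutation group on a finite set acts regularly, so the stabiliser of any point is trivial; but square-freeness yields $\Lcal_x(x) = x$ for every $x \in X$, so every generator $\Lcal_x$ of $\Gcal$ lies in a point stabiliser and must equal $\id_X$. This forces $\Gcal = \{\id_X\}$, contradicting transitivity on a set of size at least two. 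Hence $t \geq 2$.

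With $t \geq 2$ established, Theorem \ref{theoremA}(3) supplies $\mpl(X,r) \leq t$ and the proof is complete. I do not expect any real obstacle: the substance is entirely contained in Theorem \ref{theoremA}, and the corollary amounts to its finite-cardinality specialisation together with the short group-theoretic remark that an abelian square-free $\Gcal$ cannot be transitive on a set with more than one element.
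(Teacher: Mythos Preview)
Your proposal is correct and mirrors the paper's approach: the corollary is stated immediately after Theorem~\ref{theoremA} with no separate proof, as a direct specialisation to the finite case. Your extra care in verifying the hypothesis $t\ge 2$ (via Rump's decomposition theorem or the regularity argument for transitive abelian groups) is a welcome addition, since Theorem~\ref{theoremA} explicitly assumes $t\ge 2$ while the corollary does not.
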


\begin{theorem}
\label{theoremB} Let $(X,r)$ be  a square-free solution with abelian
permutation group $\Gcal$. Suppose  $X$ is a strong twisted union,
$(X, r)  = X_1\stu X_2$ of the solutions $(X_1, r_{X_1})$, and
$(X_2, r_{X_2}).$  Then the three solutions are multipermutation
solutions and
\[
\mpl X \leq \mpl X_1 + \mpl X_2.
\]
\end{theorem}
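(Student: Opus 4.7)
\emph{Proof plan.} The argument would proceed in three stages. First, to establish that $X_1$ and $X_2$ are themselves multipermutation (which is part of the claim), note that $\Gcal(X_i, r_{X_i})$ is a homomorphic image of $\Gcal(X,r)$ via restriction to the $\Gcal$-invariant subset $X_i$, and is therefore abelian. Under the natural hypothesis that each $X_i$ has finitely many $\Gcal(X_i, r_{X_i})$-orbits, Theorem~\ref{theoremA} applied to $(X_i, r_{X_i})$ gives that $X_i$ is a multipermutation solution of finite level, so $m = \mpl X_1$ and $n = \mpl X_2$ are both finite.

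The core of the argument would be an induction on $N = m+n$ proving $\mpl X \leq m+n$. The retraction $\Ret X$ inherits all relevant hypotheses: $\Gcal(\Ret X)$ is abelian as a quotient of $\Gcal(X,r)$, and writing $Y_i$ for the image of $X_i$ in $\Ret X$, one checks that $\Ret X = Y_1 \stu Y_2$ is still a strong twisted union (the {\rm\bf stu} identities survive the quotient). The induction closes provided a single retract of $X$ drops the combined level of the two pieces by at least one, namely $\mpl Y_1 + \mpl Y_2 \leq m + n - 1$; the conclusion then reads $\mpl X = 1 + \mpl \Ret X \leq m+n$. The base case (both $X_i$ trivial, so $N=2$) is handled directly: abelian-ness combined with the identities $\Lcal_{{}^{\alpha}x}|_{X_i} = \Lcal_x|_{X_i}$ arising from {\rm\bf stu} forces $\Ret^2 X$ to be a single point.

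An alternative and perhaps cleaner route bypasses the retract induction by invoking the action-identity characterisation of multipermutation level provided by Theorem~\ref{mplmtheorem}. The identity governing $\mpl X \leq m+n$ concerns an iterated nested left action on arbitrary tuples of length $m+n+1$ in $X$; by the pigeonhole principle each such tuple contains at least $m+1$ elements of $X_1$ or at least $n+1$ elements of $X_2$. Using abelian-ness of $\Gcal(X,r)$ to commute consecutive $\Lcal$-factors and the {\rm\bf stu} identities to decouple cross-actions between the two blocks, one reorganises the expression so that it reduces to the identity characterising $\mpl X_1 \leq m$ or $\mpl X_2 \leq n$, which holds by hypothesis.

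The main obstacle in either approach is the mismatch between the ambient equivalence $\sim$ defining $\Ret X$ and the intrinsic equivalences $\sim_{X_i}$ defining $\Ret X_i$: the ambient relation requires equality of the left actions on all of $X$ rather than just on $X_i$, so $\sim$ restricted to $X_i$ is \emph{finer} than $\sim_{X_i}$. Consequently $Y_i$ may have strictly more classes than $\Ret X_i$, and the naive bound $\mpl Y_i \leq \mpl X_i - 1$ does not hold \emph{a priori}. Overcoming this requires exploiting the abelian hypothesis together with the {\rm\bf stu} identities to show that the action of $\Lcal_x$ on the complementary piece is essentially determined by the $\Gcal$-orbit of $x$, thereby bounding the additional information by the multipermutation level of the other piece. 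The abelian hypothesis is essential here: the authors themselves emphasise that without it the analogue of Theorem~\ref{theoremB} is conjectural.
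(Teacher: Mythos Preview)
Your second route---invoking Theorem~\ref{mplmtheorem} and analysing towers of length $m_1+m_2+1$---is exactly the paper's approach, but the mechanism you describe is not quite right and glosses over the real content. A tower $(\cdots((\zeta_m\la\zeta_{m-1})\la\cdots)\la\zeta_1)$ is \emph{not} a composition $\Lcal_{\zeta_m}\circ\cdots\circ\Lcal_{\zeta_1}$ applied to something, so ``commuting consecutive $\Lcal$-factors'' does not make sense here; abelian-ness of $\Gcal$ enters more delicately. The paper's actual argument rests on a \emph{truncation} lemma (Lemma~\ref{favouriteL}): under the abelian hypothesis plus the {\rm\bf stu} condition, whenever a tower contains a segment
\[
(\cdots((\alpha\la y_k)\la y_{k-1})\la\cdots\la y_1)\la z
\]
with $\alpha,z$ in the same component $X_j$ and the $y$'s in the other, one may simply delete $\alpha$ (and hence everything above it, by Lemma~\ref{cuttowersL}). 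The case split is then purely combinatorial (Remark~\ref{towerintwoalphabetsR}): either such a ``sandwich'' pattern occurs somewhere in the word $\zeta_m\cdots\zeta_1$, in which case favouriteL truncates both $\omega$ and $\omega'$ to the same shorter tower; or the word is a single block from $X_j$ followed by a single block from $X_i$, one of which must have length $\geq m_j+1$ (resp.\ $\geq m_i+1$), and the $\mpl$ identity for that component truncates directly. Your pigeonhole observation (at least $m_1+1$ letters from $X_1$ or $m_2+1$ from $X_2$) is true but not by itself enough, since those letters need not be contiguous; the sandwich case is precisely what handles the non-contiguous situation.

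Your first route (induction on $m_1+m_2$ via retracts) is not pursued in the paper, and the obstacle you correctly identify---that the ambient $\sim$ restricted to $X_i$ is strictly finer than the intrinsic $\sim_{X_i}$, so $\mpl Y_i$ need not drop---is genuine and not resolved by your sketch. The paper's tower-truncation method sidesteps this entirely.
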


\begin{remark}
Note that using a
different argument Ced\'{o}, Jespers, and
Okni\'{n}ski, \cite{Eric2009}, have proven that each finite
square-free solution $(X,r)$ with $\Gcal$ abelian  is retractable
(and therefore multipermutation solution),  but do no give
estimation of the multipermutation level.
\end{remark}

\begin{remark}
Theorem \ref{theoremA} confirms Conjectures I and II \cite{T04} in
the case when the finite square-free solutions $(X,r)$ has abelian
permutation group $\Gcal(X,r)$, or equivalently its YB group
$G(X,r)$ has solvable length 2.
\end{remark}

\begin{remark}
\label{m&t_independent_remark} Note that in the general case there
is no relation between $\mpl X$ and the number of orbits $t=t(X)$.
As show Lemma \ref{canonicalextensionlemma}, and Theorem
\ref{beautifulconstruction}, for every integer $m \geq 3$ there
exist finite square-free solutions $(X,r)$  with exactly 2 orbits
and with $\mpl X = m$.
\end{remark}
The proofs of the main results in this section heavily rely on a
necessary and   sufficient condition for $\mpl (X,r) = m$ given by
Theorem \ref{mplmtheorem} in the next section. This condition is given
in terms of \emph{long actions}, or as we call them informally
\emph{towers of actions}. We need to develop first some basic
technique for computation with towers of actions. We end the section
with a simple construction \emph{doubling of solutions} which is
also used to illustrate that a solutions $(X,r)$ may have exactly
two orbits and arbitrarily large finite multipermutation level $m=
\mpl X$.

\begin{definition}
\label{canonicalextensiondef} Let $(X,r_X)$ and $(X^{\prime},
r_{X^{\prime}})$ be a disjoint identical finite square-free solution, where $X
= \{x_1 \cdots x_n\}$, $X^{\prime} = \{x_1^{\prime} \cdots
x_n^{\prime}\}$. Let  $ Y=\{\alpha\}$ be a one element set disjoint
with $X\bigcup X^{\prime}$, and let $(Y, r_0)$ be the one element
trivial solution. (By definition one has $\mpl Y = 0$). Consider the
extension of solutions $(Z, r)$
\[
Z = (X \stu _0 X^{\prime})\stu\{ \alpha\},
\]
where  $r$ is an extension of the YB maps, $r_X, r_{X^{\prime}},
r_Y$ defined (as usual) via the canonical isomorphism of solutions
$\Lcal_{\alpha} = (x_1 \;x_1^{\prime})\cdots (x_n \;x_n^{\prime})$
\[\begin{array}{rl}
r_{\mid X}= r_X,\quad  r_{\mid X^{\prime}}= r_{X^{\prime}}
\\
r(x_i, x_j^{\prime}) = (x_j^{\prime}, x_i)\quad 1 \leq i, j \leq n
\\
r(\alpha, x_j)  = (x_j^{\prime}, \alpha),\quad  r(\alpha,
x_j^{\prime}) = (x_j, \alpha).
\end{array}
\]
We call $(Z,r)$ a \emph{canonical doubling} of $(X,r_X)$,
and denote it  $Z=X^{[2,\alpha]}$.
\end{definition}

In the following lemma, $\wr$ denotes the wreath product of a group with
a permutation group. We discuss wreath products more extensively below.

\begin{lemma}
\label{canonicalextensionlemma} Let $(X,r_X)$ be a square-free
solution with $\mpl X = m$. Let $(Z, r)=X^{[2,y]}$ be a canonical
doubling of $(X,r_X)$.  Denote $G_X=G(X,r_X), G_Z=G(Z,r).$ Then
\begin{enumerate}
\item
$\mpl (Z, r) = m+1$ and clearly, $(Z, r)$ has exactly two $\Gcal(Z,
r)$ orbits, namely $Z_1 = X \bigcup X^{\prime}$, and $Z_2 = \{y\}$.
\item
There is an isomorphism of groups
\[
G_Z \simeq (G_X\times G_X)\rtimes C_\infty \simeq G_X\wr C_\infty,
\]
where the generator of the infinite cyclic group interchanges the
two factors.
\item
There is an isomorphism of groups
\[
\Gcal(Z)\simeq (\Gcal_X\times\Gcal_X)\rtimes C_2 \simeq \Gcal_X\wr C_2.\]
\end{enumerate}
\end{lemma}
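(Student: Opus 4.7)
My plan is to verify the three parts in order, using an inductive argument for the multipermutation level and a direct analysis of the defining relations for the two group structures.

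For the orbit structure, note that $\Lcal_{x_i}$ and $\Lcal_{x_i'}$ each fix $y$ by the definition of the extension, and $\Lcal_y(y)=y$ by square-freeness, so $\{y\}$ is a $\Gcal$-orbit. Within $X\cup X'$, every $\Lcal_{x_i}$ acts as $\Lcal^X_{x_i}$ on $X$ and fixes $X'\cup\{y\}$ pointwise (symmetrically for $\Lcal_{x_i'}$), while $\Lcal_y$ interchanges $x_i\leftrightarrow x_i'$ in pairs; so when $X$ is a single $\Gcal(X)$-orbit, $X\cup X'$ is a single $\Gcal(Z)$-orbit.

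To compute $\mpl Z$, I would induct on $m=\mpl X$. The base case $m=1$ is immediate: every $\Lcal^Z_{x_i}$ and $\Lcal^Z_{x_i'}$ equals $\id_Z$, so $\Ret(Z,r)$ has only the two classes $[X\cup X']$ and $[y]$, both acting trivially, whence $\mpl Z=2$. For the inductive step, the key structural claim is that $\Ret(Z,r)$ is the doubling $\Ret(X,r_X)^{[2,[y]]}$ modulo at most an identification of the two images of the kernel class $\ker\Lcal^X$: two elements of $X$ are $\sim_Z$-equivalent iff they are $\sim_X$-equivalent (and analogously within $X'$), while a cross-identification $[x_i]_Z=[x_j']_Z$ requires both $\Lcal^X_{x_i}=\id$ and $\Lcal^X_{x_j}=\id$, which singles out the unique identity class of $\Ret X$ when it exists. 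Iterating, at level $k=m-1$ one reaches two copies of the multi-element trivial solution $T=\Ret^{m-1}X$ (possibly glued along their common identity class) together with $[y]^{(m-1)}$; every element of $T\cup T'$ acts trivially on $\Ret^{m-1}Z$, so $\Ret^m Z$ collapses all of them into one class while $[y]^{(m-1)}$ remains distinct, leaving the $2$-element trivial solution. One further retract yields the singleton, giving $\mpl Z=m+1$. The main obstacle is this inductive step: when $\ker\Lcal^X$ is nonempty, $\Ret Z$ is a proper quotient of the honest doubling $(\Ret X)^{[2,[y]]}$, identifying the two images of the identity class, and one must check that this identification (confined to a single class at each retract iteration) does not alter the number of retract steps needed to reach a singleton, which the analysis at level $m-1$ precisely verifies.

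For $G_Z$, I would read the presentation directly from the defining relations of $r$. The restrictions of $r$ to $X\times X$ and to $X'\times X'$ make $\langle X\rangle\cong G_X$ and $\langle X'\rangle\cong G_X$. The cross relations $r(x_i,x_j')=(x_j',x_i)$ force $x_ix_j'=x_j'x_i$, so $\langle X\cup X'\rangle\cong G_X\times G_X$ as an internal direct product. The relations $r(y,x_j)=(x_j',y)$ and $r(y,x_j')=(x_j,y)$ translate to $yx_jy^{-1}=x_j'$ and $yx_j'y^{-1}=x_j$, so conjugation by $y$ implements the factor-swap. Since the only relation among $y$ alone is the trivial $r(y,y)=(y,y)$, no torsion relation is imposed on $y$, and $\langle y\rangle\cong C_\infty$. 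This gives $G_Z\cong(G_X\times G_X)\rtimes C_\infty\cong G_X\wr C_\infty$. Applying the permutation homomorphism $\Lcal$ and using that $\Lcal_y$ is a product of $2$-cycles (hence of order $2$), together with the direct verification $\Lcal_y\Lcal_{x_i}\Lcal_y^{-1}=\Lcal_{x_i'}$ in $\Sym(Z)$, yields $\Gcal(Z)\cong(\Gcal(X)\times\Gcal(X))\rtimes C_2\cong\Gcal(X)\wr C_2$.
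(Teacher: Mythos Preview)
Your argument is correct and considerably more detailed than what the paper offers: the paper simply declares the proof ``straightforward'' and then remarks, immediately after the lemma, that it is a special case of the later Lemma~\ref{raisingmpl}. That route amounts to observing that $\Lcal_\alpha\in\Aut(X\natural_0 X')\setminus\Gcal(X\natural_0 X')$ and that $\mpl(X\natural_0 X')=\mpl X$, after which Lemma~\ref{raisingmpl} gives $\mpl Z=\mpl X+1$ in one stroke. Your direct inductive computation of the successive retracts $\Ret^k Z$ is a genuinely different route: it is self-contained and needs no forward reference, at the price of having to track the kernel-class identification through the iteration. Your handling of that subtlety is correct---at level $m-1$ at most one pair is merged, and $|T|\ge2$ ensures $\Lcal_{y^{(m-1)}}\neq\id$, so $\Ret^m Z$ has exactly two classes and $\Ret^{m+1}Z$ is a singleton. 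Your presentation-based arguments for $G_Z$ and $\Gcal_Z$ are exactly what ``straightforward'' means here; they could alternatively be obtained from Theorem~\ref{MAINDECOMPOSITIONTHM} and Fact~\ref{propExt+}, but the direct reading of the relations is cleaner.

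One point to flag: your caveat ``when $X$ is a single $\Gcal(X)$-orbit'' in the orbit paragraph is not innocuous. A finite square-free solution with $|X|\ge2$ is never a single $\Gcal(X)$-orbit (this is Rump's decomposition theorem), so with that caveat in place you have not actually established the lemma's claim that $X\cup X'$ is one $\Gcal(Z)$-orbit. In fact that literal claim fails in general: if $X$ has $\Gcal(X)$-orbits $O_1,\ldots,O_t$, then the $\Gcal(Z)$-orbits on $X\cup X'$ are precisely the sets $O_j\cup O_j'$. This is an imprecision in the paper's statement rather than a defect in your reasoning; what is true, and what is actually used downstream, is that $Z$ decomposes into the two $\Gcal(Z)$-\emph{invariant} subsets $X\cup X'$ and $\{y\}$.
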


The proof is straightforward.

Clearly, $\Lcal_{\alpha}\in \Aut(X \natural_0 X^{\prime})$ but does
not belong to the permutation group $\Gcal (X \stu _0
X^{\prime})$, so  Lemma \ref{canonicalextensionlemma} is a
particular case of Lemma \ref{raisingmpl}.

\begin{remark}
It is straightforward to see that
\[\sol(G_Z)=\sol(G_X)+1,\qquad \sol(\Gcal_Z)=\sol(\Gcal_X)+1,\]
where $\sol(G)$ denotes the solvable length of the group  $G$.
We will see more general results later.
\end{remark}

\subsection{Computations with actions in $(X,r)$ }
\label{Computations with actions }
In cases when we have to write a sequence of successive actions we
shall use one also well known notation
\begin{equation}
\label{la} \alpha\la x = {}^{\alpha}{x}
\end{equation}

\begin{definition} Let
$\zeta_1, \zeta_2,\cdots, \zeta_m \in X.$ The expression
\[
\omega=
(\cdots((\zeta_m \la \zeta_{m-1})\la\zeta_{m-2})\la
\cdots\la\zeta_2)\la\zeta_1
\]
will be called \emph{a tower of
actions } or shortly \emph{a tower}.
\end{definition}
Clearly, the result of this action has the shape $\;
{}^{u}{\zeta_1}$, where
\[
u =(\cdots((\zeta_m \la
\zeta_{m-1})\la\zeta_{m-2})\la \cdots\la\zeta_3)\la \zeta_2,
\] so it
belongs to the $\Gcal$-orbit of $\zeta_1$.

The following two remarks and lemma are straightforward and hold for
the general case of square-free solutions, where $\Gcal(X,r)$ is not
necessarily abelian, and $X$ is of arbitrary cardinality.

\begin{remark}
\label{stringsintwoalphabetsR} Let $\Sigma_1, \Sigma_2$ be two
disjoint alphabets, $m_1,m_2$ be positive integers, $m=m_1+m_2+1$.
Let $\omega= \zeta_m\zeta_{m-1}\cdots\zeta_2\zeta_1$ be a string
(word) of length $m$ in the alphabet $\Sigma_1\bigcup \Sigma_2$.Then
one of the following conditions is satisfied

i) $\omega$ contains a segment $v$ of the shape $v=\beta y_{q}\cdots
y_2y_1\alpha,$ where $q \geq 1, \; y_k \in \Sigma_j, 1 \leq k \leq
q$, $\alpha,\beta \in \Sigma_i,$ and $1 \leq i\neq j\leq 2$

ii)  $\omega=y_{q}\cdots y_2y_1\alpha_{p}\cdots \alpha_2\alpha_1$,
where $y_s \in \Sigma_j, 1\leq s \leq q$, $\alpha_k \in \Sigma_i,
1\leq k \leq p,$ and either $p \geq m_i+1,$ or $q \geq m_j+1,$
($p=0, q = m_1+m_2+1$, or $p = m_1+m_2+1,q=0$ is also possible).
\end{remark}
This remark has a transparent but very useful interpretation for
towers of actions.

\begin{remark}
\label{towerintwoalphabetsR} Let $X_1, X_2$ be disjoint subsets of
the solution $(X,r),$ $m_1,m_2$ be positive integers, $m=m_1+m_2+1$.
Let $\zeta_1, \zeta_2, \cdots\zeta_m \in X_1\bigcup X_2,$ and
$\omega$ be the tower:
\[
\omega= (\cdots(\zeta_m\la\zeta_{m-1})\la
\cdots\la\zeta_2)\la\zeta_1
\]
Then either

i) $\omega$ contains a segment $(((\cdots\la\beta\la
y_{q})\la\cdots\la y_2)\la y_1)\la\alpha,$ where $q \geq 1, y_k \in
X_i, 1 \leq k \leq q$, $\alpha,\beta \in X_j,$ and $1 \leq i\neq
j\leq 2$; or

ii) $\omega$ has the shape
\[
\omega=(\cdots(((\cdots(y_{q}\la y_{q-1})\la\cdots \la
y_2)y_1)\la\alpha_{p})\cdots \alpha_2)\la\alpha_1,
\]
where $y_s \in X_j, 1\leq s \leq q$, $\alpha_k \in X_i, 1\leq k \leq
p$,  with $1\leq i\neq j\leq 2$. Furthermore, either $p \geq m_i+1,$
or $q \geq m_j+1,$ ($p=0, q = m_1+m_2+1$, or $q = m_1+m_2+1,q=0$ is
also possible).
\end{remark}

\begin{lemma}
\label{cuttowersL} Let $(X,r)$  be   square-free solution, $y_1,
\cdots y_k \in X$, with $k \geq 1$, and let $Z$ be an $r$-
invarionat subset of $X$. Suppose there is an equality
\begin{equation}
\label{formula2} ((\cdots((\alpha\la y_k)\la y_{k-1})\la \cdots
y_2)\la y_1) = (\cdots( y_{k}\la y_{k-1})\la \cdots y_2)\la y_1
\quad \forall \alpha \in Z
\end{equation}
Then any longer tower
\begin{equation}
\label{formula3} \omega=(\cdots(((\cdots(((((\cdots( a_s\la
a_{s-1})\la \cdots )\la a_1)\la \alpha)\la y_k)\la y_{k-1})\la
\cdots \la y_2)\la y_1)\la b_p)\la\cdots \la b_2)\la b_1
\end{equation}
with $a_1, \cdots a_s, b_1,\cdots b_p \in X$ and $\alpha \in Z$, can
be simplified by ``cutting'' the leftmost sub-tower of length $s+1$,
that is there is an equality:
\begin{equation}
\label{formula4} \omega = \omega^{\prime}= ((\cdots(((\cdots(y_k\la
y_{k-1})\la \cdots \la y_2)\la y_1)\la b_p)\la\cdots \la b_2)\la b_1.
\end{equation}
In the particular cases $s =0$ (respectively  $p= 0$) the $a$'s,
(respectively the $b$'s) are simply missing in the formulae above.
\end{lemma}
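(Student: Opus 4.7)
The strategy is to collapse the entire prefix $(a_s,\ldots,a_1,\alpha)$ into a single element $\alpha^\ast\in Z$, then invoke the hypothesis (\ref{formula2}) at $\alpha^\ast$, and finally post-compose with the $b$-suffix. The single structural ingredient needed is the left-to-right regrouping law for the tower operator $T(z_1,\ldots,z_m):=(\cdots(z_1\la z_2)\la\cdots)\la z_m$, namely
\[ T(z_1,\ldots,z_m)=T\bigl(T(z_1,\ldots,z_j),\,z_{j+1},\ldots,z_m\bigr),\qquad 1\le j<m,\]
which is immediate from the recursion $T(z_1,\ldots,z_m)=T(z_1,\ldots,z_{m-1})\la z_m$.

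I would first apply this with the split $j=s+1$, rewriting $\omega=T(\alpha^\ast,y_k,\ldots,y_1,b_p,\ldots,b_1)$ where $\alpha^\ast:=T(a_s,\ldots,a_1,\alpha)$. A second application (split $j=s$) gives $\alpha^\ast=u\la\alpha=\Lcal_u(\alpha)$ for $u:=T(a_s,\ldots,a_1)\in X$, so $\alpha^\ast$ lies in the $\Gcal$-orbit of $\alpha$. Since $(X,r)$ is square-free it satisfies \textbf{lri}, and in the intended applications of this lemma $Z$ arises together with an $r$-invariant complement; Lemma~\ref{gcalinvariantlemma} then upgrades the $r$-invariance of $Z$ to $\Gcal$-invariance, which forces $\alpha^\ast\in Z$.

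With $\alpha^\ast\in Z$ in hand, the hypothesis (\ref{formula2}) applied at $\alpha^\ast$ gives $T(\alpha^\ast,y_k,\ldots,y_1)=T(y_k,\ldots,y_1)$, and one more application of the regrouping law post-composes the $b$-suffix:
\[ \omega=T\bigl(T(\alpha^\ast,y_k,\ldots,y_1),\,b_p,\ldots,b_1\bigr)=T\bigl(T(y_k,\ldots,y_1),\,b_p,\ldots,b_1\bigr)=\omega'. \]
The boundary cases $s=0$ (where $\alpha^\ast=\alpha$ and the first regrouping is vacuous) and $p=0$ (where the last step is vacuous) are covered by the same chain. The one conceptually sensitive point is the passage $\alpha^\ast\in Z$, which is where the hypothesis on $Z$ really enters; everything else is bookkeeping with iterated $\la$.
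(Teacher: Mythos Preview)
Your approach is exactly the paper's: collapse the prefix to $\alpha^\ast=(\cdots(a_s\la a_{s-1})\la\cdots\la a_1)\la\alpha$, observe $\alpha^\ast\in Z$, apply the hypothesis at $\alpha^\ast$, and then append the $b$-suffix; the paper states this as a one-line implication without spelling out the regrouping. Your explicit flag that mere $r$-invariance of $Z$ does not literally force $\alpha^\ast\in Z$ (one needs closure under $\Lcal_u$ for arbitrary $u\in X$, i.e.\ $\Gcal$-invariance) is a valid caution---the paper simply asserts the implication, tacitly using that in every application $Z$ is $\Gcal$-invariant---so your detour through Lemma~\ref{gcalinvariantlemma} is reasonable, though strictly speaking it imports an assumption not present in the lemma's hypotheses.
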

The lemma follows straightforwardly from the clear implication
\[
\alpha\in Z \Longrightarrow ((\cdots (a_s\la a_{s-1})\la \cdots )\la
a_1)\la \alpha  \in Z
\]

We shall refer (informally) to the procedure described in Lemma
\ref{cuttowersL} as \emph{truncation}.

From now on  we assume that $\Gcal(X,r)$ is abelian.

\begin{lemma}
\label{favouriteL} Let $(X,r)$  be a square-free solution with
abelian permutation group $\Gcal$. Suppose $Y,Z$ are
$r$-invariant subsets of $X$ and the following \textbf{stu}-type
condition is satisfied:
\begin{equation}
\label{vipbase1} {}^{{}^{\alpha}y}{z}={}^{y}z\quad \forall \alpha,z
\in Z \quad\text{and}\quad \forall y \in Y.
\end{equation}
Then for every pair $\alpha, z \in Z,$ and every finite sequence
$y_1, \cdots, y_k \in Y$, $k \geq 1$ one has
\begin{equation}
\label{vipformula1} ((\cdots(\alpha\la y_k)\la y_{k-1})\la \cdots \la
y_2)\la y_1)\la z = ((\cdots( y_{k}\la y_{k-1})\la \cdots \la y_2)\la
y_1)\la z.
\end{equation}
\end{lemma}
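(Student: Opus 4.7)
The plan is to induct on $k$, strengthening the statement so that the inductive hypothesis asserts membership in the pointwise stabilizer $H:=\{g\in\Gcal: g(z)=z\ \forall z\in Z\}$, a subgroup of $\Gcal$. Let $u_k$ and $v_k$ denote the two tower elements appearing just before the final $\la z$, so the goal rewrites as $\Lcal_{u_k}(z)=\Lcal_{v_k}(z)$; what I actually prove is the stronger claim $P(k)$: $\Lcal_{u_k}\Lcal_{v_k}^{-1}\in H$ for every $\alpha\in Z$ and every $y_1,\dots,y_k\in Y$.

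Two ingredients drive the induction. From \textbf{l1} (giving $\Lcal_x\Lcal_y=\Lcal_{{}^xy}\Lcal_{x^y}$) together with commutativity of $\Gcal$ one obtains the rearrangement $\Lcal_{{}^xy}=\Lcal_x\Lcal_{x^y}^{-1}\Lcal_y$. From \textbf{lri} ($x^y=\Lcal_y^{-1}(x)$) and commutativity one gets the distributivity identity $({}^ab)^c={}^a(b^c)$ for all $a,b,c\in X$, since $\Lcal_c^{-1}\Lcal_a=\Lcal_a\Lcal_c^{-1}$. Applied at the outermost bracket of a tower, this shows that for $t=((\cdots(\zeta_k\la\zeta_{k-1})\la\cdots)\la\zeta_1)$ and any $c\in X$, $t^c$ is the tower obtained from $t$ by replacing only the final entry $\zeta_1$ by $\zeta_1^c$.

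The base case $P(1)$ is precisely hypothesis (\ref{vipbase1}). For the inductive step, set $a:=((\cdots((\alpha\la y_{k+1})\la y_k)\la\cdots)\la y_2)$ and $b:=((\cdots((y_{k+1}\la y_k)\la\cdots)\la y_2)$, so that $u_{k+1}={}^ay_1$ and $v_{k+1}={}^by_1$. Applying the rearrangement to each and cancelling the common factor $\Lcal_{y_1}$ via commutativity I compute
\[\Lcal_{u_{k+1}}\Lcal_{v_{k+1}}^{-1}=\Lcal_a\Lcal_{a^{y_1}}^{-1}\Lcal_{b^{y_1}}\Lcal_b^{-1}=(\Lcal_a\Lcal_b^{-1})(\Lcal_{b^{y_1}}\Lcal_{a^{y_1}}^{-1}).\]
The first factor is the length-$k$ tower ratio for $(\alpha;y_{k+1},\dots,y_2)$ with all $y$-entries in $Y$, so it lies in $H$ by the inductive hypothesis. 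The tower-distributivity above identifies $a^{y_1}$ and $b^{y_1}$ as length-$k$ towers obtained by replacing $y_2$ by $y_2^{y_1}$; since $Y$ is $r$-invariant, $y_2^{y_1}\in Y$, so the inductive hypothesis again places the second factor in $H$. As $H$ is a subgroup the product stays in $H$, and evaluating at any $z\in Z$ yields (\ref{vipformula1}).

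The main obstacle is selecting the correct inductive statement: working only with pointwise equality $\Lcal_{u_k}(z)=\Lcal_{v_k}(z)$ at a single $z$ leaves no handle for extending to the next layer, whereas upgrading to membership in the abelian subgroup $H$ reduces the inductive step to arithmetic in $\Gcal$ plus subgroup closure. Once that reformulation and the abelian identity $({}^ab)^c={}^a(b^c)$ (which depends crucially on $\Gcal$ being abelian) are in place, the induction is purely formal.
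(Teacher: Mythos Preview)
Your proof is correct and takes a genuinely different route from the paper's. Both argue by induction on $k$, but the inductive steps are organized quite differently.

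The paper works pointwise in $X$: it introduces $y_{k-1}'={}^{y_k}y_{k-1}$ and $y_{k-1}''={}^{({}^{\alpha}y_k)}y_{k-1}$, proves the key identity $\alpha\la y_{k-1}'=(\alpha^{y_k})\la y_{k-1}''$ from \textbf{l1} and commutativity of $\Gcal$, and then applies the inductive hypothesis twice---once to insert the element $\alpha^{y_k}$ in front of the shorter tower and once to remove $\alpha$. This step uses that $\alpha^{y_k}\in Z$. You instead lift the whole problem to $\Gcal$, proving the stronger claim that $\Lcal_{u_k}\Lcal_{v_k}^{-1}$ lies in the pointwise stabilizer $H$ of $Z$; your inductive engine is the abelian identity $({}^ab)^c={}^a(b^c)$, which lets you read $a^{y_1}$ and $b^{y_1}$ as towers with only the bottom entry $y_2$ replaced by $y_2^{y_1}\in Y$.

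What each buys: the paper's argument stays entirely at the level of elements of $X$ and never needs to speak about stabilizer subgroups, so it is closer to the tower calculus used elsewhere in the section. Your argument, by packaging things group-theoretically, makes the induction purely formal once the two abelian identities are in hand; it also uses only the $r$-invariance of $Y$ (to get $y_2^{y_1}\in Y$) and never appeals to $\alpha^{y_k}\in Z$, so it goes through verbatim under the bare hypotheses of the lemma without needing $Z$ to be $\Gcal$-invariant.
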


\begin{proof}
We shall prove (\ref{vipformula1}) using induction on $k$. Clearly,
(\ref{vipbase1}) gives the base for the induction. Assume the
statement of the lemma is true  for $k-1$ where $k>1$. Suppose
$\alpha, z \in Z, y_1, \cdots y_k \in Y.$ For convenience we
introduce the elements $y_{k-1}^{\prime}, y_{k-1}^{\prime\prime}$ as
follows:
\begin{equation}
\label{vipeqnot} y_{k-1}^{\prime} =y_{k}\la y_{k-1}
={}^{y_k}{y_{k-1}} \quad \quad y_{k-1}^{\prime\prime} =
({\alpha}\la{y_k})\la y_{k-1}= {}^{({}^{\alpha}{y_k} )}{y_{k-1}}
\end{equation}
Then the following equalities hold.
\[
\begin{array}{rll}
\alpha\la y_{k-1}^{\prime}= &{}^{\alpha}{({}^{y_k}{y_{k-1}})}
\quad &\text{ (\ref{vipeqnot})}
 \\
&={}^{{}^{\alpha}{y_k}}{({}^{{\alpha}^{y_k}}{y_{k-1}})} \quad &
\text{ \textbf{l1}} \\
&={}^{{\alpha}^{y_k}}{({}^{{}^{\alpha}{y_k} }{y_{k-1}})} \quad &
\text{ $\Gcal$ abelian}\\
&={}^{{\alpha}^{y_k}}{((\alpha \la y_k) \la y_{k-1})} \quad &\\
&=({\alpha}^{y_k})\la y_{k-1}^{\prime\prime}\quad  & \text{ (\ref{vipeqnot})}
\end{array}
\]
Thus
\begin{equation}
\label{vipeq1} \alpha\la y_{k-1}^{\prime}=({\alpha}^{y_k})\la
y_{k-1}^{\prime\prime}.
\end{equation}
Now consider the equalities
\[
\begin{array}{rll}
&((\cdots((\alpha\la y_k)\la y_{k-1})\la \cdots y_2)\la y_1)\la z \quad &\\
=& ((\cdots( y_{k-1}^{\prime\prime}\la y_{k-2})\la \cdots \la
y_2)\la
y_1)\la z \quad &  \text{ (\ref{vipeqnot})}\\
=& ((\cdots((( {\alpha}^{y_k})\la y_{k-1}^{\prime\prime})\la
y_{k-2})\la \cdots \la y_2)\la
y_1)\la z \quad & \text{ ${\alpha}^{y_k}\in Z$, and by IH}\\
=&((\cdots(( \alpha\la y_{k-1}^{\prime})\la y_{k-2})\la \cdots \la
y_2)\la y_1)\la z \quad &
\text{ (\ref{vipeq1})} \\
=&((\cdots(y_{k-1}^{\prime}\la y_{k-2})\la \cdots\la y_2)\la y_1)\la
z \quad  &
\text{ by IH} \\
=& ((\cdots((y_{k}\la y_{k-1}) \la y_{k-2})\la \cdots\la y_2)\la
y_1)\la z \quad  & \text{ (\ref{vipeqnot})}
\end{array}
\]
where IH is the inductive assumption.
This proves the Lemma.
\end{proof}

\begin{remark}
Note that in the hypothesis of Lemma \ref{favouriteL} we do not
assume that the sets $Y,Z$ are disjoint. Furthermore the stu-type
condition is not imposed symmetrically on both sets, i.e. even if
$Y$ and $Z$ are disjoint we do not assume that necessarily $Y \stu Z$.
\end{remark}

\subsection{Proofs of the theorems}

\begin{lemma}
\label{gabelianlemmaA} Let $(X,r)$ be a square-free solution, with
abelian permutation group  $\Gcal$. Then the following two
conditions hold.
\begin{enumerate}
\item
Let $Y$  be a $\Gcal$-orbit of $X$. Then for
 any $x \in Y$ one has
$(\Lcal_{x})_{\mid Y}= \id_{ Y}$
\item
Suppose $Y, Z$ are two distinct $\Gcal$-orbits of $X$, $(Y, r_Y),
(Z, r_Z)$ are the canonically induced solutions on $Y$ and $Z$. Then
the actions satisfy the \textbf{stu} condition:
\[
\begin{array}{lcl}
(\Lcal_{{}^x{\alpha}})_{\mid Y}=(\Lcal_{\alpha})_{\mid Y};\quad &
(\Lcal_{{}^{\alpha}x})_{\mid Z}=(\Lcal_{x})_{\mid Z}\quad\text{for
all}\quad x\in Y, \alpha \in Z
\end{array}
\]
Moreover, the induced solution $(T, r_T)$ on the union $T= Y\bigcup
Z$ is a strong twisted union $T= Y\stu Z.$
\end{enumerate}
\end{lemma}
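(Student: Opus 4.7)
My plan for this lemma is to extract it directly from two ingredients already available: the square-free identity from Corollary \ref{alternativedefc}(ii), and the commutativity of $\Gcal$. The structural observation is that for an abelian permutation group acting with orbits, each stabiliser of a point is automatically the stabiliser of every point in the same orbit, which is exactly what part~(1) is asserting in the form ``$\Lcal_x$ fixes all of $Y$''.

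For part~(1), I would argue as follows. Fix $x\in Y$ and an arbitrary $y\in Y$. Since $Y$ is a single $\Gcal$-orbit, there exists $g\in\Gcal$ with $g(x)=y$. Commutativity of $\Gcal$ gives $\Lcal_x\circ g = g\circ\Lcal_x$, and square-freeness gives $\Lcal_x(x)=x$. Combining:
\[
\Lcal_x(y)=\Lcal_x(g(x))=g(\Lcal_x(x))=g(x)=y.
\]
So $(\Lcal_x)_{\mid Y}=\id_Y$, which is (1).

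For part~(2), I would fix $x\in Y$, $\alpha\in Z$, and verify $(\Lcal_{{}^{\alpha}x})_{\mid Z}=(\Lcal_x)_{\mid Z}$ (the other equality is symmetric). Corollary \ref{alternativedefc}(ii) applied to $\alpha$ and $x$ gives the group-level identity $\Lcal_{{}^{\alpha}x}\circ\Lcal_{\alpha}=\Lcal_{{}^{x}\alpha}\circ\Lcal_x$ in $\Gcal$. Pick any $\beta\in Z$. Since $Z$ is $\Gcal$-invariant, ${}^x\alpha\in Z$ as well. Now apply part~(1) twice: $\Lcal_\alpha(\beta)=\beta$ (both $\alpha$ and $\beta$ lie in $Z$) and $\Lcal_{{}^x\alpha}$ fixes everything in $Z$, in particular it fixes $\Lcal_x(\beta)$ (which is again in $Z$). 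Evaluating the identity on $\beta$ therefore reduces it to $\Lcal_{{}^{\alpha}x}(\beta)=\Lcal_x(\beta)$, as required. The other \textbf{stu}-equality is obtained in the same way by swapping the roles of $Y$ and $Z$.

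Finally, for the ``moreover'' clause, the set $T=Y\cup Z$ is $r$-invariant because $Y$ and $Z$ are $\Gcal$-invariant (hence $r$-invariant) and the mixed entries of $r$ stay in $T$: for $x\in Y$, $\alpha\in Z$, the element ${}^x\alpha$ lies in the $\Gcal$-orbit $Z$ and $x^\alpha=\Lcal_\alpha^{-1}(x)$ lies in $Y$. The two equalities just proved are precisely the conditions of Definition \ref{STUdef} (equivalently \textbf{stu1} in \eqref{stu1}) for the decomposition $T=Y\cup Z$, so $(T,r_T)$ is the strong twisted union $Y\stu Z$. The only subtle point in the whole argument is recognising that abelianness of $\Gcal$ combined with square-freeness forces the ``trivial orbit'' behaviour in part~(1); once that is in hand, the \textbf{stu}-condition in part~(2) is just a one-line consequence of the square-free axiom.
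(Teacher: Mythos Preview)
Your proof is correct and follows essentially the same approach as the paper's. For part~(1) the paper computes ${}^{x}({}^{t}x)={}^{t}({}^{x}x)={}^{t}x$ directly, which is exactly your commutativity-plus-fixed-point argument written at the element level rather than with an abstract $g\in\Gcal$; for part~(2) the paper uses condition~\textbf{l1} where you invoke the cyclic-set identity of Corollary~\ref{alternativedefc}(ii), but these are equivalent for square-free solutions and the reduction via part~(1) is the same in both cases.
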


\begin{proof}
Let $x \in Y.$ To prove (1) it will be enough to show
\begin{equation}
\label{lemmaeq1}
 {}^{x}{({}^tx)}= {}^tx, \;  \forall t \in X.
\end{equation}

Now the equalities
\[
\begin{array}{rll}
{}^{x}{({}^tx)}&={}^{t}{({}^xx)} \quad & \text{ $\Gcal$
abelian}\\
&={}^{t}{x} \quad  & \text{ $(X, r)$ square-free }.
\end{array}
\]
imply (1). Assume now that $x,y\in Y, \alpha, \beta \in Z.$
We have to show
\begin{equation}
\label{lemmaeq2}
 {}^{{}^{y}{\alpha}}{x}= {}^{\alpha}{x}, \quad\quad   {}^{{}^{\beta}{x}}{\alpha}=
 {}^{x}{\alpha}.
\end{equation}
 Consider the equalities
\[
\begin{array}{rll}
{}^{\alpha}{x}=&{}^{y}{({}^{\alpha}{x})} \quad & \text{ $y,
{}^{\alpha}{x} \in Y$ and (1)}\\
=&{}^{({}^{y}{\alpha})}{({}^{y^{\alpha}}{x})} \quad & \text{ by \textbf{l1}} \\
&{}^{({}^{y}{\alpha})}{x} \quad &\text{ $x, y^{\alpha} \in Y$ and
(1)}
\end{array}
\]
We have shown the left hand side equality in (\ref{lemmaeq2}).
Analogous argument gives the remaining equality.
\end{proof}

\begin{corollary}
\label{gabeliancorA} Let $(X,r)$ be a square-free solution of
arbitrary cardinality and with abelian permutation group  $\Gcal$.
Suppose $\Gcal$ acts non transitively on $X$ and splits it into a
finite number of $\Gcal$- orbits $X_1, \cdots X_t,\; t \geq 2.$ Then
each $(X_i, r_i)$ is a trivial solution and $X$ is a strong twisted
union
\[
X= X_1\stu X_2\stu\cdots\stu X_t.
\]
\end{corollary}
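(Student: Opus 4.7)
The plan is to reduce the corollary essentially to a direct application of the preceding Lemma \ref{gabelianlemmaA}, invoked orbit-by-orbit and pair-by-pair.

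First, I would handle the claim that each restricted solution $(X_i, r_i)$ is trivial. Fix an orbit $X_i$ and an arbitrary $x \in X_i$. By Lemma \ref{gabelianlemmaA}(1), $(\Lcal_x)_{\mid X_i} = \id_{X_i}$. Since this holds for every $x \in X_i$, every left action in $(X_i, r_i)$ is the identity, so $(X_i, r_i)$ satisfies ${}^xy = y$ for all $x,y \in X_i$. By Lemma \ref{mpl=1} (or directly by Example \ref{trivialsolex}) this means $(X_i, r_i)$ is the trivial solution, establishing the first conclusion.

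Next, I would verify the strong twisted union decomposition. By Definition \ref{STUmsetsdef}, it suffices to check that for every pair of distinct indices $i \neq j$, $1 \leq i, j \leq t$, the $r$-invariant subset $X_{ij} = X_i \cup X_j$ equipped with the restricted solution is a strong twisted union $X_{ij} = X_i \stu X_j$ in the sense of Definition \ref{STUdef}. But this is exactly the content of Lemma \ref{gabelianlemmaA}(2) applied to the two distinct $\Gcal$-orbits $Y = X_i$ and $Z = X_j$: the abelian hypothesis on $\Gcal$ yields the required \textbf{stu} identities ${}^{{}^y\alpha}x = {}^\alpha x$ and ${}^{{}^\beta x}\alpha = {}^x\alpha$ for $x \in X_i$, $\alpha \in X_j$, and the induced matched pair structure then makes $X_{ij}$ into a strong twisted union.

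Since this holds for every pair $i \neq j$, Definition \ref{STUmsetsdef} gives the full decomposition $X = X_1 \stu X_2 \stu \cdots \stu X_t$, completing the proof. There is no serious obstacle here: the technical work has already been absorbed into Lemma \ref{gabelianlemmaA}, and the corollary is a bookkeeping step that packages the orbit-wise triviality and the pairwise \textbf{stu} condition into the global strong twisted union statement.
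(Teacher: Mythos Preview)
Your proposal is correct and follows exactly the intended route: the paper states this corollary immediately after Lemma \ref{gabelianlemmaA} without a separate proof, so it is meant to be read as the direct consequence you describe. Applying part (1) of the lemma orbit-by-orbit gives triviality of each $(X_i,r_i)$, and applying part (2) to each pair $i\neq j$ together with Definition \ref{STUmsetsdef} gives the strong twisted union decomposition.
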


\begin{proposition}
\label{gabelianpropA} Under the hypothesis of Theorem
\ref{theoremA}. $(X,r)$ is multipermutation solution with $\mpl X
\leq t,$ where $t$ is the number of $\Gcal$-orbits of $X$.
\end{proposition}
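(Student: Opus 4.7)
The plan is to invoke Theorem \ref{mplmtheorem}, which reduces the bound $\mpl X \leq t$ to verifying the tower identity
\[
a \la \zeta_t \la \zeta_{t-1} \la \cdots \la \zeta_1 = \zeta_t \la \zeta_{t-1} \la \cdots \la \zeta_1
\]
for all $a, \zeta_1, \ldots, \zeta_t \in X$, and then to prove this identity by combining the decomposition from Corollary \ref{gabeliancorA} with a pigeonhole argument based on the truncation Lemmas \ref{favouriteL} and \ref{cuttowersL}. By Corollary \ref{gabeliancorA} one has $X = X_1 \stu \cdots \stu X_t$ with each $(X_i, r_i)$ trivial, and the pairwise stu1 condition (\ref{stu1}) between $X_k$ and every $X_l$ with $l \neq k$ immediately shows that the hypothesis (\ref{vipbase1}) of Lemma \ref{favouriteL} is satisfied with $Z = X_k$ and $Y = X \setminus X_k$ for every orbit $X_k$.

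Fix $a$ and $\zeta_1, \ldots, \zeta_t$, and let $X_k$ be the orbit of $a$. In the first case some $\zeta_i$ lies in $X_k$; taking $i_0$ to be the largest such index, either $i_0 = t$ and triviality of $X_k$ forces $a \la \zeta_t = \zeta_t$, or $i_0 < t$, in which case all of $\zeta_{i_0+1}, \ldots, \zeta_t$ belong to $X \setminus X_k$ and Lemma \ref{favouriteL} directly cuts $a$ off the top of the tower; appending the block $\zeta_{i_0 - 1} \la \cdots \la \zeta_1$ then completes this case. In the second case $X_k$ meets none of the $\zeta_i$, so $\zeta_1, \ldots, \zeta_t$ lie in the $t-1$ remaining orbits; the pigeonhole principle produces indices $j > i$ and an orbit $X_m$ with $\zeta_j, \zeta_i \in X_m$, chosen so that $\zeta_p \notin X_m$ for $i < p < j$. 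Lemma \ref{favouriteL} applied with $Z = X_m$, $\alpha = \zeta_j$, $z = \zeta_i$ (or, when $j = i+1$, the immediate identity $\zeta_j \la \zeta_i = \zeta_i$ from triviality of $X_m$) supplies the base identity (\ref{formula2}); Lemma \ref{cuttowersL} then reduces both $\omega = a \la \zeta_t \la \cdots \la \zeta_1$ and the comparison tower $\zeta_t \la \zeta_{t-1} \la \cdots \la \zeta_1$ to the same tail $\zeta_{j-1} \la \cdots \la \zeta_1$ by deleting everything above and including $\zeta_j$, so the two towers agree.

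The main obstacle is the second case: since the orbit of $a$ is disjoint from $\{\zeta_1, \ldots, \zeta_t\}$, there is no way to cut $a$ alone using Lemma \ref{favouriteL}. The resolution exploits the strength of Lemma \ref{cuttowersL}, which allows one to truncate an entire prefix lying above a chosen $\alpha \in Z$ rather than merely $\alpha$ itself; this is precisely what lets a repetition among the $\zeta$'s suffice, once one observes that $\omega$ and the reference tower $\zeta_t \la \cdots \la \zeta_1$ share the same suffix below that $\alpha$. Combining the two cases and appealing to Theorem \ref{mplmtheorem} yields $\mpl X \leq t$, as claimed.
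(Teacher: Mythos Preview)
Your proposal is correct and follows essentially the same approach as the paper: both reduce the bound $\mpl X\le t$ to the tower identity via Theorem~\ref{mplmtheorem}, both use pigeonhole among the $t+1$ letters together with the triviality of each orbit from Corollary~\ref{gabeliancorA}, and both invoke Lemmas~\ref{favouriteL} and~\ref{cuttowersL} to truncate. The only cosmetic difference is the case split: you single out the top letter $a$ and branch on whether its orbit reappears below, whereas the paper applies pigeonhole uniformly to all $t+1$ letters and branches on whether the repeat is adjacent ($\mu=1$) or not; in particular the paper does not need to pick a \emph{closest} repeat, since condition~(\ref{vipbase1}) actually holds with $Z=X_i$ and $Y=X$ (the case $y\in X_i$ being trivial because ${}^{\alpha}y=y$).
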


\begin{proof}
By Theorem \ref{mplmtheorem} it will be enough to show that for each
choice of $y_1, \cdots y_{m+1}\in X$ there is an equality
\begin{equation}
\label{mplmeq} \omega := (\cdots((y_{m+1}\la y_{m})\la y_{m-1})\la
\cdots \la y_2)\la y_1 = (\cdots( y_{m}\la y_{m-1})\la \cdots \la y_2
)\la y_1=: \omega^{\prime}.
\end{equation}
Clearly, since the orbits are exactly $m$, there will be some $1
\leq \lambda< \lambda+\mu\leq m+1$, such that $y_{\lambda},
y_{\lambda+\mu}$ are in the same orbit, say $X_i$.

Two cases are possible.

\textbf{Case 1.} $\mu = 1.$ In this case,$\lambda+\mu= \lambda+1$
$(y_{m+1}\la y_{m})\la \cdots \la y_{\lambda+1}) = u \in X_i$ and
since $X_i$ is a trivial solution, one has $(\cdots (y_{m+1}\la
y_{m})\la \cdots \la y_{\lambda+1})\la y_{\lambda} =
{}^u{y_{\lambda}}=y_{\lambda}$, thus
\[
\omega=(\cdots((y_{m+1}\la y_{m})\la y_{m-1})\la \cdots \la y_2)\la
y_1 =(\cdots(y_{\lambda}\la y_{\lambda-1})\la \cdots \la y_2)\la y_1
\]
Similarly,
\[
\omega^{\prime}=(\cdots( y_{m}\la y_{m-1})\la \cdots \la y_2)\la y_1
=(\cdots(y_{\lambda}\la y_{\lambda-1})\la \cdots \la y_2)\la y_1
\]
So $\omega=\omega^{\prime}$, which  proves (\ref{mplmeq}).

\textbf{Case 2.} $\mu > 1$. In this case we set
$y_{\lambda+\mu}=\alpha$, $y_{\lambda} = z$. The tower $\omega$
contains the segment $(\cdots (\alpha\la y_{\lambda+\mu-1})\la
\cdots \la y_{\lambda+1})\la z,$ with $\alpha, z \in X_i$. By Lemma
\ref{favouriteL} we can cut $\alpha$ from the tower to yield
\[
(\cdots (\alpha\la y_{\lambda+\mu-1})\la \cdots \la
y_{\lambda+1})\la z = (\cdots(y_{\lambda+\mu-1}\la
y_{\lambda+\mu-2}) \cdots \la y_{\lambda+1})\la z.
\]
We shall assume $\lambda+\mu < m+1$ (The proof in the case
$\lambda+\mu = m+1$ is analogous). By Lemma \ref{cuttowersL} there
are equalities
\[
\begin{array}{rl}
\omega=&(\cdots((\cdots((\cdots(y_{m+1}\la y_{m})\la\cdots
\alpha)\la y_{\lambda+\mu-1})\la \cdots \la y_{\lambda+1})\la
z)\la\cdots )\la y_1\\
=&(\cdots((\cdots(y_{\lambda+\mu-1}\la y_{\lambda+\mu-2})\cdots \la
y_{\lambda+1})\la z)\la\cdots )\la y_1\\
=&(\cdots((\cdots((\cdots(y_{m}\la y_{m-1})\la\cdots \alpha)\la
y_{\lambda+\mu-1})\la \cdots \la y_{\lambda+1})\la z)\la\cdots )\la
y_1.
\end{array}
\]
The proposition has been proved.
\end{proof}
\textbf{Theorem \ref{theoremA}} follows straightforwardly from
Corollary \ref{gabeliancorA} and Proposition \ref{gabelianpropA}.

We shall now prove \textbf{Theorem \ref{theoremB}}. Suppose the
square free solution $(X,r)$ is a strong twisted union $(X, r)  =
X_1\stu X_2$ and $\Gcal(X,r)$ is abelian. Then by Theorem
\ref{theoremA} $(X, r)$,$(X_1, r_{X_1}), (X_2, r_{X_2})$ are
multipermutation solutions. Let $\mpl X_1 = m_1$, $\mpl X_2 = m_2$. We
claim that $\mpl X \leq m_1 +m_2.$ Denote $m = m_1 + m_2 + 1.$ By
Theorem \ref{mplmtheorem} it will be enough to show that for any
choice of $\zeta_1, \cdots \zeta_m \in X$ there is an equality
\begin{equation}
\label{TBeq1} \omega= (\cdots(\zeta_m\la\zeta_{m-1})\la
\cdots\la\zeta_2)\la\zeta_1 = \omega^{\prime}=
(\cdots(\zeta_{m-1}\la\zeta_{m-2})\la \cdots\la\zeta_2)\la\zeta_1
\end{equation}
By Remark \ref{towerintwoalphabetsR} two cases are possible.

\textbf{ Case 1.} $\omega$ contains a segment
$(((\cdots\la\beta\la y_{q})\la\cdots\la y_2)\la y_1)\la\alpha,$
where $q \geq 1, y_k \in X_i, 1 \leq k \leq q$, $\alpha,\beta \in
X_j,$ and $1 \leq i\neq j\leq 2$. Since $X$ is strong twisted union
of $X_1$ and $X_2$ the hypothesis of  Lemma \ref{favouriteL} is in
force, and therefore
\[
((\cdots(\beta\la y_{q})\la\cdots\la y_2)\la y_1)\la\alpha =
((\cdots( y_{q}\la y_{q-1})\la \cdots\la y_2)\la y_1)\la\alpha
\]
Now apply Lemma \ref{cuttowersL} to deduce (\ref{TBeq1})

\textbf{ Case 2.} $\omega$ has the shape
\[
\omega=(\cdots(((\cdots(y_{q}\la y_{q-1})\la\cdots \la
y_2)\la y_1)\la\alpha_{p})\cdots \alpha_2)\la\alpha_1,
\]
where $y_s \in X_j, 1\leq s \leq q$, $\alpha_k \in X_i, 1\leq k \leq
p$,  with $1\leq i\neq j\leq 2$. Furthermore, either $p \geq m_i+1,$
or $q \geq m_j+1,$ ($p=0, q = m_1+m_2+1$, or $q = m_1+m_2+1,q=0$ is
also possible). Without loss of generality we may assume $q \geq
m_j+1.$ But $\mpl X_j = m_j$, so Theorem \ref{mplmtheorem} implies the
equality
\[
(\cdots(y_{q}\la y_{q-1})\la\cdots \la y_2)\la y_1 = (\cdots(y_{q-1}\la
y_{q-2})\la\cdots \la y_2)\la y_1.
\]
We apply again  Lemma \ref{cuttowersL} to obtain (\ref{TBeq1}). The
case when $p \geq m_i+1$ is analogous and we leave it to the reader.

Theorem \ref{theoremB} has been proved. Q.E.D.

\section{Multipermutation solutions of finite multipermutation level}
\label{generalmplsection}
\subsection{General results}
\label{General results}
We shall use the notation from section 2. We  also recall some
notions and basic facts from \cite{T04}.

\begin{notation}
\label{notret0} Let $(X,r)$ be a  square-free solution. For each
integer $k \geq 0$ as usual, we shall use  following notation.
\begin{enumerate}
\item
 $\Ret^k(X,r)$ denotes the $k$-th retract of $(X,r)$, but
 when $k=1$ it is convenient to
 use both notations
$\Ret(X,r)=\Ret^1(X,r)$ and $([X], r_{[X]})$ for the retract.
For completeness we set $\Ret^0(X,r)=(X,r)$.
\item
$x^{(k)}$ denotes the image of $x$ in $\Ret^k(X,r).$ The set
 \[
 [x^{(k)}]: = \{\xi \in X \mid x^{(k)}=\xi^{(k)}\}
\]
is called \emph{the kth retract class of $x$}. (In \cite{T04} it is
referred to as \emph{the kth retract orbit of $x$}).

\item
In the case  when $\mpl (X, r) = m < \infty,$ and $X$ has a finite number of
$\Gcal$-orbits, we let these orbits be
\[X_1, \ldots, X_t.\]
\item
We fix a notation for the elements of the $(m-1)$th retract:
\[\Ret   ^{m-1}(X,r) = \{  \zeta_1^{(m-1)}, \cdots , \zeta_s^{(m-1)} \}.\]
(By Corollary \ref{intransitiveactionforinfiniteXCor} one has $s \leq t$).
\item
The  $m-1$th retract classes will be denoted by
\[
Y_i : = [\zeta_i^{(m-1)}], \; 1 \leq i \leq s.
\]
For each $i, 1 \leq i \leq s$
we denote the set of all $\Gcal$-orbits of $X$ which intersect $Y_i$ nontrivially by
\[
X_{i1}, X_{i2}, \cdots , X_{it_i}.
\]
\end{enumerate}
\end{notation}

\begin{remark}
In the above notation, suppose that $\mpl (X, r) = m < \infty,$ and that
$X$ has a finite number of $\Gcal$-orbits, say
$X_1, \ldots, X_t$.
Then by Corollary \ref{intransitiveactionforinfiniteXCor}  $\Ret^{m-1}$ is a finite set of order $s, 2 \leq s \leq t$.

Furthermore, it follows from Proposition \ref{vip_retrprop} that
for each pair $i, j 1 \leq i \leq s, 1 \leq j \leq t$
one has
\[
Y_i\bigcap X_j\neq \emptyset \Longleftrightarrow X_j \subseteq Y_i.
\]
so each $(m-1)$th retract class $Y_i, 1 \leq i \leq s$ is a disjoint union of
the set of all
$\Gcal$-orbits which intersect it nontrivially:
\[
Y_i = \bigcup_{1 \leq k \leq t_i} X_{ik}.
\]
\end{remark}

The following corollary is  straightforward from Lemma
\ref{retractlemma}.

\begin{corollary}
For each integer $k\geq 1$ the canonical map $(X,r)\longrightarrow
\Ret^k(X,r)$, $x \mapsto x^{(k)}$, is a homomorphism of solutions.
\end{corollary}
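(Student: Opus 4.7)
The plan is to reduce the statement to the base case $k=1$ and then proceed by induction on $k$, using that a composition of homomorphisms of solutions is again a homomorphism.

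For the base case, let $\pi : X \to [X]$ be the canonical map $x \mapsto [x]$. By Definition \ref{defhom}, I need to verify the identity $(\pi \times \pi) \circ r = r_{[X]} \circ (\pi \times \pi)$ on $X \times X$. This is immediate from the definition of $r_{[X]}$ given in Lemma \ref{retractlemma}(2): for every $x,y \in X$,
\[
(\pi \times \pi)(r(x,y)) \;=\; (\pi \times \pi)({}^xy,\, x^y) \;=\; ([{}^xy],\, [x^y]) \;=\; ({}^{[x]}{[y]},\, [x]^{[y]}) \;=\; r_{[X]}([x],[y]),
\]
which is exactly $r_{[X]} \circ (\pi \times \pi)(x,y)$.

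For the inductive step, assume the canonical map $\pi_{k-1} : X \to \Ret^{k-1}(X,r)$, $x \mapsto x^{(k-1)}$, is a homomorphism of solutions. Applying the base case to the symmetric set $\Ret^{k-1}(X,r)$ (which is again a nondegenerate symmetric set by Lemma \ref{retractlemma}(3), so the construction of its retract is legitimate), the canonical projection
\[
\Ret^{k-1}(X,r) \longrightarrow \Ret\bigl(\Ret^{k-1}(X,r)\bigr) = \Ret^{k}(X,r)
\]
is a homomorphism of solutions. The composition $\pi_k : X \to \Ret^k(X,r)$, $x \mapsto x^{(k)}$, is the composition of these two homomorphisms, and composition of homomorphisms of solutions is again a homomorphism (directly from the functional equation $(\varphi \times \varphi)\circ r_X = r_Y \circ (\varphi \times \varphi)$).

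There is essentially no obstacle here: the statement is a bookkeeping consequence of how $r_{[X]}$ was defined to make $\pi$ intertwine the two maps. The only thing to be careful about is ensuring that the iterated retract is well-defined at each level, which is precisely what Lemma \ref{retractlemma}(3) guarantees.
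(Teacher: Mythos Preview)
Your proof is correct and is exactly the natural unwinding of what the paper means by ``straightforward from Lemma \ref{retractlemma}'': verify the base case $k=1$ directly from the definition of $r_{[X]}$, then induct by composing with the one-step retraction map. The paper gives no further details beyond that remark, so your argument matches its intended approach.
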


The following results are extracted from \cite{T04}, where they are stated
for finite square-free solutions $(X,r)$. However, the argument does not
rely on the finiteness of $X$.
\begin{facts}
\label{VIPfacts} \cite{T04}
\begin{enumerate}
\item
\label{fact1}  \cite{T04}, Lemma 8.10. For every $\alpha,\beta, x
\in X,$ and $k$ a positive integer,
\begin{equation}
\label{facteq1} \alpha^{(k)}= \beta^{(k)}\Longrightarrow
({}^{\alpha}{x})^{(k-1)}=({}^{\beta}{x})^{(k-1)}.
\end{equation}
In particular,
\begin{equation}
\label{facteq2} \alpha^{(2)}= \beta^{(2)}\Longrightarrow
{}^{\alpha}{x}\sim {}^{\beta}{x} \quad \forall x \in X, \quad
{}^{\alpha}{\beta}\sim \beta.
\end{equation}
\item
\label{fact2}  \cite{T04}, Lemma 8.9. It follows (\ref{facteq1})
that  for any positive integer $k$, and any $x \in X,$ the
restriction $r_{x,k}$ of  $r$ on $[x^{(k)}]$ is a bijective map
\[r_{x,k}:[x^{(k)}]\times [x^{(k)}]\longrightarrow [x^{(k)}]\times [x^{(k)}],\]
so the kth retract class $([x^{(k)}], r_{x,k})$ is itself a
solution. Furthermore, $([x^{(k)}],r_{x,k})$ is a multipermutation
solution of level $\leq k$. In particular, whenever $[x]$ has
cardinality $\geq 2$, $([x], r_{x,1})$ is the trivial solution.
\item
\label{fact3}
 \cite{T04}, Lemma 8.12.
\begin{equation}
\label{facteq3} \alpha^{(2)}= \beta^{(2)}\Longrightarrow
{}^{{}^{\alpha}{x}} {\beta} = {}^x{\beta} \quad \forall x \in X,
\quad (\Lcal_{{}^{\alpha}{x}})_{\mid [\alpha^{(2)}]}=
(\Lcal_x)_{\mid [\alpha^{(2)}]}.
\end{equation}
\end{enumerate}
\end{facts}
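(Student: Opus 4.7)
The plan is to prove the three parts in order, using iteratively Lemma \ref{retractlemma} (each retract is itself a nondegenerate square-free symmetric set with \textbf{lri} and the cyclic conditions) and the definition of the equivalence $\sim$.

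For (\ref{fact1}), I would argue directly rather than by induction. Unwinding the definition, $\alpha^{(k)} = \beta^{(k)}$ in $\Ret^{k}(X,r) = \Ret(\Ret^{k-1}(X,r))$ means precisely that $\alpha^{(k-1)} \sim \beta^{(k-1)}$ in $\Ret^{k-1}(X,r)$, i.e.\ $\Lcal_{\alpha^{(k-1)}} = \Lcal_{\beta^{(k-1)}}$ as permutations of $\Ret^{k-1}(X,r)$. Since the canonical projection $X \to \Ret^{k-1}(X,r)$ is a homomorphism of solutions, one has $\Lcal_{\alpha^{(k-1)}}(x^{(k-1)}) = ({}^{\alpha}x)^{(k-1)}$ and similarly for $\beta$, which immediately yields $({}^{\alpha}x)^{(k-1)} = ({}^{\beta}x)^{(k-1)}$.

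For (\ref{fact2}), $r$-invariance of $Y := [x^{(k)}]$ is a direct consequence of $\Ret^{k}(X,r)$ being square-free: if $\alpha,\beta \in Y$ then $\alpha^{(k)} = \beta^{(k)}$, so the homomorphism property of the retract gives
\[
(({}^{\alpha}\beta)^{(k)}, (\alpha^{\beta})^{(k)}) = r^{(k)}(\alpha^{(k)}, \beta^{(k)}) = r^{(k)}(\alpha^{(k)}, \alpha^{(k)}) = (\alpha^{(k)}, \alpha^{(k)}),
\]
forcing both ${}^{\alpha}\beta$ and $\alpha^{\beta}$ to lie in $Y$. For the bound $\mpl(Y, r_Y) \leq k$ I would induct on $k$. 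The base $k=1$ is immediate: $\alpha,\beta \in [x]$ gives $\Lcal_{\alpha} = \Lcal_{\beta} = \Lcal_{x}$, so by square-freeness ${}^{\alpha}\beta = \Lcal_{\beta}(\beta) = \beta$, and by \textbf{lri} $\alpha^{\beta} = \Lcal_{\beta}^{-1}(\alpha) = \Lcal_{\alpha}^{-1}(\alpha) = \alpha$, whence $([x], r_{x,1})$ is trivial. For the inductive step, observe that $\sim^{X}|_{Y}$ is finer than the internal equivalence $\sim^{Y}$ of $Y$ (since $\Lcal^{X}_{\alpha} = \Lcal^{X}_{\alpha'}$ implies their restrictions to $Y$ agree), so the assignment $[\alpha]_{X} \mapsto [\alpha]_{Y}$ is a well-defined surjective homomorphism of solutions $[[x]^{(k-1)}] \twoheadrightarrow \Ret(Y, r_{Y})$, where the source is the $(k-1)$th retract class of $[x]$ in the square-free solution $\Ret(X,r)$. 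By the inductive hypothesis applied to $\Ret(X,r)$, $\mpl[[x]^{(k-1)}] \leq k-1$; since surjective homomorphisms of solutions do not raise the multipermutation level, $\mpl\Ret(Y, r_{Y}) \leq k-1$, giving $\mpl Y \leq k$.

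For (\ref{fact3}), applying (\ref{fact1}) with $k=2$ and with $x$ replaced by an arbitrary $y \in X$ gives $({}^{\alpha}y)^{(1)} = ({}^{\beta}y)^{(1)}$, i.e.\ $\Lcal_{{}^{\alpha}y} = \Lcal_{{}^{\beta}y}$. Specialising to $y=x$ and evaluating at $\beta$ yields $\Lcal_{{}^{\alpha}x}(\beta) = \Lcal_{{}^{\beta}x}(\beta) = {}^{{}^{\beta}x}\beta$, and the cyclic condition \textbf{cl2} (which holds for square-free solutions by Corollary \ref{constructivecor}) rewrites ${}^{{}^{\beta}x}\beta = {}^{x}\beta$. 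As this holds for every $\beta \in [\alpha^{(2)}]$, it upgrades to the equality of restrictions $(\Lcal_{{}^{\alpha}x})_{|[\alpha^{(2)}]} = (\Lcal_{x})_{|[\alpha^{(2)}]}$. The main technical obstacle is the $\mpl$ bound in (\ref{fact2}): because $\sim^{Y}$ and $\sim^{X}|_{Y}$ need not coincide, one cannot identify $\Ret(Y,r_{Y})$ with a subset of $\Ret(X,r)$ directly, and the argument has to go through the surjection $[[x]^{(k-1)}] \twoheadrightarrow \Ret(Y,r_{Y})$ together with a double induction passing to the retract solution $\Ret(X,r)$.
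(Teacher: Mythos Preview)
The paper does not supply its own proof of Facts~\ref{VIPfacts}; these are quoted from \cite{T04} (Lemmas~8.9, 8.10, 8.12) and used as black boxes, so there is no in-paper argument to compare against. Your proof is correct and self-contained. Part~(\ref{fact1}) is exactly the unwinding of the definition of $\Ret^{k}$ via the homomorphism property of the canonical projection; part~(\ref{fact3}) is the intended combination of part~(\ref{fact1}) with the cyclic condition~\textbf{cl2}.

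For part~(\ref{fact2}), your inductive step is the non-obvious part and is done correctly: the key point, which you identify, is that one cannot simply embed $\Ret(Y,r_Y)$ into $\Ret(X,r)$ because $\sim^Y$ may be strictly coarser than $\sim^X|_Y$, and you handle this by producing instead the surjection $[[x]^{(k-1)}]\twoheadrightarrow \Ret(Y,r_Y)$. One small point you use without justification is that a surjective homomorphism of (square-free) solutions does not increase the multipermutation level; this is easy but worth a sentence: a surjective homomorphism $\phi:(A,r_A)\to(B,r_B)$ descends to a surjection $\Ret(A)\to\Ret(B)$ (if $\Lcal_a=\Lcal_{a'}$ then $\Lcal_{\phi(a)}(\phi(b))=\phi(\Lcal_a(b))=\phi(\Lcal_{a'}(b))=\Lcal_{\phi(a')}(\phi(b))$ for all $b$, and $\phi$ is onto), so by iteration $|\Ret^m(B)|\le|\Ret^m(A)|$.
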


\begin{remark} Note that  Lemma 8.9 in \cite{T04} states inaccurately
that $\mpl([x^{(k)}],r_{x,k}) = k$. The correct statement is
$\mpl([x^{(k)}],r_{x,k}) \leq k$.
\end{remark}

\begin{remark}
\label{trivialG(Y)remark2}
Suppose that $X_0= X \bigcap \ker \Lcal \neq \emptyset$ then $\Gcal(X_0)= 1$.
 Let $\zeta_0 \in X_0$.
Then $X_0= [\zeta_0] \subseteq [\zeta_0^{(k)}]$, for all $k \geq 1$.
Let $Y = [x^{(k)}]$ be a $k$-th retract class distinct from $[\zeta_0^{(k)}]$.
Then this class generates a nontrivial permutation group $\Gcal(Y)$.
It follows then that $\Gcal(Y)= 1$ is possible for at most one
k-th retract class, namely $Y = [\zeta_0^{(k)}]$, and this happens in the
particular case $Y=[\zeta_0^{(k)}]= X_0$.
\end{remark}

\begin{corollary}
\label{ret2abelianGcorollary}
Suppose $Y=[\zeta^{(2)}]$ is a second retract class in $X$.
Then the permutation group $\Gcal(Y)$ is an abelian subgroup of $\Gcal=\Gcal(X, r)$.
 $\Gcal(Y)=1$ \emph{iff} $Y \subset \ker \Lcal$.
\end{corollary}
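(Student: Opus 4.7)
The plan is to handle the two assertions separately. The second, that $\Gcal(Y) = 1$ if and only if $Y \subseteq \ker\Lcal$, is immediate from Remark \ref{trivialG(Y)remark}, since by Notation \ref{S(Y)G(Y)def} $\Gcal(Y)$ is generated inside $\Gcal$ by $\{\Lcal_y : y \in Y\}$, and all of these are trivial precisely when each $y \in Y$ lies in $\ker\Lcal$.

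For the abelianness of $\Gcal(Y)$ it suffices to show that any two generators $\Lcal_\alpha, \Lcal_\beta$ with $\alpha, \beta \in Y$ commute inside $\Gcal$. The braid condition \textbf{l1} gives
\[
\Lcal_\alpha \circ \Lcal_\beta \;=\; \Lcal_{{}^\alpha\beta} \circ \Lcal_{\alpha^\beta},
\]
so it is enough to establish the two retract-level equivalences ${}^\alpha\beta \sim \beta$ and $\alpha^\beta \sim \alpha$: these yield $\Lcal_{{}^\alpha\beta} = \Lcal_\beta$ and $\Lcal_{\alpha^\beta} = \Lcal_\alpha$, whence $\Lcal_\alpha \Lcal_\beta = \Lcal_\beta \Lcal_\alpha$.

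The first equivalence is the right-hand assertion of (\ref{facteq2}) in Facts \ref{VIPfacts}, applied with the hypothesis $\alpha^{(2)} = \beta^{(2)}$, which holds because $\alpha, \beta \in Y = [\zeta^{(2)}]$. The second equivalence $\alpha^\beta \sim \alpha$ is not tabulated in the paper and is the only step needing real work; I will handle it by passing to the first retract $([X], r_{[X]})$, which by Lemma \ref{retractlemma}(5)--(6) is again a square-free symmetric set satisfying \textbf{lri}. The hypothesis $\alpha^{(2)} = \beta^{(2)}$ translates to $[\alpha] \sim [\beta]$ in $[X]$, i.e.\ $\Lcal_{[\alpha]} = \Lcal_{[\beta]}$; applying \textbf{lri} in $[X]$ this forces $\Rcal_{[\alpha]} = \Lcal_{[\alpha]}^{-1} = \Lcal_{[\beta]}^{-1} = \Rcal_{[\beta]}$, and square-freeness of the retract then gives
\[
[\alpha^\beta] \;=\; [\alpha]^{[\beta]} \;=\; \Rcal_{[\beta]}([\alpha]) \;=\; \Rcal_{[\alpha]}([\alpha]) \;=\; [\alpha],
\]
as required.

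The only subtle point is that Facts \ref{VIPfacts}(1) is stated for the left action only, so a right-action analogue would be needed for a more symmetric argument; the plan sidesteps this by invoking \textbf{lri} inside the retract itself, thereby exploiting the left/right symmetry already present in square-free symmetric sets. No other obstacle is expected.
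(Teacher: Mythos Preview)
Your proof is correct and follows essentially the same route as the paper: both reduce to showing $\Lcal_{{}^\alpha\beta}=\Lcal_\beta$ and $\Lcal_{\alpha^\beta}=\Lcal_\alpha$, then use the relation $\Lcal_\alpha\Lcal_\beta=\Lcal_{{}^\alpha\beta}\Lcal_{\alpha^\beta}$ to conclude commutativity. The paper obtains the first from (\ref{facteq2}) and dismisses the second with ``similarly''; your argument via \textbf{lri} and square-freeness in the retract is exactly a clean way to make that ``similarly'' precise, so the two proofs are the same in substance.
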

\begin{proof}
Let $\alpha,\beta \in Y$. Then $\alpha^{(2)}= \beta^{(2)}$, and by (\ref{facteq2}) one has
\[
{}^{\alpha}{\beta}\sim {}^{\beta}{\beta}=\beta,
\]
or equivalently
\begin{equation}
\label{e1}
\Lcal_{({}^{\alpha}{\beta})}=\Lcal_{\beta}
\end{equation}
similarly
\begin{equation}
\label{e2}
\Lcal_{{\alpha}^{\beta}}=\Lcal_{\alpha}.
\end{equation}
So we have
\[
\begin{array}{rll}
\Lcal_{\alpha}\circ \Lcal_{\beta}& = \Lcal_{{}^{\alpha}{\beta}}\circ\Lcal_{{\alpha}^{\beta}} &\quad\quad \text{\textbf{lri}}\\
                                 & = \Lcal_{\beta} \circ \Lcal_{\alpha}  &\quad\quad \text{(\ref{e1}), (\ref{e2})},
\end{array}
\]
hence $\Gcal(Y)$ is abelian.
\end{proof}

\begin{proposition}
\label{vip_retrprop} The following conditions are equivalent:
\begin{enumerate}
\item
$\mpl(X,r)=m$.
\item
 For every $x\in X$ one has
\[X \supset[x^{(m-1)}]\supseteq
\Ocal_{\Gcal}(x),\]  where the left hand side inclusion is strict,
and $\Ocal_{\Gcal}(x)$ is the $\Gcal$-orbit of $x$.
\item
For every $x\in X$ the $m-1$ retract class $[x^{(m-1)}]$ is a
$\Gcal$-invariant proper subset of $X$.
\end{enumerate}
\end{proposition}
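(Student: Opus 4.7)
I will prove the chain of equivalences by translating each condition into a statement about left actions on $\Ret^{m-1}(X,r)$. The fundamental tool is Lemma~\ref{retractlemma}: the canonical map $\pi\colon X \to \Ret^{m-1}(X,r)$, $x \mapsto x^{(m-1)}$, is a surjective homomorphism of solutions and $\Ret^{m-1}(X,r)$ is itself a square-free symmetric set. Consequently $\pi(\Lcal_a(y)) = \Lcal_{a^{(m-1)}}(y^{(m-1)})$ and $\Lcal_{y^{(m-1)}}(y^{(m-1)}) = y^{(m-1)}$; the set $[x^{(m-1)}]$ is by definition the fiber $\pi^{-1}(x^{(m-1)})$.

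For $(1)\Rightarrow(2)$, I unpack the definition of $\mpl(X,r)=m$: $\Ret^m(X,r) = \Ret(\Ret^{m-1}(X,r))$ is a one-point set, hence all elements of $\Ret^{m-1}(X,r)$ are equivalent under the retraction relation $\sim$, so the permutations $\Lcal_{a^{(m-1)}}$ ($a \in X$) all agree on $\Ret^{m-1}(X,r)$. Combined with the square-free identity $\Lcal_{y^{(m-1)}}(y^{(m-1)}) = y^{(m-1)}$, this common permutation fixes every point, hence equals $\id$. Pulling back through $\pi$ yields $\Lcal_a(y) \in [y^{(m-1)}]$ for all $a,y \in X$, whence $\Ocal_{\Gcal}(y) \subseteq [y^{(m-1)}]$. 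The strict inclusion $[x^{(m-1)}] \subsetneq X$ comes from the minimality clause in $\mpl(X,r) = m$: otherwise $\Ret^{m-1}(X,r)$ would be a singleton, contradicting $\mpl(X,r)>m-1$.

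The implication $(2)\Rightarrow(3)$ is immediate, because for any $z \in [x^{(m-1)}]$ we have $[z^{(m-1)}] = [x^{(m-1)}]$, and applying (2) at $z$ gives $\Ocal_{\Gcal}(z) \subseteq [x^{(m-1)}]$, i.e.\ $\Gcal$-invariance. For $(3)\Rightarrow(1)$, $\Gcal$-invariance of each retract class means $\Lcal_a(y) \in [y^{(m-1)}]$ for every $a,y\in X$, which pushes forward along $\pi$ to $\Lcal_{a^{(m-1)}}(y^{(m-1)}) = y^{(m-1)}$. Since $\pi$ is surjective, this forces every left action on $\Ret^{m-1}(X,r)$ to be trivial, so its retract is a single point and $\mpl(X,r) \leq m$; the properness assumption in (3) rules out $\mpl(X,r) < m$, giving equality.

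The only subtle move is the passage from ``all $\Lcal_{a^{(m-1)}}$ coincide'' to ``all $\Lcal_{a^{(m-1)}}$ equal the identity,'' which crucially uses square-freeness of the retract via $\Lcal_{y^{(m-1)}}(y^{(m-1)})=y^{(m-1)}$. Once this collapse is in hand, each implication reduces to a routine transport of information through the homomorphism~$\pi$, and no further combinatorial input is required beyond Lemma~\ref{retractlemma} and the definitions.
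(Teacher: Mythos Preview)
Your proof is correct and follows essentially the same approach as the paper: both arguments translate each condition into the statement that $\Ret^{m-1}(X,r)$ is a trivial solution with at least two elements, using the homomorphism $x\mapsto x^{(m-1)}$ and the square-freeness of the retract. The paper presents this as a single chain of biconditionals rather than a cycle of implications, and does not separately spell out the ``all $\Lcal_{a^{(m-1)}}$ coincide $\Rightarrow$ all equal $\id$'' step via square-freeness (it simply invokes that $\Ret^{m-1}$ trivial means every action is the identity), but the content is the same.
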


\begin{proof}
Note that $\mpl(X,r)=m$ \emph{iff } $\Ret^{m-1}$ is a trivial solution with at least
2 elements. Clearly,   $\Ret^{m-1}$ is a trivial solution  \emph{iff }
\[
{}^{(a^{(m-1)})}{(x^{(m-1)})}= x^{(m-1)} \quad \forall a, x \in X.
\]
The following are equalities in $\Ret^{m-1}$
\[
{}^{(a^{(m-1)})}{(x^{(m-1)})}=({}^{a}{x})^{(m-1)} \quad \forall a, x \in X
\]
\[
\begin{array}{rl}
\mpl(X,r)=m&\Longleftrightarrow  \Ret^{m-1} \quad \text{is a trivial solution of order}\; \geq 2\\
            &\Longleftrightarrow ({}^{a}{x})^{(m-1)}= x^{(m-1)}\quad \forall a, x \in X, [x^{(m-1)}] \subset X \\
            &\Longleftrightarrow  \Ocal_{\Gcal}(x)\subseteq [x^{(m-1)}] \subset X \quad \forall a, x \in X\\
            &\Longleftrightarrow  [x^{(m-1)}]\quad \text{is a
                                  $\Gcal$-invariant proper subset of $X$}.
\end{array}
\]
\end{proof}

\begin{corollary}
\label{intransitiveactionforinfiniteXCor} Let $(X,r)$ be a
square-free multipermutation solution of arbitrary cardinality.
Suppose  it is a multipermutation solution with $2 \leq mpl X = m <
\infty$. Then the number of $\Gcal$- orbits in $X$ is at least the
cardinality of $\Ret^{m-1}$. In particular, $\Gcal$ acts
intransitively on $X.$
\end{corollary}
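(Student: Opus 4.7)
The plan is to exhibit a canonical surjection from the set of $\Gcal$-orbits of $X$ onto the underlying set of $\Ret^{m-1}(X,r)$, which immediately yields the stated inequality. The key input is Proposition \ref{vip_retrprop}, which we have just established.

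First I would observe that by Proposition \ref{vip_retrprop}, the hypothesis $\mpl(X,r)=m$ means that for every $x\in X$ the $(m-1)$st retract class $[x^{(m-1)}]$ is a $\Gcal$-invariant subset of $X$ containing $\Ocal_{\Gcal}(x)$. Since the retract classes $\{[x^{(m-1)}]: x\in X\}$ partition $X$ and each one is $\Gcal$-invariant, every $\Gcal$-orbit of $X$ is contained in exactly one $(m-1)$st retract class. This defines a well-defined map
\[
\Phi\colon\{\Gcal\text{-orbits of }X\}\longrightarrow \Ret^{m-1}(X,r),\qquad \Ocal_{\Gcal}(x)\longmapsto x^{(m-1)}.
\]
The map $\Phi$ is surjective because every element of $\Ret^{m-1}(X,r)$ has the form $x^{(m-1)}$ for some $x\in X$, and such an element is the image of the orbit $\Ocal_{\Gcal}(x)$ under $\Phi$. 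Hence the number of $\Gcal$-orbits of $X$ is at least $|\Ret^{m-1}(X,r)|$.

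For the second assertion, I would argue that $|\Ret^{m-1}(X,r)|\geq 2$. Indeed, if $\Ret^{m-1}(X,r)$ were a one-element set then it would already be the trivial solution on a single element, and by Definition \ref{mpldef} this would force $\mpl(X,r)\leq m-1$, contradicting $\mpl(X,r)=m\geq 2$. Combining this with the first part, the number of $\Gcal$-orbits is at least $2$, so $\Gcal$ cannot act transitively on $X$.

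There is no real obstacle here; the proof is essentially a bookkeeping consequence of Proposition \ref{vip_retrprop} together with the definition of $\mpl$. The only point requiring a moment of care is the well-definedness of $\Phi$, which is exactly the content of the $\Gcal$-invariance of each $(m-1)$st retract class.
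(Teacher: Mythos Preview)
Your proof is correct and takes essentially the same approach as the paper: both invoke Proposition \ref{vip_retrprop} to see that each $(m-1)$st retract class is $\Gcal$-invariant and hence contains the $\Gcal$-orbit of any of its elements, yielding a surjection from orbits onto $\Ret^{m-1}(X,r)$, and both note that $\Ret^{m-1}(X,r)$ has at least two elements since $\mpl(X,r)=m$. Your version is simply more explicit in writing out the map $\Phi$ and checking its well-definedness and surjectivity.
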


\begin{proof}
The (m-1)st retract $\Ret^{m-1}$ is a trivial solution with at least
2 elements. By Proposition \ref{vip_retrprop} each $(m-1)$-retract
class $[x^{(m-1)}]$ contains the $\Gcal$-orbit of $x$. This proves
the corollary.
\end{proof}

\begin{theorem}
\label{mplmtheorem} Let $(X,r)$ be an arbitrary square-free solution,
not necessarily of finite  cardinality. Then

i)  $\mpl (X,r)\leq m$ if and only if the following equality holds
\begin{equation}
\label{mplmequality}
\begin{array}{rl}((\cdots((y_m\la y_{m-1})\la
y_{m-2})\la \cdots \la y_2)\la y_1)\la x& \\
&=((\cdots( y_{m-1}\la y_{m-2})\la \cdots \la y_2)\la y_1)\la x, \\
\forall x,\; y_1, \cdots y_m \in X.&
\end{array}
\end{equation}

ii) $\mpl (X,r)= m$ if and only if $m$ is the minimal integer for which
(\ref{mplmequality}) holds.
\end{theorem}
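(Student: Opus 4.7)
The plan is to reduce condition (\ref{mplmequality}) in $(X,r)$ directly to the assertion that $\Ret^{m-1}(X,r)$ is a trivial solution, which by definition is equivalent to $\mpl(X,r)\le m$. The mechanism is a clean induction on $m$, where each step peels off one level of retraction.

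First I would rewrite the equation. Set $T:=(\cdots(y_m\la y_{m-1})\la\cdots\la y_2)\la y_1$ and $T':=(\cdots(y_{m-1}\la y_{m-2})\la\cdots\la y_2)\la y_1$; both are well-defined elements of $X$. Condition (\ref{mplmequality}) asserts ${}^Tx={}^{T'}x$ for all $x\in X$, which is precisely $\Lcal_T=\Lcal_{T'}$, i.e.\ $T\sim T'$, i.e.\ $T^{(1)}=(T')^{(1)}$ in $\Ret(X,r)$. Next, since the canonical projection $\pi:X\to\Ret(X,r)$, $y\mapsto y^{(1)}$, is a homomorphism of solutions by Lemma \ref{retractlemma}, it commutes with the left action, so
\[T^{(1)}=(\cdots(y_m^{(1)}\la y_{m-1}^{(1)})\la\cdots)\la y_1^{(1)},\qquad (T')^{(1)}=(\cdots(y_{m-1}^{(1)}\la y_{m-2}^{(1)})\la\cdots)\la y_1^{(1)}.\]
Relabeling $z_i:=y_{i+1}^{(1)}$ for $1\le i\le m-1$ and $w:=y_1^{(1)}$, the identity $T^{(1)}=(T')^{(1)}$ becomes exactly the instance of (\ref{mplmequality}) at level $m-1$ in $\Ret(X,r)$ applied to $(z_{m-1},\ldots,z_1,w)$. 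Since $\pi$ is surjective, the universal quantifier transfers faithfully. This yields the key reduction: (\ref{mplmequality}) holds at level $m$ for $(X,r)$ if and only if (\ref{mplmequality}) holds at level $m-1$ for $\Ret(X,r)$. Lemma \ref{retractlemma} also guarantees that the retract of a square-free solution is square-free, so the inductive step is well-posed.

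Iterating $m-1$ times, (\ref{mplmequality}) at level $m$ in $(X,r)$ is equivalent to (\ref{mplmequality}) at level $1$ in $\Ret^{m-1}(X,r)$; the latter is just the statement ${}^{y_1}x=x$ for all $y_1,x\in\Ret^{m-1}(X,r)$, which by Lemma \ref{mpl=1} is equivalent to $\Ret^{m-1}(X,r)$ being the trivial solution. To close the circle, I would observe that for a square-free solution $Y$, the conditions "$Y$ is trivial" and "$\Ret(Y)$ is a single point" are equivalent: if all $\Lcal_y$ coincide then squarefreeness ${}^yy=y$ forces ${}^yz={}^yy=y$ for every $z$, and nondegeneracy of $\Lcal_y$ then forces $Y$ to be a singleton unless every $\Lcal_y=\id_Y$. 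Hence $\Ret^{m-1}(X,r)$ is trivial iff $\Ret^m(X,r)$ is a one-element set, iff $\mpl(X,r)\le m$. This proves (i), and (ii) follows immediately by the minimality characterization of $\mpl$.

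The main obstacle is bookkeeping: verifying that the reindexing in the inductive step converts the derived equality in $\Ret(X,r)$ exactly into an instance of (\ref{mplmequality}) at the next lower level, and correctly handling the boundary between "$\Ret^{m-1}$ has one element" and "$\Ret^{m-1}$ is a trivial solution on several elements", both of which must be subsumed under the single condition $\mpl(X,r)\le m$.
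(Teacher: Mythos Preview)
Your argument is correct and follows essentially the same inductive route as the paper: both reduce condition (\ref{mplmequality}) at level $m$ in $(X,r)$ to the same condition at level $m-1$ in $\Ret(X,r)$, using that $[a]\la[b]=[a\la b]$ (equivalently, that the retraction map is a homomorphism of solutions), and then unwind to the base case. The paper phrases the induction step directly via the equalities $[a\la b]={}^{[a]}[b]$, whereas you package the same computation by invoking the homomorphism property of $\pi$ and the surjectivity of $\pi$ to transfer the universal quantifier; these are the same idea.

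One small slip to fix in your last paragraph: from ``all $\Lcal_y$ coincide'' and square-freeness you should conclude ${}^yz={}^zz=z$ (apply $\Lcal_y=\Lcal_z$ and then ${}^zz=z$), not ${}^yz={}^yy=y$. With that correction your argument that, for a square-free solution, ``$\Ret(Y)$ is a singleton'' is equivalent to ``$Y$ is trivial'' goes through cleanly, and the bridge from ``$\Ret^{m-1}(X,r)$ trivial'' to ``$\mpl(X,r)\le m$'' is complete.
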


\begin{proof}
We use induction on $m$ to show the implication
\[
\mpl X \leq m \Longleftrightarrow (\ref{mplmequality}).
\]
Clearly, \[ \mpl X \leq 2
  \Longleftrightarrow[{}^zy] = [y], \forall y, z \in X \Longleftrightarrow
{}^{({}^zy)}{x}={}^{y}{x}, \forall x, y, z \in X.
  \]
This gives the base for the induction.

Assume the implications are true whenever $\mpl X \leq m$. Consider now the
retract $([X], r_{[X]}$. Clearly  $\mpl X = \mpl [X] +1$. Furthermore,
by the inductive assumption the following equality holds if and only if
$\mpl ([X], r_[X])\leq m.$
\begin{equation}
\label{mplPropeq1} ((\cdots(([y_{m+1}]\la [y_{m}])\la [y_{m-1}])\la
\cdots [y_3])\la [y_2])\la [y_1] = (\cdots(([y_{m}]\la
[y_{m-1}])\la \cdots [y_3])\la [y_2])\la [y_1]
\end{equation}
for all $y_1, \cdots,  y_m, y_{m+1} \in X$.

(Here we enumerate differently: we write $y_1$ instead of $x$,
etc.) By the obvious equalities
\begin{equation}
[a]\la [b] = {}^{[a]}{[b]} = [{}^{a}{b}]=[a\la b],
\end{equation}
(\ref{mplPropeq1}) is equivalent to
\begin{equation}
\label{mplPropeq2} [(\cdots((y_{m+1}\la y_{m})\la y_{m-1})\la \cdots
\la y_2)\la y_1] = [(\cdots(y_{m}\la y_{m-1})\la \cdots
\la y_2)\la y_1]
\end{equation}
for all $y_1, \cdots,  y_m, y_{m+1} \in X$.

But (\ref{mplPropeq2}) is equivalent to
\begin{equation}
\label{mplPropeq3} ((\cdots((y_{m+1}\la y_{m})\la y_{m-1})\la \cdots
\la y_2)\la y_1)\la x = (\cdots((y_{m}\la y_{m-1})\la
\cdots \la y_2)\la y_1)\la x
\end{equation}
for all $x, y_1, \cdots y_m, y_{m+1} \in X$.

This proves the equivalence $\mpl X \leq m+1 \Longleftrightarrow
\ref{mplPropeq3},$ which proves i).

(ii) follows straightforwardly from (i).

The theorem has been proved.
\end{proof}

\subsection{The groups $G(X,r)$ and $\Gcal(X,r)$}
\label{The groups subsection}
We recall more results.

\begin{facts}
\label{facts} Suppose $(X,r)$ is a square-free solution.
Then
\begin{enumerate}
\item \label{Gtorsionfree} $G= G(X,r)$ is
\emph{torsion free}. Let  $p$ be the least common multiple of the
orders of all permutations $\Lcal_x$ for $x \in X.$  Then the
following conditions hold.
\begin{itemize}
\item[(i)] $yx^p = (({}^yx)^p)y,\quad  x^py = y(x^y)^p \quad \forall x,y \in X.$ Thus the
group $G$ acts via conjugation on the set $X^{(p)}= \{x^p \mid x\in X\}$.
\item[(ii)]  $x^py^p = y^px^p,  \;\forall x,y \in X.$
The subgroup $A$ of $G$ generated by the set $X^{(p)}=\{x^p\mid x
\in X\}$ is isomorphic to the free abelian group in $n$
generators, (see \cite{T04, TM}).
\end{itemize}

\item It is known
that each set-theoretic solution $(X,r)$ of YBE (a braded set) can
be extended canonically to a solution $(S, r_S)$ on $S = S(X,r)$,
see \cite{TSh08} (respectively to a solution $(G, r_G),$ on $G=
G(X,r),$, \cite{Lu}) which makes $(S, r_S),$ a \emph{braided
monoid} (respectively $(G, r_G),$ a \emph{braided group}. In other
words the equality
\begin{equation}
\label{braidedgrouprel} uv= {}^uv.u^v,
\end{equation}
holds in for all $u,v$ in $S$ (respectively in $G$).
\end{enumerate}
\end{facts}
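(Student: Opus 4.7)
The plan is as follows. Statement~(2) is the matched-pair extension already summarised as Facts~\ref{theoremAMP} and established in \cite{TSh08,Lu}, and the equality \eqref{braidedgrouprel} is then precisely the fundamental matched-pair identity \eqref{M3eq}; I concentrate on part~(1). The strategy is to prove the commutation formula (i) by iterating the defining quadratic relation $yx=({}^yx)(y^x)$, deduce (ii) as a direct corollary, and then invoke the structure of $G$ as a finite extension of a free abelian group to conclude torsion-freeness.

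First, I would establish, by induction on $k\geq 1$, the identity
\[
 y\, x^k \;=\; ({}^y x)^k \,\bigl(y\,\Rcal_x^k\bigr) \qquad \text{in } S(X,r).
\]
The base case $k=1$ is the defining relation. For the inductive step, set $y'=y\,\Rcal_x^k$ and apply the relation once more: $y'x=({}^{y'}x)((y')^x)$, with $(y')^x=y\,\Rcal_x^{k+1}$. The non-trivial ingredient is the identification $\Lcal_{y\,\Rcal_x^k}(x)=\Lcal_y(x)$, which follows by iterating the cyclic condition \textbf{cl1} (whose one-step form reads $\Lcal_{y^x}(x)={}^yx$). Condition \textbf{lri} then forces $\Rcal_x=\Lcal_x^{-1}$, so the order of $\Rcal_x$ divides $p$; hence $\Rcal_x^p=\id_X$ and $y\,\Rcal_x^p=y$. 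Specialising to $k=p$ yields $yx^p=({}^yx)^p\,y$, the left half of (i). The symmetric identity $x^py=y(x^y)^p$ is obtained dually by iterating $xy=({}^xy)(x^y)$ and invoking \textbf{cr1}.

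For (ii), formula (i) shows that conjugation by $y$ sends $x^p$ to $(\Lcal_y(x))^p\in X^{(p)}$, so the conjugation action of $G$ on $X^{(p)}$ factors through the homomorphism $\Lcal:G\to\Gcal$. Iterating, conjugation by $y^p$ acts as $\Lcal_y^p=\id_X$ on $X^{(p)}$, so $y^p$ centralises every $x^p$, yielding $x^py^p=y^px^p$. Consequently $A=\langle X^{(p)}\rangle$ is abelian and normal in $G$; its freeness (of rank $|X|$ when $X$ is finite) together with cancellativity of $S$ and the embedding $S\hookrightarrow G$ is the content of \cite{T04,TM}. Torsion-freeness of $G$ then drops out of the resulting Bieberbach-type picture in which $G$ sits as a finite extension of the lattice $A$ by a finite group acting by permutations of the canonical basis $\{x^p\}$: any torsion element would have to act on $A$ through a non-trivial permutation of basis elements and still satisfy $g^N=1$ for some $N$, which is incompatible with the free $\mathbb{Z}$-structure of $A$ and cancellativity on $S$.

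The main obstacle will be the inductive step establishing $\Lcal_{y\,\Rcal_x^k}(x)={}^yx$ uniformly in $k$; this is exactly what the cyclic conditions were designed to guarantee, and once it is in hand, the rest of (i), (ii), and torsion-freeness are either formal consequences or appeals to the cancellativity/extension machinery already developed in \cite{T04,TM,TSh08}.
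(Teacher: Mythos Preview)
The paper gives no proof of this statement; it is labelled \emph{Facts} and simply cites \cite{T04,TM,TSh08,Lu}. So there is nothing in the paper to compare your argument against directly. That said, your reconstruction of (i) and (ii) is correct and is essentially the argument implicit in those references: the induction $y\,x^k=({}^yx)^k(y\Rcal_x^k)$ goes through exactly as you say, the cyclic condition \textbf{cl1} iterates to give $\Lcal_{y\Rcal_x^k}(x)={}^yx$, and specialising $k=p$ together with \textbf{lri} yields (i); (ii) is then a formal corollary.

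Where your sketch falls short is the torsion-freeness claim. The sentence ``any torsion element would have to act on $A$ through a non-trivial permutation of basis elements \ldots\ which is incompatible with the free $\mathbb{Z}$-structure of $A$ and cancellativity on $S$'' is not a valid argument. A finite extension of a free abelian lattice by a finite permutation group can perfectly well contain torsion (crystallographic groups with point-group torsion are the standard examples), and nothing you have written rules that out. The actual proof in \cite{TM} does not proceed this way: it uses the $I$-type structure of $S(X,r)$, namely the explicit bijection $S\to\mathbb{N}^n$ compatible with the left divisibility order, to exhibit a faithful length function on $G$ that obstructs torsion. Since the paper only quotes the result, your deferral to \cite{T04,TM} is appropriate, but you should drop the heuristic Bieberbach sentence rather than present it as an argument.
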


\begin{remark} It follows from the results of  \cite{TSh08} that
the extended solution $(G, r_G)$ satisfy
\begin{enumerate}
\item $(G, r_G)$  is involutive (i.e. $(G, r_G)$ is symmetric set)
if and only if $(X,r)$ is involutive.
\item $(G, r_G)$ is nondegenerate if and only if $(X,r)$ is nondegenerate
\item In particular, if $(X,r)$ is a square-free solution then $(G, r_G)$
is a nondegenerate symmetric set (but in general it is not
square-free). The notion of equivalence $u \sim v$
given by
\[u \sim v\Leftrightarrow(\forall g\in G)({}^ug = {}^vg)\]
is well defined,
and, as usual, $[u]$ denotes the equivalence class of $u$ in $G$.
\end{enumerate}
\end{remark}

\begin{lemma}
\label{kernelL} Let $G=G(X,r).$
 The kernel $K_0 =
\ker \Lcal$ of the group homomorphism $\Lcal :G \longrightarrow
\Sym(X)$ is a normal abelian subgroup of $G$ of finite index. In
particular, $K_0$ contains the free abelian subgroup $A
={}_{gr}[x_1^p, \cdots, x_n^p],$ where $p$ is the least common
multiple of all orders of permutations $\Lcal_x$, for $x \in X$.
\end{lemma}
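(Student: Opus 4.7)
The plan treats the four claims in turn. Normality of $K_0$ is automatic as the kernel of a homomorphism. For the finite index, the notation $x_1,\ldots,x_n$ together with the \emph{least common multiple} $p$ forces $X$ to be finite with $|X|=n$, so $\Sym(X)$ is finite and hence so is $G/K_0 \simeq \Gcal \leq \Sym(X)$, giving $[G:K_0]\leq n!$. The inclusion $A\subseteq K_0$ follows immediately because $\Lcal(x^p)=\Lcal_x^p=\id$ by the choice of $p$, and that $A$ is free abelian of rank $n$ is Facts~\ref{facts}(\ref{Gtorsionfree})(ii). The substantive content is therefore the abelianness of $K_0$.

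My approach to abelianness is to exploit the braided-group structure $(G,r_G)$ of Facts~\ref{theoremAMP}, which is also involutive and symmetric since $(X,r)$ is. First I will establish, for every $u \in K_0$ and every $v \in G$, the identities
\[
{}^u v \;=\; v,\qquad u^v \;=\; v^{-1} u v.
\]
I will prove these simultaneously by induction on the length of a reduced expression for $v$ in $X\cup X^{-1}$. The base case $v\in X$ is the definition of $K_0$ together with the braided relation $uv=({}^u v)(u^v)=v\cdot u^v$; the case $v\in X^{-1}$ is covered by Lemma~\ref{Sinvariantlemma}. For the inductive step, write $v=w\zeta$ with $\zeta\in X\cup X^{-1}$: by \textbf{ML2},
\[
{}^u(w\zeta)\;=\;({}^u w)\bigl({}^{u^w}\zeta\bigr),
\]
and the inductive hypothesis gives ${}^u w = w$ and $u^w = w^{-1} u w \in K_0$ (by normality); applying the base case to $u^w\in K_0$ and $\zeta$ then yields ${}^{u^w}\zeta=\zeta$, so ${}^u(w\zeta)=w\zeta$, and $u^v=v^{-1}uv$ follows from the braided relation.

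The second step invokes the involutivity $r_G\circ r_G=\id$. Evaluating at $(u,v)$ with $u\in K_0$ and substituting the identities just proved gives ${}^v(v^{-1}uv)=u$. Since $K_0$ is normal, as $u$ ranges over $K_0$ so does $w:=v^{-1}uv$, and the identity rewrites as
\[
{}^v w \;=\; v\,w\,v^{-1}\qquad\forall\,w\in K_0,\;v\in G.
\]
Specializing to $v\in K_0$ and applying Step~1 in the form ${}^v u = u$ gives $u=vuv^{-1}$, i.e.\ $uv=vu$. The hard part is Step~1: extending the trivial left action of $u\in K_0$ from $X$ to all of $G$ requires a \emph{simultaneous} induction on both ${}^u v$ and $u^v$, with normality of $K_0$ used at every stage to keep $u^v$ inside $K_0$. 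Once that is in place, the involutivity argument converting ``trivial braided action'' into ``ordinary conjugation'' is a short calculation and commutativity drops out.
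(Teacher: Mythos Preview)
Your proof is correct, but it takes a longer route than the paper's. The paper observes that since $(X,r)$ satisfies \textbf{lri}, one has $\Rcal_u = \Lcal_u^{-1}$ not only for $u \in X$ but for all $u \in G$ (this follows from \textbf{ML1}/\textbf{MR1}). Hence $u \in K_0$ forces both ${}^u a = a$ \emph{and} $a^u = a$ for all $a \in G$, by the same matched-pair induction you carry out in Step~1. For $u,v \in K_0$ the braided relation then gives immediately
\[
uv \;=\; ({}^u v)(u^v) \;=\; v\cdot u,
\]
using ${}^u v = v$ (trivial left action of $u$) together with $u^v = u$ (trivial right action of $v$). That is the entire argument.

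Your approach establishes only the trivial \emph{left} action of $K_0$ on $G$, derives $u^v = v^{-1}uv$ from the braided relation, and then brings in involutivity of $r_G$ to obtain the conjugation identity ${}^v w = vwv^{-1}$ for $w \in K_0$, from which commutativity follows. This is a legitimate alternative and has the minor conceptual advantage of not relying on the extension of \textbf{lri} from $X$ to $G$; but it costs you the extra involutivity detour that the paper's two-line argument bypasses entirely. If you simply add to your Step~1 the symmetric claim $a^u = a$ for $u \in K_0$ (same induction, using \textbf{MR2} and \textbf{lri}), your Step~2 becomes unnecessary.
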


 \begin{proof}
Clearly $u \in K_0$ if and only if $\Lcal_u = \id_X,$ and by {\bf
lri} the right action $\Rcal_u= (\Lcal_u)^{-1}=\id_X.$  This
straightforwardly implies
\begin{equation}
\label{rel1} u \in K_0 \Longleftrightarrow  {}^ua=a \quad \forall a
\in G \Longleftrightarrow a^u=a, \quad \forall a \in G
\end{equation}
Assume now $u,v \in  K_0$ Then  $uv=^{(\ref{braidedgrouprel})}
{}^uv.u^v=^{(\ref{rel1})}  vu,$ so $K_0$ is abelian. Clearly,
$\Lcal_{x^p}= (\Lcal_x)^p = \id_X,$ so $x^p \in K_0,$ for all $x\in
X,$ and therefore the free abelian group $A$  is contained in $K_0.$
\end{proof}
In assumption and conventions as above we introduce more notation.
\begin{notation}
\label{notation1}
\begin{enumerate}
\item
 $G_i= G(\Ret^i(X,r)),$ $G_0 =  G(X,r) =G$ ($\Ret^0(X,r)):= (X,r))$.
\item
$\Gcal_i= \Gcal(\Ret^i(X,r)).$
\item
$\Lcal^{0}= \Lcal:G(X,r) \longrightarrow \Gcal(X,r)$ is the usual
epimorphism extending the assignment $x \mapsto \Lcal_x, x \in X$
\\
$\Lcal^i= \Lcal:G_i \longrightarrow \Gcal_i$ is the canonical
epimorphism extending the assignment $x^{(i)} \mapsto
\Lcal_{x^{(i)}} \in \Sym (\Ret^i(X,r)),\; x \in X.$
\item
 $K_i$,
is the pull-back of $\ker \Lcal^i$ in $G,$ in particular $K_0= \ker
\Lcal.$
\item
$\mu_i: G_i  \longrightarrow G_{i+1}$ are the canonical epimorphisms
extending
$x^{(i)} \mapsto x^{(i+1)}$, where $0 \leq i < \mpl X,$ see Lemma \ref{rethomlemma} and Proposition \ref{longsequenceprop}.
\item
$N_i$ is the pull-back of $\ker \mu_i$ in $G.$
\item
$\varphi_i: \Gcal_i  \longrightarrow \Gcal_{i+1}$ is the canonical
epimorphism extending  the assignments
$\Lcal_{(x^{(i)})} \mapsto \Lcal_{x^{(i+1)}}, x\in X$, see Lemma \ref{rethomlemma} and Proposition \ref{longsequenceprop}.
\item
$H_i$ is the pull-back of $\ker \varphi_i$  in $G.$
\end{enumerate}
\end{notation}
Note that by definition, for $1\leq i\leq m-1$ one has
\[
K_1 =\{u \in S\mid \Lcal_{[u]} = \id _{[X]}\}, \quad K_i = \{u \in
S\mid \Lcal_{(u^{(i)})} = \id _{\Ret^i(X,r)}\}.
\]

\begin{lemma}
\label{rethomlemma} In assumption and notation as above
 the following conditions hold.
\begin{enumerate}
\item
\label{rethom1}
 The canonical epimorphism of solutions
\[
\mu: (X,r) \longrightarrow ([X], [r]);\quad x \mapsto [x],
\]
extends to a group epimorphism $\mu_0: G_0 \longrightarrow G_1.$
Analogously there exists a group epimorphism $ \mu_1: G_1
\longrightarrow G_2.$
\item
\label{rethom2} There is a canonical epimorphism
$\varphi_0: \Gcal_0  \longrightarrow \Gcal_1\quad\Lcal_x \mapsto
\Lcal_{[x]}, \; \forall x\in X.$
\item
\label{rethom4} The kernels $N_0 = \ker \mu_0,\; K_0 = \ker
\Lcal^{0},$ $H_0$- the pull back of $\ker \varphi_0$ into $G$, and
$K_1$, the pull back of $\ker \Lcal^1$ into $G$ satisfy
\begin{equation}
\label{kernel2}
\begin{array}{c}
N_0 \subset K_0 \subset K_1= H_0 \\ \\
 \ker \mu_1 \simeq N_1/N_0; \quad\quad  \ker \Lcal_1\simeq K_1/N_0; \quad\quad \ker \varphi_0 \simeq  K_1/K_0.
\end{array}
\end{equation}
\item
\label{rethom5} In particular,  $N_0$ is an abelian normal subgroup
of $G_0$, and there is a canonical epimorphism of groups
\[f_0: G_1 \longrightarrow \Gcal_0\quad [x] \mapsto \Lcal_{[x]}, \quad x\in X \quad \text{with}\; \ker f_0 \simeq K_0/ N_0.\]
\item
\label{rethom6} There are short exact sequences:
\begin{equation}
\label{vipexactsequences}
\begin{array}{rl}
1 \longrightarrow N_0 \longrightarrow G
\stackrel{\mu_0}{\longrightarrow} G_1 \longrightarrow 1\quad \quad& 1
\longrightarrow K_0 \longrightarrow G
\stackrel{\Lcal^{0}}{\longrightarrow} \Gcal \longrightarrow 1
\\
\\
1 \longrightarrow N_1/N_0 \longrightarrow G_1
\stackrel{\mu_1}{\longrightarrow}   G_2 \longrightarrow 1\quad \quad& 1
\longrightarrow K_0/N_0 \longrightarrow G_1
\stackrel{f_0}{\longrightarrow}  \Gcal \longrightarrow 1
\\ \\
1 \longrightarrow K_1/N_0 \longrightarrow G_1
\stackrel{\Lcal^1}{\longrightarrow} \Gcal_1 \longrightarrow 1 \quad \quad&
1 \longrightarrow K_1/K_0 \longrightarrow \Gcal
\stackrel{\varphi_0}{\longrightarrow} \Gcal_1 \longrightarrow 1,
\end{array}
\end{equation}

Moreover, the following diagram is commutative:

\begin{equation}
\label{vipcomdiagram}
\setlength{\unitlength}{0.7cm}
\begin{picture}(10,8)
\put(0,2){$1$} \put(0,4){$1$} \put(0.8,2){$\rightarrow$}
\put(0.8,4){$\longrightarrow$} \put(2,0){$1$} \put(1.5,2){$K_1/K_0$}
\put(2,4){$N_0$} \put(3,1){$\swarrow$} \put(3.2,2){$\rightarrow$}
\put(3,4){$\longrightarrow$} \put(3,5){$\nearrow$} \put(4,0){$1$}
\put(4,1){$\downarrow$} \put(4,2){$\mathcal{G}_0$}
\put(4,3){$\downarrow$}
\put(3.3,2.9){$\scriptstyle{\mathcal{L}^{0}}$} \put(4,4){$G_0$}
\put(4,5){$\downarrow$} \put(4,6){$K_0$} \put(4,7){$\downarrow$}
\put(4,8){$1$} \put(5,2){$\longrightarrow$}
\put(5.2,2.3){$\scriptstyle{\phi_0}$} \put(5,3){$\swarrow$}
\put(4.8,3.2){$\scriptstyle{f_0}$} \put(5,4){$\longrightarrow$}
\put(5.2,4.3){$\scriptstyle{\mu_0}$} \put(5,7){$K_1$}
\put(4.5,6.5){$\nearrow$} \put(5.5,6.5){$\searrow$} \put(6,0){$1$}
\put(6,1){$\downarrow$} \put(6,2){$\mathcal{G}_1$}
\put(6,3){$\downarrow$}
\put(6.3,2.9){$\scriptstyle{\mathcal{L}^{1}}$} \put(6,4){$G_1$}
\put(6,5){$\downarrow$} \put(5.8,6){$K_1/N_0$}
\put(6,7){$\downarrow$} \put(6,8){$1$} \put(7,2){$\longrightarrow$}
\put(7,4){$\longrightarrow$} \put(7,5){$\swarrow$} \put(8,2){$1$}
\put(8,4){$1$} \put(7.8,6){$K_0/N_0$} \put(9,7){$\swarrow$}
\put(10,8){$1$}
\end{picture}
\end{equation}
\end{enumerate}
\end{lemma}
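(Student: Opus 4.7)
The plan is to first construct all the maps via universal properties, then analyse the kernels, and finally read off the exact sequences and the commutativity of diagram (\ref{vipcomdiagram}).

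For part (\ref{rethom1}), I invoke the universal property of the presented group $G_0 = G(X,r)$: by Lemma \ref{retractlemma}, the canonical projection $x\mapsto [x]$ is a homomorphism of solutions $(X,r)\to ([X],r_{[X]})$, so it sends each defining relation $xy = ({}^xy)(x^y)$ of $G_0$ to the defining relation $[x][y]=[{}^xy][x^y]$ of $G_1$ and hence extends uniquely to a surjective group homomorphism $\mu_0:G_0\to G_1$. Applying the same argument one step higher yields $\mu_1:G_1\to G_2$. For part (\ref{rethom2}), the composition $\Lcal^1\circ\mu_0:G_0\to\Gcal_1$ sends each $x$ to $\Lcal_{[x]}$ and factors through $\Lcal^0$, because $\Lcal_x=\Lcal_y$ (i.e.\ $x\sim y$) forces $[x]=[y]$ and hence $\Lcal_{[x]}=\Lcal_{[y]}$; this defines $\varphi_0:\Gcal_0\to\Gcal_1$ and simultaneously establishes the commutation
\[
\varphi_0\circ\Lcal^0 \;=\; \Lcal^1\circ\mu_0.
\]
Dually, the assignment $[x]\mapsto\Lcal_x$ is well defined on $[X]$ (because $[x]=[y]\iff\Lcal_x=\Lcal_y$) and preserves the defining relations of $G_1$, since $\Lcal_x\Lcal_y=\Lcal_{{}^xy}\Lcal_{x^y}$ holds in $\Gcal_0$ by condition \textbf{l1}; it therefore extends to $f_0:G_1\to\Gcal_0$ satisfying $f_0\circ\mu_0=\Lcal^0$, as required by (\ref{rethom5}).

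Next I attack the kernel structure (\ref{kernel2}). Lifting the two presentations to a common free group $F(X)$ one sees that $N_0=\ker\mu_0$ is precisely the normal closure in $G_0$ of $\{xy^{-1}:x,y\in X,\ \Lcal_x=\Lcal_y\}$; each such generator lies in $K_0$, for $\Lcal_{xy^{-1}}=\Lcal_x\Lcal_y^{-1}=\id_X$, and as $K_0$ is normal this gives $N_0\subset K_0$. If $u\in K_0$ then $u$ fixes every $x\in X$, so $\mu_0(u)=[u]$ fixes every $[x]$, whence $\mu_0(u)\in\ker\Lcal^1$ and $u\in K_1$; this is $K_0\subset K_1$. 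The equality $K_1=H_0$ is the kernel version of the commutation relation above,
\[
u\in K_1\iff\Lcal^1(\mu_0(u))=\id\iff\varphi_0(\Lcal^0(u))=\id\iff u\in H_0,
\]
and $N_0\subset K_0$ together with Lemma \ref{kernelL} (asserting $K_0$ is abelian) shows $N_0$ is abelian and normal, completing the preliminaries to (\ref{rethom5}). The three isomorphisms in (\ref{kernel2}) are then applications of the First Isomorphism Theorem: $\mu_0(N_1)=\ker\mu_1$ yields $\ker\mu_1\simeq N_1/N_0$; $\mu_0(K_1)=\ker\Lcal^1$ yields $\ker\Lcal^1\simeq K_1/N_0$; and $\Lcal^0(H_0)=\ker\varphi_0$, with fibres cosets of $K_0$, yields $\ker\varphi_0\simeq K_1/K_0$.

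Finally the six short exact sequences in (\ref{vipexactsequences}) are just rephrasings of the surjectivity of $\mu_0,\mu_1,\Lcal^0,\Lcal^1,\varphi_0,f_0$ combined with the kernels already identified, and diagram (\ref{vipcomdiagram}) commutes square by square: each square is either one of the verified identities $f_0\circ\mu_0=\Lcal^0$, $\varphi_0\circ\Lcal^0=\Lcal^1\circ\mu_0$, or a restriction of $\mu_0,\Lcal^0$ to one of the nested kernels $N_0\subset K_0\subset K_1$. I expect no genuine obstacle in this proof; the only subtlety is organisational, keeping six groups, six maps and four nested kernels aligned. The single nontrivial ingredient beyond the First Isomorphism Theorem is the commutation $\varphi_0\circ\Lcal^0=\Lcal^1\circ\mu_0$, which however is immediate once $\varphi_0$ is shown to be well defined on $\Gcal_0$.
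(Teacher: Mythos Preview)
Your proof is correct and tracks the paper's organisation closely: build the maps, verify the kernel inclusions, read off the exact sequences. The one substantive divergence is in showing $N_0 \subset K_0$. The paper argues element-wise: for $a$ with $[a]=1_{G_1}$ it proves ${}^ax=x$ for all $x\in X$ by first deducing ${}^{ax}y={}^xy$ for all $y$ and then setting $y=x$, invoking the square-free identity ${}^xx=x$. You instead identify $N_0$ as the normal closure in $G_0$ of $\{xy^{-1}:\Lcal_x=\Lcal_y\}$ and observe that each such generator already lies in $K_0$ because $\Lcal$ is a group homomorphism; this is cleaner and sidesteps the explicit appeal to square-freeness at this step. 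One small presentational point: the factoring of $\Lcal^1\circ\mu_0$ through $\Lcal^0$ that you use to produce $\varphi_0$ really rests on the kernel inclusion $K_0\subset K_1$ (which you prove a few lines later), not merely on the generator-level implication $\Lcal_x=\Lcal_y\Rightarrow\Lcal_{[x]}=\Lcal_{[y]}$ that you cite there; the logic is sound, but the dependency should be made explicit.
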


\begin{proof}
Parts (\ref{rethom1}), (\ref{rethom2})
are clear.
We shall verify  (\ref{rethom4}).
Clearly, the kernel $N_0=\Ker \mu_G$ consists of all $a \in G,$
such that $[a]= 1_{[G]},$
hence it will be enough to show
 the  implication
\[
[a]= 1_{[G]} \Longrightarrow \Lcal_a = \id_X.
\]
Indeed, suppose $[a]= 1_{[G]}$. Then for an arbitrary  $x \in X$ one
has $[ax]=[a][x]=[x],$ so
\begin{equation}
\label{eq1} {}^xy = {}^{ax}y= {}^{a}{({}^xy)} \quad \forall y \in X.
\end{equation}
In particular, (\ref{eq1}) is true for $y=x,$ thus for an arbitrary
$x \in X$ one has
\begin{equation}
\label{eq2} x ={}^xx = {}^{ax}x=
{}^{a}{({}^xx)} = {}^ax,
\end{equation}
where the equality $x ={}^xx$ follows from our assumption $(X,r)$ square-free.
We have shown ${}^ax = x,$ for every $x\in X,$ thus $\Lcal _a=
\id_X.$ This verifies $N_0\subseteq K_0$. By
Lemma \ref{kernelL}  the group $K_0$ is  abelian, so is
$N_0$.

The equality $H_0= K_1 $ follows from the  implications:
\[
\begin{array}{cl}
u \in H_0 &\Longleftrightarrow \Lcal_{[u]} =
\id_{[X]}\\
&\Longleftrightarrow {}^{[u]}{[x]} = [{}^ux]=[x]\quad  \forall x \in
X\\
&\Longleftrightarrow {}^{{}^ux}z = {}^xz \quad \forall x,z \in X\\
&\Longleftrightarrow  u \in K_1.
\end{array}
\]
The inclusions (\ref{kernel2}) for the three kernels  are clear.
This implies the second line in   (\ref{kernel2}). The existence of
the short exact sequences
 (\ref{vipexactsequences}) is straightforward from  (\ref{kernel2}). One easily sees that the diagram (\ref{vipcomdiagram})
is commutative.
\end{proof}
We  discuss some basic differences between the two kernels $N_0$ and
$K_0$ below.
\begin{remark} Suppose $(X, r)$ is a nontrivial square-free solution
of finite order (so  $\mpl X \geq 2$).
 Then  $K_0$ is a normal subgroup of $G$
of finite index $[G:K_0 ]$,  and in contrast, the index $[G:N_0]$ of
$N_0$ is not finite. Furthermore, $A \subset K_0 $, but $A \cap
N_0= e.$   Indeed,  by hypothesis $(X,r)$ is a nontrivial solution
then, by Lemma \ref{mpl=1} the set $[X]$ has order $> 1.$ Furthermore
the retract
$([X], [r])$ is a braided set. Hence $[X]$ generates the group
$G_{1}=G([X], r_{[X]}),$ $[x] \neq 1_{G_{[X]}},\; \forall x \in X$.
The group $G_1$ is torsion free as a YB group of square-free
solution of order $> 1$, see Facts \ref{facts}, in particular,
$[x^p]=[x]^p \neq 1_{G_{[X]}},$  so $\forall x \in X, x^p$ is not in
$N_0.$ On the other hand we have shown in Lemma \ref{kernelL} that
$x^p \in K_0, \;\forall x \in X.$
\end{remark}
The following proposition is an iteration of Lemma
\ref{rethomlemma}

\begin{proposition}
\label{longsequenceprop} Let $(X,r)$ be a nontrivial square-free
solution. Suppose $\mpl(X,r) = m$. Then  the following conditions
hold.
\begin{enumerate}
\item
For all $j, 0 \leq j \leq m-1,$
there are canonical group epimorphisms
\begin{equation}
\begin{array}{rl}
\mu_j: G_j \longrightarrow G_{j+1} \quad& x^{(j)} \mapsto x^{(j+1)},
\\
\Lcal_j: G_j \longrightarrow \Gcal_j \quad& x^{(j)} \mapsto
\Lcal_{x^{(j)}}
\\
f_j: G_{j+1} \longrightarrow \Gcal_j \quad& x^{(j+1)} \mapsto
\Lcal_{x^{(j)}}
\\
\varphi_j: \Gcal_j \longrightarrow \Gcal_{j+1} \quad&
\Lcal_{x^{(j)}} \mapsto \Lcal_{x^{( j+1)}}.
\end{array}
\end{equation}
\item
\label{H0} For $0\leq j \leq m-1$ let $N_j$, (respectively, $K_j,
H_j$) be the pull-back in $G$ of the kernel $\ker\mu_j,$
(respectively, the pull-back of $\ker\Lcal_j$, $\ker \varphi_j$).
Then there are inclusions
\[\begin{array}{ccccccccccccc}
N_0 & \subset & N_1 & \subset & N_2 & \subset & \cdots & \subset & N_j & \subset & N_{j+1} & \subset & \cdots \\[5pt]
\bigcap & & \bigcap & & \bigcap & & & & \bigcap & & \bigcap & & \\[5pt]
K_0 & \subset & K_1 & \subset & K_2 & \subset & \cdots & \subset & K_j & \subset & K_{j+1} & \subset & \cdots \\[5pt]
& & \| & & \| & & & & \| & & \| & & \\[5pt]
& & H_0 & \subset & H_1 & \subset & \cdots & \subset & H_{j-1} & \subset & H_j & \subset & \cdots
\end{array}\]

and
\begin{equation}
\label{kernel3}
\begin{array}{rl}
\ker\mu_j \simeq N_j/ N_{j-1}, \quad \quad&
\ker \Lcal^j\simeq K_j/ N_{j-1} \\ \\
\ker f_j\simeq K_j/ N_j, \quad \quad& \ker \varphi_j \simeq K_{j+1}/ K_{j}.
\end{array}
\end{equation}
\item
\end{enumerate}
\end{proposition}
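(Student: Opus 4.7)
The plan is to iterate Lemma \ref{rethomlemma}. By Lemma \ref{retractlemma}, each retract $\Ret^j(X,r)$, for $0 \leq j \leq m-1$, is itself a nondegenerate, involutive square-free solution. Applying Lemma \ref{rethomlemma} with $(X,r)$ replaced by $\Ret^j(X,r)$ immediately furnishes the four canonical epimorphisms $\mu_j: G_j \to G_{j+1}$, $\Lcal^j: G_j \to \Gcal_j$, $f_j: G_{j+1} \to \Gcal_j$ (sending $x^{(j+1)} \mapsto \Lcal_{x^{(j)}}$), and $\varphi_j: \Gcal_j \to \Gcal_{j+1}$, together with the corresponding short exact sequences and kernel identifications at the level of $G_j$ and $G_{j+1}$. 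This establishes part (1).

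For part (2), I would first translate each pull-back condition into an identity on the appropriate retract. Directly from the definitions: $a \in N_j$ iff $a^{(j+1)} = \mu_j(a^{(j)}) = 1$ in $G_{j+1}$; $a \in K_j$ iff $\Lcal_{a^{(j)}} = \id_{\Ret^j(X,r)}$; and $a \in H_j$ iff $\varphi_j(\Lcal_{a^{(j)}}) = \id$, which is the same as $\Lcal_{a^{(j+1)}} = \id_{\Ret^{j+1}(X,r)}$, i.e.\ $a \in K_{j+1}$. This yields the identification $H_j = K_{j+1}$. The inclusion $N_j \subset N_{j+1}$ is then immediate, since $a^{(j+1)} = 1$ forces $a^{(j+2)} = \mu_{j+1}(a^{(j+1)}) = 1$; similarly $K_j \subset K_{j+1}$ using $\Lcal_{a^{(j+1)}} = \varphi_j(\Lcal_{a^{(j)}})$; and $N_j \subset K_j$ comes from $f_j$, since $a^{(j+1)} = 1$ gives $\Lcal_{a^{(j)}} = f_j(a^{(j+1)}) = \id$.

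For the kernel isomorphisms, the idea is to combine the iterated surjections with the third isomorphism theorem. The composite $G \twoheadrightarrow G_1 \twoheadrightarrow \cdots \twoheadrightarrow G_j$ has kernel exactly $N_{j-1}$ (and is the identity when $j=0$), since $N_{j-1}$ is by construction the full preimage of $1 \in G_j$. Pulling back $\ker\mu_j$ and $\ker\Lcal^j$ inside $G_j$, respectively $\ker f_j$ inside $G_{j+1}$ and $\ker\varphi_j$ inside $\Gcal_j$, to $G$ and passing to quotients then gives
\[
\ker\mu_j \simeq N_j/N_{j-1},\qquad \ker\Lcal^j \simeq K_j/N_{j-1},\qquad \ker f_j \simeq K_j/N_j,\qquad \ker\varphi_j \simeq K_{j+1}/K_j,
\]
which is exactly (\ref{kernel3}).

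The main difficulty here is not conceptual but notational: aligning the various pull-back kernels in $G$ with the correct subquotients along the retract tower, and keeping the four parallel sequences in sync. The key observation that unlocks the entire iteration is that $\Ret^j(X,r)$ remains a square-free solution at every level, so Lemma \ref{rethomlemma} applies verbatim; once the dictionary in the second paragraph is in place, everything else is formal.
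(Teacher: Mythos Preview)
Your proposal is correct and matches the paper's approach exactly: the paper states only that the proposition ``is an iteration of Lemma \ref{rethomlemma}'' and gives no further proof, while you have spelled out precisely that iteration, including the dictionary $H_j=K_{j+1}$ and the third-isomorphism-theorem computations that the paper leaves implicit. If anything, your write-up is more detailed than what the paper supplies.
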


\begin{remark} Note that $\mpl(X,r) = m$ if and only if $H_{m-1}=G$.
\end{remark}

\begin{remark} By assumption $(X,r)$ is a square-free solution, thus {\bf lri} holds and the
graph $\Gamma(X,r)$ is well defined.  $\mu$ induces a homomorphism
of graphs
\[
\mu_{\Gamma}: \Gamma(X, r_X) \longrightarrow \Gamma([X], r_{[X]})
\]
The graph $\Gamma([X], r_{[X]})$ is a homomorphic image of
$\Gamma(X, r_X)$, though not in general a retraction.
\end{remark}

Recall that each solvable group $G$ has a canonical solvable series,
namely \emph{the derived series}
\[
G\supset G^{\prime}\supset G^{(2)}\supset\cdots \supset
G^{(s)}=1,
\]
where the derived subgroups $G^{(k)}$ are defined recursively.
$G^{\prime}$ is the commutator of $G$ (it is generated by the
comutators $[x,y]=xyx^{-1}y^{-1}, \; x,y \in G$) and for all $k\geq
1, G^{(k+1)}= (G^{(k)})^{\prime}$. Clearly, each $G^{k}$ is a normal
subgroup of $G.$ The length $s$ of the derived series is called
\emph{the solvable length} of $G,$ it is the minimal length of
solvable series for $G.$
We shall denote the solvable length of $G$ by $\sol(G)$.
The following fact is well known, and can be extracted, with a slight
modification of the proof, from \cite[Proposition 6.6]{Milne}.

\begin{fact}
\label{solvablelemma1} Let $N$ be a normal subgroup of $G$, and let
$\overline{G}=G/N.$ Suppose $N$ and  $G/N$ are solvable of solvable
lengths $m$ and $s,$ respectively. Then the solvable length $\sol(G)$ satisfies
$\max (m,s) \leq \sol(G) \leq m+s$.
\end{fact}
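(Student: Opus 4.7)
The plan is to prove the two inequalities $\max(m,s)\leq \sol(G)$ and $\sol(G)\leq m+s$ separately, using only the standard compatibility of the derived series with subgroups and surjective homomorphisms.

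For the lower bound I would first recall the two elementary observations: (i) for any subgroup $H\leq G$ one has $H^{(k)}\subseteq G^{(k)}$, and (ii) for any surjective homomorphism $\pi: G\to Q$ one has $\pi(G^{(k)})=Q^{(k)}$. From (i), setting $\ell=\sol(G)$, we get $N^{(\ell)}\subseteq G^{(\ell)}=1$, so $\sol(N)\leq \ell$, giving $m\leq \sol(G)$. From (ii) applied to the quotient map $\pi: G\to G/N$, we get $(G/N)^{(\ell)}=\pi(G^{(\ell)})=1$, so $\sol(G/N)\leq \ell$, giving $s\leq \sol(G)$. Taken together, these yield $\max(m,s)\leq \sol(G)$.

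For the upper bound I would again use (ii): since $(G/N)^{(s)}=1$, we have $\pi(G^{(s)})=1$, which means $G^{(s)}\subseteq N$. Now I would iterate the derivation $s$ more times, using the fact that taking the commutator subgroup is monotone with respect to inclusion; this gives
\[
G^{(s+k)}=\bigl(G^{(s)}\bigr)^{(k)}\subseteq N^{(k)}
\]
for every $k\geq 0$. Taking $k=m$ and using $N^{(m)}=1$ yields $G^{(s+m)}=1$, hence $\sol(G)\leq s+m$.

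Combining the two bounds gives $\max(m,s)\leq \sol(G)\leq m+s$, as required. There is no real obstacle here; the only point that deserves a careful word is the identity $\pi(G^{(k)})=(G/N)^{(k)}$, which follows by induction on $k$ from the fact that $\pi$ sends commutators to commutators and is surjective. Everything else is formal manipulation of the derived series.
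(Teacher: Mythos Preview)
Your argument is correct and is the standard proof. The paper does not actually supply a proof of this fact; it simply records it as well known and refers to Milne's notes, so there is nothing to compare against. One trivial slip: in the prose before the display you say ``iterate the derivation $s$ more times'' where you mean $m$ more times (the displayed computation and the conclusion $G^{(s+m)}=1$ are correct).
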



\begin{remark}
\label{solvableGremark}
It is known that the YB group $G(X,r)$ of every \emph{finite} nondegenerate
symmetric set  is solvable, see \cite{ESS}, Theorem 2.15. In
\cite{T04}, Theorem 7.10. is given  a different proof for the case
of finite square-free solutions.
\end{remark}
We shall prove that for each multipermutation (square-free)
solution $(X,r)$ \emph{of arbitrary cardinality}  the groups $G(X,r)$ and $\Gcal(X,r)$ are solvable,
and the solvable length of
$G(X,r)$ $\leq \mpl(X,r).$

\begin{proposition}
Let $(X,r)$ be a  square-free solution of
arbitrary cardinality, $G = G(X,r),  \Gcal = \Gcal(X,r).$ Suppose
$([X],r_{[X]}) = \Ret(X,r)$,
$G_1= G([X],r_{[X]})$. Then the following are equivalent
\begin{enumerate}
\item
$G$ is solvable.
\item
$\Gcal$ is solvable.
\item
$G_1$ is solvable.
\end{enumerate}
In this case the following inequalities hold:
 \begin{equation}
\label{ebasic}
 \sol (\Gcal)\leq \sol (G_1) \leq \sol (G) \leq \sol (\Gcal) +1.
 \end{equation}
In particular, if some of the retracts $\Ret^i(X,r)$ ($i \geq 0$ is a finite set,
then $G(X,r)$ is solvable.
\end{proposition}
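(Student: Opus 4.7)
The plan is to read off everything from the short exact sequences produced in Lemma \ref{rethomlemma}\ref{rethom6}, combined with the fact that a normal subgroup $N \trianglelefteq H$ with $N$ abelian and $H/N$ solvable forces $H$ solvable of length at most $\sol(H/N)+1$ (Fact \ref{solvablelemma1}). First I would list the three sequences that matter:
\begin{equation*}
1 \to K_0 \to G \xrightarrow{\Lcal^0} \Gcal \to 1, \qquad
1 \to N_0 \to G \xrightarrow{\mu_0} G_1 \to 1, \qquad
1 \to K_0/N_0 \to G_1 \xrightarrow{f_0} \Gcal \to 1.
\end{equation*}
By Lemma \ref{kernelL} the kernel $K_0$ is abelian, and since $N_0\subseteq K_0$, both $N_0$ and the quotient $K_0/N_0$ are abelian as well. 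So in all three sequences the left-hand term is abelian.

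Next I would apply Fact \ref{solvablelemma1} to each sequence. The first gives $\sol(\Gcal)\le\sol(G)\le\sol(\Gcal)+1$. The second gives $\sol(G_1)\le\sol(G)\le\sol(G_1)+1$ (the left inequality is automatic as $G_1$ is a quotient of $G$). The third gives $\sol(\Gcal)\le\sol(G_1)\le\sol(\Gcal)+1$. Concatenating yields the advertised chain
\begin{equation*}
\sol(\Gcal)\ \le\ \sol(G_1)\ \le\ \sol(G)\ \le\ \sol(\Gcal)+1,
\end{equation*}
and the equivalence (1)$\Leftrightarrow$(2)$\Leftrightarrow$(3) is then immediate, since any one of $\sol(G)$, $\sol(G_1)$, $\sol(\Gcal)$ being finite forces the other two to be finite.

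For the last sentence I would argue by descending induction on $i$. Suppose $\Ret^i(X,r)$ is finite for some $i\geq 0$. Then its YB-group $G_i=G(\Ret^i(X,r))$ is solvable by Remark \ref{solvableGremark} (solvability for finite symmetric sets, as in \cite{ESS}). Since $(\Ret^{i-1}(X,r),r_{\Ret^{i-1}})$ is itself a square-free solution whose first retract is $\Ret^i(X,r)$, the inequalities established above, applied to this solution in place of $(X,r)$, give $\sol(G_{i-1})\le \sol(G_i)+1$. Iterating, $\sol(G)=\sol(G_0)\le \sol(G_i)+i<\infty$, so $G(X,r)$ is solvable. The argument is essentially bookkeeping with the three exact sequences; the only point requiring any care is checking that the kernels really are abelian, which is precisely what Lemma \ref{kernelL} and the inclusion $N_0\subset K_0$ of Lemma \ref{rethomlemma}\ref{rethom4} provide, so I do not expect a genuine obstacle here.
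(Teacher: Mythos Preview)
Your proof is correct and follows essentially the same approach as the paper: you use exactly the three short exact sequences from Lemma \ref{rethomlemma} with their abelian kernels (via Lemma \ref{kernelL} and the inclusion $N_0\subset K_0$), apply Fact \ref{solvablelemma1} to each, and then run the same descending induction using Remark \ref{solvableGremark} for the final clause. The only cosmetic difference is that you record both bounds from Fact \ref{solvablelemma1} at each step, whereas the paper extracts only the one inequality it needs from each sequence.
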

\begin{proof}
 We know that there is a short exact sequence
\[
1 \longrightarrow K_0 \longrightarrow G \stackrel{\Lcal}{\longrightarrow}  \Gcal \longrightarrow 1,
\]
where  the kernel $K_0 = \ker \Lcal$ is an abelian  normal subgroup
of $G$, see Lemma \ref{kernelL}.  Fact \ref{solvablelemma1} implies
then that
\[\sol (G) \leq \sol (\Gcal) +1\]. By  Lemma \ref{rethomlemma}
there is a short exact sequence
\begin{equation}
\label{e5}
 1 \longrightarrow N_0 \longrightarrow G
\stackrel{\mu_0}{\longrightarrow} G_1 \longrightarrow 1,
\end{equation}
where the kernel $N_0$ of  $\mu_0$ is an abelian normal subgroup of
$G,$ so
\[
\sol (G_1) \leq \sol (G).
\]
By Lemma \ref{rethomlemma} $N_0\subset K_0$ and there is a short
exact sequence
\[
1 \longrightarrow K_0/N_0 \longrightarrow G_1 \longrightarrow  \Gcal \longrightarrow 1
\]
thus
\[
\sol (\Gcal) \leq \sol (G_1).
\]
We have  verified the inequalities (\ref{ebasic}).
Clearly this implies the equivalence of (1), (2), (3).

Assume now that for some $i$ the retract $\Ret^i(X,r)$ is of finite order.
Then by Remark \ref{solvableGremark} $G_i= G(\Ret^i(X,r))$ is solvable,
and therefore $G_{i-1}= G(\Ret^{i-1}(X,r))$ is solvable.
By decreasing induction on $i$ we deduce that $G_0=G(X,r)$ is solvable.
\end{proof}

\begin{theorem}
\label{sltheorem} Let $(X,r)$ be a  square-free solution of
arbitrary cardinality, $G = G(X,r),  \Gcal = \Gcal(X,r).$
Suppose  $(X,r)$  is a multipermutation solution with $\mpl(X,r)= m$.
Then $G$ and $\Gcal$ are solvable with
 \begin{equation}
 \label{slGeq}
 \sol (G) \leq m  \quad \text{and} \quad \sol (\Gcal)\leq m-1.
  \end{equation}
Furthermore,
 \[
\mpl(X,r) = 2 \Longrightarrow \sol (G) = 2  \quad\text{and}\quad
\sol (\Gcal) = 1.
\]
\end{theorem}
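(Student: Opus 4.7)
I will prove the bounds by induction on $m = \mpl(X,r)$, leveraging the chain of inequalities $\sol(\Gcal) \leq \sol(G_1) \leq \sol(G) \leq \sol(\Gcal)+1$ established in the preceding proposition (with $G_1 = G(\Ret(X,r))$), together with the fact, from Lemma~\ref{retractlemma}, that $\Ret(X,r)$ is again a square-free solution, whose multipermutation level is $m-1$ by Definition~\ref{mpldef}. No further structural theory beyond what the paper has already assembled is needed for the main bounds.

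\textbf{Base case $m \leq 1$.} If $m = 0$ then $|X|=1$ and both $G$ and $\Gcal$ are trivial. If $m = 1$ then, by Lemma~\ref{mpl=1}, $(X,r)$ is the trivial solution, so $G$ is free abelian, giving $\sol(G) \leq 1 = m$, and $\Gcal = \{\id\}$, giving $\sol(\Gcal) = 0 = m-1$.

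\textbf{Inductive step.} Assume the bounds hold for all square-free solutions of multipermutation level $< m$. Applying the inductive hypothesis to $\Ret(X,r)$ yields $\sol(G_1) \leq m-1$. Inserting this into the chain $\sol(\Gcal) \leq \sol(G_1)$ from the preceding proposition gives $\sol(\Gcal) \leq m-1$, and then $\sol(G) \leq \sol(\Gcal) + 1 \leq m$. This handles both inequalities of~(\ref{slGeq}).

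\textbf{The case $m=2$.} The bounds just proved give $\sol(\Gcal) \leq 1$ and $\sol(G) \leq 2$. By Theorem~\ref{significantth}(1), when $\mpl(X,r) = 2$ the group $\Gcal$ is a nontrivial abelian group, so $\sol(\Gcal) = 1$ exactly. The main obstacle is to show $\sol(G) = 2$, i.e.\ that $G$ is not abelian. For this, choose $a \in X$ with $\Lcal_a \neq \id_X$ (which exists as $\mpl(X,r)\geq 2$) and $x \in X$ with $y := {}^a x \neq x$; then $x$ and $y$ lie in a common $\Gcal$-orbit distinct from that of $a$, by Theorem~\ref{significantth}(5). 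Since every nontrivial orbit of a $\mpl=2$ solution is a trivial sub-solution (Theorem~\ref{significantth}(2)) and the elements of one orbit act on another by a common permutation, one checks that $a^x = a$, so the defining relation $ax = ({}^ax)(a^x) = y\cdot a$ holds in $G$. If $G$ were abelian one would have $xa = ax = ya$, and cancellation in $G$ (which is torsion-free by Facts~\ref{facts}(\ref{Gtorsionfree})) would force $x = y$, a contradiction. Hence $G$ is non-abelian and $\sol(G) = 2$. The non-abelianness argument in this last step is the only non-routine part of the proof; the rest is purely the inductive propagation of the kernel/extension data from Lemma~\ref{rethomlemma}.
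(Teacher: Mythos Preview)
Your inductive argument for the bounds $\sol(G)\le m$ and $\sol(\Gcal)\le m-1$ is correct and is essentially the paper's own proof, packaged through the chain $\sol(\Gcal)\le\sol(G_1)\le\sol(G)\le\sol(\Gcal)+1$ from the preceding proposition rather than re-invoking the short exact sequences directly.

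The $m=2$ case, however, has a genuine gap. Your assertion that $a^x=a$ is unjustified and in fact fails. Take $X=\{x_1,x_2,a_1,a_2\}$ with $\Lcal_{a_1}=\Lcal_{a_2}=(x_1\,x_2)$ and $\Lcal_{x_1}=\Lcal_{x_2}=(a_1\,a_2)$: this is a square-free solution with $\mpl=2$, yet for \emph{every} admissible choice of $a$ and $x$ (i.e.\ with ${}^ax\neq x$) one gets $a^x\neq a$. The fact that all elements of one orbit act by the same permutation on another orbit does not force that permutation to fix $a$. So the relation you want, $ax=ya$ in $G$, is not available, and the non-abelianness argument collapses. (Incidentally, cancellation in a group needs no appeal to torsion-freeness.)

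The paper handles this step far more simply: if $\sol(G)<2$ then $G$ is abelian, and Lemma~\ref{mpl=1} (the equivalence of ``$G$ is the free abelian group on $X$'' with $\mpl(X,r)=1$) forces $\mpl(X,r)=1$, contradicting $m=2$. You should replace your explicit construction with this one-line appeal.
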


\begin{proof}
 We shall use induction on $m$ to show that $\sol(G) \leq m.$ Note
that the retraction $([X],[r])$ is a multipermutation square-free
solution of level $\mpl([X],r_{[X]})= m-1$. Base for the induction,
$m=1.$
Then $(X,r)$ is the trivial solution, $\Gcal ={e}$, and  by Lemma
\ref{mpl=1}, $G$ is abelian, so $\sol G = 1 = \mpl(X,r)$.

Suppose the statement is true for $m \leq m_0.$ Let $\mpl(X,r)=
m_0+1.$ Look at the short exact sequence (\ref{e5}). The retraction
$([X],r_{[X]})$ is a multipermutation square-free solution of level
$\mpl ([X],r_{[X]})= m_0,$ so by the inductive assumption the
solvable length of $G_1$ is at most $m_0.$ Clearly, the solvable
length of $N_0$ is exactly 1, hence by Fact \ref{solvablelemma1} the
solvable length of $G$ is at most $m_0+1.$

Using analogous argument one shows that $\sol\Gcal \leq m-1,$ this time we use the short exact sequence
\[
1 \longrightarrow K_1/K_0 \longrightarrow \Gcal \longrightarrow  \Gcal_1 \longrightarrow 1,
\]
where the kernel $K_1/K_0 $ is an abelian normal subgroup of
$\Gcal,$ see Lemma \ref{rethomlemma} again. (Here as usual $\Gcal_1
= \Gcal([X], r_{[X])})$. This verifies \ref{slGeq}.

Assume now that $\mpl(X,r)=2$. This implies that  $\Gcal(X,r)$ is abelian,
or equivalently $\sol \Gcal(X,r)=1$, see Theorem \ref{significantth}.
We have already shown  that $\sol G(X,r) \leq \mpl(X,r) (= 2).$
An assumption that there is  a strict inequality $\sol G(X,r) < 2$ would imply $G(X,r)$
is abelian, and therefore by Lemma
\ref{mpl=1} $\mpl(X,r)= 1,$ a contradiction.

The theorem has been proved.
\end{proof}

In the case when $(X,r)$ is of finite order we show that the
solvable lengths of $G$ and $\Gcal$ differ exactly with $1$, see
Theorem \ref{slG=slGcal+1thm}.

We need a preliminary lemma.

\begin{lemma}
\label{slG=slGcal+1lemma}
Let $A$ be a non-zero free abelian group of finite rank, and let $H$
be a non-trivial finite group acting faithfully on $A$. Let
\[[H,A]=\langle {}^ha-a:a\in A,h\in H\rangle.\]
Then $[H,A]$ is non-zero, and $H$ acts faithfully on $[H,A]$.
\end{lemma}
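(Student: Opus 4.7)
The plan is to work rationally. Set $V = A\otimes_{\Z}\mathbb{Q}$; since $A$ is torsion-free of finite rank, the natural map $A\hookrightarrow V$ is injective, and any $h\in H$ acting trivially on $V$ must act trivially on $A$. Thus the faithful action of $H$ on $A$ extends to a faithful action of $H$ on the finite-dimensional $\mathbb{Q}$-vector space $V$. The non-triviality of $[H,A]$ is then immediate: if $[H,A]=0$, then ${}^h a=a$ for all $h\in H$ and $a\in A$, so $H$ acts trivially on $A$, contradicting faithfulness (since $H$ is non-trivial).

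For the faithfulness of the action on $[H,A]$, let $W=[H,A]\otimes_{\Z}\mathbb{Q}\subseteq V$; equivalently, $W$ is the $\mathbb{Q}$-subspace spanned by all vectors of the form ${}^h v-v$ with $h\in H$, $v\in V$. By Maschke's theorem (applicable since $|H|$ is invertible in $\mathbb{Q}$), decompose $V=V_1\oplus\cdots\oplus V_k$ into irreducible $\mathbb{Q}[H]$-submodules. For each summand, the subspace $W_i$ spanned inside $V_i$ by $\{{}^h v-v : h\in H,\ v\in V_i\}$ is $H$-invariant; it equals $0$ when $H$ acts trivially on $V_i$, and equals $V_i$ by irreducibility when $H$ acts non-trivially (since it is then a nonzero $H$-invariant subspace of the irreducible $V_i$). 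Since $W=W_1\oplus\cdots\oplus W_k$, one concludes that $W$ is precisely the sum of those $V_i$ on which $H$ acts non-trivially.

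Now suppose $h\in H$ acts trivially on $[H,A]$, hence on $W$. On the remaining summands of $V$ the $H$-action, and in particular $h$, is trivial by construction, so $h$ acts trivially on all of $V$ and hence on $A$; faithfulness of $H$ on $A$ then forces $h=1$. The crux of the argument is the identification of $W$ with the sum of the non-trivial irreducible constituents of $V$ — this is where the semisimplicity of the rational group ring $\mathbb{Q}[H]$ does the real work; after that, both conclusions of the lemma follow directly.
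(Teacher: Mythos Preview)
Your proof is correct and follows essentially the same approach as the paper: tensor with $\mathbb{Q}$ and invoke Maschke's theorem to decompose $V$, then identify $W=[H,A]\otimes\mathbb{Q}$ with the sum of the non-trivial irreducible constituents (equivalently, the paper identifies the complement of $W$ as the fixed-point submodule $V^H$). One small point the paper makes explicit and you leave implicit is that $[H,A]$ is itself $H$-stable, so that ``$H$ acts on $[H,A]$'' makes sense; this follows from ${}^k({}^ha-a)=({}^{kh}a-a)-({}^ka-a)$, and in any case is absorbed by your observation that each $W_i$ is $H$-invariant.
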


\begin{proof} We begin by observing that $H$ does indeed act on
$[H,A]$. If $k\in H$, then
\[{}^k({}^ha-a)={}^{khk^{-1}}b-b,\hbox{ where }b={}^ka\in A,\]
so ${}^k({}^ha-a)\in[H,A]$.

Let $\hat A=A\otimes\mathbb{Q}$. Then $\hat A$ is a vector space
over $\mathbb{Q}$, with dimension equal to the rank of $A$, and $H$
acts faithfully on $\hat A$. It suffices to prove the lemma with
$\hat A$ in place of $A$, since elements of $[H,A]$ are multiples of
elements of $[H,\hat A]$. The advantage is that \emph{Maschke's
Theorem} holds: if $B$ is an $H$-submodule of $\hat A$, then there
is a complement $C$, a $H$-submodule such that $\hat A=B\oplus C$
(in other words, $\hat A/B\cong C$).

Now $[H,\hat A]$ is the smallest $H$-submodule $B$ of $\hat A$ such
that $H$ acts trivially on $A/B$. So the complement guaranteed by
Maschke's Theorem is $C_H(\hat A)=\{a\in\hat A:a^h=a\}$. Since
$H\ne\{1\}$ and the action is faithful, $C_H(\hat A)\ne\hat A$, so
$[H,\hat A]\ne\{0\}$.

Finally, suppose that $h\in H$ acts trivially on $[H,\hat A]$. Since
also $h$ acts triviallly on $C_H(\hat A)$ by definition, it acts
trivially on the whole of $\hat A$; since we assume that $H$ acts
faithfully on $A$, we deduce that $h=1$. The lemma is proved.
\end{proof}

\begin{theorem}
\label{slG=slGcal+1thm} Let $(X,r)$ be a square-free solution of finite
order. Then \[\sol (G) = \sol (\Gcal) + 1.\]
\end{theorem}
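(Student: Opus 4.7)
The plan is to establish the two inequalities $\sol(G)\leq \sol(\Gcal)+1$ and $\sol(G)\geq \sol(\Gcal)+1$ separately. The first is already in hand: the short exact sequence $1\to K_0\to G\to \Gcal\to 1$ has abelian kernel $K_0$ by Lemma \ref{kernelL}, and Fact \ref{solvablelemma1} supplies the bound. The substance of the theorem is therefore the matching lower bound $\sol(G)\geq \sol(\Gcal)+1$. Writing $s:=\sol(\Gcal)$, the task reduces to exhibiting a non-trivial element of $G^{(s)}$.

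The first step would be to promote the conjugation action of $G$ on the free abelian subgroup $A=\langle x^p:x\in X\rangle$ provided by Facts \ref{facts} to a \emph{faithful} action of the quotient $\Gcal$. Using the identity $u x^p u^{-1}=({}^ux)^p$ from the same Facts, triviality of the action of $u\in G$ on $A$ forces $({}^ux)^p=x^p$ for every $x\in X$; since $A$ is free abelian on $\{x^p:x\in X\}$, this forces ${}^ux=x$, i.e.\ $u\in K_0$. Because $K_0$ is abelian and contains $A$, the conjugation action factors through $\Gcal=G/K_0$, and the implication just shown is precisely faithfulness of the induced $\Gcal$-action on $A$.

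The core step would be an iteration of Lemma \ref{slG=slGcal+1lemma}. Set $B_0:=A$ and $B_{k+1}:=[\Gcal^{(k)},B_k]$, regarded as a subgroup of $A$. I would prove by induction on $0\leq k\leq s$ the twin statement that $B_k\neq 0$ and $\Gcal^{(k)}$ acts faithfully on $B_k$: the base case $k=0$ is the previous paragraph, while for $0\leq k\leq s-1$ the group $\Gcal^{(k)}$ is non-trivial by definition of $s$, so Lemma \ref{slG=slGcal+1lemma} applied to the faithful action of the non-trivial finite group $\Gcal^{(k)}$ on the non-zero free abelian group $B_k$ produces both $B_{k+1}\neq 0$ and faithfulness of $\Gcal^{(k)}$ (hence a fortiori of $\Gcal^{(k+1)}$) on $B_{k+1}$. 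In parallel I would verify $B_k\subseteq G^{(k)}$ inductively: given $B_k\subseteq G^{(k)}\cap A$, for $g\in G^{(k)}$ and $a\in B_k$ the commutator $[g,a]=({}^ga)a^{-1}$ lies in $A\cap [G^{(k)},G^{(k)}]=A\cap G^{(k+1)}$ and depends only on $\bar g\in\Gcal^{(k)}$ (since $K_0\supseteq A$ is abelian, so $K_0$ centralises $A$), whence $B_{k+1}=[\Gcal^{(k)},B_k]\subseteq G^{(k+1)}\cap A$. Taking $k=s$ yields $B_s\neq 0$ inside $G^{(s)}$, so $\sol(G)\geq s+1$.

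The main obstacle is the inductive maintenance of faithfulness as the submodules $B_k$ shrink --- this is exactly what Lemma \ref{slG=slGcal+1lemma} (proved via Maschke over $\mathbb{Q}$) was engineered to deliver. Once that lemma is in place, the two parallel inductions dovetail to force $G^{(s)}\neq 1$, giving the desired equality $\sol(G)=\sol(\Gcal)+1$.
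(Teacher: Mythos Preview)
Your proposal is correct and follows essentially the same route as the paper's own proof: both set up the faithful conjugation action of $\Gcal$ on the free abelian subgroup $A=\langle x^p:x\in X\rangle$, define the descending chain $A^{(0)}=A$, $A^{(k+1)}=[\Gcal^{(k)},A^{(k)}]$, and iterate Lemma~\ref{slG=slGcal+1lemma} to force $A^{(s)}\neq 0$ inside $G^{(s)}$. Your write-up is in fact more explicit than the paper's on two points the paper leaves implicit --- why the action descends faithfully to $\Gcal=G/K_0$, and why $B_k\subseteq G^{(k)}$ (which needs the surjectivity of $G^{(k)}\to\Gcal^{(k)}$ to lift $\bar g\in\Gcal^{(k)}$ to $g\in G^{(k)}$) --- so your version would stand as a cleaner rendering of the same argument.
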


\begin{proof} We know that there is a natural number $p$ such that
the subgroup of $G$ generated by the $p$th powers of the generators
is a free abelian group $A$. Clearly $A$ is isomorphic to the
integral permutation module for $\mathcal{G}$ (in its action on
$X$), so the action is faithful. (This uses the equation
$ba^pb^{-1}=({}^ba)^p$, which follows from $ba^p=({}^ba)^pb$, see Facts \ref{facts} 1. i.)

Let $A^{(n)}$ be defined inductively by $A^{(0)}=A$ and
\[A^{(n+1)}=[\mathcal{G}^{(n)},A^{(n)}]\]
for $n\geq 0$. By lemma \ref{slG=slGcal+1lemma} and induction, if
$\mathcal{G}^{(n)}\ne\{1\}$, then $A^{(n+1)}\ne\{0\}$ and
$\mathcal{G}^{(n)}$ acts faithfully on $A^{(n+1)}$. So, if $l=\sol G$,
then $A^{(l)}\neq\{0\}$. But $A^{(l)}\le G^{(l)}$ (the $l$th derived
group of $G$); so $\sol G>l$. By our previous observation, we know
that $\sol G\le l+1$; so in fact $\sol G=l+1$ holds, and the theorem is
proved.
\end{proof}
We know that $\mpl(X,r) = 2$ implies $\sol(G(X,r)) = 2.$ Example
\ref{gap-mpl-sl-lemma}  shows that  a gap between $\mpl(X)$ and
$\sol(G(X,r)$ can occur even for $\mpl(X,r) = 3.$

\begin{question}
Suppose $(X,r)$ is a  multipermutation square-free solution of
finite order $|X| > 1$. The following questions are closely
related.
\begin{enumerate}
\item
Suppose $\mpl(X,r) = m$. Can we express a  lower bound for $\sol
G(X,r)$ in terms of $m$ ?
\item
When  there is an  equality?
\[
\sol G(X,r) = \mpl(X,r) \quad\text{or equivalently}\quad \sol\Gcal(X,r)= \mpl(X,r)-1 \text{?}.
\]
\end{enumerate}
\end{question}

In  Section \ref{infinitesolutions} we construct   an infinite
sequence of explicitly defined solutions $(X_m, r_m),$  $m = 0, 1,
2 \cdots, $ such that $\mpl(X_m)= m$, and $m = \sol G(X, r_m) = \sol
\Gcal(X, r_m) + 1,$ see Definition \ref{beautifulconstructiondef}
and Theorem \ref{beautifulconstruction}.

\section{Wreath products of solutions}
\label{Wreath products}
In this section we define the notion of wreath product of solutions,
by analogy with the wreath product of permutation groups.

The following result is true for arbitrary braided sets (without
any further restrictions like being symmetric, finite, or
square-free).

\begin{theorem}\cite{TSh08}
 Let $(X,r_X),$ $(Y,r_Y)$ be disjoint solutions of the YBE, with
YB- groups $G_X=G(X,r_X),$ and $G_Y=G(Y,r_Y)$. Let $(Z,r)$ be a
regular YB-extension of $(X,r_X),$ $(Y,r_Y),$ with a  YB-group $G_Z=G(Z, r)$.
Then
\begin{itemize}
\item  $G_X, G_Y$ is a matched pair of groups with
actions induced
 from  the braided group $(G_Z, r)$.
 \item
  $G_Z$ is isomorphic to the double crossed product $G_X\bowtie
 G_Y$.
\end{itemize}
\end{theorem}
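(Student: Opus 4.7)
The plan is to mirror the argument of Theorem~\ref{MAINDECOMPOSITIONTHM}, which handled the special case where $Z=X\cup Y$ is a disjoint union of $r$-invariant subsets. The regular extension hypothesis (see Remark~\ref{our_extensionsareregular_rem}) gives us exactly the same set-up for the ``crossed'' part of $r$: the bijections $\rho\colon Y\times X\to X\times Y$ and $\sigma\colon X\times Y\to Y\times X$, and the resulting actions ${}^{(\ )}\bullet,\ \bullet^{(\ )},\la,\ra$ on the generating sets. By Facts~\ref{theoremAMP}, the braided set $(Z,r)$ canonically extends to a braided group $(G_Z,r_{G_Z})$, so in $G_Z$ we have the identity $uv=({}^uv)(u^v)$ and $(G_Z,G_Z)$ is a matched pair of groups. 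The whole proof then consists in showing that this global matched pair restricts to a matched pair $(G_X,G_Y)$ inside $G_Z$.

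First I would check that the natural maps $G(X,r_X)\to G_Z$ and $G(Y,r_Y)\to G_Z$ are injective, so that $G_X$ and $G_Y$ may be identified with subgroups of $G_Z$. One convenient way is to note that the assignments $x\mapsto x$ on $X$ and $y\mapsto 1$ on $Y$ respect the defining relations of $G_Z$ (because the relations coming from $r(x,y)$ with $x\in X,y\in Y$ land in $X\cdot Y$ or $Y\cdot X$ by regularity) and thus induce a retraction $G_Z\twoheadrightarrow G_X$; likewise for $G_Y$. The key lemma to prove next is the analog of Proposition~\ref{Ginvariantprop} in this disjoint-extension setting:
\begin{equation*}
a\in G_Y,\ u\in G_X\ \Longrightarrow\ {}^a u\in G_X\ \text{ and }\ u^a\in G_Y,
\end{equation*}
and symmetrically with the roles of $X$ and $Y$ swapped. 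The proof is by a double induction on the reduced lengths of $a$ and $u$, imitating Lemma~\ref{Sinvariantlemma}: the base case on generators is forced by regularity, the step for negative powers uses the identity ${}^a(y^{-1})=({}^ay)^{-1}$ proved there, and the general step uses the matched pair axioms \textbf{ML1}, \textbf{ML2} (resp.\ \textbf{MR1}, \textbf{MR2}) of the ambient braided group $(G_Z,r_{G_Z})$.

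Once invariance is established, the left action of $G_Y$ on $G_X$ and the right action of $G_X$ on $G_Y$ inherited from $G_Z$ automatically satisfy \textbf{ML0}--\textbf{MR2}, since these axioms already hold for the ambient matched pair $(G_Z,G_Z)$. Thus $(G_X,G_Y)$ is a matched pair of groups, which gives the first bullet. For the second bullet, the braided group relation $uv=({}^uv)(u^v)$ applied to $u\in G_Y$, $v\in G_X$ (using the invariance just shown) lets us rewrite any word in $X\cup Y\cup X^{-1}\cup Y^{-1}$ so that all $X$-letters precede all $Y$-letters; this yields the factorisation $G_Z=G_X\cdot G_Y$. Uniqueness, i.e.\ $G_X\cap G_Y=\{1\}$, follows from the retraction maps constructed in the first step. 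By the converse part of the double cross product theorem recalled in Section~\ref{Decompositions_Section}, any such factorisation into submonoid/subgroup pair forces $G_Z\cong G_X\bowtie G_Y$.

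The step I expect to cost the most work is the invariance lemma above: it requires the full strength of the matched pair identities for $(G_Z,G_Z)$ and a careful induction dealing with inverses, exactly as in the proof of Proposition~\ref{Ginvariantprop}. Everything else (the matched pair axioms, the factorisation, the uniqueness) is then a bookkeeping consequence of the braided group identity in $G_Z$ combined with this invariance.
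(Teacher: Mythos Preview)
The paper does not give its own proof of this theorem: it is stated at the start of Section~\ref{Wreath products} with the citation \cite{TSh08} and the remark that it holds ``for arbitrary braided sets (without any further restrictions like being symmetric, finite, or square-free)'', but no argument follows. The closest thing in the paper is Theorem~\ref{MAINDECOMPOSITIONTHM}, which treats the special case of a square-free solution splitting into $r$-invariant subsets, and even there the matched-pair step is deferred to \cite{TSh08}, Proposition~4.25. So there is nothing to compare your proof against directly; your sketch is in fact more detailed than what the paper offers.

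That said, one technical point in your sketch deserves care. You invoke the identity ${}^a(y^{-1})=({}^ay)^{-1}$ ``proved there'' (Lemma~\ref{Sinvariantlemma}). That lemma is stated and proved for \emph{square-free} solutions and its proof uses \textbf{lri} and the cyclic conditions explicitly; neither is available for an arbitrary braided set, which is the generality claimed here. What you get for free from the matched-pair axioms of $(G_Z,G_Z)$ is only
\[
1={}^a(yy^{-1})=({}^ay)\,\bigl({}^{a^y}(y^{-1})\bigr),\qquad\text{hence}\qquad {}^{\,a^y}(y^{-1})=({}^ay)^{-1},
\]
which is a different identity. This weaker formula is still enough for your invariance induction (since $a^y\in G_Y$ by the inductive hypothesis and regularity), but you should rewrite the negative-power step to use it rather than the square-free identity. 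With that adjustment your outline---invariance of $G_X,G_Y$ under the ambient actions, then restriction of the $(G_Z,G_Z)$ matched pair, then factorisation via the braided-group relation and the retractions $G_Z\twoheadrightarrow G_X,G_Y$---is exactly the shape of the argument in \cite{TSh08}.
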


\begin{fact}
\label{propExt+}
[ESS] Let $(X,r_X)$ and $(Y,r_Y)$ be symmetric sets.
If  $Z\in Ext^{+}(X, Y)$, then $G_Z \simeq G_Y \rtimes G_X,$ where the semidirect product
is formed using the action of $G_Y$ on $X$ via $\alpha \rightarrow \Lcal_{\alpha}.$
\end{fact}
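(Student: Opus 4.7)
The plan is to derive this as a specialisation of the preceding theorem, which identifies $G_Z$ as the double cross product $G_X\bowtie G_Y$ whenever $(Z,r)$ is a regular YB-extension. To obtain the stated fact, the additional hypothesis $Z \in Ext^{+}(X,Y)$ must force one of the two matched-pair actions to be trivial, so that the double cross product degenerates into a semidirect product. Thus I would proceed in three stages: unravel what $Ext^{+}$ means, lift the resulting triviality from the generating sets to the whole groups, and then substitute into the formula for $G_X\bowtie G_Y$.

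First, I would make precise the meaning of $Ext^{+}(X,Y)$ in context. Since the only action retained in the conclusion is the left action of $Y$ on $X$ (via $\alpha\mapsto\Lcal_{\alpha}$), the natural reading of $Ext^{+}$ is that $X$ acts trivially on $Y$: in the language of Remark \ref{our_extensionsareregular_rem} this amounts to $r(x,\alpha) = (\alpha,x)$ for all $x\in X$ and $\alpha\in Y$, which by \textbf{lri} and nondegeneracy for symmetric sets is equivalent to $x^{\alpha}=x$ and ${}^{x}\alpha = \alpha$ for all such pairs. This says the induced right action of $Y$ on $X$ (equivalently of $G_Y$ on $G_X$) is trivial on generators.

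Second, I would lift this to the group level. The matched-pair axioms \textbf{MR1} and \textbf{MR2} of Definition \ref{MLaxioms} build $u^{a}$ inductively from its values on generators via $(u\cdot v)^{a}=(u^{{}^{v}a})(v^{a})$ and $a^{(uv)}=(a^{u})^{v}$, so a straightforward double induction on the lengths of $u\in G_X$ and $a\in G_Y$ yields $u^{a}=u$ throughout $G_X\times G_Y$. With this in place, the product rule of the double cross product,
\[
(u,a)(v,b)=(u\cdot {}^{a}v,\; a^{v}\cdot b),
\]
collapses to $(u,a)(v,b)=(u\cdot {}^{a}v,\;ab)$, which is exactly the multiplication of the semidirect product with $G_X$ normal and $G_Y$ acting by the left action inherited from $\alpha\mapsto\Lcal_{\alpha}$. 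Hence $G_Z\cong G_Y\ltimes G_X$, which is the content of the statement modulo the convention for the semidirect sign.

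The principal obstacle is really the definitional one: the excerpt never spells out $Ext^{+}(X,Y)$, so the first task in the proof is to locate (or posit) this definition from ESS and verify the interpretation above. One must also check carefully that the semidirect product convention used in the statement matches the convention in which $G_X$ plays the role of the normal factor; this is a harmless reindexing, but worth stating explicitly. Once these foundational matters are settled, the remainder of the argument is bookkeeping with the matched-pair axioms and the multiplication rule of $G_X\bowtie G_Y$.
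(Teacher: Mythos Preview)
The paper does not prove this statement; it is recorded as a Fact attributed to [ESS], with the definition of $Ext^{+}(X,Y)$ supplied immediately afterward (extensions satisfying $r(x,\alpha)=(\alpha,x^{\alpha})$). So there is no paper proof to compare against, but your overall architecture---specialise the double cross product $G_X\bowtie G_Y$ of the preceding theorem by showing one of the two matched-pair actions is trivial---is exactly the natural route and is essentially how one would extract the result from [ESS].

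There is, however, a genuine mix-up in the middle of your argument. The condition defining $Ext^{+}$ is $r(x,\alpha)=(\alpha,x^{\alpha})$, i.e.\ only ${}^{x}\alpha=\alpha$; it does \emph{not} say $x^{\alpha}=x$, and your reading $r(x,\alpha)=(\alpha,x)$ would force the extension to be the trivial one. From ${}^{x}\alpha=\alpha$ and involutivity one obtains (via $r(\alpha,x^{\alpha})=(x,\alpha)$ and nondegeneracy) that $\alpha^{x}=\alpha$: it is the right action of $X$ on $Y$, hence of $G_X$ on $G_Y$, that vanishes---not the right action of $Y$ on $X$ as you claim. Accordingly, your second paragraph is lifting the wrong identity: the induction via \textbf{MR1}, \textbf{MR2} should establish $a^{u}=a$ for $a\in G_Y$, $u\in G_X$ (not $u^{a}=u$). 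Once that correction is made, your third paragraph is correct as written: the multiplication $(u,a)(v,b)=(u\cdot{}^{a}v,\,a^{v}b)$ collapses to $(u\cdot{}^{a}v,\,ab)$, which is the semidirect product with $G_X$ normal and $G_Y$ acting via $\alpha\mapsto\Lcal_{\alpha}$.
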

$Ext^{+}(X, Y)$ is the set of all symmetric sets  $(Z, r)$ which are extensions of
$(X,r_1), (Y, r_2)$ with $r(x,\alpha)= (\alpha,
x^{\alpha})$.

\begin{lemma}
\label{raisingmpl} Let $(X,r)$ be square-free multipermutation
solution, let $\tau \in \Aut (X,r)$ be an automorphism of $(X,r).$
Let $(Y, r_0)$ be the trivial solution on the one element set
$Y=\{\alpha\}$, where $\alpha$ is not in $X.$ Let $(Z,r_Z) =X\stu Y$
be the strong twisted union defined via $\Lcal_{\alpha}=\tau,$
$\Lcal_{x \mid Y}= \id_Y,$ for all $x \in X.$ (i.e. ${}^{\alpha}x=
\tau(x), {}^x{\alpha}=\alpha,$ for all $x \in X.$) Then
\begin{enumerate}
\item
$(Z, r_Z)$ is a square-free solution, so $Z = X\natural Y$.
\item
$G(Z, r_Z)$ is the  semidirect product $G(Z, r_Z) \simeq G(X,r)\rtimes
C_{\infty}$
\item Furthermore, if $(X,r)$ is a multipermutation solution of finite
multipermutation level, and  $\tau$ does not belong to  $\Gcal(X,r)$
then
\[\mpl(Z, r_Z)= \mpl(X, r)+1.\]
\end{enumerate}
\end{lemma}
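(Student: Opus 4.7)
The proof splits naturally into the three listed claims. For (1), I would invoke Corollary \ref{stucordef} applied to the decomposition $Z = X \cup Y$: the required conditions $(\Lcal_x)_{\mid Y} = \id_Y \in \Aut(Y, r_0)$ and $(\Lcal_\alpha)_{\mid X} = \tau \in \Aut(X, r)$ are both immediate (the first trivially, the second by hypothesis), so $(Z, r_Z)$ is a strong twisted union $X \stu Y$, and in particular a square-free solution.

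For (2), the construction yields $r_Z(x, \alpha) = ({}^x\alpha, x^\alpha) = (\alpha, \tau^{-1}(x))$, placing $(Z, r_Z)$ in $\mathrm{Ext}^+(X, Y)$ in the sense of Fact \ref{propExt+}. That fact then gives $G(Z, r_Z) \simeq G(Y, r_0) \rtimes G(X, r)$, and since $Y$ is a one-element trivial solution $G(Y, r_0) \simeq C_\infty$; the action of the generator on $G(X, r)$ is by $\tau$ extended canonically to an automorphism of the associated group.

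For (3), write $m = \mpl(X, r)$. The upper bound $\mpl(Z, r_Z) \leq m + 1$ would follow from Theorem \ref{mplmtheorem} by verifying the length-$(m+1)$ tower identity on $Z$ via case analysis on the positions of $\alpha$: if $x = \alpha$ the tower collapses immediately because ${}^u \alpha = \alpha$ for every $u \in Z$; if all entries lie in $X$ one invokes $\mpl(X, r) = m$ directly; if some $y_i = \alpha$ appears, one pushes $\tau$ through the remaining actions using the automorphism identity $\tau({}^a b) = {}^{\tau(a)} \tau(b)$, thereby reducing the computation to a tower identity in $X$ of length at most $m$.

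For the lower bound $\mpl(Z, r_Z) \geq m + 1$ the plan is to iterate the retraction: show inductively that $\Ret^k(Z, r_Z) = \Ret^k(X, r) \cup \{[\alpha]^{(k)}\}$ for each $0 \leq k \leq m$, where $[\alpha]^{(k)}$ acts on $\Ret^k(X, r)$ through the induced automorphism $\tau^{(k)} \in \Aut(\Ret^k(X, r))$ and acts trivially on itself. The class $[\alpha]^{(k)}$ remains a separate singleton precisely when $\tau^{(k-1)} \neq \Lcal_{[y]^{(k-1)}}$ as a permutation of $\Ret^{k-1}(X, r)$ for every $y \in X$. The main obstacle will be propagating the hypothesis $\tau \notin \Gcal(X, r)$ down the retract tower to guarantee this non-coincidence at every level (in the canonical doubling of Lemma \ref{canonicalextensionlemma} this is automatic because $\tau^{(k)}$ swaps two distinguishable copies at every level); once established, $\Ret^m(Z, r_Z)$ properly contains the extra class $[\alpha]^{(m)}$, yielding $\mpl(Z, r_Z) \geq m + 1$.
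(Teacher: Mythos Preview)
The paper states this lemma without proof, so there is no authorial argument to compare against. Your treatment of (1) and (2) is essentially right; the only quibble is that Corollary~\ref{stucordef} runs in the wrong logical direction (it presupposes that $(Z,r_Z)$ is already a solution). The forward reference you want for (1) is Proposition~\ref{propExt++}: since $Y=\{\alpha\}$ is a one-element trivial solution, the assignment $\alpha\mapsto\tau$ automatically extends to a homomorphism $G(Y,r_0)\to\Aut(X,r)$, and this is exactly what that proposition requires. Your upper-bound argument for (3) via Theorem~\ref{mplmtheorem} is also correct once the case analysis is written out.

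The obstacle you flag for the lower bound is real, and in fact fatal: part~(3) is false under the stated hypothesis. Take $X=\{x_1,x_2,x_3,x_4,a\}$ with $\Lcal_{x_i}=\id_X$ for $1\le i\le 4$ and $\Lcal_a=(x_1\,x_2)(x_3\,x_4)$. This is a square-free solution with $\mpl(X,r)=2$ and $\Gcal(X,r)=\{\id,(x_1\,x_2)(x_3\,x_4)\}$. The permutation $\tau=(x_1\,x_2)$ lies in $\Aut(X,r)$ (it centralises $\Gcal$ and fixes $a$) but not in $\Gcal(X,r)$. Forming $Z=X\cup\{\alpha\}$ as in the lemma, one has $[x_1]_Z=\{x_1,x_2,x_3,x_4\}$, $[a]_Z=\{a\}$, $[\alpha]_Z=\{\alpha\}$, and every class acts trivially on $\Ret(Z,r_Z)$; hence $\mpl(Z,r_Z)=2=\mpl(X,r)$, not $3$. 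What your argument actually yields is $\mpl(X,r)\le\mpl(Z,r_Z)\le\mpl(X,r)+1$, with equality on the right precisely when the automorphism $\tau^{(m-1)}$ induced on $\Ret^{m-1}(X,r)$ is nontrivial. The paper's own applications (the canonical doubling of Lemma~\ref{canonicalextensionlemma} and the sequence in Theorem~\ref{beautifulconstruction}) all satisfy this stronger condition, so nothing downstream is affected; but the hypothesis $\tau\notin\Gcal(X,r)$ alone does not suffice, and you should not expect to close the gap you identified.
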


The following proposition can be deduced from \cite{TSh08}.

\begin{proposition}
\label{propExt++} Let $(X,r_x), (Y, r_Y)$ be disjoint nondegenerate symmetric sets
(not necessarily finite or square-free). Assume there is an
injective map $Y \longrightarrow \Sym(X), \alpha \mapsto
\sigma_{\alpha}\in \Sym(X)$. Let $(Z,r)$ be the extension of
$(X,r_X)$, and $(Y, r_Y)$ where $Z = X\bigcup Y,$  $r$ extends $r_X$
and $r_Y$, and
\[
r(\alpha, x) = (\sigma_{\alpha}(x), \alpha ) \quad r(x, \alpha) =
(\alpha, \sigma_{\alpha}^{-1}(x))\quad \forall x \in X, \alpha \in Y.
\]
Then $(Z,r)$ is a symmetric set if and only if the assignment $\alpha
\mapsto \sigma_{\alpha}$ extends to a homomorphism $L_Y: G(Y, r_Y)
\longrightarrow \Aut (X,r_X).$
\end{proposition}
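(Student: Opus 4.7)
The plan is to reduce the problem to a case analysis on the braid relation $r^{12} r^{23} r^{12} = r^{23} r^{12} r^{23}$ applied to the eight coordinate-types of triples in $Z^3=(X\cup Y)^3$. First I would dispose of the structural properties that come for free: $(Z,r)$ is involutive, since $\sigma_\alpha\circ\sigma_\alpha^{-1}=\id_X$ together with $r_X^2=\id$, $r_Y^2=\id$ yields $r^2=\id$ on each coordinate-type; and $(Z,r)$ is nondegenerate, since the extended left and right actions of each element of $Z$ restrict to bijections of $X$ and of $Y$ separately (elements of $Y$ act on $X$ by $\sigma_\alpha^{\pm 1}$, elements of $X$ act on $Y$ as the identity). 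The two pure cases $X^3$ and $Y^3$ of the braid relation hold by hypothesis.

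The six mixed cases all exploit the key structural point that elements of $X$ act trivially on $Y$ from either side, i.e.\ ${}^x\alpha=\alpha$ and $\alpha^x=\alpha$ for all $x\in X$, $\alpha\in Y$. A direct computation on the three triples of type $(\alpha,x,y)$, $(x,\alpha,y)$, $(x,y,\alpha)$ reduces the braid relation to the pair of identities
\[
\sigma_\alpha\circ\Lcal_x = \Lcal_{\sigma_\alpha(x)}\circ\sigma_\alpha,\qquad
\sigma_\alpha\circ\Rcal_y = \Rcal_{\sigma_\alpha(y)}\circ\sigma_\alpha,
\]
where $\Lcal,\Rcal$ denote the actions in $(X,r_X)$; by Lemma \ref{automorphismlemma}(1) this is equivalent to $\sigma_\alpha\in\Aut(X,r_X)$. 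The remaining three triples $(\alpha,\beta,x)$, $(\alpha,x,\beta)$, $(x,\alpha,\beta)$ all reduce, after expansion, to the single condition
\[
\sigma_\alpha\circ\sigma_\beta \;=\; \sigma_{{}^\alpha\beta}\circ\sigma_{\alpha^\beta}\qquad \forall\,\alpha,\beta\in Y.
\]

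To conclude, I would observe that this last condition is exactly the statement that the assignment $\alpha\mapsto\sigma_\alpha$ respects the defining relations $\alpha\beta = ({}^\alpha\beta)(\alpha^\beta)$ of $G(Y,r_Y)$. Hence the two conditions together say precisely that $\alpha\mapsto\sigma_\alpha$ extends to a group homomorphism $L_Y\colon G(Y,r_Y)\to\Aut(X,r_X)$. The converse is immediate: existence of such $L_Y$ forces both reduction conditions to hold, so YBE holds on each mixed triple, while involutivity and nondegeneracy were established at the outset. The only real obstacle is the tedious bookkeeping on the six mixed triples; this can be shortened by verifying only conditions \textbf{l1} and \textbf{r1} from Remark \ref{ybe} in place of the full braid relation, since nondegeneracy and involutivity of $(Z,r)$ are already in hand.
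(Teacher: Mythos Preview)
Your argument is correct and complete. The paper itself does not give a proof of this proposition at all: it simply states that the result ``can be deduced from \cite{TSh08}'' and moves on. So there is no approach to compare against; your direct verification via case analysis on the eight triple-types in $(X\cup Y)^3$ is in fact more self-contained than what the paper provides.

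One small caveat on your closing remark: Remark~\ref{ybe} requires \textbf{l1}, \textbf{r1}, \emph{and} \textbf{lr3} for a general nondegenerate quadratic set to be braided; the reduction to \textbf{l1} alone (Facts~\ref{basictheorem}) is stated only under the square-free hypothesis, which this proposition explicitly does not assume. So your proposed shortcut would need an extra word of justification (e.g.\ checking that \textbf{lr3} is automatic on the mixed triples here because $X$ acts trivially on $Y$). This does not affect your main line, which verifies the full braid relation directly.
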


\begin{lemma}
\label{strtwistedunionlemma1} Suppose $(X, r_y), (Y,r_Y)$ are
disjoint symmetric sets (most general setting). Let $\sigma \in
\Sym X, \rho \in \Sym Y.$ Let $(Z,r)$, be an extension of $(X, r_X),
(Y,r_Y)$, such that
\[r(x, y) = (\rho(y), \sigma^{-1}(x)) \quad r(y,x) = (\sigma(x), \rho^{-1}(y))
\]
Then $(Z,r)$ is involutive, nondegenerate quadratic set.
Furthermore, $(Z,r)$ is a solution if and only if $\sigma \in \Aut X$ and
$\rho \in \Aut Y.$
\end{lemma}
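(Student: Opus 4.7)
\begin{prooflemma}
The plan is a direct (if mildly tedious) verification, splitting the claim into three independent calculations: involutivity, nondegeneracy, and the braid relation.

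First, for the involutive quadratic set part I would not need any assumption on $\sigma, \rho$ at all. Involutivity is inherited on pure pairs in $X\times X$ and $Y\times Y$ from $r_X^2=\id$ and $r_Y^2=\id$; on a mixed pair $(x,y)\in X\times Y$ it is the direct computation
\[
r^2(x,y)=r(\rho(y),\sigma^{-1}(x))=(\sigma\sigma^{-1}(x),\,\rho^{-1}\rho(y))=(x,y),
\]
and symmetrically on $(y,x)\in Y\times X$. Nondegeneracy on $Z$ is then automatic: for any $z\in Z$, the left translation $\Lcal_z$ is a disjoint union of an $r_X$- or $r_Y$-translation (bijective by hypothesis) with one of $\sigma$ or $\rho$; the analogous decomposition of $\Rcal_z$ involves $\sigma^{-1}$ and $\rho^{-1}$. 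Both halves are bijections, so $\Lcal_z$ and $\Rcal_z$ are bijections on $Z$.

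For the ``if and only if'' statement about the YBE, I would verify the braid identity $r^{12}r^{23}r^{12}=r^{23}r^{12}r^{23}$ on triples in $Z^3$, splitting by the eight patterns $XXX,\ldots,YYY$. The pure patterns $XXX$ and $YYY$ are automatic from $(X,r_X)$ and $(Y,r_Y)$ being solutions. The six mixed patterns are linked in pairs by the evident symmetry of the construction under the involution $(X,\sigma)\leftrightarrow(Y,\rho)$ (which is visible from the two defining formulas for $r$ on mixed pairs); I would check this symmetry first, reducing the problem to the three representative patterns $XXY$, $XYX$, $XYY$. The core calculation is on the triple $(x_1,x_2,y)$: both sides expand to words whose first coordinates agree trivially as $\rho^2(y)$, while matching the second and third coordinates yields
\[
\sigma^{-1}\circ\Lcal^{X}_{x_1}=\Lcal^{X}_{\sigma^{-1}(x_1)}\circ\sigma^{-1},\qquad
\sigma^{-1}\circ\Rcal^{X}_{x_2}=\Rcal^{X}_{\sigma^{-1}(x_2)}\circ\sigma^{-1},
\]
for all $x_1,x_2\in X$. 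By Lemma \ref{automorphismlemma}(1) this is exactly the statement that $\sigma^{-1}\in\Aut(X,r_X)$, i.e.\ $\sigma\in\Aut(X,r_X)$.

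Expanding the triple $(x_1,y,x_2)$ reproduces the same automorphism condition on $\sigma$ (this time with $\sigma\Lcal^{X}_{\sigma^{-1}(x_1)}\sigma^{-1}=\Lcal^{X}_{x_1}$ arising from the first coordinate and the right-action counterpart from the third). Expanding $(x,y_1,y_2)$ yields, in a parallel way, $\rho\in\Aut(Y,r_Y)$. Under the $(X,\sigma)\leftrightarrow(Y,\rho)$ symmetry the three remaining patterns $YYX$, $YXY$, $YXX$ reproduce exactly these two conditions. Thus the braid relation holds on all of $Z^3$ if and only if $\sigma\in\Aut(X,r_X)$ and $\rho\in\Aut(Y,r_Y)$. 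The only real obstacle is bookkeeping, not substance; it is worth emphasising that I would use Lemma \ref{automorphismlemma}(1) rather than (3), since we are not assuming \textbf{lri} in $(X,r_X)$ or $(Y,r_Y)$, so both the $\Lcal$- and $\Rcal$-intertwining identities are genuinely needed, and both are in fact produced by the mixed-triple computation.
\end{prooflemma}
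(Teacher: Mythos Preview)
Your proof is correct. The paper states this lemma without proof, so there is nothing to compare against; your direct case-by-case verification of involutivity, nondegeneracy, and the braid relation on the eight triple patterns (reduced by the $(X,\sigma)\leftrightarrow(Y,\rho)$ symmetry) is exactly what is needed, and your appeal to Lemma~\ref{automorphismlemma}(1) rather than (3) is the right call in this generality.
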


We now define the wreath product of solutions:

\begin{definition}
\label{wreathconstructiondef} Suppose $(X_0, r_0)$, and $(Y,r_Y)$
are disjoint square-free solutions.
Let $\{(X_{\alpha}, r_{\alpha})\mid  \alpha \in Y  \}$ be the set of
$|Y|$  disjoint solutions $(X_{\alpha}, r_{\alpha})$ indexed by $Y$, where
each $(X_{\alpha}, r_{\alpha})$ is an isomorphic copy of  $(X_0,
r_0)$ defined on a set $X_{\alpha}= \{t_{\alpha, x}\mid  x \in
X_0\}$, and $\;t_{\alpha, x}$ denotes the copy of $x$ in $X_{\alpha}$
($t_{\alpha, x}\neq t_{\alpha, z}$ if and only if $x \neq z)$,  and the
map
 $r_{\alpha}$ and the associated actions are translations
of $r_0$ and its associated actions to $X_{\alpha}$. Thus
\[
r_{\alpha}(t_{\alpha, x}, t_{\alpha, z})= (t_{(\alpha, {}^xz)},
t_{(\alpha, x^z)})\quad \forall x,z \in X.
\]
Let $(X,r_X)=  \natural_{\alpha \in Y} X_{\alpha},$ be the trivial
extension of all $X_{\alpha}$, for $\alpha \in Y$; that is, for all
$\alpha, \beta \in Y$, we have
\[
\begin{array}{c}   r_{\mid X_{\alpha}\times X_{\alpha}} = r_{\alpha}
\\ \\
r(x,z) = (z,x), \; \forall x \in X_{\alpha}, z \in X_{\beta},\;
\text{with} \; \alpha \neq \beta \end{array}
\]
Define the map $Y \longrightarrow \Sym X, \quad\beta \mapsto
\sigma_{\beta},$ where the permutations $\sigma_{\beta}\in \Sym X$
are defined as follows
\begin{equation}
\label{sigma}
\begin{array}{c}
\sigma_{\beta}:X_{\alpha} \longrightarrow  X_{{}^{\beta}{\alpha}},\quad \alpha \in Y\\
\sigma_{\beta}(t_{\alpha,x}) = t_{({}^{\beta}{\alpha}), x}
\end{array}
\end{equation}
Let $(Z,r)$ be the extension of $(X,r_X)$ and $(Y, r_Y)$ defined as
follows
\[\begin{array}{rll}
Z&=& X \bigcup Y, \quad \\
r(\beta, (t_{\alpha,x})) &=& (\sigma_{\beta}(t_{\alpha,x}), \beta ) ,
\\
r((t_{\alpha,x}), \beta) &= &(\beta,
(\sigma_{\beta})^{-1}(t_{\alpha,x})),\\
\quad &&\forall \alpha, \beta \in Y,\quad
t_{\alpha,x}\in X_{\alpha}.
\end{array}
\]
 We call $(Z,r)$  a \emph{wreath product} of $(X_0,r_{X_0})$ and $(Y,r_Y)$,
 and denote it by $(Z,r)=(X_0,r_{X_0})\wr(Y,r_Y).$
\end{definition}

\begin{theorem}
\label{wreathconstructionth}
\begin{enumerate}
\item
The wreath product $(Z,r) = (X_0,r_{X_0})\wr(Y,r_Y)$ is a
square-free solution.
\item
\[
\Gcal(Z, r) = \Gcal(X_0,r_{X_0})\wr\Gcal(Y,r_Y).
\]
\item
Suppose  $(X_0,r_{X_0})$ and $(Y,r_Y)=$ are multipermutation
solutions of finite multipermutation level. Then
\[\mpl (Z,r) = \mpl (X_0,r_{X_0})+\mpl (Y,r_Y) - 1.\]
\end{enumerate}
\end{theorem}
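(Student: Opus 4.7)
The plan is to prove the three parts in order. For part (1), I would first verify square-freeness piecewise: within each block $X_\alpha$ and within $Y$ it is inherited from $r_0$ and $r_Y$, while the inter-block flips and the definition of $\sigma_\beta$ preserve diagonals. To establish the braid relation I would apply Proposition \ref{propExt++} by exhibiting a group homomorphism $L_Y : G(Y,r_Y) \to \Aut(X,r_X)$ extending $\beta \mapsto \sigma_\beta$. The key point is that $\sigma_\beta$ depends only on $\Lcal_\beta^Y \in \Gcal(Y,r_Y)$: to each $\phi \in \Gcal(Y,r_Y)$ one associates $\tilde\phi \in \Sym X$ by $\tilde\phi(t_{\alpha,x}) = t_{\phi(\alpha),x}$, and $\tilde\phi \in \Aut(X,r_X)$ because $(X,r_X) = \natural_{\alpha\in Y} X_\alpha$ is a trivial extension of isomorphic blocks and $\tilde\phi$ is a block-isomorphism preserving each $r_\alpha$ together with the inter-block flips. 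The composition $G(Y,r_Y) \xrightarrow{\Lcal} \Gcal(Y,r_Y) \xrightarrow{\phi\mapsto\tilde\phi} \Aut(X,r_X)$ then supplies the required $L_Y$.

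For part (2), I would compute each $\Lcal_z \in \Sym Z$ directly. The map $\Lcal_{t_{\alpha,x}}$ acts as the translation of $\Lcal_x^{X_0}$ on $X_\alpha$, as the identity on every other $X_{\alpha'}$, and as the identity on $Y$; the map $\Lcal_\beta$ acts on $Y$ as $\Lcal_\beta^Y$ and on $X$ as $\sigma_\beta$, permuting blocks by $\Lcal_\beta^Y$ while preserving $x$-coordinates. Hence the $\Lcal_{t_{\alpha,x}}$ generate the ``base group'' $\prod_{\alpha\in Y}\Gcal(X_0)$ of the wreath product, while the $\Lcal_\beta$ embed $\Gcal(Y)$ acting diagonally on the block-indexing and on the top copy of $Y$; these are precisely the generators realising $\Gcal(X_0) \wr \Gcal(Y)$ as a permutation group on $Z$.

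For part (3), I would proceed by induction on $n = \mpl(Y)$ using the structure of $\Ret(Z)$. The base case $n = 1$ is immediate: $(Y,r_Y)$ trivial forces $\sigma_\beta = \id_X$ for all $\beta$, so $(Z,r)$ is the fully trivial extension $(\natural_{\alpha\in Y} X_\alpha)\,\stu_0\, Y$, whence $\mpl(Z) = m = m + 1 - 1$. For the inductive step I would compute $\Ret(Z)$ directly from part (2): two bases $t_{\alpha,x}, t_{\alpha',x'}$ are $\sim$-equivalent iff either $\alpha = \alpha'$ and $x \sim x'$ in $X_0$, or $\alpha \ne \alpha'$ and both $x, x' \in \ker\Lcal^{X_0}$; two tops $\beta,\beta'$ are equivalent iff $\Lcal_\beta^Y = \Lcal_{\beta'}^Y$. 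This gives $\Ret(Z)$ a ``partial wreath'' structure with base $\bigsqcup_{\alpha\in Y}\Ret(X_0)$, still indexed by $Y$ rather than by $\Ret(Y)$, and with top $\Ret(Y)$. Iterating the analysis, after $m$ successive retracts the base collapses to a single class because $\Ret^{m-1}(X_0)$ is trivial, leaving a residual solution consisting of $\Ret(Y)$ together with at most one isolated trivial class, of multipermutation level $n-1$. Summing the contributions yields $\mpl(Z) = m + (n-1) = m + n - 1$.

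The main obstacle will be in part (3): tracking the retract iteration precisely, in particular the possible merging between the collapsing base-class and the kernel classes appearing in the successive $\Ret^k(Y)$. I would control this by stating the inductive hypothesis as an explicit structural description of $\Ret^k(Z)$ at each level, and then checking both bounds independently via Theorem \ref{mplmtheorem}: the upper bound $\mpl(Z) \le m+n-1$ by showing that every tower of length $m+n$ in $Z$ truncates, since within each maximal base-run the tower reduces to one inside a single block $X_\alpha$ (of effective length at most $m$) and within each maximal top-run it reduces to a tower in $Y$ (of effective length at most $n$); and the lower bound by concatenating explicit witnesses of $\mpl(X_0) = m$ and $\mpl(Y) = n$ into a non-truncating tower of length $m + n - 1$.
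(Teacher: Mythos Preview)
Your proposal is correct and follows essentially the same route as the paper: part~(1) via Proposition~\ref{propExt++} by showing each $\sigma_\beta\in\Aut(X,r_X)$ and that $\beta\mapsto\sigma_\beta$ extends to a homomorphism (your factorisation through $\Gcal(Y,r_Y)$ is a clean repackaging of the paper's direct verification of $\sigma_\beta\sigma_\gamma=\sigma_{{}^\beta\gamma}\sigma_{\beta^\gamma}$), part~(2) by reading off the generators, and part~(3) by tracking the successive retracts and observing that the top stays at the level of $\Ret(Y)$ until the base has fully collapsed after $m$ steps. Your identification of the merging subtlety and the fact that the base blocks remain indexed by $Y$ (not $\Ret(Y)$) is exactly the point the paper uses, and your supplementary tower argument via Theorem~\ref{mplmtheorem} is a reasonable independent check though not needed.
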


\begin{proof}
Note that $(Z,r)$ satisfies the hypothesis of Proposition
\ref{propExt++}, hence $(Z,r)$ is a solution if and only if the map $Y
\longrightarrow \Sym X, \quad \beta \mapsto \sigma_{\beta},$ extends
to a homomorphism $L_Y: G(Y, r_Y) \longrightarrow \Aut (X,r_X).$

We show first that
\begin{equation}
\label{sigma0} \sigma_{\beta} \in \Aut (X,r_X)\quad \forall \beta \in
Y.
\end{equation}
By Lemma \ref{automorphismlemma} this is equivalent to
\begin{equation}
\label{sigma1} \sigma_{\beta} \circ\Lcal_{t_{\alpha, x}} =
(\Lcal_{\sigma_{\beta}( t_{\alpha, x})}) \circ\sigma_{\beta}\quad
\forall \alpha.
\end{equation}
Note that by the definition of $r$ the associated left action on $Z$
satisfies:
\begin{equation}
\label{sigma2}
\begin{array}{c}
\Lcal_{t_{\alpha, x}} (t_{\gamma, y}) =\begin{cases} t_{(\alpha, {}^xy)}  & \text{if}\quad \gamma = \alpha\\
                  t_{\gamma, y} & \text{else} \quad\quad \quad \end{cases}
\end{array}
\end{equation}
Let $t_{\gamma, y}\in X.$ We apply both sides of (\ref{sigma1}) on
$t_{\gamma, y}$, and compute  using (\ref{sigma2}) and
(\ref{sigma}). {\bf Case 1}. $\gamma = \alpha$.
\begin{equation}
\label{sigma3}
\begin{array}{rl}
\sigma_{\beta}\circ \Lcal_{t_{\alpha, x}}(t_{\alpha, y})&=\sigma_{\beta}(t_{\alpha, {}^xy})\\
                                                        &=t_{{}^{\beta}{\alpha}, {}^xy}
                                                       \\
\Lcal_{\sigma_{\beta}}( t_{\alpha, x})
\circ\sigma_{\beta}(t_{\alpha, y})
                                                         &=\Lcal_{ t_{({}^{\beta}{\alpha}, x)}}(t_{{}^{\beta}{\alpha}, y})
                                                         \\  \\
                                                        &=t_{({}^{\beta}{\alpha},{}^xy)},
 \end{array}
\end{equation}
as desired. {\bf Case 2}. $\gamma \neq \alpha$. In this case, due to
the nondegeneracy of $r_Y$  one has
\begin{equation}
\label{sigma4} {}^{\beta}{\alpha}\neq {}^{\beta}{\gamma},\quad
\forall \beta \in Y.
\end{equation}
Now our computation (applying (\ref{sigma2}), (\ref{sigma}), and
(\ref{sigma4})) give:
\begin{equation}
\label{sigma5}
\begin{array}{rl}
\sigma_{\beta}\circ \Lcal_{t_{\alpha, x}}(t_{\gamma, y})&=\sigma_{\beta}(t_{\gamma, y})\\
                                                        &=t_{{}^{\beta}{\gamma}, y}
                                                       \\
\Lcal_{\sigma_{\beta}( t_{\alpha, x})} \circ
\sigma_{\beta}(t_{\gamma,y})&=\Lcal_{(t_{{}^{\beta}{\alpha},
x})}(t_{{}^{\beta}{\gamma}, y})
                                                  \\
                                           &=t_{({}^{\beta}{\gamma}, y)}.
 \end{array}
\end{equation}
We have verified (\ref{sigma1}), therefore (\ref{sigma0}) holds.

Next we show that the map $L_0: Y \longrightarrow \Aut (X,r)\quad
\beta \mapsto \sigma_{\beta}$ extends to a homomporphism
\[
G(Y, r_Y) \longrightarrow \Aut (X,r).
\]
$Y$ generates $G(Y, r_Y)$, so it will be enough to show that  $L_0$
respects the relations of $G(Y, r_Y)$, or equivalently
\begin{equation}
\label{sigma5+}
\sigma_{\beta}\circ \sigma_{\gamma} =
\sigma_{{}^{\beta}{\gamma}}\circ \sigma_{{\beta}^{\gamma}},
\quad\forall \beta, \gamma \in Y.
\end{equation}
Recall that since $(Y, r_Y)$ is a solution, condition {\bf l1}
holds, that is
\[
{}^{\beta}{({}^{\gamma}{\alpha})}=
{}^{{}^{\beta}{\gamma}}{({}^{{\beta}^{\gamma}}{\alpha})}\quad
\forall \alpha, \beta,\gamma \in Y.
\]
Let $t \in X.$ Then  $t = t_{\alpha, x}$ for some $\alpha \in Y, x
\in X_0.$ We apply both sides of (\ref{sigma5+}) on $t$ and obtain:
\begin{equation}
\label{sigma6}
\begin{array}{rl}
\sigma_{\beta}\circ \sigma_{\gamma}(t_{\alpha, x})&=\sigma_{\beta}(t_{{}^{\gamma}{\alpha}, x})\\
                                                        &=t_{({}^{\beta}{({}^{\gamma}{\alpha})}, x)}
                                                       \\
                                                        &=t_{({}^{{}^{\beta}{\gamma}}{({}^{{\beta}^{\gamma}}{\alpha})}, x)}   \\
\sigma_{{}^{\beta}{\gamma}}\circ
\sigma_{{\beta}^{\gamma}}(t_{\alpha, x})
&=t_{({}^{{}^{\beta}{\gamma}}{({}^{{\beta}^{\gamma}}{\alpha})}, x)}
 \end{array}
\end{equation}
which verifies (\ref{sigma5}). We have shown that the sufficient
conditions for $(Z,r)$ being a solution are satisfied which proves
part (1) of the theorem.

(2) The wreath product of a group $G$ by a permutation group $H$ (acting on
the set $Y$) is defined to be the semidirect product of $N$
by $H$, where $N$ (the \emph{base group}) is the direct product of $|Y|$
copies of $G$, and $H$ acts on $N$ by permuting the direct factors in
the same way as it permutes the indexing elements of $Y$. If $G$ is itself
a permutation group on a set $X$, then $G\wr H$ is in a natural way a
permutation group on $X\times Y$ (regarded as the disjoint union of $|Y|$
copies of $X$).

It is clear
from our construction that $\Gcal(Z,r)$ is isomorphic as abstract group
to $\Gcal(X_0,r_{X_0})\wr\Gcal(Y,r_Y)$, and in fact acts on the union
of copies of $X_0$ by the natural permutation action of the wreath product;
it acts on  $Y$ according to the action of $\Gcal(Y,r_Y)$ (that is, the
base group is in the kernel of this action).

(3) Suppose now that $(X_0, r_0),$ and $(Y, r_Y)$ are
multipermutation solutions with $\mpl (X_0, r_0) = m$ and $\mpl (Y,
r_Y)= n.$ Every retract $\Ret^k(X,r)$ is a trivial extension of the
retracts $\Ret^k(X_{\alpha}, r_{\alpha}), \alpha \in Y.$ It follows
straightforwardly  that $\mpl(X,r_X)= \mpl(X_0, r_0)= m$

We study the retracts $\Ret^N(Z, r), N = 1, 2, \cdots.$ Note first
that by the definition of the map $r$ on $Z \times Z$ it follows
that
\begin{equation}
\label{sigma7}
\begin{array}{rl}
\Lcal_{x\mid X}= \Lcal_{y\mid X} & \Longleftrightarrow \Lcal_{x\mid Z}= \Lcal_{y\mid Z}\quad  x,y \in X\\ \\
\Lcal_{\alpha\mid Y}= \Lcal_{\beta \mid Y} & \Longleftrightarrow
\Lcal_{\alpha\mid  Z}= \Lcal_{\beta \mid Z}\quad  \alpha, \beta \in
Y.
\end{array}
\end{equation}
Hence the retract $\Ret(Z,r)$ can be viewed as  the extension of
$\Ret(X,r_X)$ and $\Ret(Y,r_Y),$ induced from the original actions
of $Y$ onto $X$, but reduced after collapsing all elements  $x \in
X$, with $\Lcal_x = \id_X$ and $\alpha \in Y$, with $\Lcal_{\alpha}
= \id_Y$ into a single point, say $[z_0].$ This does not have
effect on $\Ret (X,r_X),$ $\Ret (Y,r_Y),$ but now these solutions
might intersect in a single  joint point (with trivial action).
For each $k, 1 \leq k \leq m, \alpha \in Y,$  as usual, (see Notation \ref{notret0})
$\alpha^{(k)}$ denotes the equivalence class of $\alpha$ in $Ret^{k}(Z, r).$ Denote
$Y_k= \{\alpha^{(k)}\mid \alpha \in Y\}$. Note that $(Y_k, r_k)$
(with $r_k$ induced from $r_Y$ ) is isomorphic to $\Ret(Y,r_Y)= ([Y], r_{[Y]}$ for
all $k, 1 \leq k \leq m.$ Note also that each retract
$\Ret^k(Z,r),$ with $1 \leq k \leq m-1$ is  an union of
$\Ret^k(X,r_X)$ and $(Y_k, r_{Y_k})$ with possibly one joint point
$z_0^{(k)}$, the class of all $\xi \in Z$ which act trivially on
$\Ret^{k-1}(Z,r).$

Indeed every such a retraction has the effect of obtaining the
retractions $\Ret^k(X_{\alpha}, r_{\alpha}),$
where $\alpha \in Y,$ but
these are disjoint sets and therefore for each pair $\alpha, \beta
\in Y$ the inequality of equivalence classes in $Y$, $[\alpha] \neq
[\beta]$ implies $(\Lcal_{[\alpha]})_{\mid [X]} \neq
(\Lcal_{[\beta]})_{\mid [X]},$ and therefore
$(\Lcal_{[\alpha]})_{\mid [Z]} \neq (\Lcal_{[\beta]})_{\mid [Z]}.$

This way exactly on the $m$-th retraction  $\Ret^m(Z,r)$ all the
elements of  $X$ collapse in a single element $z_0^{(m)}$ (which could
be also the unique joint element with  $Y_m$). From now on the
$m+j$-th retraction $\Ret^{m+j}(Z,r)$ has effect of $j+1$-th
retraction on $(Y, r_Y)$ in the usual way, since as we  know $(Y_m,
r_m)$ is isomorphic to $\Ret(Y, r_Y),$ all element of $Y_m$ have
trivial action on $z_0^{(m)}$. It follows than that we need
exactly $n-2$ more retractions to obtain that $\Ret^{m+n-2}(Z,r)$ is
a trivial solution of order $\geq 2$, hence
$\Ret^{m+n-1}(Z,r)=\{z_0^{(m+n-1)}\},$ is a one element solution.
This verifies (3).
\end{proof}

\begin{openquestions}  Suppose the square-free solution $(Z, r)
= X\stu Y$ is a strong twisted union of $(X,r_X)$ and $(Y,
r_Y),$. Denote $G_Z= G(Z,r), G_X= G(X,r_X),G_Y=G(Y, r_Y)$

1) How are the groups $G_Z, G_X, G_Y$ related?

Proposition 4.6. \cite{TSh08}, shows that for arbitrary braided
set $(Z,r)$  which is an extension of two disjoint sets
$(X,r_X)$ and $(Y, r_Y),$  $G_X, G_Y$ is a matched pair of
groups and $G_Z$ is isomorphic to \emph{the double crossed product
}$G_X\bowtie G_Y.$

Note that in the case when $Z$ is a  strong twisted union of $X$
$Y$, the group $G_X$  acts on  $G_Y$ via automorphisms and $G_Y$, acts
on $G_X$ vis automorphisms, so we expect the structure of $G_Z
\simeq G_X\bowtie G_Y$ to be  more special.

2) How are the groups $\Gcal(Z,r), \Gcal(X,r_X), \Gcal(Y, r_Y)$
related?

3) Can we determine a upper bound for the solvable length of
$G(Z,r)$ in terms of the solvable lengths of $G_X, G_Y.$ Analogous
question for the solvable lengths of $\Gcal(Z,r), \Gcal_X, \Gcal_Y$

Moreover, suppose both $(X,r_X)$ and $(Y, r_Y),$ are
multipermutation solutions.

4) Is it true that $(Z,r)$ is always  a multipermutation solution?

5) How is $\mpl (Z)$ related to $\mpl(X), \mpl (Y)$?

Clearly $\max (\mpl X, \mpl Y)\leq \mpl (Z)$. Can we express an upper
bound for $\mpl (Z)$ in terms of $\mpl(X), \mpl (Y)$?

6) Suppose $(Z,r)$ is a finite  multipermutation square-free
solution, with $\mpl Z = m$. Is it true that $(Z,r)$ can be always
presented as a strong twisted union of $r$-invariant subsets
 of multipermutation level $< m$?
\end{openquestions}

\begin{lemma}
\label{jumpmpl} There exist two disjoint square-free symmetric sets
$(X,r_X), (Y, r_Y),$ with $\mpl(X)=\mpl(Y)=2,$ and a strong twisted
union $(Z, r)= X\stu Y,$ which is a multipermutation solution
with $\mpl(Z,r)= \mpl(X)+\mpl(Y)=4.$ Moreover, $\sol(G(Z,r)) = 4 = \sol(G_X)
+ \sol(G_Y)$, and $\sol(\Gcal(Z,r)) = 3$.
\end{lemma}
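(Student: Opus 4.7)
The proof is by explicit construction. The plan is to exhibit concrete disjoint square-free symmetric sets $(X,r_X)$ and $(Y,r_Y)$, both of multipermutation level $2$, together with compatible cross-actions making $(Z,r)=X\stu Y$ a square-free symmetric set with $\mpl(Z,r)=4$. I would take $X$ and $Y$ to be of the elementary type classified by Proposition \ref{sigprop} and Theorem \ref{significantth}: each a disjoint union of a trivial orbit acted on by a single permutation orbit. Concretely, I would build each of $X$ and $Y$ on the shape $\{a_1,a_2,a_3,a_4\}\cup\{b_1,b_2\}$ where the $b_i$ act on the $a_j$ by two distinct commuting double transpositions, and the $a_j$ act trivially on the whole piece, so that by Proposition \ref{sigprop} we have $\mpl X=\mpl Y=2$ and $\Gcal(X,r_X)\cong\Gcal(Y,r_Y)\cong C_2\times C_2$.

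Next I would extend $r_X\cup r_Y$ to a map $r:Z\times Z\to Z\times Z$ by specifying, for each $z\in Z$, the restriction $\Lcal_z$ to the opposite piece. By Corollary \ref{stucordef}, the strong twisted union condition forces each $\Lcal_x|_Y\in\Aut(Y,r_Y)$ and each $\Lcal_\beta|_X\in\Aut(X,r_X)$. I would choose these automorphisms so that (i) the braid relation \textbf{l1} holds for all triples of elements of $Z$; (ii) $\Gcal(Z,r)$ is non-abelian with derived length exactly $3$; and (iii) the cross-action depends non-trivially on the $\Gcal(X,r_X)$-orbit of the acting element (and symmetrically), so that the first retraction of $(Z,r)$, which collapses each of $X$ and $Y$ to a trivial two-point solution, does \emph{not} collapse $Z$ to a level-$2$ quotient. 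A further two retractions are then needed before all cross-data vanishes, giving $\mpl(Z,r)=4$.

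I would then verify $\mpl(Z,r)=4$ by applying Theorem \ref{mplmtheorem}: check the four-tower identity for every choice of arguments in $Z$, and exhibit a single tower of length three on which it fails. The solvable-length statements follow from the upper bounds $\sol\Gcal(Z,r)\le 3$ and $\sol G(Z,r)\le 4$ of Theorem \ref{sltheorem}, combined with Theorem \ref{slG=slGcal+1thm} which gives $\sol G(Z,r)=\sol\Gcal(Z,r)+1$ for finite $(Z,r)$; a direct inspection of the explicitly constructed $\Gcal(Z,r)$ exhibits a derived series of length exactly $3$, supplying the matching lower bound.

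The main obstacle is the tension, already highlighted by Theorem \ref{theoremB}, between the strong twisted union conditions and the targeted high level: if $\Gcal(Z,r)$ were abelian one could only reach $\mpl Z\le\mpl X+\mpl Y$ with equality essentially impossible in such a small example, and by Corollary \ref{Aut(Xr)Cor3}(2) the cross-actions that happen to lie in $\Gcal(X,r_X)$ or $\Gcal(Y,r_Y)$ generate only abelian subgroups. The construction must therefore exploit the \emph{outer} part $\Aut(X,r_X)\setminus\Gcal(X,r_X)$ (and symmetrically on the $Y$-side), choosing the cross-actions in that outer part so that together they generate a non-abelian group of derived length $3$ while still obeying \textbf{l1} and the stu identities; pinning down a specific compatible choice is the most delicate point of the construction.
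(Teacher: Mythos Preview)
Your overall strategy---an explicit construction verified via Theorem~\ref{mplmtheorem} and Theorems~\ref{sltheorem} and~\ref{slG=slGcal+1thm}---is exactly the paper's approach, and you correctly identify that the cross-actions must be outer automorphisms. However, the specific building blocks you propose are too small to achieve $\sol\Gcal(Z,r)=3$, so the construction cannot be completed as sketched.

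Here is the obstruction. In any strong twisted union $Z=X\stu Y$ with $\mpl X=\mpl Y=2$, every $(\Lcal_z)_{|X}$ lies in $\Aut(X,r_X)$: for $z\in Y$ this is Corollary~\ref{stucordef}, and for $z\in X$ it is Proposition~\ref{sigprop}~(\ref{mpl2_3}). Since $X$ and $Y$ are $\Gcal(Z,r)$-invariant, restriction to the two pieces embeds
\[
\Gcal(Z,r)\hookrightarrow \Aut(X,r_X)\times\Aut(Y,r_Y).
\]
For your six-element piece $X=\{a_1,\dots,a_4,b_1,b_2\}$ with $\Lcal_{a_j}=\id_X$ and $\Lcal_{b_1},\Lcal_{b_2}$ two distinct commuting double transpositions on $\{a_1,\dots,a_4\}$, one computes $\Aut(X,r_X)\cong D_8$: any automorphism preserves $\{a_1,\dots,a_4\}$, and the permutations of this set normalising the pair $\{\Lcal_{b_1},\Lcal_{b_2}\}\subset V_4$ form a Sylow $2$-subgroup of $S_4$. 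Hence $\Gcal(Z,r)$ embeds in $D_8\times D_8$, which has derived length~$2$; so $\sol\Gcal(Z,r)\le 2$ and, by Theorem~\ref{slG=slGcal+1thm}, $\sol G(Z,r)\le 3$. The targets $3$ and $4$ are therefore unreachable with these pieces, regardless of how the cross-actions are chosen.

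The paper evades this bound by making $\Aut(X,r_X)$ large. It takes $X$ to be the \emph{trivial} strong twisted union of four isomorphic copies $X^1,\dots,X^4$ of a four-element $\mpl$-$2$ solution (so $|X|=16$), and takes $|Y|=10$ with two distinguished elements $b,c\in Y$ whose cross-action on $X$ permutes the four blocks $X^i$ among themselves, while other elements of $Y$ act on $X$ by $4$-cycles within a single block. Now $\Aut(X,r_X)$ contains $D_8\wr S_4$, and the resulting $\Gcal(Z,r)$ restricted to $X$ is the full wreath product $D_8\wr V_4$ of order $2^{14}$, which has derived length exactly~$3$. The moral is that reaching $\sol\Gcal=3$ in a strong twisted union of two $\mpl$-$2$ pieces requires at least one piece whose automorphism group already has derived length~$\ge 3$; a single ``atomic'' $\mpl$-$2$ block will not suffice, but a disjoint union of several isomorphic blocks---with the other side permuting the blocks---does.
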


\begin{proof}
We shall define  $(X,r_X), (Y, r_Y)$ and $(Z,r)$ explicitly, via the
left actions. Let $(X, r_{X}), (Y, r_{Y})$ be the solutions defined
as
\[
\begin{array}{lll}
X=\bigcup_{1 \leq i\leq 4} X^{i} & X^{i}=\{x_1^{i}, x_2^{i},
x_3^{i}, x_4^{i}\}, &  1 \leq i\leq 4 \\
\Lcal_{x_1^{i}}=\Lcal_{x_3^i}=  (x_2^{i}x_4^{i})\quad&
\Lcal_{x_2^{i}}=\Lcal_{x_4^{i}} = (x_1^{i}x_3^{i}), &1 \leq i\leq 4.
\end{array}
\]
\[
\begin{array}{ll}
Y=\{a_1, a_2, a_3, a_4,  a_1^{\prime}, a_2^{\prime}, a_3^{\prime},
a_4^{\prime}, b,c \}, \quad& \Lcal_{a_i}= \Lcal_{a_i^{\prime}} =
\id_Y, \quad1 \leq i\leq 4,
\\
\Lcal_{b}=(a_1a_2)(a_3a_4)(a_1^{\prime} a_2^{\prime})(a_3^{\prime},
a_4^{\prime}), \quad& \Lcal_{c}=(a_1a_3)(a_2a_4)(a_1^{\prime}
a_3^{\prime})(a_2^{\prime}a_4^{\prime}).
\end{array}
\]
Note that $X$ is a trivial extension of the four isomorphic
solutions $(X^{i}, r_{i}), 1 \leq i \leq 4,$ with $r_i$ defined, as
usual,  via the left action. It is easy to see that  $\mpl X^{i} = 2$
Clearly, one has $\mpl X=2$. Direct computation shows that $\mpl Y
=2$. We set $Z= X \bigcup Y.$ In (\ref{exnewactions}) we
 define a left action  $Z\times Z\longrightarrow Z$ extending the
given actions $X\times X\longrightarrow X$ and $Y\times
Y\longrightarrow Y.$ (All permutations bellow are considered as
elements of $\Sym Z$).
\begin{equation}
\label{exnewactions}
\begin{array}{l}
\Lcal_{x_1^{i}}=\Lcal_{x_3^i}=(x_2^{i}x_4^{i})(a_ia_i^{\prime})
\quad
\Lcal_{x_2^{i}}=\Lcal_{x_4^{i}}= (x_1^{i}x_3^{i})(a_ia_i^{\prime}),\\
\Lcal_{a_i}=\Lcal_{a_i^{\prime}}=
(x_1^{i}x_2^{i}x_3^{i}x_4^{i}),\quad 1
\leq i \leq 4 ;\\
 \Lcal_{b}=(a_1a_2)(a_3a_4)(a_1^{\prime} a_2^{\prime})(a_3^{\prime}
a_4^{\prime})\prod_{1 \leq i \leq 4}[(x_i^{1}x_i^{2})
(x_i^{3}x_i^{4})],  \\
\Lcal_{c}=(a_1a_3)(a_2a_4)(a_1^{\prime}
a_3^{\prime})(a_2^{\prime}a_4^{\prime})\prod_{1 \leq i \leq
4}[(x_i^{1}x_i^{3}) (x_i^{2}x_i^{4})].
\end{array}
\end{equation}
Consider  the map $r: Z\times Z\longrightarrow Z\times Z,$  defined
as $r(z,t) = (\Lcal_{z}(t), \Lcal_{t}^{-1}(z))$ where  $t,z \in Z,
\Lcal_{z}, \Lcal_{t}$ as in (\ref{exnewactions}). One verifies
straightforwardly condition \textbf{l1}, hence $(Z, r)$ is a
symmetric set. Furthermore, condition  \textbf{stu } holds, so
 $(Z,r)= (X, r_X) \stu (Y, r_Y).$
Direct computation shows that $\mpl Z= 4$. Moreover, a short calculation
with GAP~\cite{GAP} shows that the group $\Gcal(Z,r)$ has order $2^{14}$
and solvable length~$3$. (This can easily be seen directly, since $\Gcal$ is
the wreath product of the dihedral group of order~$8$ with the Klein group
of order~$4$.) By Theorem~\ref{slG=slGcal+1thm}, $\sol G=4$.
\end{proof}

\section{Infinite solutions}
\label{infinitesolutions}

We consider solutions $(X,r)$ with $X$ infinite but having finite
multipermutation level.

\begin{question}
 In the hypothesis and notation of Lemma \ref{raisingmpl},
when can we  express $\Gcal(Z,r))$ as a wreath product
 $\Gcal(X,r)\wr\langle \tau ^{\prime}\rangle$
(where $\tau ^{\prime}$ is some appropriate permutation
deduced from $\tau$)? (This is not true in general.)
\end{question}

Note that, in general,   $ \Aut(X,r)\subseteq \Gcal(X,r),$   is
possible (so the existence of a $\tau \notin \Gcal(X,r) $ is not
automatic), as show the following example.

\begin{example}
Let $(X, r)$ be the three element nontrivial solution, with $X =
\{x_1, x_2, x_3\}$ and $r$ defined via the left actions
$\Lcal_{x_3}=(x_1x_2), \Lcal_{x_1}=\Lcal{x_2}= \id_{X}x$. Then
$\mpl(X,r) = 2$ and $\Aut(X,r)=\Gcal(X,r) = \{id_X, (x_1x_2)\}$,
\end{example}

\begin{example}
\label{easyconstruction} We shall construct an infinite  sequence of
explicitly defined square-free symmetric sets
\[
(X_0, r_0), (X_1, r_1), \cdots, (X_m, r_m), \cdots,
\]
such that

(i) for each $m$,  $m= 0,1,2, \cdots,\;$ $X_m \subset X_{m+1}$ is an
$r_{m+1}$-invariant subset of $X_{m+1}.$ Furthermore,

(ii) $(X_m, r_m)$ is a finite multipermutation  solution of order
$|X| = 2^{m+1}-1,$ and

(iii) $\mpl(X_m, r_m)= m$, the solvable length of each $\Gcal(X_m,
r_m)$ is exactly $m$.

(iv) $(X_{m+1}, r_{m+1}$ is a strong twisted union of a solution $(Y_m, r_m)$, with
$\mpl(Y_m)= m$,  $|Y_m| =2(2^m-1)$, and a trivial one-element solution.

As a starting point we chose an infinite  countable set
 $X=\{x_n\mid 1 \leq n \}$.
We define the solutions $(X_m, r_m), m=0, 1, 2, \cdots $
recursively.
\begin{itemize}
\item
$(X_0, r_0)$ is the one element trivial solution with  $X_0 =
\{x_1\}$.
\item
$(X_1, r_1)$ is the trivial solution on the set $X_1=\{x_1, x_2\}.$
\item
We set $(X_2, r_2)=X_1\stu \{x_3\},$ where
$\Lcal_{x_3}=(x_1x_2).$ Clearly, $\mpl X_2=2.$
\item
Construction of $(X_3, r_3).$ Let $(X_2^{\prime},r_2^{\prime})$ be
an isomorphic copy of $(X_2, r_2),$ where $X_2^{\prime}=\{x_4, x_5,
x_6\},$ and the map $\tau: (X_2, r_2) \longrightarrow
(X_2^{\prime},r_2^{\prime})$ with $\tau(x_i) = x_{i+3}, 1 \leq i
\leq 3,$  is an isomorphism of solutions. Let $(Y_2, r_{Y_2})=
X_2\stu _0 X_2^{\prime}$ be the trivial extension. We set $(X_3,
r_3) = Y_2\stu \{x_7\},$ where the map $r_3$ is defined via the
left action $\Lcal_{x_7}= (x_1x_4) (x_2x_5)(x_3x_6).$ One has
$\Lcal_{x_7}\in \Aut(Y_2, r_{Y_2})  \setminus  \Gcal{(Y_2, r_{Y_2})
},$ so $\mpl X_3$ =3.
\end{itemize}
Assume we have constructed the sequence $(X_0, r_0), (X_1, r_1),
\cdots, (X_m, r_m),$ satisfying conditions (i) and (ii). We shall
construct effectively $(X_{m+1}, r_{m+1})$ so that (i), (ii). and
(iii) are satisfied. For $N=2^{m}-1 = |X_m|$, let
$X_m^{\prime}= \{x_{N+1}, \cdots x_{2N},\}$ and let
$(X_m^{\prime},r_{X_m^{\prime}})$ be the solution isomorphic to
$(X_m, r_m)$ via the isomorphism $\tau: X_m\longrightarrow
X_m^{\prime}$ with $\tau(x_i) = x_{i+N}, 1 \leq i \leq N.$ We denote
by $(Y_m, r_{Y_m})$ the trivial extension $ X_m \stu _0
X_m^{\prime},$ and set $X_{m+1} = (Y_m,r_{Y_m})\stu
\{x_{2N+1}\},$ where $r_{m+1}$ is defined via the action 
$\Lcal_{x_{2N+1}}= (x_1x_{N+1})(x_2x_{N+2})\cdots (x_Nx_{2N}).$ One
can show that $\mpl(X_{m+1}, r_{m+1})= m+1,$ and $\Gcal(X_{m+1},
r_{m+1})= \Gcal(X_{m}, r_{m})\wr C_2.$
\end{example}

The following question was posed by Paul Martin.

\begin{question}
\label{n_m} For each positive integer $m$ denote by  ${\bf n}_m,$
the minimal integer so that there exists a square-free
multipermutation solution $(X_m, r_m)$ of order $|X_m| = {\bf
n}_m,$ and with $\mpl(X_m, r_m) = m$. How does ${\bf n}_m$ depend on $m$?
\end{question}
In the proof of Theorem \ref{beautifulconstruction} we construct
an infinite sequence of recursively defined explicit solutions
$(X_m, r_m), m = 0, 1, 2 \cdots, $ s.t. $\mpl(X_m)= m$, and
$|X_m| = 2^{m-1}+1.$ Therefore ${\bf n}_m \leq 2^{m-1}+1.$ By
definition, the unique symmetric set with multipermutation level
$0$ is the one element solution, so ${\bf n}_0=1.$ Direct
computation show that for $1 \leq m \leq 3$ the square-free
solutions $(X_m, r_m)$ in the Construction
\ref{beautifulconstruction}, are of minimal possible order, so
${\bf n}_m = 2^{m-1}+1, m=1, 2, 3.$

\begin{question}
Is it true that ${\bf n}_m = 2^{m-1}+1,$ for all integers $m \geq
1?$
\end{question}

Let $\X = \{x_i \mid  i= 1,2,3, \cdots \}$ be an infinite countable set.

\begin{definition}
\label{veedef}
Let $\rho =(x_{i_1}\; x_{i_2}\; \cdots x_{i_k})$ and
$\sigma = (y_{i_1}\; y_{i_2}\; \cdots y_{i_k})$ be disjoint cycles
of length $k$ in $\Sym(X)$, and let $N$ be a natural number.
Define
\[\begin{array}{rl}
\rho[N] &= (x_{ i_1+N}\; x_{i_2+N}\; \cdots \; x_{i_k+N}) \\ \\
\rho \vee \sigma &= (x_{i_1}\; y_{i_1}\; x_{i_2}\;y_{i_2}\; \cdots \;x_{i_k} \;y_{i_k}) \\ \\
\end{array}
\]
\end{definition}
So $\rho[N]$ is a cycle of the same length $k$ as $\rho$ and is  obtained by shifting the indices $N$-steps to the right.
$\rho \vee \sigma$ is a cycle of length $2k$ and
\[
(\rho \vee \sigma)^2 = \rho \circ \sigma.
\]

\begin{definition}
\label{Y[N]def}
For a pair $(x_i,x_j) \in \X \times \X$ we define $(x_i,x_j)[N] = (x_{i+N},x_{j+N}).$
Let $(Y,r)$ be a symmetric set, where $Y= \{x_1, x_2\cdots x_k \} \subset \X.$ For each integer $N$, $N> k$ we
define  the quadratic set $(Y,r)[N]) = (Y[N],r[N]),$
where $Y[N]= \{x_{1+N}, x_{2+N}\cdots x_{k+N} \},$ and
\begin{equation}
\label{r[N]}
r[N](x_{i+N}, x_{j+N})= (r(x_i, x_j))[N]
\end{equation}
\end{definition}

\begin{remark}
\label{Y[N]remarj=k}
Clearly the left action induced by $r[N]$ satisfies
\begin{equation}
\label{[N]actionseq} {}^{(x[N])}{(y[N])}=({}^xy)[N]
\end{equation}
So {\bf l1} is satisfied and therefore  $(Y[N], r[N])$ is a
square-free solution. There is an isomorphism of solutions
\[\varphi _N: (Y,r) \longrightarrow (Y[N], r[N])\quad  x_i\mapsto x_{i+N}.
\]
Furthermore,
 \[ \Gcal(Y[N],r[N])=\Gcal(Y, r)[N] =  {}_{gr}\langle \Lcal_x[N]\mid x \in Y\rangle.
\]
\end{remark}

\begin{definition}
\label{beautifulconstructiondef}
For $m= 0, 1, 2 \cdots $ define a sequence of cycles $\sigma_m \in \Sym \X$,  each of length $2^m$ as follows
\[
\begin{array}{cll}
\label{sigma_m}
\sigma_1 &=& (x_1\;x_2) \\
\sigma_2 &=&  \sigma_1\vee (\sigma_1[2]) = (x_1\; x_3\; x_2\; x_4)\\
\sigma_3 &=& \sigma_2\vee (\sigma_2[4])\\
\sigma_{m+1} &=& \sigma_m\vee (\sigma_m[2^{m}])
\end{array}
\]

For $m=0,1, 2, \cdots$ we define the solutions $(X_m, r_m)$ and $(Y_m, r_{Y_m})$ recursively, as follows.
\begin{equation}
\label{recursivedef2}
\begin{array}{clll}
(X_0, r_0)&&&\text{the trivial solution on one element set } X_0= \{x_1\}\\
(X_1,r_1)&&&\text{the trivial solution on the set } X_1= \{x_1, x_2\}\\
(Y_1, r_{Y_1})&=& (X_1,r_1) &\quad\text{(for completeness only)}\\
(X_2, r_2)&=& Y_1 \stu \{x_3\} &\;\text{where}\;\Lcal_{x_3} = \sigma_1\\
Y_2 &=& Y_1 \stu Y_1[2],&\;\text{where}\; \Lcal_{x_3} = \Lcal_{x_4}=
\sigma_1,\quad \Lcal_{x_1} = \Lcal_{x_2}= \sigma_1[2]\\
(X_3, r_3) &=& Y_2 \stu \{x_5\},& \;\text{where}\; \Lcal_{x_5} =   \sigma_2 = \sigma_1\vee (\sigma_1[2]) \\
Y_3 &=& Y_2 \stu Y_2[4],&\;\text{where}\;\Lcal_{y\mid Y_2} = \sigma_2, \;\;\forall  y \in Y_2[4],\; \Lcal_{x\mid Y_2[4]}=  \sigma_2[4], \;\; \forall x \in Y_2\\
(X_4, r_4) &=& Y_3 \stu \{x_9\},&\;\text{where}\; \Lcal_{x_9} =  \sigma_3 =\sigma_2\vee (\sigma_2[4]) \\
Y_4 &=& Y_3\stu Y_3[8],&\;\text{where}\; \Lcal_{y\mid Y_3} = \sigma_3, \; \;\forall y \in Y_3[8], \;\Lcal_{x\mid Y_3[8]}=  \sigma_3[8], \; \;\forall x \in Y_3\\
&&\ldots\ldots\ldots\ldots\ldots\ldots&\\
&&&\\
Y_m &=& Y_{m-1}\stu Y_{m-1}[2^{m-1}],&\;\text{where}\; \Lcal_{y}= \sigma_{m-1},\;\; \forall  y \in Y_{m-1}[2^{m-1}],\\
&&&\\
&&&
\text{ and }
\quad
 \Lcal_{x\mid Y_{m-1}[2^{m-1}]}=  \sigma_{m-1}[2^{m-1}],\; \;\forall x \in Y_{m-1}\\
&&&\\
X_{m+1} &=& Y_m \stu \{\xi\},&\;\text{where}\;  \xi = x_{2^m+1}, \; \Lcal_{\xi}=  \sigma_{m}=\sigma_{m-1}\vee (\sigma_{m-1}[2^{m-1}] ).
 \end{array}
\end{equation}
\end{definition}

Clearly $\X= \bigcup_{0 \leq m } X_m$.

\begin{definition}
Define the map \[
r_{\X}: \X\times \X \longrightarrow \X\times \X \quad r(x,y) = r_m(x,y)\text{ where } x, y \in X_m.
\]
\end{definition}

\begin{theorem}
\label{beautifulconstruction} In assumption and notation as above
Let
\[
(X_0, r_0), (X_1, r_1), \cdots, (X_m, r_m), \cdots
\]
be the infinite sequence of quadratic sets defined in Definition
\ref{beautifulconstructiondef}. Then  the following conditions hold
for each $m= 0,1,2,\ldots$:

\begin{enumerate}
\item \label{bc1} $(X_m, r_m)$ is a square-free solution of order
$|X_m| = 2^{m-1}+1$
 \item
\label{bc2}   $(X_{m+1}, r_{m+1})$ is an extension of $(X_m, r_m).$
\item
\label{bc3} $\Ret(X_{m+1}, r_{m+1}) \approx (X_m, r_m) .$ (i.e.
$(X_{m+1}, r_{m+1})$ is a blow up of $(X_m, r_m).$
\item
\label{bc4}  $\mpl(X_m, r_m)= m, m= 0,1,2 \cdots .$
\item
\label{bc5} Each group $\Gcal_{m+1}= \Gcal(X_{m+1}, r_{m+1})$ is
isomorphic to the wreath product $\Gcal(X_{m}, r_m)\wr C_2$ , so $
\Gcal_{m+1}= \underbrace {(({C_2}\wr {C_2})\wr \cdots )\wr
C_2}_{m\; \text{times}}.$
\item
\label{bc6} There are equalities $\sol (G_m)= m$, $\sol (\Gcal_m) = m-1$.
\item
\label{bc7} $(\X, r_{\X})$ is the inverse limit of the solutions
$(X_m, r_m)$. For the retracts one has $\Ret^m(\X, r_{\X})\neq
\Ret^{m+1}(\X, r_{\X})$, and  $\mpl(\X,r)= \infty.$ Furthermore, the
group $\Gcal(\X, r_{\X})$ acts nontransitively on $\X$.
\end{enumerate}
\end{theorem}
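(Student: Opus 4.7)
The plan is to proceed by strong induction on $m$, establishing properties (\ref{bc1})--(\ref{bc6}) simultaneously at each stage, with (\ref{bc7}) following by passing to the direct limit. The base cases $m \in \{0,1,2\}$ are immediate: the first two are trivial solutions, and $(X_2, r_2)$ with $\Lcal_{x_3} = (x_1\,x_2)$ is straightforwardly verified to be a square-free solution having $\mpl = 2$, $\Gcal_2 = C_2$, and $\sol G_2 = 2$.

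For the inductive step, I would assume the theorem through index $m$ and build $X_{m+1}$ in two stages. First, to see that $(Y_m, r_{Y_m}) = Y_{m-1} \stu Y_{m-1}[2^{m-1}]$ is a square-free solution: since every element of the shifted copy $Y_{m-1}[2^{m-1}]$ acts on $Y_{m-1}$ by the \emph{same} permutation $\sigma_{m-1}$ (and symmetrically), Corollary~\ref{stucordef} reduces the required stu conditions to checking that $\sigma_{m-1} \in \Aut(Y_{m-1}, r_{Y_{m-1}})$. This follows from the inductive hypothesis $X_m = Y_{m-1} \stu \{\xi_{m-1}\}$ with $\Lcal_{\xi_{m-1}} = \sigma_{m-1}$, by Proposition~\ref{splitmapprop}(iii). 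Second, to see that $(X_{m+1}, r_{m+1}) = Y_m \stu \{\xi_m\}$ with $\Lcal_{\xi_m} = \sigma_m$ is a square-free solution, I need $\sigma_m \in \Aut(Y_m, r_{Y_m})$, i.e.\ the conjugation identity
\[
\sigma_m \circ \Lcal_z \circ \sigma_m^{-1} = \Lcal_{\sigma_m(z)}, \qquad z \in Y_m.
\]
The key combinatorial inputs are that (a) $\sigma_m = \sigma_{m-1} \vee \sigma_{m-1}[2^{m-1}]$ restricted to $Y_{m-1}$ is exactly the shift bijection $x_i \mapsto x_{i + 2^{m-1}}$ mapping $Y_{m-1}$ onto $Y_{m-1}[2^{m-1}]$, and (b) $\sigma_m^2 = \sigma_{m-1}\cdot \sigma_{m-1}[2^{m-1}]$, a commuting product on disjoint supports. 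A case analysis (two cases depending on whether $z$ lies in $Y_{m-1}$ or $Y_{m-1}[2^{m-1}]$, each tested on each of the two halves) then yields the automorphism property.

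Next, (\ref{bc3}) and (\ref{bc4}) follow from a retract computation. Since each $z \in Y_{m-1}$ acts constantly on $Y_{m-1}[2^{m-1}]$ (as $\sigma_{m-1}[2^{m-1}]$) and fixes $\xi_m$, the relation $\sim$ restricted to $Y_{m-1}$ inside $X_{m+1}$ coincides with the intrinsic retraction of $(Y_{m-1}, r_{Y_{m-1}})$; the same holds on the other half, while $\xi_m$ remains isolated. A parallel inductive observation gives $\Ret(Y_m) \cong Y_{m-1}$ under the shift identification of the two halves, so $\Ret(X_{m+1}) \cong Y_{m-1} \cup \{\xi_m\} = X_m$ with $\Lcal_{[\xi_m]}$ identifying with $\sigma_{m-1} = \Lcal_{\xi_{m-1}}$. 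Hence $\mpl(X_{m+1}) = 1 + \mpl \Ret(X_{m+1}) = m+1$. For (\ref{bc5}), I would show that restriction to $Y_m$ identifies $\Gcal(Y_m)$ with $\Gcal_m \times \Gcal_m$ (one factor per half, compatibly with the shift), and that conjugation by $\sigma_m$ swaps the two factors while $\sigma_m^2 \in \Gcal(Y_m)$; this presents $\Gcal_{m+1}$ as the semidirect product $\Gcal_m \wr C_2$. Property (\ref{bc6}) then follows by combining (\ref{bc4}), (\ref{bc5}) with Theorems~\ref{sltheorem} and \ref{slG=slGcal+1thm}: the iterated wreath product of $m-1$ copies of $C_2$ has solvable length exactly $m-1$, saturating $\sol \Gcal_m \leq m-1$, and then $\sol G_m = \sol \Gcal_m + 1 = m$. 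Finally, (\ref{bc7}) follows from the fact that each $\Ret^k(\X, r_\X)$ contains, for $n \geq k$, a copy of $\Ret^k(X_n, r_n)$; since $\mpl X_n = n$ grows without bound, the tower of retracts never collapses to a point, so $\mpl(\X) = \infty$, and the structure of the inverse limit yields the remaining assertions about orbits.

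The main obstacle is the automorphism verification in the second stage of the inductive step: although conceptually $\sigma_m$ merely interchanges the two halves of $Y_m$ compatibly with the STU structure, the precise bookkeeping forced by the interleaving cycle $\sigma_{m-1} \vee \sigma_{m-1}[2^{m-1}]$ is delicate. A secondary technical point is threading the auxiliary identification $\Ret(Y_m) \cong Y_{m-1}$ through the induction in tandem with the main claims, since this is needed both for (\ref{bc3}) and for the $\Gcal_m \times \Gcal_m$ structure in (\ref{bc5}).
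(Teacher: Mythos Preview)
Your proposal is correct and follows essentially the same strategy as the paper: an induction carrying the joint claim that $(Y_m,r_{Y_m})$ is a square-free solution and $\sigma_m\in\Aut(Y_m,r_{Y_m})$, with the automorphism check done by the four-case analysis you describe (this is exactly the paper's Lemma~\ref{sigma_mlemma2}), followed by the retract computation, the wreath-product identification, and the appeal to Theorems~\ref{sltheorem} and~\ref{slG=slGcal+1thm} for the solvable lengths. The only organisational difference is in part~(\ref{bc3}): the paper computes the retract via explicit index formulae (Lemma~\ref{ret_Xmlemma}) showing $\Lcal_{x_{2k-1}}=\Lcal_{x_{2k}}$ on $X_{m+1}$ and then reads off the isomorphism $[x_{2k-1}]\mapsto x_k$, whereas you route it through the auxiliary identification $\Ret(Y_m)\cong Y_{m-1}$; both arrive at the same place.
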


\begin{proof}
Under the hypothesis of the theorem we prove first several
preliminary statements.
\begin{remark}
\label{sigma_mlemma1} Let  $m$ be an integer, $m\geq 2$, $N_m = 2^{m}$
\begin{enumerate}
\item
$|Y_m| = 2^{m}$ and $|X_{m+1}| = 2^{m}+1.$
\item
$\sigma_m$ is well defined  via $\sigma_k,$ $k \leq m-1$ and the shift $y \mapsto y[N_{m-1}]$.
We have the following explicit formulae:
\begin{equation}
\label{sigma_meq0}
(\sigma_{m-1}[N_{m-1}])(y[N_{m-1}]) = (\sigma_{m-1}(y)[N_{m-1}]\quad \forall y\in Y_{m-1}\quad
\end{equation}
\begin{equation}
\label{sigma_meq1}
\begin{array}{cll}
\sigma_m (y) &=& y[N_{m-1}]\in Y_{m-1}[N_{m-1}], \quad\forall y\in Y_{m-1}\\
&&\\
 \sigma_m (y[N_{m-1}]) &=& \sigma_{m-1} y \in Y_{m-1}, \quad \forall y\in Y_{m-1}
\end{array}
\end{equation}
\begin{equation}
\label{sigma_meq2}
{}^{x[N_{m-1}]}{z[N_{m-1}]} = ({}^xz)[N_{m-1}]
\end{equation}
\end{enumerate}
\end{remark}

\begin{lemma}
\label{sigma_mlemma2} For each integer $m \geq 1$, $(Y_m,
r_{Y_m})$ is a square-free solution of order $N_m = 2^m$ and
$\sigma_m \in \Aut (Y_m, r_{Y_m})$,
\end{lemma}

\begin{proof}
We shall prove the lemma using induction on $m$. Base for the induction, $m=1$.
By definition $(Y_1, r_{Y_1})$ is the trivial solution $\{x_1, x_2\}$ and
clearly $\sigma_1 = (x_1 x_2) \in
\Aut (Y_1, r_{Y_1}).$
Assume now that the lemma is true for all $k \leq m$.

Clearly, by Definition  of as a set $Y_{m+1}$
is a disjoint union $Y_{m+1} = Y_m\bigcup Y_m[N_m]$, so
$|Y_{m+1}| = 2|Y_m| = 2^{m+1}$.
Furthermore (by definition), the quadratic set $(Y_{m+1}, r_{m+1})$
with  $r=r_{Y_{m+1}}$ satisfies
\[r(x, y) = (\sigma_m[N_m](y), \sigma_m^{-1}x) \quad r(y,x) = (\sigma_(x), \sigma_m[N_m]^{-1}(y) \quad \forall
x \in Y_{m}, y \in Y_{m}[N_m].
\]
By assumption $\sigma_m\in \Aut (Y_m, r_{Y_m}),$ hence $\sigma_m[N_m] \in \Aut (Y_m[N_m], r_{Y_m}[N_m]),$
Hence by Lemma \ref{strtwistedunionlemma1} $(Y_{m+1}, r_{m+1})$
is a solution.

We shall now prove that $\sigma_{m+1} \in \Aut Y_{m+1}.$
By Lemma \ref{automorphismlemma} 3) it will be enough to show  that for each $ x\in Y_m$
\begin{equation}
\label{autre1}
 \sigma_{m+1}\circ \Lcal_{x}  =\Lcal_{ \sigma_{m+1} (x)}\circ \sigma_{m+1}
\end{equation}
is an equality of maps in $Y_{m+1}$, or equivalently
\begin{equation}
\label{autre2}
 \sigma_{m+1} ({}^xz)  ={}^{\sigma_{m+1}  (x)}{\sigma_{m+1} (z)} \forall z \in Y_{m+1}.
\end{equation}

By the inductive assumption we have
\begin{equation}
\label{autre3}
 \sigma_{m} ({}^xz)  ={}^{\sigma_{m} (x)}{\sigma_{m}(z)} \forall x,z \in Y_{m}.
\end{equation}

Let $x, z \in Y_{m+1}.$ By definition $Y_{m+1} = Y_m \bigcup Y_{m}[N_m]$ (this is a disjoint union).

{\bf Case 1.} $x \in Y_{m}$.  {\bf 1.a.} $z\in Y_{m}$
\[
\begin{array}{ll}
{}^xz \in Y_{m}\quad \sigma_m ({}^xz) = {}^xz[N_{m}] & \text{by} \quad (\ref{sigma_meq1})\\&\\
\sigma_m (x)= x[N_{m}]\quad \sigma_m(z)= z[N_{m}] & \text{by} \quad (\ref{sigma_meq1})\\&\\
{}^{\sigma_m (x)}{\sigma_m(z)} = {}^{x[N_{m}]}{z[N_{m}]} = {}^xz[N_{m}] & \text{by} \quad (\ref{sigma_meq2})
\end{array}
\]
So in this case  (\ref{autre2}) holds.
{\bf 1.b.} $z=y[N_{m}],$ where $y \in Y_{m}.$
\[
\begin{array}{ll}
{}^xz = (\sigma_{m}[N_{m}])(z)=\sigma_{m}(y)[N_{m}]\in Y_{m}[N_{m}] & \text{by} \quad(\ref{recursivedef2}
\\&\\
\sigma_m ({}^xz) = \sigma_m (\sigma_{m}(y)[N_{m}])= (\sigma_{m})^2 (y) & \text{by} \quad(\ref{sigma_meq1})\\&\\
\sigma_m (x)= x[N_{m}]\quad \sigma_m(z)=\sigma_m(y[N_{m}]) = \sigma_{m}(y) & \text{by} \quad (\ref{sigma_meq1})\\&\\
{}^{\sigma_m (x)}{\sigma_m(z)} = {}^{x[N_{m}]}{\sigma_{m}(y)} = \sigma_{m}(\sigma_{m}(y)) & \text{by} \quad( (\ref{recursivedef2}).
\end{array}
\]
Hence (\ref{autre2}) holds.

{\bf Case 2.} $x= \xi[N_{m}] \in Y_{m}[N_{m}].$ {\bf 2.a.} $z\in Y_{m}$
In this case (\ref{autre2}) follows from the equalities
\[
\begin{array}{ll}
{}^xz = \sigma_{m}(z)=\in Y_{m} & \text{by} \quad(\ref{recursivedef2})\\&\\
\sigma_m ({}^xz) = \sigma_m (\sigma_{m}(z))= \sigma_{m}(z)[N_{m}] & \text{by} \quad(\ref{sigma_meq1})\\&\\
\sigma_m (x)= \sigma_m (\xi[N_{m}]) = \sigma_{m}(\xi)\in Y_{m}\quad \sigma_m(z)= z[N_{m}] \in Y_{m}[N_{m}]\quad & \text{by} \quad (\ref{sigma_meq1})\\&\\
{}^{\sigma_m (x)}{\sigma_m(z)} = {}^{\sigma_{m}(\xi)}{z[N_{m}]} = \sigma_{m}[N_{m}](z[N_{m}]) & \text{by} \quad (\ref{recursivedef2}) \\&\\
\quad \quad =(\sigma_{m}(z))[N_{m}] & \text{by} \quad( \ref{sigma_meq0}).
\end{array}
\]
{\bf 2.b.} $z=y[N_{m}], y \in Y_{m}.$
Note that
\[
\begin{array}{ll}
{}^xz  = {}^{\xi[N_{m}]}{y[N_{m}]}=({}^{\xi}y)[N_{m}] \in Y_{m}[N_{m}] \quad& \text{by}\quad (\ref{[N]actionseq})  \\&\\
\sigma_m ({}^xz) = \sigma_m (({}^{\xi}y)[N_{m}])   = \sigma_{m}({}^{\xi}y) \quad&
\text{by} \quad (\ref{sigma_meq1})\\&\\
\sigma_m (x)= \sigma_{m}(\xi)\in Y_{m}\quad \sigma_m(z)= \sigma_m(y[N_{m}])=  \sigma_{m}(y)\in Y_{m} \quad& \text{by} \quad (\ref{sigma_meq1})\\&\\
{}^{\sigma_m (x)}{\sigma_m(z)} = {}^{\sigma_{m}(\xi)}{\sigma_{m}(y)} =\sigma_{m}( {}^{\xi}{y} \quad& \text{by IH and } (\ref{autre2}).
\end{array}
\]
where IH denotes the inductive hypothesis.

We have shown that $\sigma_{m+1} \in \Aut Y_{m+1},$ which verifies the lemma.
\end{proof}
The following corollary is straightforward from the recursive definition of the quadratic sets $(X_m, r_m)$
and Lemma \ref{strtwistedunionlemma1}.

\begin{corollary}
\label{X_m_is_asolutioncor} For each $ m = 0, 1, 2, \cdots,\;$
$(X_m, r_m)$ is a square-free solution, of order $|X_m| = 2^{m-1}+1$.
Furthermore, $(X_{m+1}, r_{m+1})$ is an extension of $(X_{m}, r_{m}).$
\end{corollary}
The following lemma gives explicit recursive presentation of the left actions in $X_m,$ respectively $Y_m$

\begin{lemma}
\label{ret_Xmlemma}
Let $m \geq 2.$
\begin{enumerate}
\item
Let $x \in X_{m+1}$ then following equalities hold.
\begin{equation}
\label{ret_Xmeq1}
\begin{array}{c}
\forall x \in Y_{m-1}\quad \Lcal_{x\mid Y_m}=
\Lcal_{x\mid Y_{m-1}}\circ\sigma_{m-1}[N_{m-1}]\\ \\
 \Lcal_{x\mid X_{m+1}} = \Lcal_{x\mid Y_m} \\ \\
\forall y \in Y_{m-1}[N_{m-1}], \; y=x[N_{m-1}], x \in Y_{m-1}\quad
 \Lcal_{y\mid Y_m}= (\Lcal_{x\mid Y_{m-1}})[N_{m-1}]\circ\sigma_{m-1}\\
 \\ \Lcal_{y\mid X_{m+1}} = \Lcal_{y\mid Y_m} \\ \\
z=x_{2^m+1} \quad \Lcal{z\mid X_{m+1}} = \sigma_m.
\end{array}
\end{equation}
\item For all $i, i = 2k-1,  1 \leq k \leq 2^{m-1}$ one has
\begin{equation}
\label{ret_Xmeq2}
\begin{array}{c}
\Lcal_{x_{i}\mid X_{m+1}} = \Lcal_{x_{i+1}\mid X_{m+1}}, \\\\
\Lcal_{x_{i}} \neq \Lcal_{x_{j}}\quad\text{whenever}\quad j \neq i, i+1,\quad  1 \leq j \leq 2^m +1
\end{array}
\end{equation}
\item
There is an isomorphism of solutions
\[
\Ret(X_{m+1}, r_{m+1}) \simeq (X_m, r_m).
\]
\end{enumerate}
\end{lemma}

\begin{proof}
The equalities (\ref{ret_Xmeq1}) follow from the recursive definition of $Y_m$ and $X_m, m= 0,1,2, \cdots$
see Definition \ref{beautifulconstructiondef},  (\ref{recursivedef2}).
The recursive definition and (\ref{ret_Xmeq1}) imply (\ref{ret_Xmeq2}).
Then \[[X_{m+1}] =\{[x_1], [x_3],\cdots [x_{2^{m-2}+1}], \cdots [x_{2^{m-1}-1}]\}.\] It is easy to see
that  $([X_{m+1}], r_{[X_{m+1}]}) \simeq (X_m, r_m)$.
\end{proof}
Now the statement of the theorem follows easily. Indeed, Corollary \ref{X_m_is_asolutioncor} verifies parts (\ref{bc1}),  and
(\ref{bc2}). Part (\ref{bc3}) follows from Lemma \ref{ret_Xmlemma}.
Clearly, induction on $m$ and (\ref{bc3}) straightforwardly imply
(\ref{bc4}).
(\ref{bc5})  is clear from the construction.
(\ref{bc5}) implies $\sol(\Gcal_m) = m-1$. Hence, by Theorem
\ref{slG=slGcal+1thm} we have $\sol(G_m) = \sol(\Gcal_m) +1$, which proves
(\ref{bc6}).
Finally, (\ref{bc7}) is clear.
\end{proof}

\begin{construction} Let $R$ be a finite ring and $A$ a finite
faithful $R$-module. Let $\omega$ be a fixed unit in $R$. For $a\in
A$, let $L_a$ be the permutation $x\mapsto \omega x+(1-\omega)a$.
\end{construction}

Note: We do not need to assume that $R$ is commutative, since we
work only in the subring generated by $1$ and $\omega$. Also, note
that $L_a$ has the effect of multiplying $x-a$ by $\omega$; so it is
clearly a permutation fixing $a$. (Its inverse is obtained by
replacing $\omega$ by its inverse.) If $\omega=-1$, it is the
inversion in $a$.

\begin{proposition}
The following are equivalent:
\begin{itemize}
\item[(a)] the maps $L_a$, for $a\in A$, give a solution;
\item[(b)] the maps $L_a$, for $a\in A$, all commute;
\item[(c)] $(1-\omega)^2=0$.
\end{itemize}
\end{proposition}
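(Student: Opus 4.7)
The plan is to prove (a)$\Leftrightarrow$(c) and (b)$\Leftrightarrow$(c) by direct expansion of compositions of the affine maps $L_a$; faithfulness of $A$ enters only at the end, to pass from an identity of the form $(1-\omega)^{2}A=0$ to the ring identity $(1-\omega)^{2}=0$. A one-line computation gives
\[
L_{a}L_{b}(x)=\omega^{2}x+\omega(1-\omega)b+(1-\omega)a,
\]
and swapping $a,b$ and subtracting yields that $L_{a}L_{b}=L_{b}L_{a}$ for all $a,b\in A$ if and only if $(1-\omega)^{2}(b-a)=0$ for all $a,b$, i.e.\ $(1-\omega)^{2}A=0$. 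By faithfulness this gives $(1-\omega)^{2}=0$; conversely $(1-\omega)^{2}=0$ visibly implies commutativity, establishing (b)$\Leftrightarrow$(c).

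For (a)$\Leftrightarrow$(c) I will invoke Corollary~\ref{alternativedefc}. Since $L_{a}(a)=a$ is immediate, it remains to verify the braiding identity
\[
L_{L_{b}(a)}\circ L_{b}\;=\;L_{L_{a}(b)}\circ L_{a}\qquad(a,b\in A).
\]
Expanding the left-hand side applied to $z$ yields
\[
\omega^{2}z+\omega(1-\omega)b+(1-\omega)\omega a+(1-\omega)^{2}b,
\]
and subtracting the symmetric expression for $L_{L_{a}(b)}L_{a}(z)$ collapses, after the automatic cancellation $\omega(1-\omega)-(1-\omega)\omega=0$, to the single term $(1-\omega)^{2}(b-a)$. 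Hence the identity holds for all $a,b\in A$ iff $(1-\omega)^{2}A=0$, which by faithfulness is (c). Bijectivity of each $L_{a}$ (needed for $r(a,b)=(L_{a}(b),L_{b}^{-1}(a))$ to be a bijection of $A\times A$) is automatic since $\omega$ is a unit, with explicit inverse $L_{a}^{-1}(y)=\omega^{-1}y+(1-\omega^{-1})a$.

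No step poses a real obstacle; the argument is routine affine algebra. The only mild points to track are (i) the order of scalar multiplication in a possibly non-commutative $R$, which is harmless since $\omega$ and $1-\omega$ automatically commute in any ring, and (ii) the role of faithfulness in converting the module-theoretic condition $(1-\omega)^{2}A=0$ into the ring identity $(1-\omega)^{2}=0$, used once in each of the two equivalences.
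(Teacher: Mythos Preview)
Your proof is correct and follows essentially the same route as the paper: direct expansion of the affine compositions, reducing both the commutativity condition and the solution condition to $(1-\omega)^2(a-b)=0$ for all $a,b$, hence to $(1-\omega)^2=0$ via faithfulness. The only cosmetic difference is that the paper checks the identity $L_{{}^ab}L_{a^b}=L_aL_b$ (with $a^b=L_b^{-1}(a)$) rather than the cyclic-set identity $L_{L_b(a)}L_b=L_{L_a(b)}L_a$ from Corollary~\ref{alternativedefc}; the resulting algebra collapses to the same condition either way, and your explicit mention of faithfulness is in fact more careful than the paper's wording.
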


\begin{proof} Clearly $L_a(a)=a$ holds for all $a\in A$. Thus, the
maps form a solution if and only if
\[L_{{}^ab}L_{a^b}=L_aL_b,\]
where ${}^ab=L_a(b)$ and $a^b=L_b^{-1}(a)$; and a short calculation
(see below) shows that this condition and the condition
$LaL_b=L_bL_a$ hold for all $a$ and $b$ if and only if
$(1-\omega)^2=0$.

\bigskip

If $\omega=1$, then $L_a$ is the identity map for all $a$, and the
multipermutation level is $1$. Otherwise, $L_a=L_b$ if and only if
$(1-\omega)(a-b)=0$; so the elements of the reduct are the cosets of
the submodule $\{a\in A:(1-\omega)a=0$ (the annihilator of
$1-\omega$. Since $(1-\omega)^2=0$, the element $(1-\omega)$ acts as
zero on the quotient module $A/I$, so the reduct has mpl~$1$, and
the original solution has mpl~$2$.

\bigskip

\paragraph{Calculations}
$L_a$ maps $x$ to $\omega x+(1-\omega)a$, and so
\begin{eqnarray*}
L_aL_b &\mapsto& \omega x+(1-\omega)b \\
       &\mapsto& \omega(\omega x+(1-\omega)b)+(1-\omega)a \\
       &=& \omega^2x+(1-\omega)(a+\omega b).
\end{eqnarray*}
Similarly $L_bL_a$ maps $x$ to $\omega^2x+(1-\omega)(b+\omega a)$.
These are equal if and only if $(1-\omega)^2(a-b)=0$. For this to
hold for all $a$ and $b$, it is necessary and sufficient that
$(1-\omega)^2=0$.

\medskip

Now $a^b=\omega^{-1}a+(1-\omega^{-1})b$ and ${}^ab=\omega
b+(1-\omega)a$, so we calculate that $L_{{}^ab}L_{a^b}$ maps $x$ to
\[\omega^2x+(1-\omega)(2-\omega)a+(1-\omega)(2\omega-1)b.\]
This is equal to $L_aL_b(x)$ if and only if $(1-\omega)^2(a-b)=0$,
and the conclusion follows as before. \end{proof}

\section{More about YB permutation groups}

Let $(Z, r)$ be an YB extension of the disjoint solutions
$(X,r_X)$ and $(Y, r_Y)$. Then for every $z \in Z$ the action
$\Lcal_z$ splits
\[
\Lcal_z = (\Lcal_{z})_{\mid X}\circ (\Lcal_{z})_{\mid XY}.
\]

Recall that if $(X,r_X)$ and $(Y, r_Y)$ are two disjoint solutions
Then the trivial extension $(Z, r)$ is defined as $Z = X \bigcup
Y$, with $r(x, \alpha)= (\alpha, x),$ for all $x \in X, \alpha \in Y$.

\begin{proposition}
\label{PYBextprop1} Let $(X,r_X)$ and $(Y, r_Y)$ be disjoint
square-free solutions with $G_1 = G(X,r_X), G_2=(Y, r_Y),$
$\Gcal_1 = \Gcal(X,r_X),\Gcal_2 = \Gcal(Y,r_Y).$ Then the
following conditions hold.
\begin{enumerate}
\item Suppose $(Z,r)$ is the trivial extension of $(X,r_X)$ and
$(Y, r_Y)$. Then it is a square-free solution, and
\[
G(Z,r)= G_1 \times G_2 \quad \Gcal(Z,r) = \Gcal_1 \times \Gcal_2
\]
Conversely, if $G_1$ and $G_2$ ate permutation YB groups, then $G=
G_1 \times G_2$ is a permutation YB group.
 \item
 \label{semidirectpr1} Suppose $(Z,r)$
is an involutive extension which satisfies
\begin{equation}
\label{ext+} r(\alpha, x) = ({}^{\alpha}x, \alpha)\quad \forall x
\in X, \alpha \in Y. \end{equation} Then $(Z,r)$ is a solution
if and only if the assignment $\alpha \mapsto (\Lcal_{\alpha})_{\mid
X}$ extends to a homomorphism
\[
G(Y,r_Y) \longrightarrow \Aut (X, r_X)
\]
In other words, $G(Y,r_Y)$ acts as automorphisms on $(X,r).$ In this
case $G= G(Z,r)$ and the YB permutation  group $\Gcal= \Gcal(Z,r)$
are  semidirect products: \[ G= G_1 \rtimes G_2, \quad \Gcal=
\Gcal_1 \rtimes  \Gcal_2.
\]
\item
 Conversely, suppose there is an action of $G(Y,r_Y)$ on
$G(X, r_X),$ such that $X$ is invariant under this action. Then the
formula (\ref{ext+} )  induces canonically a solution $(Z,r)$ on
$Z$. In particular, a semidirect product $\Gcal_1 \rtimes\Gcal_2$ of
two permutation YB groups defined via an action of $G(Y,r_Y)$ on
$G(X, r_X),$ which keeps $X$ invariant is itself a permutation YB
group.
 \item Suppose $(Z,r)$ is the wreath product of
solutions $(Z,r)= (X,r_X)\wr(Y,r_Y),$ see Definition
\ref{wreathconstructiondef}. Then there are equalities
\[
G(Z,r) ={G_1}\wr G_2\quad \Gcal(Z,r) ={\Gcal_1}\wr\Gcal_2
\]
\end{enumerate}
\end{proposition}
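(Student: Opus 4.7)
The plan is to handle the four parts in order, using the extension machinery already set up in Sections 3 and 7 (Proposition \ref{propExt++}, Fact \ref{propExt+}, and Theorem \ref{wreathconstructionth}) rather than re-deriving the YBE verifications by hand.

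For part (1), I would first check that the trivial extension $(Z,r)$ is a nondegenerate, involutive, square-free quadratic set: this is immediate from $r(x,\alpha)=(\alpha,x)$ and the fact that $r_X,r_Y$ already have these properties. To confirm $(Z,r)$ is a solution I would apply Facts \ref{basictheorem} and verify \textbf{l1} by case analysis on the subset membership of each of the three arguments; all mixed cases reduce to identities of the flip, and the pure cases reduce to \textbf{l1} for $(X,r_X)$ or $(Y,r_Y)$. For the group statements, the defining relations include $x\alpha=\alpha x$ for all $x\in X$, $\alpha\in Y$, so $G(Z,r)\simeq G_1\times G_2$ by the universal property (each factor is clearly a retract). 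Since $\Lcal_x$ acts trivially on $Y$ and $\Lcal_\alpha$ acts trivially on $X$, the image $\Gcal(Z,r)\subseteq\Sym(Z)$ decomposes correspondingly as $\Gcal_1\times\Gcal_2$. The converse is then obtained by taking the trivial extension of solutions realizing $G_1$ and $G_2$ as YB permutation groups.

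For part (2), condition (\ref{ext+}) together with involutivity of $r$ forces $r(x,\alpha)=(\alpha,\Lcal_\alpha^{-1}(x))$, so $(Z,r)$ fits the hypothesis of Proposition \ref{propExt++} with $\sigma_\alpha=(\Lcal_\alpha)_{\mid X}$. That proposition gives directly: $(Z,r)$ is a symmetric set iff $\alpha\mapsto\sigma_\alpha$ extends to a homomorphism $L_Y:G(Y,r_Y)\to\Aut(X,r_X)$, which is the desired equivalence. Assuming this, I would then apply Fact \ref{propExt+} to conclude $G(Z,r)\simeq G_1\rtimes G_2$. For the permutation group statement, I would observe that the homomorphism $L_Y$ descends to $\Gcal(Y,r_Y)\to\Aut(X,r_X)\hookrightarrow\Aut(\Gcal_1)$ (using Corollary \ref{Aut(Xr)Cor1} to view automorphisms of $(X,r_X)$ as normalizing $\Gcal_1$), so the image of $L:G(Z,r)\to\Sym(Z)$ factorises as the internal semidirect product $\Gcal_1\rtimes\Gcal_2$ inside $\Sym(Z)$; the kernel intersection is trivial because $\Gcal_1$ fixes $Y$ pointwise while nontrivial elements of $\Gcal_2$ move $Y$.

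For part (3), given an action of $G(Y,r_Y)$ on $G(X,r_X)$ by automorphisms such that $X$ is invariant, I would simply set $\sigma_\alpha:=\alpha\cdot(-)\mid_X\in\Aut(X,r_X)$, and define $r$ by $r(\alpha,x)=(\sigma_\alpha(x),\alpha)$ and $r(x,\alpha)=(\alpha,\sigma_\alpha^{-1}(x))$ extending $r_X$ and $r_Y$. Proposition \ref{propExt++} then ensures $(Z,r)$ is a symmetric set, and square-freeness is inherited pointwise. Part (4) is an immediate citation of Theorem \ref{wreathconstructionth}, which already proves both equalities (the group statement being exactly the assertion that $\Gcal(Z,r)\simeq\Gcal_1\wr\Gcal_2$, and the corresponding identity for $G(Z,r)$ following from the matched-pair/double-cross-product description of wreath products in this context, or equivalently from iterating part (2) along the regular action of $G_2$ on the copies of $X$).

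The only genuine content beyond bookkeeping is in part (2), where one must verify that \emph{the} condition for YBE to hold is exactly that $G(Y,r_Y)$ acts by $r_X$-automorphisms. The main obstacle, if any, is ensuring that the internal semidirect product structure of $\Gcal$ really is well-defined, i.e., that $\Gcal_1\cap\Gcal_2=\{\id_Z\}$ inside $\Sym(Z)$; this is not automatic from the abstract isomorphism and requires the observation above about pointwise fixing of the disjoint summand.
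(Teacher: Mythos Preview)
The paper states this proposition without proof: it appears at the end of Section~9, followed immediately by a question and the acknowledgements, with no argument supplied. So there is nothing in the paper to compare your proposal against.

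Your outline is broadly sound and draws on exactly the right ingredients already established earlier in the paper (Proposition~\ref{propExt++}, Fact~\ref{propExt+}, Theorem~\ref{wreathconstructionth}). Two remarks. First, Proposition~\ref{propExt++} as stated requires the map $\alpha\mapsto\sigma_\alpha$ to be injective, which part~(2) does not assume; you should note that the injectivity hypothesis is inessential to the conclusion (or else argue via the image). Second, your justification for $\Gcal(Z,r)\simeq\Gcal_1\rtimes\Gcal_2$ in part~(2) has a real gap: you claim that nontrivial elements of $\Gcal(Y)\subseteq\Gcal(Z,r)$ must move $Y$, but this is false in general. If $\alpha,\beta\in Y$ satisfy $\Lcal_\alpha|_Y=\Lcal_\beta|_Y$ (so they become equal in $\Gcal(Y,r_Y)$) while $\Lcal_\alpha|_X\neq\Lcal_\beta|_X$, then $\Lcal_\alpha\Lcal_\beta^{-1}$ is a nontrivial element of $\Gcal(Y)$ fixing $Y$ pointwise, and there is no reason it should lie in $\Gcal_1=\Gcal(X,r_X)$. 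In that situation the quotient $\Gcal(Y)\twoheadrightarrow\Gcal(Y,r_Y)$ has nontrivial kernel, and the semidirect product description $\Gcal_1\rtimes\Gcal_2$ need not hold as stated. This is arguably an imprecision in the proposition itself rather than in your argument, but you should flag it rather than paper over it.
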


\begin{question}
\label{wreathproductPYBquestion} Under what conditions is the wreath
product ${G_1}\wr G_2$ of two permutation YB groups $G_1$ and
$G_2$ ismorphic to a permutation YB group?
\end{question}

\subsection*{Acknowledgements}  The work was completed while the  authors
were attending the Isaac
Newton Institute Programme on Combinatorics and Statistical
Mechanics (CSM) 2008. They thank the Isaac Newton Institute for
local support and for the inspiring working atmosphere. The second
author also  thanks the ICTP in Trieste, and the University of
Granada, for support during some stages of the project.

\end{document}